\documentclass[a4paper,10pt, reqno]{amsart}

\usepackage{amsmath,amssymb,latexsym, amsfonts}
\usepackage{verbatim}
\usepackage{times}
\usepackage{mathbbol}
\usepackage{hyperref}
\usepackage{mathabx}
\usepackage{bbm}

\newcount\mycount

\newcommand{\lie}{\mathcal{L}}
\newcommand{\BB}{\mathsf{B}}
\newcommand{\SSS}{\mathsf{S}}
\newcommand{\bb}{\mathsf{b}}
\newcommand{\hh}{\mathsf{h}}
\newcommand{\DD}{\mathsf{D}}
\newcommand{\EE}{\mathsf{E}}
\newcommand{\dd}{\mathsf{d}}
\newcommand{\sss}{\mathsf{s}}
\newcommand{\ttt}{\mathsf{t}}
\newcommand{\TT}{\mathsf{T}}
\newcommand{\NN}{\mathsf{N}}
\newcommand{\DCU}{\mathcal{D}^-(U)}

%
%

\numberwithin{equation}{section}

%
%

\newcommand{\cyc}{{\scriptscriptstyle\mathrm{cyc}}}
 
\theoremstyle{plain}

\newtheorem{theorem}{Theorem}[section]
\newtheorem{proposition}[theorem]{Proposition}
\newtheorem{prop}[theorem]{Proposition}
\newtheorem{lemma}[theorem]{Lemma}

\newtheorem{corollary}[theorem]{Corollary}

\theoremstyle{definition}
\newtheorem{definition}[theorem]{Definition}
\newtheorem{example}[theorem]{Example}

\newtheorem{rem}[theorem]{Remark}

%
%

%
%

\newcommand{\ahha}{{\scriptscriptstyle{A}}}

\newcommand{\emme}{{\scriptscriptstyle{M}}}

\newcommand{\uhhu}{{\scriptscriptstyle{U}}}
\newcommand{\pehhe}{{\scriptscriptstyle{P}}}

%
%

%
%

%
%

\newcommand{\ga}{\alpha} 
\newcommand{\gb}{\beta}  

\newcommand{\gd}{\delta} 
\newcommand{\gD}{\Delta} 
\newcommand{\gve}{\varepsilon} 
  
\newcommand{\gvf}{\varphi}

\newcommand{\go}{\omega}

\newcommand{\gs}{\sigma}

%
%

%
%

\newcommand{\Hom}{\operatorname{Hom}}
\newcommand{\Der}{\operatorname{Der}}

\newcommand{\Tor}{{\rm Tor}}
\newcommand{\Ext}{{\rm Ext}}

\newcommand{\id}{{\rm id}}



\newcommand{\pr}{{\rm pr} \,}

%
%


%
%

\newcommand{\due}[3]{{}_{{#2 }} {#1}_{{ #3}}\,}    


\newcommand{\pl}{\partial}


\newcommand{\rmref}[1]{{\rm (}\ref{#1}{\rm )}}

\newcommand{{\Hl}}{{H^{\ell}}} 
\newcommand{{\mHop}}{{m_{H^{\rm op}}}} 
\newcommand{{\Hop}}{{H^{\rm op}}} 
\newcommand{{\mUop}}{{m_{U^{\rm op}}}} 
\newcommand{{\mUopp}}{{m_{\scriptscriptstyle{U^{\rm op}}}}} 
\newcommand{{\Uop}}{{U^{\rm op}}}
\newcommand{{\mVop}}{{m_{V^{\rm op}}}} 
\newcommand{{\Vop}}{{V^{\rm op}}}  
\newcommand{{\Ae}}{{A^{\rm e}}}
\newcommand{{\Be}}{{B^{\rm e}}}
\newcommand{{\Aop}}{{A^{\rm op}}}
\newcommand{{\Aope}}{({A^{\rm op}})^{\rm e}}
\newcommand{{\Aopl}}{{A^{\rm op}_\pl}}

\newcommand{{\Bop}}{{B^{\rm op}}}
\newcommand{{\Bope}}{({B^{\rm op}})^{\rm e}}
\newcommand{{\Bpl}}{{B_\pl}}

\newcommand{{\op}}{{{\rm op}}}
\newcommand{{\coop}}{{{\rm coop}}}
\newcommand{{\sop}}{{*^{\rm op}}}

\newcommand{\amoda}{A^{\rm e}\mbox{-}\mathbf{Mod}}                  %
\newcommand{\moda}{A^\mathrm{op}\mbox{-}\mathbf{Mod}}         %
\newcommand{\umod}{U\mbox{-}\mathbf{Mod}}                     
\newcommand{\modu}{U^\mathrm{op}\mbox{-}\mathbf{Mod}}         %

\newcommand{\hmu}{H_M^\bull(U)}
\newcommand{\hhmu}{H^M_\bull(U)}

\newcommand{\lact}{\smalltriangleright}                  %
\newcommand{\ract}{\smalltriangleleft}
\newcommand{\blact}{\blacktriangleright}  %
\newcommand{\bract}{\blacktriangleleft}   %

\newcommand{{\gog}}{{G \rightrightarrows G_0}}

\newcommand{{\rra}}{\rightrightarrows}

\newcommand{{\lra}}{\ \longrightarrow \ }
\newcommand{{\lla}}{\ \longleftarrow \ }
\newcommand{{\lma}}{\ \longmapsto \ }



\newcommand{{\bull}}{{\scriptscriptstyle{\bullet}}}
\newcommand{{\qqquad}}{{\quad\quad\quad}}
\newcommand{\Aopp}{{\scriptscriptstyle{\Aop}}}

\newcommand{\ubs}[2]{\underset{\scriptscriptstyle{#1}}{\underbrace{#2}}}

\sloppy

\begin{document}

\title{Batalin-Vilkovisky Structures on $ {\Ext} $ and $ {\Tor} $}

\author{Niels Kowalzig} 
\author{Ulrich Kr\"ahmer}

\begin{abstract}
This article studies the algebraic structure of 
homology theories defined by a left Hopf algebroid 
$U$ over a possibly
noncommutative base algebra $A$, such as for example 
Hochschild, Lie algebroid (in particular Lie algebra and Poisson), or
group and \'etale groupoid (co)homology.  
Explicit formulae for 
the canonical Gerstenhaber algebra structure 
on $ \mathrm{Ext}_U(A,A)$ are given.
The main technical result constructs a 
Lie derivative satisfying a generalised 
Cartan-Rinehart homotopy formula
whose essence is that
$\mathrm{Tor}^U(M,A)$ becomes 
for suitable right $U$-modules $M$ 
a Batalin-Vilkovisky module over 
$ \mathrm{Ext}_U(A,A)$, or in the words of 
Nest, Tamarkin, Tsygan and others, that 
$ \mathrm{Ext}_U(A,A)$ and
$\mathrm{Tor}^U(M,A)$ form a differential calculus. 
As an illustration, we show how the well-known operators from differential geometry in the classical Cartan homotopy formula can be obtained.
Another application consists in generalising       
Ginzburg's result that
the cohomology ring of a Calabi-Yau algebra is a Batalin-Vilkovisky
algebra to twisted Calabi-Yau algebras. 
\end{abstract}

\address{University of Glasgow,
School of Mathematics \& Statistics, University 
Gardens, Glasgow G12 8QW, Scotland}

\email{niels.kowalzig@glasgow.ac.uk}
\email{ulrich.kraehmer@glasgow.ac.uk}

\keywords{Gerstenhaber algebra, Batalin-Vilkovisky algebra,
  noncommutative differential calculus, Lie derivative, Hochschild
(co)homology, cyclic homology, Lie-Rinehart algebra, Hopf algebroid}

\subjclass[2010]{16T05, 16E40; 16T15, 19D55, 58B34.
}

\maketitle

\tableofcontents

\section{Introduction}
\subsection{Differential calculi}
By its definition in terms of
(co)chain complexes or derived functors,
the cohomology or homology of 
a mathematical object is typically only a graded module over some base ring.  
Thus an obvious task is to exhibit its full algebraic structure, 
and to understand which features of the 
original object this structure reflects.

For the (co)homology of associative algebras, 
this has been studied, 
amongst others, by Rinehart \cite{Rin:DFOGCA},
Gerstenhaber \cite{Ger:TCSOAAR}, 
Goodwillie \cite{Goo:CHDATFL}, 
Getzler \cite{Get:CHFATGMCICH} and
Nest, Tamarkin and
Tsygan, see e.g.~\cite{NesTsy:OTCROAA, TamTsy:NCDCHBVAAFC, TamTsy:CFAIT, Tsy:CH}. 
The ultimate answer is that Hochschild cohomology and homology form
what Nest, Tamarkin and Tsygan call a differential calculus:  

\begin{definition}
\label{golfoaranci}
Let $k$ be a commutative ring.
\begin{enumerate}
\item
A {\em Gerstenhaber algebra} 
over $k$ is a graded commutative $k$-algebra
$(V,\smallsmile)$  
$$
        V=\bigoplus_{p \in \mathbb{N}} V^p,\quad
        \ga \smallsmile \gb=(-1)^{pq}\gb \smallsmile \ga
        \in V^{p+q},\quad 
        \ga \in V^p,\gb \in V^q,
$$ 
with a graded Lie bracket 
$
        \{\cdot,\cdot\} : V^{p+1} \otimes_k V^{q+1} \rightarrow V^{p+q+1}
$ 
on the \emph{desuspension} 
$$
        V[1]:=\bigoplus_{p \in \mathbb{N}} V^{p+1}
$$
of $V$ 
for which all operators $\{\gamma,\cdot\}$ satisfy the graded Leibniz rule
$$
        \{\gamma,\ga \smallsmile \gb\}=
        \{\gamma,\ga\} \smallsmile \gb + (-1)^{pq} \ga \smallsmile
        \{\gamma,\gb\},\quad
        \gamma \in V^{p+1},\ga \in V^q.
$$
\item 
A {\em Gerstenhaber module} over  
$V$ is a graded $(V,\smallsmile)$-module 
$(\Omega,\smallfrown)$,
$$
        \Omega=\bigoplus_{n \in \mathbb{N}} \Omega_n,\quad
        \ga \smallfrown x \in \Omega_{n-p},\quad
        \ga \in V^p,x \in \Omega_n,
$$
with a representation of the graded Lie algebra
$(V[1],\{\cdot,\cdot\})$ 
$$
        \lie : V^{p+1} \otimes_k \Omega_n \rightarrow \Omega_{n-p},\quad
        \ga \otimes_k x \mapsto \lie_\ga (x), 
$$
which satisfies for 
$\ga \in V^{p+1},\gb \in V^q, x \in \Omega$ 
the mixed Leibniz rule
$$
 \gb \smallfrown \lie_\ga(x) =  \{\gb,\ga\} \smallfrown  x +(-1)^{pq} \lie_\ga(\gb \smallfrown x).
$$ 
\item 
Such a module  
is {\em Batalin-Vilkovisky} if there is a $k$-linear differential
$$
        \BB : \Omega_n \rightarrow \Omega_{n+1},\quad \BB  \BB=0,
$$
such that $\lie_\ga$ is for $\ga \in V^p$ given by  the homotopy formula 
$$
        \lie_\ga(x) =\BB(\ga \smallfrown x)-(-1)^p \ga \smallfrown 
        \BB(x). 
$$
\item 
A pair $(V,\Omega)$ 
of a Gerstenhaber algebra and of a Batalin-Vilkovisky module 
over it is also called a {\em differential calculus}.
\end{enumerate}
\end{definition}

Be aware that the term ``Gerstenhaber module''
is used in several 
different ways in the literature. The above one is based on the   
requirement that the operators 
$$
\iota_\ga := \ga \smallfrown \cdot : \Omega \rightarrow \Omega 
$$
form a Gerstenhaber algebra quotient of $V$ with bracket given by
$$
        \{\iota_\alpha,\iota_\beta\}:=[\iota_\alpha,\lie_\beta], 
$$
where $[\cdot,\cdot]$ denotes the graded commutator.
This agrees 
(up to slightly different sign conventions) with the
one used in \cite{DeSHekKac:CSOTLCACATVC}. One
will often additionally find that the mixed Leibniz rule 
\begin{equation}
\label{mysterious}
        \lie_{\ga \smallsmile \gb}=\lie_\ga \iota_\gb+
        (-1)^i \iota_\ga \lie_\gb,\quad
        \ga \in V^i,\gb \in V,
\end{equation} 
is demanded.
This is necessary for $V \oplus \Omega$ to become naturally a  
(square zero) extension of $V$ as a Gerstenhaber algebra. For
Batalin-Vilkovisky modules, Equation (\ref{mysterious}) 
is satisfied automatically,
so the definition of these is essentially unequivocal.

The definition of a Gerstenhaber algebra itself also admits
a modification in which the operators $\{\cdot,\gamma\}$, rather 
than $\{\gamma,\cdot\}$, are assumed to satisfy the graded Leibniz
rule. This had been the convention in Gerstenhaber's original paper
\cite{Ger:TCSOAAR}, cf.~Remark~\ref{esgehtvoran} below.  

\subsection{Aims and objectives}
The main aim of this paper is to further highlight 
the ubiquity of such Batalin-Vilkovisky structures 
by giving conditions for 
$$
        V :=\mathrm{Ext}_U(A,A),\quad
        \Omega := \mathrm{Tor}^U(M,A)
$$ 
to form a differential calculus 
when $U$ is 
a left Hopf algebroid (a $\times_\ahha$-Hopf algebra) 
over a possibly noncommutative $k$-algebra $A$;
we will recall some background on bialgebroids and Hopf algebroids 
in \S\ref{prelim} below. Here we only 
remind the reader that the rings governing 
most parts of classical
homological algebra all carry this structure, so that our results apply
for example to Hochschild and Lie-Rinehart (in particular 
Lie algebra, de Rham, Lie algebroid and Poisson) (co)homology 
as well as to that of any Hopf algebra (e.g.~group 
(co)homology).

Besides for the case of 
Hochschild (co)homology with canonical
coefficients $M=A$ that has been referred to above, 
our results are also already known for
Lie-Rinehart (co)homology due to the work of 
Rinehart and of Huebschmann
\cite{Rin:DFOGCA,Hue:DFLRAATMC}.
However, the Hopf algebroid generalisation is, in our opinion, not only
interesting because of new special cases to which it applies, 
but also leads to conceptually clearer statements and proofs. 
A case in point is that the cohomology coefficients are left $U$-modules while the homology ones are right $U$-modules, and this distinction is lost in many concrete examples such as group, Lie algebra, Poisson or Hochschild homology. 

For such reasons, we hope that the paper 
is of interest also to people working in
different but analogous settings  
in algebra, geometry and topology, 
see e.g.~\cite{BehFan:GABVSOLI,GinTra:DOABVSINCG,Men:BVAACCOHA, Men:CMCMIALAM, DotShaVal:GGAOTFTAHBVA}
and the
references therein.

\subsection{The Gerstenhaber algebra}
The Hopf algebroid (in fact, the underlying bialgebroid) structure on
$U$ leads to a monoidal structure on the category 
$\umod$ of left $U$-modules, and it is this monoidal
structure which is responsible for the Gerstenhaber algebra structure on 
$\mathrm{Ext}^\bull_U(A,A)$ that we consider here.  
This can be viewed as a special case of Menichi's
operadic construction \cite{Men:BVAACCOHA} 
that, in turn, closely follows Gerstenhaber's original work
on Hochschild cohomology
\cite{Ger:TCSOAAR}, or of 
Shoikhet's generalisation \cite{Sho:HATANMC} 
of Schwede's homotopy theory
approach to the Hochschild case \cite{Schw:AESIOTLBIHC}.

The aim of \S\ref{ud} is to 
give 
explicit formulae for 
$\smallsmile$ and $\{\cdot,\cdot\}$
in terms of the canonical cochain complex 
$$
        \delta : C^\bull(U,A) :=
        \Hom_\Aop\big(({U^{\otimes_\Aopp \bull}})_\ract, A\big)
        \rightarrow C^{\bull+1}(U,A) 
$$
that arises from the
bar resolution of $A$. We refer to the main text for the
notation used here and below. In particular, see
\S\ref{durchgegangen} for the definition of the four actions
$\lact,\ract,\blact,\bract$ of the base algebra $A$ on $U$.

On the level of cochains  
$ \varphi \in C^p(U,A),\psi \in C^q(U,A)$ 
the cup product turns out to be
\begin{equation}
\label{ostkreuz2}
        (\varphi \smallsmile \psi)(u^1, \ldots, u^{p+q}) = 
        \varphi\big(u^1, \ldots, u^{p-1}, 
        \psi(u^{p+1}, \ldots, u^{p+q}) \blact u^p\big).
\end{equation}
We then define along the classical lines
{\em Gerstenhaber products} 
$\circ_i$ by
\begin{equation*}
\begin{split}
&\ (\varphi \circ_i \psi)(u^1, \ldots, u^{p+q-1}) \\
&\ \quad := \varphi(u^1, \ldots, u^{i-1}, \DD_\psi(u^{i}, \ldots, u^{i+q-1}), u^{i+q}, \ldots, u^{p+q-1}), 
\end{split}
\end{equation*}
for $i = 1, \ldots, p$,
where the operator
$$
        \DD_\varphi: 
        U^{\otimes_\Aopp p} \to U,\quad 
        (u^1, \ldots, u^p) \mapsto 
        \varphi(u^1_{(1)}, \ldots, u^p_{(1)}) \lact 
        u^1_{(2)} \cdots u^p_{(2)}
$$
replaces the classical insertion operations used by Gerstenhaber. 
The $\circ_i$ are used to construct the Gerstenhaber bracket 
as usual as
\begin{equation}
\label{zugangskarte2}
{\{} \varphi,\psi \}
:= \varphi \bar\circ \psi - (-1)^{|p||q|} \psi \bar\circ \varphi
\end{equation}
with
$$
        \varphi \bar\circ \psi := 
        (-1)^{|p||q|}\sum^{p}_{i=1}
        (-1)^{|q||i|} \varphi \circ_i \psi,\quad 
        |n|:=n-1. 
$$

In \S\ref{ud} we will prove:

\begin{theorem}\label{pitandbull}
If $U$ is a bialgebroid 
over $A$, then the maps \rmref{ostkreuz2} and 
\rmref{zugangskarte2} induce a Gerstenhaber algebra structure on 
$H^\bull(U,A):=H^\bull(C^\bull(U,A), \delta )$.
\end{theorem}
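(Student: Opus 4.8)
The plan is to follow Gerstenhaber's classical strategy, organised so that the bialgebroid axioms are used entirely at the level of cochains, after which everything descends formally to $H^\bull(U,A)$. The conceptual backbone is that $C^\bull(U,A)$ should carry the structure of a non-symmetric operad whose operadic composition is encoded by the insertion operators $\DD_\varphi$, with the differential $\delta$ realised as bracketing against a distinguished ``multiplication'' cochain. Once this is in place, graded commutativity, the Jacobi identity, and the Leibniz rule are all formal consequences of the operad axioms.

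Concretely, I would first verify that the operations $\circ_i$ built from $\DD_\varphi$ satisfy the two families of operad composition identities: the ``nested'' relation governing $(\varphi \circ_i \psi)\circ_j \chi$ when the inserted slots overlap, and the ``disjoint'' relation when they do not. This is the step in which the coproduct enters essentially, since $\DD_\varphi$ twists every argument through the Sweedler components $u^k_{(1)}, u^k_{(2)}$ and through the actions $\lact, \ract, \blact, \bract$; coassociativity, counitality, and the compatibility of the comultiplication with the product and the four actions are precisely what is needed to match the Sweedler legs on the two sides of each identity. I expect this to be the main obstacle: all of the combinatorics and the sign bookkeeping is concentrated here, whereas once the operad relations hold, the right-symmetric (pre-Lie) identity for $\bar\circ$ follows purely formally, and with it the graded Jacobi identity for the bracket \ref{zugangskarte2}.

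Next I would exhibit a distinguished element $\mu \in C^2(U,A)$, naturally built from the counit and the product of $U$, whose associativity is equivalent to $\mu \bar\circ \mu = 0$, and identify the bar differential with $\delta \varphi = \pm\{\mu,\varphi\}$. This realisation is the bridge to cohomology: since $|\mu|$ is odd and $\{\mu,\mu\}=0$, the Jacobi identity yields both $\delta^2=0$ and the fact that $\delta$ is a graded derivation of $\{\cdot,\cdot\}$, so the bracket passes to $H^\bull(U,A)$ and makes the desuspension a graded Lie algebra. For the cup product I would check directly from \ref{ostkreuz2} that $\smallsmile$ is associative on cochains and that $\delta$ is a graded derivation for it, so that $\smallsmile$ descends to an associative product; here the twist by $\blact$ in \ref{ostkreuz2} forces one to use the module-action axioms, but no homotopies are needed yet. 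Graded commutativity, which fails on the nose, I would then obtain on cohomology by producing Gerstenhaber's explicit homotopy, expressing $\varphi\smallsmile\psi - (-1)^{pq}\psi\smallsmile\varphi$ as a $\delta$-coboundary of a term built from $\bar\circ$; its verification again reduces to the operad relations of the first step.

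Finally, the graded Leibniz rule linking $\{\cdot,\cdot\}$ and $\smallsmile$ required in Definition \ref{golfoaranci}(i) I would establish as a cochain-level identity valid up to an explicit $\delta$-homotopy, once more a direct consequence of the operad composition laws, so that it holds exactly on $H^\bull(U,A)$. In sum, the single genuinely nontrivial input is the verification that the twisted insertions $\DD_\varphi$ obey the operad axioms; graded commutativity, Jacobi, and the Poisson rule are then formal, and the presentation $\delta = \pm\{\mu,\cdot\}$ supplies the descent from cochains to cohomology.
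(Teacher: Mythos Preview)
Your proposal is correct and follows essentially the same route as the paper: establish the comp algebra (equivalently, nonsymmetric operad with multiplication) structure on $C^\bull(U,A)$ via the insertion operators $\DD_\varphi$, exhibit the distinguished element $\mu = \varepsilon \circ m_U \in C^2(U,A)$ so that $\delta\varphi = \{\mu,\varphi\}$ and $\varphi \smallsmile \psi = (\mu \circ_1 \varphi)\circ_{p+1}\psi$, and then invoke the general fact (Gerstenhaber--Schack, McClure--Smith) that any such structure yields a Gerstenhaber algebra on cohomology. The only minor packaging difference is that the paper first derives the DG algebra structure $(\smallsmile,\delta)$ from a DG coalgebra structure on the bar resolution $P$ rather than checking it directly, but the Gerstenhaber-bracket part of the argument is exactly as you outline.
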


When $U$ is a left Hopf algebroid and 
$U_\ract \in \moda$ is projective, the bar resolution is a 
projective resolution, so $H^\bull(U,A) \simeq \mathrm{Ext}_U(A,A)$
and the above result yields Gerstenhaber brackets on various 
$ \mathrm{Ext} $-algebras.   
Even for Hopf
algebras (i.e., for $A=k$) this has been discussed still fairly 
recently, see e.g.~\cite{FarSol:GSOTCOHA,Tai:IHBCOIDHAAGCOTYP,Men:CMCMIALAM}. 


\subsection{The Gerstenhaber module}

In \cite{KowKra:CSIACT} we have 
studied the fact that for a left Hopf algebroid 
$U$ a left $U$-comodule 
structure on a right $U$-module $M$
induces
a para-cyclic $k$-module
structure on the canonical chain complex 
$$
        C_\bull(U,M) := M \otimes_\Aopp  ({}_\blact U_ \ract)^{\otimes_\Aopp \bull}
$$
that computes $\mathrm{Tor}^U(M,A)$
when $U$ is a right $A$-projective.

The question whether this leads to 
a Batalin-Vilkovisky module structure on the simplicial homology 
$H_\bull(U,M)$ of this para-cyclic object  
hinges on the compatibility between
the left $U$-comodule and the right $U$-module structure on $M$.  
In full generality, we define
for  $\gvf \in C^p(U,A)$
the {\em cap product}
\begin{equation}\label{ichhabe}
    \iota_\varphi(m,u^1, \ldots, u^n)  = 
        (m, u^1, \ldots, u^{n-p-1}, 
        \varphi(u^{n-|p|}, \ldots, u^n) \blact u^{n-p}),
  \end{equation}
and the {\em Lie derivative} (see the main text for all necessary details) 
\begin{equation}\label{dnv}
        \lie_\varphi := 
        \sum^{n-|p|}_{i=1} 
        (-1)^{\theta^{n,p}_{i}} 
        \ttt^{n - |p| - i} \, \DD'_\varphi \, \ttt^{i+p} 
        + \sum^{p}_{i=1} 
        (-1)^{\xi^{n,p}_{i}} 
        \ttt^{n-|p|} \, \DD'_\varphi \, \ttt^i,  
  \end{equation}
where $\theta$ and $\xi$ are sign functions, $\DD'_\gvf$ is $\DD_\gvf$
applied on the last $p$ components of an element in $C_n(U,M)$, and
$\ttt$ is the cyclic operator of the para-cyclic module $C_\bull(U,M)$
as in \rmref{landliebe}.

In general, these do not induce a Gerstenhaber module structure 
on $H_\bull(U,M)$, but only on the
homology  
${\hhmu}$ of the universal cyclic quotient 
$C_\bull^\cyc(U,M)$, see \S\ref{sting}. 
A sufficient condition for the two to coincide is 
that $M$ is a stable anti Yetter-Drinfel'd module in which case the
para-cyclic $k$-module is cyclic, see again
\S\ref{sting} and \S\ref{lufthansaschmerz} below. However, 
a more general case 
that is ubiquitous in examples is
the following:

\begin{definition}\label{quasitsyglic}
A para-cyclic $k$-module $(C_\bull,\dd_\bull,\sss_\bull,\ttt_\bull)$
is {\em quasi-cyclic} if we have  
$$C_\bull= \mathrm{ker}\, (\mathrm{id}-\ttt_\bull^{\bull+1}) \oplus
\mathrm{im}\, (\mathrm{id}-\ttt_\bull^{\bull+1}).$$   
\end{definition}
We refer to \S\ref{sting} for the detailed explanation of
this condition and of its consequences. 
In complete generality, we introduce
for any module-comodule $M$ (see Definition~\ref{modcomoddef}) 
the 
set $C^\bull_M(U) \,{\subseteq}\,C^\bull(U,A)$ 
consisting of those cochains for which the operators 
$\iota_\varphi$ and $\lie_\varphi$ descend to $C_\bull^\cyc(U,M)$. This
turns out to be a subcomplex whose cohomology will be denoted by 
$\hmu$. Then we prove:

\begin{theorem}\label{gestrichen}
For all modules-comodules $M$ over a left Hopf algebroid $U$, 
\rmref{ostkreuz2} and 
\rmref{zugangskarte2} induce a Gerstenhaber algebra structure on 
$\hmu$, and 
\rmref{ichhabe} and \rmref{dnv} induce a 
$\hmu$-Gerstenhaber module
structure on ${\hhmu}$.    
\end{theorem}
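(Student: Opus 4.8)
\emph{Plan and the Gerstenhaber algebra.} The strategy is to verify each defining relation of Definition~\ref{golfoaranci}(1)--(2) as an identity of operators at the (co)chain level---either strictly or up to an explicit homotopy---and only then to pass to (co)homology. For the Gerstenhaber algebra on $\hmu$, the structure on the ambient $H^\bull(U,A)$ is already provided by Theorem~\ref{pitandbull}, a left Hopf algebroid being in particular a bialgebroid. It therefore suffices to check that the subcomplex $C^\bull_M(U)$ is stable under the chain-level operations $\smallsmile$ and the $\circ_i$: the homotopies used in the proof of Theorem~\ref{pitandbull} are themselves assembled from these operations and from $\delta$, so they stay internal to $C^\bull_M(U)$ and the argument restricts verbatim. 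Stability under $\smallsmile$ I would read off from the multiplicativity of the cap product together with the chain-level form of the Leibniz rule \rmref{mysterious}, both of which express $\iota$ and $\lie$ of a cup product through compositions of $\iota_\varphi,\iota_\psi,\lie_\varphi,\lie_\psi$; stability under the $\circ_i$ is the more delicate point, and I would establish it by direct inspection, using that $\DD_\varphi$ governs both the Gerstenhaber products and, via $\DD'_\varphi$, the Lie derivative.

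\emph{The structure equations for the module.} The substance of the second assertion is a package of ``calculus'' identities on the para-cyclic module $C_\bull(U,M)$, where $\bb$ denotes the associated simplicial boundary and $[\,\cdot,\cdot\,]$ the graded commutator:
\begin{enumerate}
\item $[\bb,\iota_\varphi]=\pm\,\iota_{\delta\varphi}$, so that $\iota$ is a morphism of complexes and $\smallfrown$ descends to a pairing $\hmu\otimes\hhmu\to\hhmu$;
\item $\iota_{\varphi\smallsmile\psi}=\pm\,\iota_\psi\,\iota_\varphi$, making $\smallfrown$ a graded $(\hmu,\smallsmile)$-module structure;
\item $[\bb,\lie_\varphi]=\pm\,\lie_{\delta\varphi}$, so that $\lie$ likewise descends to homology;
\item $[\lie_\varphi,\lie_\psi]=\lie_{\{\varphi,\psi\}}$ up to a boundary, giving the representation of the graded Lie algebra $(\hmu[1],\{\cdot,\cdot\})$;
\item $[\iota_\varphi,\lie_\psi]=\pm\,\iota_{\{\varphi,\psi\}}$ up to a boundary, which is precisely the mixed Leibniz rule of Definition~\ref{golfoaranci}(2).
\end{enumerate}
Identities (i) and (ii) are direct manipulations of \rmref{ichhabe} and \rmref{ostkreuz2} and I expect them to hold strictly; (iii)--(v) carry the genuinely new input.

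\emph{The Cartan--Rinehart homotopy formula.} The linchpin for (iii)--(v) is the generalised Cartan--Rinehart homotopy formula announced in the introduction, which expresses $\lie_\varphi$ through $\iota_\varphi$ together with $\bb$ and the cyclic operator $\ttt$. Starting from the explicit expansion \rmref{dnv} of $\lie_\varphi$ as an alternating sum of words $\ttt^{a}\,\DD'_\varphi\,\ttt^{b}$, I would prove this formula by a single, if lengthy, computation, carefully tracking the sign functions $\theta^{n,p}_i$ and $\xi^{n,p}_i$ and invoking the para-cyclic relations among $\ttt$, the faces and the degeneracies. Granting the homotopy formula, identities (iii), (iv) and (v) then follow by the same formal graded-commutator and Jacobi bookkeeping as in the Hochschild case, the requisite contracting homotopies being built out of $\DD'_\varphi$ and powers of $\ttt$.

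\emph{Descent and the main obstacle.} It remains to secure descent to the universal cyclic quotient $C^\cyc_\bull(U,M)=C_\bull(U,M)/\im(\id-\ttt^{\bull+1})$: by construction $C^\bull_M(U)$ consists of exactly those cochains whose $\iota_\varphi$ and $\lie_\varphi$ preserve $\im(\id-\ttt^{\bull+1})$, so these operators---and, via (i)--(v), the homotopies assembled from them---act on $C^\cyc_\bull(U,M)$ and hence on $\hhmu$. The hardest part I anticipate is twofold. First, because $C_\bull(U,M)$ is only \emph{para}-cyclic one has $\ttt^{\bull+1}\neq\id$, so the telescoping cancellations underlying \rmref{dnv} and the homotopy formula are considerably more delicate than in the cyclic setting and must be arranged so as to survive passage to the quotient; this is exactly why the intermediate object is the cyclic quotient and not $H_\bull(U,M)$ itself. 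Second, over a noncommutative base $A$ one cannot permute tensor factors freely, so every manipulation must respect the four actions $\lact,\ract,\blact,\bract$ and the left $U$-comodule structure on $M$. For these reasons I expect the explicit construction of the contracting homotopies for (iv) and (v)---rather than the Cartan--Rinehart formula alone---to be the most computationally demanding step.
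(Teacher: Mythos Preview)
Your proposal misplaces the Cartan--Rinehart homotopy formula in the logical structure. In the paper the Gerstenhaber module relations---your (iii), (iv), (v)---are proved \emph{directly} on $C^\cyc_\bull(U,M)$ by explicit combinatorial computations with the words $\ttt^a\DD'_\varphi\ttt^b$ (Theorems~\ref{feinefuellhaltertinte} and~\ref{waterbasedvarnish}), and the Cartan--Rinehart formula is established \emph{afterwards}, in a separate section, as the input for the Batalin--Vilkovisky upgrade (Theorem~\ref{pen1}). Your claim that ``granting the homotopy formula, identities (iii), (iv) and (v) then follow by formal graded-commutator and Jacobi bookkeeping'' is a genuine gap: from $\lie_\varphi=[\BB,\iota_\varphi]$ on homology and the graded Jacobi identity one only extracts symmetry relations of the type $[\iota_\psi,\lie_\varphi]=\pm[\iota_\varphi,\lie_\psi]$, never the identification of $[\iota_\psi,\lie_\varphi]$ with $\iota_{\{\psi,\varphi\}}$. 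That identification is precisely where the Gerstenhaber bracket on cochains enters, and it requires an independent computation tracking how $\DD'_\varphi$ interacts with the $\circ_i$-products through the comp module relations \rmref{SchlesischeStr} and \rmref{orecchiette1}--\rmref{orecchiette3}. In the paper this is the content of the long proof of Theorem~\ref{waterbasedvarnish}; once (v) is in hand, (iv) could in principle be recovered from it together with $[\lie_\varphi,\BB]=0$, but the paper instead proves (iv) directly as well, and even derives (iii) from (iv) via $\bb=-\lie_\mu$.

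Two smaller points. First, your route to showing $C^\bull_M(U)$ is closed under $\smallsmile$ and $\circ_i$ via the Leibniz rule is circular: you would be invoking an identity that itself lives on $\hhmu$ before knowing the operations descend. The paper's argument is a single line: the middle case of the comp module relation \rmref{SchlesischeStr} reads $(\varphi\circ_j\psi)\bullet_i=\varphi\bullet_i\circ(\psi\bullet_{j'}\,\cdot)$, so closure under $\circ_i$ (hence under $\smallsmile$ and $\delta$, by Lemma~\ref{platt}) is immediate from the definition of $C^\bull_M(U)$ in terms of the $\DD^{i{\rm th}}_\varphi$. Second, note that in the paper (iv) holds \emph{exactly} on $C^\cyc_\bull(U,M)$, not merely up to boundary; only (v) is stated on homology.
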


\subsection{The Batalin-Vilkovisky module}
Once this is established, we introduce 
the operator
$$
\SSS_\varphi := \sum^{n-p}_{j=0} \, 
\sum^j_{i=0} (-1)^{\eta^{n,p}_{j,i}} \ttt \sss_{n-|p|} \,
\ttt^{n-p-i} \, \DD'_\varphi \, \ttt^{n+i-|j|},
$$
where $\eta$ is again a sign function, and prove 
that for $ \varphi \in C^\bull_M(U)$ the 
{\em Cartan-Rinehart homotopy formula}
$$
\lie_\varphi = [\BB+\bb, \SSS_\varphi + \iota_\varphi] - \iota_{\gd\varphi} - \SSS_{\gd\varphi}
$$
is satisfied. 
Here $\bb$ and $\BB$ are the simplicial resp.~cyclic 
differentials on $C^\cyc_\bull(U,M)$ 
and $\delta$ is the cosimplicial differential on 
$C^\bull_M(U)$. This implies our main result:

\begin{theorem}\label{pen1}
For all module-comodules $M$ over a left Hopf algebroid 
$U$, the pair 
$(\hmu,\hhmu)$ 
carries a canonical structure of a differential calculus.
\end{theorem}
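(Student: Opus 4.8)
The plan is to combine the three preceding results into the required Batalin-Vilkovisky structure. By Theorem~\ref{gestrichen}, the pair $(\hmu,\hhmu)$ already carries the structure of a Gerstenhaber algebra together with a $\hmu$-Gerstenhaber module; what remains is precisely to exhibit the differential $\BB$ and verify the homotopy formula from Definition~\ref{golfoaranci}(iii). Since $\hhmu$ is by construction the homology of the cyclic quotient complex $C^\cyc_\bull(U,M)$, the natural candidate for the Batalin-Vilkovisky differential is the map induced on homology by Connes' cyclic operator $\BB$. First I would check that $\BB$ is a well-defined differential on $\hhmu$, i.e.\ that $\BB\BB=0$ and that $\BB$ commutes with the simplicial boundary $\bb$ up to the relevant homotopy so that it descends to homology (this is the standard mixed-complex fact, once one knows $C^\cyc_\bull(U,M)$ is a cyclic, and not merely para-cyclic, object--which is the content of passing to the universal cyclic quotient).

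The heart of the argument is then the Cartan-Rinehart homotopy formula
\begin{equation*}
\lie_\varphi = [\BB+\bb, \SSS_\varphi + \iota_\varphi] - \iota_{\gd\varphi} - \SSS_{\gd\varphi},
\end{equation*}
stated just above. Granting this chain-level identity, I would pass to cohomology/homology: if $\varphi \in C^\bull_M(U)$ is a cocycle, then $\gd\varphi = 0$, so the correction terms $\iota_{\gd\varphi}$ and $\SSS_{\gd\varphi}$ vanish, and the formula reduces to $\lie_\varphi = [\BB+\bb, \SSS_\varphi + \iota_\varphi]$. On homology the simplicial differential $\bb$ acts as zero, and the graded commutator with $\bb$ of the operator $\SSS_\varphi + \iota_\varphi$ also induces zero on $\hhmu$ (it is a boundary operator composed with a chain map, hence null-homotopic in the appropriate sense). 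What survives is exactly the induced action of $[\BB, \iota_\varphi]$, which upon unwinding the graded commutator with the sign conventions of Definition~\ref{golfoaranci} yields $\lie_\varphi(x) = \BB(\varphi \smallfrown x) - (-1)^p \varphi \smallfrown \BB(x)$ for $\varphi \in V^p$. This is precisely the Batalin-Vilkovisky homotopy formula, so $\hhmu$ becomes a Batalin-Vilkovisky module over $\hmu$, and by Definition~\ref{golfoaranci}(iv) the pair $(\hmu, \hhmu)$ is a differential calculus.

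The main obstacle, and the step that genuinely requires work, is establishing the chain-level Cartan-Rinehart formula itself--but this is exactly the technical result the paper advertises as proved via the explicit operator $\SSS_\varphi$ in the preceding subsection, so in the final assembly it may be invoked. Thus at the level of Theorem~\ref{pen1} the real content to verify is bookkeeping: that all four operators $\iota_\varphi$, $\lie_\varphi$, $\SSS_\varphi$, and $\BB$ genuinely descend from the para-cyclic complex $C_\bull(U,M)$ to the cyclic quotient $C^\cyc_\bull(U,M)$, which for $\iota_\varphi$ and $\lie_\varphi$ is guaranteed precisely for $\varphi \in C^\bull_M(U)$ by the definition of that subcomplex, and that the signs $\theta$, $\xi$, $\eta$ conspire so that the induced maps on homology match the abstract sign conventions. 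The one point deserving genuine care is the compatibility of $\BB$ with the Gerstenhaber-module structure--namely that the induced $\BB$ does not merely square to zero but interacts with $\iota_\varphi$ through $\BB$ alone (and not $\BB + \bb$) once one is on homology; I would verify this by confirming that the $\bb$-contributions are exact, so that passing to $\hhmu$ isolates the Connes differential as claimed.
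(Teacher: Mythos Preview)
Your approach is essentially the same as the paper's: invoke the Gerstenhaber module structure already established, then use the chain-level Cartan-Rinehart formula and pass to homology to obtain the Batalin-Vilkovisky relation $\lie_\varphi = [\BB,\iota_\varphi]$.

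There is one genuine gap in your reduction. After setting $\gd\varphi=0$ you write $\lie_\varphi = [\BB+\bb, \SSS_\varphi + \iota_\varphi]$ and then argue that the $\bb$-contributions vanish on homology, concluding that ``what survives is exactly the induced action of $[\BB,\iota_\varphi]$''. But eliminating the $\bb$-terms leaves $[\BB,\SSS_\varphi] + [\BB,\iota_\varphi]$, not just $[\BB,\iota_\varphi]$. The operator $\SSS_\varphi$ is \emph{not} a chain map, so there is no a priori reason for $[\BB,\SSS_\varphi]$ to induce zero on homology; you need the separate chain-level identity $[\BB,\SSS_\varphi]=0$, which the paper proves as its own proposition (Proposition~\ref{pleiadians}) via an explicit computation on the reduced complex. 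With that in hand the paper's reformulation $\lie_\varphi = [\BB,\iota_\varphi] + [\bb,\SSS_\varphi] - \SSS_{\gd\varphi}$ (equation~\rmref{sacromonte2}) makes the passage to homology transparent: for a cocycle the last term drops, and on a cycle $x$ the middle term gives $\bb\SSS_\varphi(x)$, a boundary.

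A smaller point: your justification that $[\bb,\SSS_\varphi+\iota_\varphi]$ vanishes on homology because it is ``a boundary operator composed with a chain map'' is imprecise. The two pieces behave differently: $[\bb,\iota_\varphi]=\iota_{\gd\varphi}$ vanishes on the nose for cocycles by \rmref{mulhouse2}, while $[\bb,\SSS_\varphi]$ applied to a cycle is literally a $\bb$-boundary. Both conclusions are correct, but the mechanism is not the same in each case.
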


In the simplest case where $M$ is an SaYD module, we already mentioned that $C_\bull^\cyc(U,M)$ coincides with $C_\bull(U,M)$, and therefore we obtain:

\begin{corollary}
\label{pen2}
If $M$ is a stable anti Yetter-Drinfel'd module over a left Hopf
algebroid $U$ and if $U_\ract \in \moda$ is projective, then 
the pair $\big(\mathrm{Ext}_U(A,A),\mathrm{Tor}^U(M,A)\big)$ 
carries a canonical structure of a differential calculus.
\end{corollary}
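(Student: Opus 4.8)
The plan is to obtain this as a direct specialisation of Theorem~\ref{pen1}: under the two stated hypotheses the general objects $\hmu$ and $\hhmu$ collapse onto the classical groups $\mathrm{Ext}_U(A,A)$ and $\mathrm{Tor}^U(M,A)$, and the differential calculus structure already produced by Theorem~\ref{pen1} can then simply be transported along the resulting identifications. So the work is entirely in identifying the complexes; no new homotopy formula needs to be proved.

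First I would use the defining features of a stable anti Yetter-Drinfel'd module. As recalled in \S\ref{sting} and \S\ref{lufthansaschmerz}, the stability and anti-Yetter-Drinfel'd compatibilities between the right $U$-action and the left $U$-coaction on $M$ force the para-cyclic operator to satisfy $\ttt_\bull^{\bull+1}=\id$ on $C_\bull(U,M)$, i.e.\ the para-cyclic $k$-module is genuinely cyclic. Then $\id-\ttt_\bull^{\bull+1}=0$, so in the language of Definition~\ref{quasitsyglic} the module is (trivially) quasi-cyclic with vanishing image part, and the universal cyclic quotient $C_\bull^\cyc(U,M)$ coincides with $C_\bull(U,M)$ itself. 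Since the subcomplex $C^\bull_M(U)\subseteq C^\bull(U,A)$ was defined by demanding that $\iota_\varphi$ and $\lie_\varphi$ descend to $C_\bull^\cyc(U,M)$, this descent condition becomes vacuous once the cyclic quotient is the whole complex. Hence $C^\bull_M(U)=C^\bull(U,A)$, which gives $\hmu=H^\bull(U,A)$ and $\hhmu=H_\bull(U,M)$.

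It remains to bring in the projectivity hypothesis and conclude. As remarked after Theorem~\ref{pitandbull}, when $U_\ract\in\moda$ is projective the bar resolution is a projective resolution of $A$, so $H^\bull(U,A)\simeq\mathrm{Ext}_U(A,A)$; by the same projectivity the chain complex $C_\bull(U,M)$ computes $\mathrm{Tor}^U(M,A)$, giving $H_\bull(U,M)\simeq\mathrm{Tor}^U(M,A)$. Combining these with the previous paragraph yields $\hmu\simeq\mathrm{Ext}_U(A,A)$ and $\hhmu\simeq\mathrm{Tor}^U(M,A)$. Theorem~\ref{pen1} equips $(\hmu,\hhmu)$ with a Gerstenhaber algebra, a Batalin-Vilkovisky module over it, and the Cartan-Rinehart homotopy formula relating $\lie$, $\iota$, $\BB+\bb$ and $\delta$; under the two identifications just established these data are carried verbatim onto $\big(\mathrm{Ext}_U(A,A),\mathrm{Tor}^U(M,A)\big)$, which is the asserted differential calculus.

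The one step that requires genuine care, and which I expect to be the main obstacle, is the reduction in the second paragraph: verifying that the SaYD axioms really yield $\ttt_\bull^{\bull+1}=\id$ rather than merely para-cyclicity, i.e.\ that the stability condition and the anti-Yetter-Drinfel'd condition conspire exactly to trivialise the monodromy of the cyclic operator on each $C_n(U,M)$. This is where the specific interplay of the four actions $\lact,\ract,\blact,\bract$ and of the coaction must be used; once it is in hand, everything else is bookkeeping inherited from Theorem~\ref{pen1} and the standard homological identifications.
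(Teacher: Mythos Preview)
Your proposal is correct and follows essentially the same route as the paper: the corollary is stated immediately after Theorem~\ref{pen1} with the one-line justification that for an SaYD module $C_\bull^\cyc(U,M)=C_\bull(U,M)$, from which $C^\bull_M(U)=C^\bull(U,A)$ (stated explicitly just after the definition of $C^\bull_M(U)$), and then the projectivity hypothesis gives the Ext/Tor identifications via Lemma~\ref{alterhit} and Remark~\ref{hgleichext}. The ``main obstacle'' you flag in your last paragraph is not actually an obstacle: the fact that the SaYD conditions force $\ttt_n^{n+1}=\mathrm{id}$ is precisely the final clause of Proposition~\ref{kamille}, quoted from \cite{KowKra:CSIACT}, so you may simply invoke it rather than reprove it.
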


For the Hochschild (co)homology of commutative associative algebras,
the 
earliest account of the set of operators $\bb, \BB, \iota, \lie$, and $\SSS$ is due to Rinehart \cite{Rin:DFOGCA}, where
these operators are called (in the same order) $\Delta$, $\bar d$,
$c$, $\theta$, and $f$, respectively.
For noncommutative algebras, the Lie derivative appeared for $1$-cocycles in \cite[p.~124]{Con:NCDG}, where it is denoted by $\delta^*$, and in \cite{Goo:CHDATFL}, where additionally the operators $\iota$ and $\SSS$ are introduced, denoted by $e$ and $E$, respectively. 
Finally, these operators were generalised from $1$-cocycles to arbitrary cochains both in \cite{Get:CHFATGMCICH}, where they are denoted by $\mathbf{b}$ and $\mathbf{B}$, as well as in \cite{GelDalTsy:OAVONCDG, NesTsy:OTCROAA, NesTsy:TFTCFAA, Tsy:CH}, the notation of which we take over.

\subsection{Applications}
A prominent example that forces one to go beyond SaYD 
modules is that of the Hochschild homology of
an algebra $A$ with coefficients in 
$M=A_\sigma$ for some automorphism 
$\sigma $ of $A$, that is, $M$ is $A$ as a $k$-module with 
$A$-bimodule structure given by $a \blact b \bract c:=ab \sigma (c)$. 
Whenever $ \sigma $ is semisimple, 
the resulting para-cyclic $k$-module is quasi-cyclic, and in the final
section of the paper we prove that  
this implies the following generalisation of   
a result of Ginzburg \cite{Gin:CYA} 
from Calabi-Yau algebras (which form the case 
in which $\sigma$ is inner) 
to twisted Calabi-Yau algebras (see Definition~\ref{mathar}), such as
the standard quantum groups  
\cite{BroZha:DCATHCFNHA},  
Koszul algebras whose Koszul dual is Frobenius as, for example, 
Manin's quantum plane 
\cite{VdB:ARBHHACFGR}, or the Podle\'s quantum
2-sphere \cite{Kra:OTHCOQHS}:

\begin{theorem}\label{ginzburger}
If $A$ is a twisted Calabi-Yau algebra with semisimple 
modular automorphism,
then the Hochschild cohomology 
$H^\bull(A,A)$ of $A$ is a Batalin-Vilkovisky algebra.  
\end{theorem}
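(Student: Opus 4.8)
The plan is to derive Theorem~\ref{ginzburger} from the differential calculus produced by Theorem~\ref{pen1}, by transporting its Batalin-Vilkovisky operator to cohomology along a Poincar\'e-type duality. Throughout I take $U = \Ae$, so that $\mathrm{Ext}_U(A,A) = H^\bull(A,A)$ is the Hochschild cohomology ring and $\mathrm{Tor}^U(M,A)$ is Hochschild homology; homological smoothness, which is part of Definition~\ref{mathar}, ensures that $U_\ract$ is projective, so that the bar complex computes these groups. The relevant coefficient module is $M = A_\sigma$ for $\sigma$ the modular automorphism. Since $H^\bull(A,A)$ is already a Gerstenhaber algebra by Theorem~\ref{pitandbull}, all that must be produced is a square-zero operator of degree $-1$ that generates its bracket.

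First I would assemble two inputs. On the homological side, semisimplicity of $\sigma$ makes the para-cyclic module $C_\bull(U,A_\sigma)$ quasi-cyclic (Definition~\ref{quasitsyglic}), as explained in \S\ref{sting}; Theorems~\ref{gestrichen} and~\ref{pen1} then endow $(\hmu,\hhmu)$ with a differential calculus, with Connes' differential $\BB$ as Batalin-Vilkovisky operator on $\hhmu$, and one checks from the splitting of Definition~\ref{quasitsyglic} that $\hmu = H^\bull(A,A)$ carries its usual Gerstenhaber structure. On the other side, the twisted Calabi-Yau hypothesis together with Van den Bergh's duality and the identifications of \S\ref{sting} yields an isomorphism
$$
  D : H^p(A,A) \xrightarrow{\ \sim\ } (\hhmu)_{n-p}, \qquad \varphi \longmapsto \varphi \smallfrown \omega,
$$
of $H^\bull(A,A)$-modules, obtained by capping with a fundamental class $\omega \in (\hhmu)_n$, where $n = \dim A$.

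Next I would set $\Delta := D^{-1}\,\BB\,D$, an operator lowering cohomological degree by one with $\Delta^2 = D^{-1}\BB^2 D = 0$. To identify $\Delta$ as a generator of the bracket I would use that $\hhmu$ is a Batalin-Vilkovisky module (Theorem~\ref{pen1}), so that the homotopy formula of Definition~\ref{golfoaranci} reads $\lie_\varphi(x) = \BB(\varphi \smallfrown x) - (-1)^{|\varphi|}\varphi \smallfrown \BB(x)$. Because $\hhmu$ vanishes above its top degree $n$ one has $\BB\omega = 0$, whence $\lie_\varphi(\omega) = \BB(\varphi \smallfrown \omega) = D(\Delta\varphi)$. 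Substituting this into the mixed Leibniz rule of Definition~\ref{golfoaranci},
$$
  \psi \smallfrown \lie_\varphi(\omega) = \{\psi,\varphi\}\smallfrown \omega + (-1)^{pq}\,\lie_\varphi(\psi \smallfrown \omega),
$$
and simplifying both sides with the module identity $(\psi \smallsmile \gamma)\smallfrown \omega = \psi \smallfrown(\gamma \smallfrown \omega)$ and with $D\Delta = \BB D$, every term takes the form $D(-)$; applying the isomorphism $D^{-1}$ then expresses $\{\psi,\varphi\}$ as the deviation of $\Delta$ from being a graded derivation of $\smallsmile$, which is the defining identity of a Batalin-Vilkovisky algebra. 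Hence $(H^\bull(A,A),\smallsmile,\Delta)$ is a Batalin-Vilkovisky algebra.

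The formal transport in the last paragraph is routine; the real work lies in the two identifications underlying the duality: that $\hmu$ recovers all of $H^\bull(A,A)$, and that $D$ lands in the cyclic-quotient homology $\hhmu$ with the fundamental class $\omega$ surviving into degree $n$. Both rest on the semisimplicity of $\sigma$ through the decomposition of Definition~\ref{quasitsyglic}, which splits off the $\ttt$-invariant part that carries the cyclic structure and the operator $\BB$. Verifying that Van den Bergh's class lands in this invariant summand, and carefully tracking the signs in passing from the homotopy formula to the generating identity, is where I expect the main difficulty.
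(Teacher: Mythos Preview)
Your proposal is correct and follows the same overall architecture as the paper: semisimplicity of $\sigma$ makes $C_\bull(A,A_\sigma)$ quasi-cyclic (Lemma~\ref{endspurt}), so Theorem~\ref{pen1} produces a differential calculus $(\hmu,\hhmu)$ with $\hhmu \simeq H_\bull(A,A_\sigma)$, and then Van den Bergh duality transports $\BB$ to a BV generator on cohomology. The paper differs only in the final step, outsourcing your explicit computation with the fundamental class and the mixed Leibniz rule to \cite[Corollaire~1.6]{Lam:VDBDABVSOCYA}, which packages exactly the argument you sketch under the name ``differential calculus with duality''. Your direct verification is more self-contained; the citation is shorter.

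One point deserves comment. You flag correctly that the identification $\hmu \simeq H^\bull(A,A)$ is the substantive step, but the paper does not obtain it ``from the splitting of Definition~\ref{quasitsyglic}'' alone. Rather, it routes through the intermediate comp subalgebra $C^\bull(A,A)_1 \subseteq C^\bull_{A_\sigma}(\Ae)$ of $G$-degree~$1$ cochains (Lemma~\ref{schweinebucht}) and then argues that the inclusion $C^\bull(A,A)_1 \hookrightarrow C^\bull(A,A)$ is a quasi-isomorphism \emph{because} the Poincar\'e duality \rmref{puankare} is an $H^\bull(A,A)$-module isomorphism and the homology side is concentrated in $G$-degree~$1$ by Lemma~\ref{endspurt}. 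So the duality is used twice: once to pin down the cohomology algebra that acts, and once to transport $\BB$. Your closing paragraph anticipates this circularity-free bootstrap, but it is worth noting that the eigenspace decomposition of $C^\bull(A,A)$ rather than of $C_\bull$ is where the argument actually lives.
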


Besides this application, we also explain in the penultimate section of
the paper how one can use our formulae to obtain the classical operators in Cartan's {\em magic} formula in differential geometry, i.e., the
Lie derivative, the insertion operator, and the de Rham differential 
in the setting of
Lie-Rinehart algebras (or Lie algebroids, and in particular the tangent bundle of a smooth manifold) by taking for $U$ the 
\emph{jet space} $J\!L$, which is the dual of the universal enveloping
algebra $V\!L$ of a Lie-Rinehart algebra $(A,L)$.
\
\\
\
\\
\thanks{ {\bf Acknowledgements.}   \!
It is our pleasure to thank Ryszard Nest, Boris Shoikhet, and
Boris Tsygan for inspiring 
discussions and explaining to us some aspects of
their work. Furthermore, we thank the referee for their careful
reading and suggestions.
N.K.~acknowledges funding by the Excellence Network of the University of Granada (GENIL) and would like to thank the University of Glasgow for hospitality and support. 
U.K.~furthermore acknowledges 
funding by the
Polish Government Grant N201 1770 33,
the Marie Curie PIRSES-GA-2008-230836
network and the 
Royal Society/RFBR joint project 
JP101196/11-01-92612,
and thanks 
ITEP Moscow for hospitality.

\section{Preliminaries}\label{prelim}
In this section we recall preliminaries on bialgebroids, Hopf
algebroids, and cyclic homology, mainly from our two papers 
\cite{KowKra:DAPIACT,KowKra:CSIACT} as we use therein 
the same notation
and conventions as here.
For more detailed information on bialgebroids and 
Hopf algebroids and references to the original 
sources, we recommend B\"ohm's 
survey \cite{Boe:HA}.

\subsection{Bialgebroids}\label{durchgegangen}
Throughout this paper, $A$ and $U$ are 
(unital associative) $k$-algebras, and we assume that there is 
a fixed $k$-algebra map
\begin{equation*}
\label{eta}
        \eta : \Ae := A \otimes_k \Aop \rightarrow 
        U.
\end{equation*}
This induces forgetful functors 
$$
\umod \rightarrow \amoda,\quad
\modu \rightarrow \amoda
$$ 
that
turn left $U$-modules $N$ respectively right $U$-modules $M$ 
into $A$-bimodules with actions
$$
        a \lact n \ract b:=\eta(a \otimes_k b)n,\quad
        a \blact m \bract b:=m \eta(b \otimes _k a),\quad
        a,b \in A,n \in N,m \in M.
$$
In particular, left and right multiplication in $U$ 
defines $A$-bimodule structures of both these types on 
$U$ itself. Unless explicitly stated otherwise, we 
a priori consider $U$ as an $A$-bimodule using the actions   
$\lact,\ract$ arising from left multiplication in $U$. 
For example, in
(\ref{cerveza}) below the actions 
$\lact,\ract$ are used to define $U \otimes_\ahha U$, 
and later we will require $U$ to be right $A$-projective
meaning that $U_\ract \in \moda$ is projective.

Generalising the standard result for bialgebras (which 
is the case $A=k$), Schauenburg has proved 
\cite{Schau:DADOQGHA} that
the monoidal structures on $\umod$ for which the forgetful 
functor to $\amoda$ is strictly
monoidal (where $\amoda$ is monoidal via $\otimes_\ahha$) correspond to
what is known as \emph{(left) bialgebroid} (or $\times_\ahha$-bialgebra) structures
on $U$. We refer, e.g., to our earlier paper 
\cite{KowKra:DAPIACT} for a detailed definition 
(which is due to Takeuchi \cite{Tak:GOAOAA}). Let us only
recall that a bialgebroid has a coproduct and a counit 
\begin{equation}
\label{cerveza}
        \Delta : U \rightarrow U \otimes_\ahha U,\quad
        \varepsilon : U \rightarrow A,
\end{equation}
which turn $U$ into a coalgebra in $\amoda$.
One of the subtleties to keep in mind is 
that unlike for $A=k$ the counit $ \varepsilon $ is not necessarily a
ring homomorphism but only yields a left $U$-module structure on 
$A$ with action of $u \in U$ 
on $a \in A$ given by $u a := \varepsilon (u \bract a)$.
Furthermore, 
$\Delta$ is required to corestrict to a map from $U$ to the
Sweedler-Takeuchi product $U
\times_\ahha U$, which is the $\Ae$-submodule of  $U \otimes_\ahha U$
whose elements $\sum_i u_i \otimes_\ahha v_i$ fulfil 
\begin{equation}
\label{tellmemore}
 \textstyle
\sum_i a \blact u_i \otimes_\ahha v_i
		  = \sum_i u_i \otimes_\ahha v_i \bract a, \
		  \forall a \in A.
\end{equation}
In the sequel, we will freely use Sweedler's notation  
$ \Delta (u)=:u_{(1)} \otimes_\ahha u_{(2)}$.

\subsection{Hopf algebroids}
In the same paper \cite{Schau:DADOQGHA}, 
Schauenburg generalised
the notion of a Hopf algebra to the bialgebroid setting. What he called 
$\times_\ahha$-Hopf algebras will be called \emph{left Hopf algebroids} here. 
Again, we refer to \cite{KowKra:DAPIACT} for the definition, examples
and more background information, and only recall that  
the crucial piece of 
structure (in addition to a bialgebroid one) 
is the so-called \emph{translation map}
\begin{equation}
\label{pm}
		  U \rightarrow 
		  {}_\blact U \otimes_\Aopp U_\ract,
\end{equation}
for which we use the Sweedler-type notation
$$
u \mapsto u_+ \otimes_\Aopp u_-.
$$ 
\begin{example}
For a Hopf algebra 
over $A=k$, the translation map is given by 
$$u \mapsto u_{(1)} \otimes_k S(u_{(2)}),$$ where
$S$ is the antipode, and its relevance is
already discussed in great detail by Cartan and 
Eilenberg \cite{CarEil:HA}. 
\end{example}

We will make permanent use of the following identities that hold for the map \rmref{pm}, 
see \cite[Proposition~3.7]{Schau:DADOQGHA}: 
\begin{proposition}
Let $U$ be a left Hopf algebroid over $A$. For all $u, v \in U$, $a,b \in A$ one has 
\begin{eqnarray}
\label{Sch3}
u_+ \otimes_\Aopp  u_- & \in 
& U \times_\Aopp U, \\
\label{Sch1}
u_{+(1)} \otimes_\ahha u_{+(2)} u_- &=& u \otimes_\ahha 1 \in U_\ract \otimes_\ahha {}_\lact U, \\
\label{Sch2}
u_{(1)+} \otimes_\Aopp u_{(1)-} u_{(2)}  &=& u \otimes_\Aopp  1 \in  {}_\blact U
\otimes_\Aopp  U_\ract, \\ 
\label{Sch38}
u_{+(1)} \otimes_\ahha u_{+(2)} \otimes_\Aopp  u_{-} &=& u_{(1)} \otimes_\ahha
u_{(2)+} \otimes_\Aopp  u_{(2)-},\\
\label{Sch37}
u_+ \otimes_\Aopp  u_{-(1)} \otimes_\ahha u_{-(2)} &=& 
u_{++} \otimes_\Aopp
u_- \otimes_\ahha u_{+-}, \\
\label{Sch4}
(uv)_+ \otimes_\Aopp  (uv)_- &=& u_+v_+
\otimes_\Aopp  v_-u_-, 
\\ 
\label{Sch47}
u_+u_- &=& s (\varepsilon (u)), \\
\label{Sch48}
\varepsilon(u_-) \blact u_+  &=& u, \\
\label{Sch5}
(s (a) t (b))_+ \otimes_\Aopp  (s (a) t (b) )_- 
&=& s (a) \otimes_\Aopp  s (b), 
\end{eqnarray}
where in \rmref{Sch3} we mean the Sweedler-Takeuchi product
\begin{equation*}
\label{petrarca}
		  U \times_\Aopp  U:=
		  \left\{\textstyle\sum_i u_i \otimes_\Aopp  v_i \in 
		  {}_\blact U \otimes_\Aopp  U_\ract\mid
		  \sum_i u_i \ract a \otimes_\Aopp  v_i=
		  \sum_i u_i \otimes_\Aopp  a \blact
		  v_i
		  \right\},
\end{equation*}
which is an algebra by factorwise
multiplication, but with opposite 
multiplication on the second factor, and where in 
(\ref{Sch47}) and (\ref{Sch5}) we use the \emph{source} and \emph{target} maps 
\begin{equation}
\label{basmati}
        s,t : A \rightarrow U,\quad s(a):=\eta (a \otimes_k 1),\quad
        t(b):=\eta (1 \otimes_k b). 
\end{equation}
\end{proposition}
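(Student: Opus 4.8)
The plan is to treat all of \rmref{Sch3}--\rmref{Sch5} as consequences of a single structural fact: the defining property of a left Hopf algebroid is that the \emph{canonical} (Hopf--Galois) map
\[
\beta : {}_\blact U \otimes_\Aopp U_\ract \to U_\ract \otimes_\ahha {}_\lact U, \qquad u \otimes_\Aopp v \mapsto u_{(1)} \otimes_\ahha u_{(2)} v,
\]
is bijective, and that the translation map \rmref{pm} is, by definition, $u_+ \otimes_\Aopp u_- := \beta^{-1}(u \otimes_\ahha 1)$. Then \rmref{Sch1} is immediate, since it merely records $\beta(u_+ \otimes_\Aopp u_-) = u \otimes_\ahha 1$. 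From \rmref{Sch1} I would first extract the explicit form of the inverse on a general element, namely $\beta^{-1}(x \otimes_\ahha y) = x_+ \otimes_\Aopp x_- y$: applying $\beta$ to the right-hand side and invoking \rmref{Sch1} returns $x \otimes_\ahha y$, and injectivity of $\beta$ does the rest. This formula is the workhorse for everything that follows.

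With $\beta$ and $\beta^{-1}$ in hand, most identities reduce to the slogan ``apply $\beta$ to both sides and invoke injectivity''. For \rmref{Sch2} I would read $\beta^{-1}\beta = \mathrm{id}$ on $u \otimes_\Aopp 1$ through the inverse formula: since $\beta(u \otimes_\Aopp 1) = u_{(1)} \otimes_\ahha u_{(2)}$, the inverse formula gives $\beta^{-1}(u_{(1)} \otimes_\ahha u_{(2)}) = u_{(1)+} \otimes_\Aopp u_{(1)-} u_{(2)}$, which must therefore equal $u \otimes_\Aopp 1$. For \rmref{Sch4} I would apply $\beta$ to the proposed right-hand side $u_+ v_+ \otimes_\Aopp v_- u_-$, use that $\gD$ is an algebra map to split $(u_+ v_+)_{(1)} \otimes_\ahha (u_+ v_+)_{(2)}$ into $u_{+(1)} v_{+(1)} \otimes_\ahha u_{+(2)} v_{+(2)}$, and then collapse the inner blocks $v_{+(2)} v_-$ and $u_{+(2)} u_-$ by \rmref{Sch1} applied first to $v$ and then to $u$, landing on $uv \otimes_\ahha 1 = \beta\big((uv)_+ \otimes_\Aopp (uv)_-\big)$. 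The coassociativity relations \rmref{Sch38} and \rmref{Sch37} I would obtain by the same mechanism applied to the one-sided extensions $\beta \otimes \mathrm{id}$ and $\mathrm{id} \otimes \beta$, combining coassociativity of $\gD$ with \rmref{Sch1} and \rmref{Sch2}.

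The counit identities \rmref{Sch47} and \rmref{Sch48} I would derive by applying $\varepsilon$ to one leg of \rmref{Sch1} and using the bialgebroid counit axioms $\varepsilon(u_{(1)}) \lact u_{(2)} = u$ and its companion: for \rmref{Sch47} this collapses $u_+ u_-$ to $s(\varepsilon(u))$, and for \rmref{Sch48} it produces $\varepsilon(u_-) \blact u_+ = u$. Finally \rmref{Sch5} is a normalisation check: since $\gD(s(a)) = s(a) \otimes_\ahha 1$ and the images of $s$ and $t$ commute, one computes $\beta(s(a) \otimes_\Aopp s(b)) = s(a) \otimes_\ahha s(b) = s(a) t(b) \otimes_\ahha 1$, so injectivity of $\beta$ forces $(s(a) t(b))_+ \otimes_\Aopp (s(a) t(b))_- = s(a) \otimes_\Aopp s(b)$. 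The remaining membership claim \rmref{Sch3}, that $u_+ \otimes_\Aopp u_-$ lies in the Takeuchi product $U \times_\Aopp U$, is really the well-definedness of $\beta^{-1}$ on the balanced tensor product: I would check that $u \mapsto u_+ \ract a \otimes_\Aopp u_-$ and $u \mapsto u_+ \otimes_\Aopp a \blact u_-$ have the same image under $\beta$ and conclude again by injectivity.

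I expect the main obstacle to be organisational rather than conceptual: keeping rigorous track of which of the four actions $\lact, \ract, \blact, \bract$ balances each tensor leg, and over which of $A$ or $A^{\mathrm{op}}$ each tensor product is formed, so that every application of $\beta$, of its extensions $\beta \otimes \mathrm{id}$ and $\mathrm{id} \otimes \beta$, and every passage of a factor $s(a)$ or $t(b)$ across a tensor symbol is legitimately defined. In particular \rmref{Sch3} and the two coassociativity relations \rmref{Sch38}, \rmref{Sch37} are the delicate points precisely because they live in triple tensor products where the balancing bookkeeping is heaviest; the algebraic content, by contrast, is exhausted by \rmref{Sch1}, bijectivity of $\beta$, and the standard bialgebroid axioms for $\gD$, $\varepsilon$, $s$ and $t$.
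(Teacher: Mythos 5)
The paper never proves this proposition itself: it is imported verbatim from Schauenburg's work (the citation to [Sch, Proposition~3.7] right before the statement), so there is no internal proof to compare yours against. What you propose is, in substance, exactly the argument behind that citation, and it is sound: \rmref{Sch1} is just the definition of the translation map as $\beta^{-1}(\,\cdot \otimes_\ahha 1)$; the inverse formula $\beta^{-1}(x \otimes_\ahha y) = x_+ \otimes_\Aopp x_-y$ holds because right multiplication by $y$ in the second tensor factor is well defined on both ${}_\blact U \otimes_\Aopp U_\ract$ and $U_\ract \otimes_\ahha {}_\lact U$; and \rmref{Sch2}, \rmref{Sch3}, \rmref{Sch4}, \rmref{Sch38}, \rmref{Sch37}, \rmref{Sch47}, \rmref{Sch5} do follow by hitting both sides with $\beta$, $\beta \otimes \mathrm{id}$ or $\mathrm{id} \otimes \beta$ and invoking injectivity, coassociativity, and the counit axioms. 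For \rmref{Sch4} the ``collapse'' step is legitimate precisely because $\gD(u_+)$ lies in the Takeuchi product $U \times_\ahha U$, whose elements act well-definedly on $U_\ract \otimes_\ahha {}_\lact U$ by factorwise left multiplication --- this is the bookkeeping you correctly flagged as the real content.

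The one item where your one-line description is genuinely too optimistic is \rmref{Sch48}. The naive evaluation map $x \otimes_\ahha y \mapsto \varepsilon(y) \blact x = x\,t(\varepsilon(y))$ is \emph{not} well defined on the target $U_\ract \otimes_\ahha {}_\lact U$ of $\beta$: the balancing there is $t(a)x \otimes_\ahha y = x \otimes_\ahha s(a)y$, and $t(a)\,x\,t(\varepsilon(y))$ differs in general from $x\,t(\varepsilon(y))\,t(a)$. The variant that \emph{is} well defined is $x \otimes_\ahha y \mapsto t(\varepsilon(y))\,x$, and applying it to \rmref{Sch1} only yields $t\big(\varepsilon(u_{+(2)}u_-)\big)\,u_{+(1)} = u$. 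To convert this into \rmref{Sch48} you still need the identity $\varepsilon(vw) = \varepsilon\big(v\,s(\varepsilon(w))\big)$ (the statement that $\varepsilon$ defines a left $U$-action on $A$) together with the Takeuchi property \rmref{tellmemore} of $\gD(u_+)$, which lets you move $s(\varepsilon(u_-))$ out of the second leg and onto the first leg as $t(\varepsilon(u_-))$, after which the counit axiom finishes the computation. This is not a fatal gap --- \rmref{Sch48} does follow from \rmref{Sch1} and the bialgebroid axioms --- but it takes distinctly more than ``apply $\varepsilon$ to one leg'', and it is exactly the kind of place where the deferred action bookkeeping can silently invalidate a step if one picks the wrong evaluation map.
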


For us, the relevance of the 
translation map stems mostly from the fact that it turns the
category $\modu$ of right $U$-modules 
into a module category over the monoidal category $\umod$. 
Explicitly, the product of $N \in \umod$ with $M \in \modu$ is the
tensor product of the underlying $A$-bimodules with right action given
by
$$
        (n \otimes_\ahha m)u:=
        u_-n \otimes_\ahha mu_+,\quad
        u \in U,m \in M,n \in N.
$$ 

\subsection{Module-comodules and anti Yetter-Drinfel'd modules}
Throughout this paper, $M$ will denote a right $U$-module,  
and in fact one which is simultaneously a comodule:
\begin{definition}\label{modcomoddef}
By a \emph{module-comodule} 
(with compatible induced left $A$-action)
over a bialgebroid $U$ we shall mean 
a right $U$-module $M \in \modu$ 
for which the underlying left $A$-module ${}_\blact M$ 
is also equipped with a left $U$-coaction
$$
     \Delta_\emme:   M \rightarrow 
        U_\ract \otimes_\ahha {}_\blact  M, \quad
        m \mapsto m_{(-1)} \otimes_\ahha m_{(0)}.
$$  
\end{definition}

Recall, e.g.~from \cite{Boe:HA}, 
that $\Delta_\emme$ is then
an $\Ae$-module morphism 
$ 
M \rightarrow 
        U_\ract \times_\ahha {}_\blact  M,
$
where $  U_\ract \times_\ahha {}_\blact  M$ is the $\Ae$-submodule of
$U_\ract \otimes_\ahha {}_\blact  M$ whose elements $\sum_i u_i
\otimes_\ahha m_i$ fulfil 
\begin{equation}
\label{auchnochnicht}
\textstyle
\sum_i a \blact u_i \otimes_\ahha m_i
		  = \sum_i u_i \otimes_\ahha m_i \bract a, \
		  \forall a \in A.
\end{equation}

The following particular class of module-comodules 
was introduced in  
\cite{HajKhaRanSom:SAYDM} for Hopf algebras and 
in \cite{BoeSte:CCOBAAVC} for left Hopf algebroids:
\begin{definition}
A module-comodule over a left Hopf algebroid
is called an \emph{anti Yetter-Drinfel'd module (aYD)} 
if the full $\Ae$-module structure 
${}_\blact M_\bract$ of the module coincides with that underlying 
the comodule, and if one has
\begin{equation*}
\label{huhomezone}
		 (mu)_{(-1)} \otimes_\ahha (mu)_{(0)}= 
		  u_- m_{(-1)} u_{+(1)} \otimes_\ahha m_{(0)} u_{+(2)}
\end{equation*}
for all $m \in M,u \in U$. A module-comodule 
is called \emph{stable (SaYD)} if one has
$$
m_{(0)}m_{(-1)} = m.
$$
\end{definition}

\subsection{The (para-)cyclic $k$-modules 
$C_\bull(U,M)$ and $C^\cyc_\bull(U,M)$}
\label{sting}
The Batalin-Vilkovisky modules 
that we are going to study in this paper are 
obtained as the simplicial homology of 
para-cyclic $k$-modules of the following form  
\cite{KowKra:CSIACT}:

\begin{proposition}
\label{kamille}
For every right module $M$ over a bialgebroid $U$ 
there is a well-defined simplicial 
$k$-module structure on   
$$
        C_\bull(U,M) := 
        M \otimes_\Aopp  
        ({}_\blact U_ \ract)^{\otimes_\Aopp
          \bull}
$$
whose face and degeneracy maps are given by
\begin{equation}\label{landliebe}
\!\!\!
\begin{array}{rcll}
        \dd_i(m,x)  
&\!\!\!\!\! =& \!\!\!\!\!\left\{ \!\!\!
\begin{array}{l}
        (m,u^1, \ldots,\varepsilon(u^n) \blact u^{n-1}),
\\
(m,\ldots,u^{n-i} u^{n-i+1},
\ldots,u^n) 
\\
(mu^1,u^2,\ldots,u^n) 
\end{array}\right.  & \!\!\!\!\!\!\!\!\!\!\!\! \,  \begin{array}{l} \mbox{if} \ i \!=\! 0, \\ \mbox{if} \ 1
\!  \leq \! i \!\leq\! n-1, \\ \mbox{if} \ i \! = \! n, \end{array} \\
\
\\
\sss_j(m,x) &\!\!\!\!\! =&\!\!\!\!\!  \left\{ \!\!\!
\begin{array}{l} (m,u^1,  
\ldots,u^n,1)
\\
(m,\ldots,u^{n-j}, 1,  
u^{n-j+1},\ldots,u^n)  
\\
(m,1,u^1,\ldots,u^n) 
\end{array}\right.   & \!\!\!\!\!\!\!\!\!\!\!  \begin{array}{l} 
\mbox{if} \ j\!=\!0, \\ 
\mbox{if} \ 1 \!\leq\! j \!\leq\! n-1, \\  \mbox{if} \ j\! = \!n, \end{array} \\
\
\end{array}
\end{equation}
Here and in what follows, 
we denote the elementary tensors in 
$C_\bull(U,M)$ by
$$
        (m,x):=(m,u^1,\ldots,u^n),\quad
m \in M,u^1,\ldots,u^n \in U.
$$
For a module-comodule $M$ over 
a left Hopf algebroid $U$, the $k$-module $C_\bull(U,M)$  
becomes a para-cyclic $k$-module via   
\begin{equation} 
\ttt_n(m,x) = 
(m_{(0)} u^1_+,u^2_+,\ldots,u^n_+,
u^n_- \cdots u^1_- m_{(-1)}).
\end{equation}
This para-cyclic $k$-module 
is cyclic if $M$ is a stable anti Yetter-Drinfel'd module.
\end{proposition}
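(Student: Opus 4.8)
The para-cyclic identities having already been established for every module-comodule, the only thing left is to prove that the extra structure map $\ttt_n$ of \rmref{landliebe} has order $n+1$, i.e.\ that $\ttt_n^{n+1}=\id$ on $C_n(U,M)$ when $M$ is stable anti Yetter-Drinfel'd; this is precisely the condition that promotes the para-cyclic $k$-module to a cyclic one. The plan is to iterate $\ttt_n$ and to establish a closed formula for $\ttt_n^{j}(m,u^1,\ldots,u^n)$ by induction on $j$. A single application is a decorated cyclic rotation: the front factor $u^1$ is split by the translation map, its component $u^1_+$ being absorbed into the $M$-slot as $m_{(0)}u^1_+$ and its component $u^1_-$ transported to the rear, where it is multiplied onto the growing tail $u^n_-\cdots u^1_- m_{(-1)}$; simultaneously each remaining $u^{i}$ migrates one slot to the left and acquires a forward decoration $(\,\cdot\,)_+$. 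Over $j$ steps this produces nested translation maps on each leg together with an iterated coaction on the $M$-slot, and I would record the resulting pattern as the inductive hypothesis.

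The two defining properties of a stable anti Yetter-Drinfel'd module are exactly what make the iteration close up. Already after the first step the $M$-slot holds the product $m_{(0)}u^1_+$, so to apply $\ttt_n$ again I must compute its coaction; this is where the anti Yetter-Drinfel'd identity enters, rewriting $(m_{(0)}u^1_+)_{(-1)}\otimes_\ahha (m_{(0)}u^1_+)_{(0)}$ through the separate coaction of $M$ and the translation-map components of $u^1_+$, after which coassociativity of the coaction lets me keep the accumulated legs $m_{(-k)}\otimes_\ahha\cdots\otimes_\ahha m_{(-1)}\otimes_\ahha m_{(0)}$ in a coherent order. On the $U$-entries, the nested translation maps $(u_+)_\pm$ and $(u_-)_\pm$ thrown up by the iteration are disentangled by \rmref{Sch1}, \rmref{Sch38} and \rmref{Sch37}, while the multiplicativity \rmref{Sch4} controls how the tail product reorganises. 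The upshot I am aiming for is that, once a factor has completed a full circuit, its forward and backward decorations telescope against one another --- schematically through $u_{+(1)}\otimes_\ahha u_{+(2)}u_-=u\otimes_\ahha 1$ of \rmref{Sch1} --- and collapse, returning the factor undecorated to its original slot.

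After $n+1$ applications every $U$-leg has thus come back cleanly to its slot, and the sole residue sits in the $M$-slot, where the coaction legs have been wrapped once around and reassembled into a single expression of the form $m_{(0)}m_{(-1)}$. It is here, and only here, that I would invoke stability, $m_{(0)}m_{(-1)}=m$, to conclude $\ttt_n^{n+1}(m,u^1,\ldots,u^n)=(m,u^1,\ldots,u^n)$. I expect the main obstacle to be neither ingredient on its own but the simultaneous bookkeeping: pinning down the correct intermediate form of $\ttt_n^{j}$ with its precise nesting of translation maps and iterated coactions, checking at each rewrite that the expressions remain well defined over the relevant $\otimes_\Aopp$ and $\otimes_\ahha$ --- i.e.\ that the balancing conditions \rmref{tellmemore} and \rmref{auchnochnicht} are respected --- and ensuring that the anti Yetter-Drinfel'd substitutions interleave with the translation-map telescoping so that decorations genuinely cancel rather than merely permute. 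As a preliminary sanity check I would first treat $n=1$, where $\ttt_1^2=\id$ already displays all three mechanisms --- the anti Yetter-Drinfel'd identity, the translation identities, and stability --- in miniature.
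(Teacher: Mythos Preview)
Your approach is correct and matches what the paper does. The paper does not give a full proof of this proposition here --- it is recalled from the authors' earlier work \cite{KowKra:CSIACT} --- but immediately afterwards, in Lemma~\ref{kohinor}, it records precisely the closed formula for $\ttt^i$ that your induction is aiming at, and states that it is obtained ``directly with the help of \rmref{Sch1}, \rmref{Sch2}, \rmref{Sch38}, and \rmref{Sch37}'', which is exactly your plan. The use of the anti Yetter-Drinfel'd identity to unravel the coaction on $m_{(0)}u^1_+$ at each step, followed by stability to collapse $m_{(0)}m_{(-1)}$ at the end, is the standard and correct mechanism.

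One point worth sharpening: your description of the intermediate form speaks of ``nested translation maps $(u_+)_\pm$ and $(u_-)_\pm$'', but the clean inductive hypothesis is the one in Lemma~\ref{kohinor}, namely
\[
\ttt^i(m,x) = (m_{(0)} u^1_{+(2)} \cdots u^{i-1}_{+(2)} u^i_+,\, u^{i+1}_+, \ldots, u^n_+,\, u^n_- \cdots u^1_- m_{(-1)},\, u^1_{+(1)},\ldots, u^{i-1}_{+(1)}),
\]
in which each $u^j$ that has already been cycled carries a \emph{coproduct} of its $+$-part, $u^j_{+(1)} \otimes_\ahha u^j_{+(2)}$, rather than an iterated translation map. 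The passage between the two pictures is exactly \rmref{Sch38}, but writing the hypothesis in this form from the outset makes the induction and the final collapse via \rmref{Sch1}--\rmref{Sch2} far more transparent and avoids the bookkeeping obstacle you anticipate.
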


Recall that this means that the operators $(\dd_i,\sss_j,\ttt_n)$ satisfy all
the defining relations of a cyclic $k$-module in the sense of Connes 
(see e.g.~\cite{Con:NCDG} or 
\cite{Lod:CH} for the definition of a cyclic $k$-module), except for the one that requires that 
$$
        \TT_n := \ttt_n^{n+1}
$$ 
equals the identity (we do not
even require it to be an isomorphism) which, 
as mentioned in the proposition,
is only satisfied when $M$ is an SaYD module.

The relations between the operators $(\dd_i,\sss_j,\ttt_n)$ imply that 
$\TT_n$ commutes with all of them, 
so they descend to well-defined operators on 
the coinvariants 
$$
        C^\cyc_\bull(U,M) := C_\bull(U,M)/\mathrm{im}\,
        (\mathrm{id}-\TT_\bull),
$$
and hence 
this becomes a cyclic $k$-module.

In this paper, we will not study the cyclic homology of this
object, but rather the simplicial homology 
of both $C_\bull(U,M)$ and $C_\bull^\cyc(U,M)$: 

\begin{definition}
For any bialgebroid $U$ and any $M \in \modu$, 
we denote the simplicial 
homology of $C_\bull(U,M)$, that is, the  
homology with respect to the
boundary map 
\begin{equation}\label{scotstoun}
        \bb := \sum_{i=0}^n (-1)^i \dd_i,  
  \end{equation}
by $H_\bull(U,M)$ and
call it the 
\emph{homology of $U$ with
coefficients in $M$}.  
For a module-comodule over a 
left Hopf algebroid, we denote the 
simplicial homology of $C_\bull^\cyc(U,M)$ 
by ${\hhmu}$.  
\end{definition}

In general,
$H_\bull(U,M)$ differs from 
${\hhmu}$,
see~\cite{HadKra:BHOQG} for an
example. However, if $C_\bull(U,M)$ is 
quasi-cyclic in the sense of Definition~\ref{quasitsyglic}, we can
apply \cite[Proposition~2.1]{HadKra:THOQSLTPT}:

\begin{proposition}
If $C_\bull$ is a quasi-cyclic $k$-module, then 
the canonical quotient map 
$$
C_\bull \rightarrow 
C_\bull/\mathrm{im}( \mathrm{id}-\ttt_\bull^{\bull+1})
$$ 
is a quasi-isomorphism of the chain complexes  
that are defined by the
underlying simplicial $k$-module structures of 
$
C_\bull$ and
$C_\bull/\mathrm{im}( \mathrm{id}-\ttt_\bull^{\bull+1})$, respectively. 
\end{proposition}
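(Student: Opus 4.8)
The plan is to exploit that $\mathrm{id}-\TT_\bull$ is an endomorphism of the underlying simplicial $k$-module. Since, as recalled above, $\TT_\bull=\ttt_\bull^{\bull+1}$ commutes with every face and degeneracy operator, in particular $\mathrm{id}-\TT_\bull$ commutes with $\bb=\sum_i(-1)^i\dd_i$, so both $\mathrm{ker}\,(\mathrm{id}-\TT_\bull)$ and $W_\bull:=\mathrm{im}\,(\mathrm{id}-\TT_\bull)$ are subcomplexes of $(C_\bull,\bb)$. The quasi-cyclicity hypothesis expresses $C_\bull$ as the internal direct sum of these two subcomplexes; consequently $C_\bull/W_\bull\cong \mathrm{ker}\,(\mathrm{id}-\TT_\bull)$ as complexes, and under this identification the quotient map becomes the projection onto the first summand. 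Passing to homology gives $H_\bull(C_\bull)\cong H_\bull(\mathrm{ker}\,(\mathrm{id}-\TT_\bull))\oplus H_\bull(W_\bull)$, so the quotient map is a quasi-isomorphism if and only if $W_\bull$ is acyclic. First I would record that on $W_\bull$ the operator $\mathrm{id}-\TT_\bull$ is in fact invertible: it is injective there because $\mathrm{ker}\,(\mathrm{id}-\TT_\bull)\cap W_\bull=0$, and it maps $W_\bull$ onto itself because $\mathrm{id}-\TT_\bull$ annihilates the complementary summand, whence $(\mathrm{id}-\TT_\bull)(W_\bull)=(\mathrm{id}-\TT_\bull)(C_\bull)=W_\bull$. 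Its inverse $P$ on $W_\bull$ then commutes with $\bb$, as does $\mathrm{id}-\TT_\bull$ itself.

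The heart of the argument is to produce a contracting homotopy for $(W_\bull,\bb)$, and for this I would bring in the para-cyclic structure through two classical devices. Writing $\lambda_n:=(-1)^n\ttt_n$ for the sign-normalised cyclic operator (so that $\lambda^{\bull+1}=\TT_\bull$) and $N:=\sum_{k=0}^n\lambda^k$ in degree $n$ for the associated norm, the mixed para-cyclic relations — which do not invoke $\TT_\bull=\mathrm{id}$ and hence hold verbatim here — yield the two bicomplex identities
\begin{equation*}
\bb\,(\mathrm{id}-\lambda)=(\mathrm{id}-\lambda)\,\bb',\qquad \bb'\,N=N\,\bb,
\end{equation*}
where $\bb':=\sum_{i=0}^{n-1}(-1)^i\dd_i$. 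Secondly, the para-cyclic extra degeneracy $\sss_{-1}:=\ttt_{n+1}\sss_n$ provides a contraction $\bb'\sss_{-1}+\sss_{-1}\bb'=\mathrm{id}$ of the $\bb'$-complex. Because $\TT_\bull$ commutes with $\ttt$, with the $\sss_j$ and with the $\dd_i$, all of $\lambda$, $N$, $\sss_{-1}$, $\bb$ and $\bb'$ preserve the decomposition $C_\bull=\mathrm{ker}\,(\mathrm{id}-\TT_\bull)\oplus W_\bull$, so each of them restricts to $W_\bull$.

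I would then set, on $W_\bull$,
\begin{equation*}
\hh:=P\,(\mathrm{id}-\lambda)\,\sss_{-1}\,N,
\end{equation*}
and check directly that $\bb\hh+\hh\bb=\mathrm{id}_{W_\bull}$: using that $P$ commutes with $\bb$, the first identity above turns $\bb\,(\mathrm{id}-\lambda)$ into $(\mathrm{id}-\lambda)\,\bb'$, the second turns $N\bb$ into $\bb'N$, and the two emerging terms combine through the $\bb'$-contraction into $P\,(\mathrm{id}-\lambda)\,N=P\,(\mathrm{id}-\TT_\bull)=\mathrm{id}_{W_\bull}$. This exhibits $W_\bull$ as contractible, hence acyclic, which by the reduction above completes the proof.

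The step I expect to be the main obstacle is not this formal homotopy computation, which is short once the pieces are assembled, but rather the verification — in the present $\times_\ahha$-setting — that $\sss_{-1}$ really contracts the $\bb'$-complex and that the two bicomplex identities hold with consistent signs; this is where the delicate bookkeeping of the four base-algebra actions and of the translation-map identities \rmref{Sch1}--\rmref{Sch5} enters. Conceptually, the only place the hypothesis beyond mere para-cyclicity is used is the invertibility of $\mathrm{id}-\TT_\bull$ on $W_\bull$, and it is precisely this that the quasi-cyclic condition of Definition~\ref{quasitsyglic} is designed to guarantee.
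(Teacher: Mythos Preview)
Your argument is correct. The paper does not actually prove this proposition; it simply quotes \cite[Proposition~2.1]{HadKra:THOQSLTPT}. Your proof is therefore a genuine contribution: the reduction to acyclicity of $W_\bull=\mathrm{im}(\mathrm{id}-\TT_\bull)$ via the direct sum decomposition is clean, and the contracting homotopy $\hh=P(\mathrm{id}-\lambda)\sss_{-1}\NN$ works exactly as you say, since $(\mathrm{id}-\lambda)\NN=\mathrm{id}-\TT_\bull$ and $P$ inverts this on $W_\bull$.

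One correction to your closing paragraph: your worry about the ``present $\times_\ahha$-setting'' and the identities \rmref{Sch1}--\rmref{Sch5} is misplaced. This proposition is a statement about abstract para-cyclic $k$-modules, not about the specific chain complex $C_\bull(U,M)$. The three ingredients you use --- the relations $\bb(\mathrm{id}-\lambda)=(\mathrm{id}-\lambda)\bb'$, $\bb'\NN=\NN\bb$, and $\bb'\sss_{-1}+\sss_{-1}\bb'=\mathrm{id}$ --- are formal consequences of the para-cyclic axioms (the compatibilities between $\ttt$, $\dd_i$, $\sss_j$) alone, and none of them requires $\TT=\mathrm{id}$. No Hopf-algebroid structure, no base-algebra actions, and no translation map enter. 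So there is no obstacle here at all; the verification is the standard one found e.g.\ in \cite{Lod:CH}, and your sketch already contains everything needed.
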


This means that if $C_\bull(U,M)$ happens to be quasi-cyclic, then  
classes in ${\hhmu}$ 
can be represented by cycles in 
$C_\bull(U,M)$
that are invariant under   
$\TT_\bull$.

Mostly, we will now work on the reduced (normalised) complexes
of $C_\bull(U,M)$ resp.~of
$C_\bull^\cyc(U,M)$ 
by the subcomplex spanned by the
images of the degeneracy maps of these simplicial 
$k$-modules.
Being slightly sloppy, we will denote operators that descend from the
original complexes to these quotients by the same symbols if no
confusion can arise. 
Furthermore, we shall drop in all what follows 
the subscript on $\ttt$ and $\TT$ 
indicating the degree of the element on which it acts. 

\subsection{The operators $\NN,\sss_{-1}$ and $\BB$}
On every para-cyclic $k$-module, 
one defines the {\em norm operator}, 
the {\em extra  degeneracy},
and the \emph{cyclic differential} 
\begin{equation}
\label{extra}
        \NN := \sum_{i=0}^n (-1)^{in} \ttt^i, \qquad         
        \sss_{-1} := \ttt \, \sss_n,\qquad
        \BB=(\mathrm{id}-\ttt) \, \sss_{-1} \, \NN.
\end{equation}
Recall that $\BB$ coincides on the 
reduced complex 
$\bar C_\bull(U,M)$
with the map (induced by) $\sss_{-1} \, \NN$, so we 
are also slightly sloppy about this and denote the latter by 
$\BB$ as well, as we, in fact, will only consider the induced map on the
reduced complex.

It follows from the para-cyclic relations 
that one has 
\begin{equation}\label{mixedcomp}
        \BB^2 =(\mathrm{id} -\TT )
        (\mathrm{id} - \ttt)\sss_{-1} \sss_{-1} \NN,\quad
        \bb \BB + \BB \bb=\mathrm{id}-\TT,
\end{equation} 
so in general
$\BB$ does not turn 
$H_\bull(U,M)$, but only
${\hhmu}$,
into a cochain complex.

In the case of an SaYD module $M$ one can 
give a compact expression for $\BB$: 
one first computes directly
with the help of \rmref{Sch1}, \rmref{Sch2}, \rmref{Sch38}, and \rmref{Sch37} 
the powers of $\ttt$:

\begin{lemma}
\label{kohinor}
If $M$ is an SaYD module, the $i^{\scriptscriptstyle{\rm th}}$ power 
for $1 \le i \le n$ of the cyclic operator $\ttt$ can be expressed as
\begin{eqnarray*}
\ttt^i(m,x) &\!\!\!\!\!=&\!\!\!\!\! (m_{(0)} u^1_{+(2)} \cdots u^{i-1}_{+(2)} u^i_+, u^{i+1}_+, \ldots, u^n_+, u^n_- \cdots u^1_- 
m_{(-1)}, u^1_{+(1)},\ldots, u^{i-1}_{+(1)}),
\end{eqnarray*}
where we abbreviated here, as elsewhere, 
$(m,x) = (m,u^1, \ldots, u^n)$.
\end{lemma}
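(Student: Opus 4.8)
The plan is to induct on $i$, the base case $i=1$ being exactly the definition of the cyclic operator $\ttt$ given in Proposition~\ref{kamille}. For the inductive step I would take the asserted expression for $\ttt^i(m,x)$, abbreviate it as $(m',w^1,\ldots,w^n)$ with $m'=m_{(0)}u^1_{+(2)}\cdots u^{i-1}_{+(2)}u^i_+$ and $w^1=u^{i+1}_+$, and apply $\ttt$ once more. By the definition of $\ttt$ this produces a new zeroth slot $m'_{(0)}w^1_+$, the shifted slots $w^2_+,\ldots,w^n_+$, and a new last slot $w^n_-\cdots w^1_- m'_{(-1)}$; I would then match these against the claimed formula for $\ttt^{i+1}(m,x)$ slot by slot.

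Three verifications are needed. For the zeroth slot I would compute the coaction $m'_{(-1)}\otimes_\ahha m'_{(0)}$ of the composite element $m'$ by iterating the anti Yetter-Drinfel'd compatibility, which rewrites the coaction on a product through the coaction on $M$ and the translation map, together with coassociativity of the coproduct and of the coaction; the identities \rmref{Sch38} and \rmref{Sch1} are then what convert the resulting nested coproduct-of-$+$ terms into the telescoping factor $u^1_{+(2)}\cdots u^i_{+(2)}u^{i+1}_+$ required in degree $i+1$. For the middle slots I would reduce the doubly applied translation maps $w^k_+=u^{k+i}_{++}$ back to single translations using \rmref{Sch37}, which trades the triple $(u_{++},u_-,u_{+-})$ for $(u_+,u_{-(1)},u_{-(2)})$, once the partner factors $u^j_{+-}$ are tracked into the last slot. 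Finally, the last slot must be shown to collapse onto the single additional component $u^i_{+(1)}$ that appears at the end in degree $i+1$.

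The step I expect to be the main obstacle is precisely this last collapse, since it is where all the bookkeeping accumulates. The final slot of $\ttt\big(\ttt^i(m,x)\big)$ is a long product of the factors $w^k_-$ with $m'_{(-1)}$, into which the translations $u^{i+1}_{+-},\ldots,u^n_{+-}$, the previously wrapped factor $u^n_-\cdots u^1_- m_{(-1)}$, and the coaction of $m'$ all feed; expanding the translations of these products requires the multiplicativity of the translation map \rmref{Sch4}, and the pairing that makes everything cancel is again governed by \rmref{Sch37} and \rmref{Sch2}, with the stability relation $m_{(0)}m_{(-1)}=m$ available as the hypothesis that places us in the genuinely cyclic regime where $\ttt^{n+1}=\mathrm{id}$. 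The genuine difficulty is keeping the two compatible Sweedler index systems, the coproduct indices $(1),(2)$ and the translation indices $+,-$, consistent as these cancellations are performed. I would structure the induction so that each extra application of $\ttt$ contributes exactly one new trailing factor $u^i_{+(1)}$ and promotes $u^i_+$ to $u^i_{+(2)}$ in the zeroth slot, thereby confining the work to one controlled cancellation at each stage and making the telescoping pattern of the general formula transparent.
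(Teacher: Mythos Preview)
Your proposal is correct and is essentially what the paper does: the paper offers no detailed proof at all, only the remark that one ``computes directly with the help of \rmref{Sch1}, \rmref{Sch2}, \rmref{Sch38}, and \rmref{Sch37}'', which is precisely the induction on $i$ you outline, using the anti Yetter-Drinfel'd compatibility to unfold the coaction on the composite $m'$. One small over-complication: stability $m_{(0)}m_{(-1)}=m$ is not actually needed for the formula in the range $1\le i\le n$ (it only enters when you push to $\ttt^{n+1}=\mathrm{id}$), so the last-slot collapse you worry about is governed purely by the aYD identity together with \rmref{Sch37}, \rmref{Sch38} and coassociativity, and no appeal to the stable hypothesis is required there.
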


Then a further direct computation gives:

\begin{lemma}
If $M$ is an SaYD module, the action of $\BB=\sss_{-1}\NN$ on 
$\bar C_\bull(U,M)$ can be expressed as   
\begin{equation}
\label{alhambra}
\begin{split}
 \sss_{-1}\NN(m,x) = 
\sum_{i=0}^{n} (-1)^{in} 
(m_{(0)} &u^1_{+(2)} \cdots u^{i}_{+(2)}, u^{i+1}_+, \\
&\ldots, u^{n}_+, u^n_- \cdots u^1_- m_{(-1)}, u^1_{+(1)}, \ldots,  u^{i}_{+(1)}).
\end{split}
\end{equation}
\end{lemma}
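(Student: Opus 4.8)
The plan is to avoid computing the coaction on composite elements altogether, and instead reduce the whole statement to a single application of the power formula of Lemma~\ref{kohinor} by exploiting the para-cyclic relations between the degeneracies and $\ttt$. The one relation that does the work is $\sss_j\ttt=\ttt\sss_{j-1}$ for $1\le j\le n$; in the conventions of Proposition~\ref{kamille} this is confirmed by the short direct check
$$
\sss_n\ttt(m,x)=(m_{(0)}u^1_+,1,u^2_+,\ldots,u^n_+,u^n_-\cdots u^1_-m_{(-1)})=\ttt\sss_{n-1}(m,x),
$$
and analogously for the other indices; it holds because $C_\bull(U,M)$ is para-cyclic and this identity does not involve $\TT$. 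Since $\sss_{-1}=\ttt\sss_n$ by \rmref{extra}, moving $\sss_n$ to the right past $\ttt^i$ and decrementing the degeneracy index at each step gives
$$
\sss_{-1}\ttt^i=\ttt\,\sss_n\,\ttt^i=\ttt^{i+1}\sss_{n-i},\qquad 0\le i\le n,
$$
where $\sss_{n-i}\colon C_n(U,M)\to C_{n+1}(U,M)$ is the degeneracy inserting a $1$ after the $i$-th of the $u$'s.

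The key step is then to feed the degenerate element $\sss_{n-i}(m,x)=(m,u^1,\ldots,u^i,1,u^{i+1},\ldots,u^n)$ into the formula of Lemma~\ref{kohinor} in degree $n+1$, applied to the power $\ttt^{i+1}$. Relabelling the $n+1$ middle entries as $v^1,\ldots,v^{n+1}$ with $v^{i+1}=1$, the only thing to observe is that $1_+=1_-=1$ (the case $a=b=1$ of \rmref{Sch5}, using $s(1)=t(1)=1$), so the inserted unit simply disappears: it contributes trivially to the leading component, $m_{(0)}v^1_{+(2)}\cdots v^i_{+(2)}v^{i+1}_+=m_{(0)}u^1_{+(2)}\cdots u^i_{+(2)}$, and drops out of the long product, $v^{n+1}_-\cdots v^1_-m_{(-1)}=u^n_-\cdots u^1_-m_{(-1)}$, while the remaining entries $v^{i+2}_+,\ldots,v^{n+1}_+$ and $v^1_{+(1)},\ldots,v^i_{+(1)}$ are exactly $u^{i+1}_+,\ldots,u^n_+$ and $u^1_{+(1)},\ldots,u^i_{+(1)}$. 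This is precisely the $i$-th summand of \rmref{alhambra}. Summing over $i$ against the signs $(-1)^{in}$ supplied by $\NN=\sum_i(-1)^{in}\ttt^i$ yields the claimed formula for $\sss_{-1}\NN$; the boundary cases $i=0$ (where $\ttt^{i+1}=\ttt$ and $\sss_{n-i}=\sss_n$) and $i=n$ (where the block $v^{i+2}_+,\ldots,v^{n+1}_+$ is empty) are checked in the same way. Note that the resulting expression contains no degenerate entry, so the equality in fact already holds in $C_{n+1}(U,M)$ and descends to $\bar C_\bull(U,M)$.

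The obstacle I anticipate is organisational rather than conceptual: one must pin down the (reversed) indexing of the degeneracies in Proposition~\ref{kamille}, confirm that the commutation relation holds in exactly the form $\sss_j\ttt=\ttt\sss_{j-1}$, and then track the inserted $1$ through the power formula carefully enough to be certain it lands only in the $M$-slot and in the minus-product, and nowhere that would survive. The genuinely messy route is the pedestrian one, substituting Lemma~\ref{kohinor} directly into the explicit extra degeneracy $\sss_{-1}(w,y^1,\ldots,y^n)=(w_{(0)},y^1_+,\ldots,y^n_+,y^n_-\cdots y^1_-w_{(-1)})$: this forces one to disentangle iterated translation-map and coproduct indices spread across different tensor factors using \rmref{Sch4}, \rmref{Sch37}, \rmref{Sch38}, together with coassociativity of the coaction and stability. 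The reduction above is designed precisely to sidestep that bookkeeping.
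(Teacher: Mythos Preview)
Your proof is correct. The paper itself gives no details beyond ``a further direct computation gives'', presumably meaning the pedestrian route you sketch at the end: apply $\sss_{-1}$ directly to each $\ttt^i(m,x)$ as computed in Lemma~\ref{kohinor}, and then simplify using the aYD condition together with \rmref{Sch4}, \rmref{Sch37}, \rmref{Sch38}. Your reorganisation---commuting the extra degeneracy inward via $\sss_j\ttt=\ttt\sss_{j-1}$ to obtain $\sss_{-1}\ttt^i=\ttt^{i+1}\sss_{n-i}$, and then invoking Lemma~\ref{kohinor} once in degree $n+1$ on an element with a harmless unit inserted---is a genuine simplification: it avoids ever computing the coaction on the composite $M$-entry $m_{(0)}u^1_{+(2)}\cdots u^{i-1}_{+(2)}u^i_+$, and so sidesteps all the iterated Sweedler bookkeeping. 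The price is that one must verify the degeneracy--cyclic commutation relation in the specific indexing of Proposition~\ref{kamille}, which you do; once that is in hand, the inserted $1$ (with $1_+\otimes_\Aopp 1_-=1\otimes_\Aopp 1$ from \rmref{Sch5}) disappears exactly as you describe, and the result drops out. Your remark that the identity already holds in $C_{n+1}(U,M)$ before passing to the reduced complex is also correct and worth keeping.
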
 

\begin{example}
For $n=1$, the above expression reduces to 
\begin{equation*}
\sss_{-1}\NN(m,u) = (m_{(0)},u_+,u_- m_{(-1)}) - (m_{(0)} u_{+(2)},u_- m_{(-1)},u_{+(1)}). 
\end{equation*}
In particular, for a Hopf algebra over $A=k$ this reads
\begin{equation*}
\sss_{-1}\NN(m,u) = (m_{(0)},u_{(1)},S(u_{(2)}) 
m_{(-1)}) - (m_{(0)} u_{(2)},S(u_{(3)}) m_{(-1)},u_{(1)}). 
\end{equation*}
\end{example}

\section{The Gerstenhaber algebra}
\label{ud}

Unless stated explicitly otherwise, 
$U$ is throughout this section 
an arbitrary left $A$-bialgebroid.
We will first give explicit formulae for a canonical 
DG coalgebra structure $\Delta^{\pehhe}$ 
on the chain complex $(P,\bb')$ that is obtained when applying
the bar construction 
for the comonad $U \otimes_\Aopp \cdot$
to the unit object $A \in \umod$. 
Applying 
$\mathrm{Hom}_U(\cdot,A)$ to $P$ yields a cochain complex 
$(C^\bull(U,A),\delta)$. On the underlying graded vector space   
we define the structure of a (nonsymmetric) operad with 
multiplication. This, in particular, 
defines a DG algebra structure $(C^\bull(U,A),\smallsmile,\delta)$
and a
Gerstenhaber algebra structure on its cohomology 
$H^\bull(U,A)$.  
The fact that this DG algebra  
coincides with the one obtained by dualising the DG coalgebra 
structure on $P$ proves that
as long as $U$ is a right $A$-projective left Hopf algebroid,
$H^\bull(U,A)$ 
is the cohomology ring $ \mathrm{Ext}_U(A,A)$ 
that we studied  
in \cite{KowKra:DAPIACT}.

We will throughout use the convention in which DG
algebras are cochain complexes while DG coalgebras and DG
modules over DG algebras are chain complexes.

\subsection{The bar resolution $P$}\label{baraufloesung}
The bar construction 
for $U \otimes_\Aopp \cdot$ applied
to $A \in \umod$ yields the chain complex 
$(P_\bull,\bb')$ of left $U$-modules, where 
$$
        P_n:=({}_\blact U_\ract)^{\otimes_{\Aopp} n+1}
$$ 
is a $U$-module via left multiplication in the first tensor
component,  and $\bb'$ is given by
\begin{align*}
        \bb' (u^0,\ldots,u^n) 
:=& \ \sum_{i=0}^{n-1} (-1)^i
        (u^0, \ldots,u^iu^{i+1},\ldots,u^n)\\ 
 &\ + (-1)^n (u^0,\ldots,u^{n-2},
        \varepsilon(u^n) \blact u^{n-1}).
\end{align*}
Note that the tensor product over $\Aop$ is chosen in such a way 
that 
$$
        (u^0,\ldots,a \blact u^i,u^{i+1},\ldots,u^n)=
        (u^0,\ldots,u^i,u^{i+1} \ract a,\ldots,u^n)
$$
holds, which is necessary for $\bb'$ to be well-defined. 
We recall \cite[Lemma~2]{KowKra:DAPIACT}:

\begin{lemma}\label{alterhit}
If $U$ is a left Hopf algebroid and 
$U_\ract \in \moda$ is projective, then $(P_\bull,\bb')$ is a projective resolution of 
$A \in \umod$. 
\end{lemma}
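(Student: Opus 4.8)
The plan is to prove the two defining properties of a projective resolution separately: exactness of the augmented complex
$$
\cdots \to P_1 \xrightarrow{\bb'} P_0 \xrightarrow{\varepsilon} A \to 0,
$$
and projectivity of each $P_n$ in $\umod$. The first part will use only the bialgebroid structure, and the left Hopf structure will enter precisely in the second.

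For exactness I would first forget the $U$-action: a sequence of left $U$-modules with $U$-linear differentials is exact if and only if it is exact as a sequence of $k$-modules, since kernels and images of $U$-linear maps are automatically $U$-submodules. It therefore suffices to exhibit a $k$-linear contracting homotopy, and the natural candidate is $h_n \colon P_n \to P_{n+1}$, $(u^0, \ldots, u^n) \mapsto (1, u^0, \ldots, u^n)$, together with a $k$-linear splitting $h_{-1} \colon A \to P_0 = U$ of the augmentation $\varepsilon$. Inserting the fixed unit is well defined on the balanced tensor products (it only imposes the extra relation at the new junction), and a direct reindexing of the terms of $\bb'$ gives $\bb' h + h \bb' = \id$; this is the standard bar computation and invokes nothing beyond the unit, the multiplication, and the counit $\varepsilon$. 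Hence the augmented complex is acyclic as a complex of $k$-modules, and therefore acyclic in $\umod$. Equivalently, $(P_\bull,\bb')$ is the bar construction of the comonad $U \otimes_\Aopp \cdot$ applied to $A$, and the same $h$ realises the comonadic contracting homotopy.

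For projectivity I observe that the left $U$-action sits on the first tensor leg by left multiplication, and that this leg is glued to its neighbour through the action $\blact$, which commutes with left multiplication; thus $P_n$ is already of induced shape $U \otimes_\Aopp Q_n$ with $Q_n := ({}_\blact U_\ract)^{\otimes_\Aopp n}$ regarded as a base module. Since induction $U \otimes_\Aopp - \colon \moda \to \umod$ is left adjoint to the exact and faithful restriction functor, it preserves projectivity, so the whole question reduces to showing that $Q_n$ is projective in $\moda$. For $n \le 1$ this is immediate from the hypothesis that $U_\ract \in \moda$ is projective, but for $n \ge 2$ the successive tensor legs remain entangled, and this is where I expect the real work — and the left Hopf structure — to be needed. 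The plan is to use the translation map $u \mapsto u_+ \otimes_\Aopp u_-$ to produce a base-linear isomorphism decoupling the legs of $Q_n$, replacing the $U$-bimodule gluings by plain balanced tensor products, so that the projectivity of $U_\ract$ in each slot assembles into projectivity of $Q_n$; the identities \rmref{Sch1}, \rmref{Sch2}, \rmref{Sch38}, and \rmref{Sch37} are exactly what make this untwisting map and its inverse mutually inverse and compatible with the $\otimes_\Aopp$-relations. For a general bialgebroid no such map exists and $Q_n$ may fail to be base-projective, so the invertibility of the Galois/translation map, i.e.\ the left Hopf property, is indispensable here. This identifies each $P_n$ with $U \otimes_\Aopp(\text{a projective base module})$, hence projective in $\umod$, and together with exactness completes the proof. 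The main obstacle is precisely the construction and verification of this decoupling isomorphism.
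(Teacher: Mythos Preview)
Your outline is correct, and since the paper does not prove the lemma here but only cites \cite[Lemma~2]{KowKra:DAPIACT}, there is no internal argument to compare against directly. The contracting homotopy you describe works (take $h_{-1}=t$, so that $\bb' h_0(u)+h_{-1}\varepsilon(u)=u-t(\varepsilon(u))+t(\varepsilon(u))=u$), and the reduction of projectivity of $P_n$ to that of $Q_n$ via the induction--restriction adjunction is the right move.

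Where you go astray is in claiming that the left Hopf structure is ``indispensable'' for the projectivity of $Q_n$ when $n\ge 2$. It is not. Unwinding the definitions, the balancing relation $a \blact u \otimes v = u \otimes v \ract a$ reads $u\,t(a)\otimes v = u\otimes t(a)\,v$, so the tensor product $\otimes_\Aopp$ appearing here is nothing but the tensor product over the subring $t(A)\subseteq U$, and $Q_n = U^{\otimes_{t(A)} n}$ with left $t(A)$-action by multiplication on the first factor. The hypothesis that $U_\ract$ is $A$-projective says precisely that $U$ is projective as a left $t(A)$-module. Projectivity of $Q_n$ then follows by a one-line induction using only this: if $Q_{n-1}$ is a summand of a free module $t(A)^{(I)}$, then $Q_n = U\otimes_{t(A)} Q_{n-1}$ is a summand of $U^{(I)}$, which is again left $t(A)$-projective. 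No translation map, no untwisting, no Hopf structure is needed. Your proposed decoupling via the Galois map would presumably also succeed, but it is a detour; the left Hopf hypothesis is in the lemma's statement only because it is the standing assumption of the paper, and it becomes genuinely essential only later, for instance in Proposition~\ref{eisenhuettenstadt} and in the fact that tensor products of projectives in $\umod$ remain projective \cite[Theorem~5]{KowKra:DAPIACT}.
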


\subsection{The DG coalgebra structure on $P$}   
As $\umod$ is monoidal, so is the category of chain 
complexes of $U$-modules and our 
aim is to turn $P$ into a coalgebra in this category.
 
\begin{definition}
We define
$$
        \Delta^{\pehhe} : P \rightarrow 
       P \otimes_\ahha P,\quad
        \Delta^\pehhe (u^0,\ldots,u^n):=  
        \sum_{i=0}^n \Delta^\pehhe_{ni}(u^0,\ldots,u^n),
$$
where for $i=0,\ldots,n$ the maps
$
        \Delta^\pehhe_{ni} : P_n \rightarrow 
        P_i \otimes_\ahha P_{n-i}
$
are given by
$$
        (u^0,\ldots,u^n) \mapsto 
        (u^0_{(1)},\ldots,u^i_{(1)}) \otimes_\ahha 
          (u^0_{(2)}\cdots u^i_{(2)},u^{i+1},\ldots,
          u^n).
$$
\end{definition}

We  verify by direct computation:
\begin{lemma}
$\Delta^\pehhe$ is coassociative. 
\end{lemma}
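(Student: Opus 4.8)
The plan is to verify coassociativity by a direct computation, comparing the two iterated comultiplications $(\Delta^\pehhe \otimes_\ahha \id) \circ \Delta^\pehhe$ and $(\id \otimes_\ahha \Delta^\pehhe) \circ \Delta^\pehhe$ applied to a generic element $(u^0,\ldots,u^n)$, and showing they agree term by term as elements of $P \otimes_\ahha P \otimes_\ahha P$. Both sides should decompose as a double sum indexed by a pair of cut points $0 \le i \le j \le n$, so the first step is to write out each composite and identify this common indexing.

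First I would compute $(\Delta^\pehhe \otimes_\ahha \id)\circ\Delta^\pehhe(u^0,\ldots,u^n)$. Applying $\Delta^\pehhe$ first gives, for each $i$, the term $(u^0_{(1)},\ldots,u^i_{(1)}) \otimes_\ahha (u^0_{(2)}\cdots u^i_{(2)}, u^{i+1},\ldots,u^n)$; then applying $\Delta^\pehhe$ to the \emph{first} tensor factor splits it at each $j$ with $0 \le j \le i$, producing a triple tensor indexed by $j \le i$. Symmetrically, $(\id \otimes_\ahha \Delta^\pehhe)\circ\Delta^\pehhe$ applies $\Delta^\pehhe$ to the \emph{second} factor, splitting it at some point, and reindexing gives a triple tensor indexed by a pair of cuts. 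The content of the proof is then to match these two expressions after relabelling the summation indices.

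The essential mechanism making the two sides coincide is coassociativity of the underlying bialgebroid coproduct $\Delta$ on $U$, applied in Sweedler notation to each $u^\ell$ via $u_{(1)} \otimes_\ahha u_{(2)(1)} \otimes_\ahha u_{(2)(2)} = u_{(1)(1)} \otimes_\ahha u_{(1)(2)} \otimes_\ahha u_{(2)}$, together with the multiplicativity of $\Delta$ which lets one rewrite products such as $(u^0_{(2)}\cdots u^i_{(2)})_{(1)} \otimes_\ahha (u^0_{(2)}\cdots u^i_{(2)})_{(2)}$ as $u^0_{(2)(1)}\cdots u^i_{(2)(1)} \otimes_\ahha u^0_{(2)(2)}\cdots u^i_{(2)(2)}$. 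Here I would be careful that all tensor products are balanced over $\Aop$ using the actions $\blact,\ract$, and that the Sweedler-Takeuchi condition \rmref{tellmemore} guarantees the relevant elements land in $U \times_\ahha U$ so that each expression is well defined; this bookkeeping of the $A$-bimodule balancing is the main place where errors could creep in, and is the step I expect to require the most care, though it is routine rather than conceptually hard.

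Carrying out the reindexing, the first composite yields terms $(u^0_{(1)},\ldots,u^j_{(1)}) \otimes_\ahha (u^0_{(2)}\cdots u^j_{(2)}, u^{j+1}_{(1)},\ldots,u^i_{(1)}) \otimes_\ahha (\cdots)$ summed over $0 \le j \le i \le n$, and the second composite yields the matching terms after applying coassociativity of $\Delta$ on each factor to align the two-fold splittings. Once both sides are brought to the same canonical form indexed by $(i,j)$, the equality is immediate. I would therefore organise the write-up as: (1) expand both composites; (2) apply coassociativity and multiplicativity of $\Delta$ to each tensor slot; (3) reindex and observe the resulting expressions are identical, which establishes the claim.
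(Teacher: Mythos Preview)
Your proposal is correct and follows essentially the same route as the paper: both expand the two iterated coproducts as double sums indexed by a pair of cut points, invoke coassociativity and multiplicativity of the bialgebroid coproduct on $U$ to write the summands in a common form, and then match the two sides by a simple reindexing of the summation variables. The paper is slightly more terse (it writes out the two generic summands and then just observes that $\sum_{i=0}^n\sum_{j=0}^i = \sum_{j=0}^n\sum_{i=j}^n$ followed by the substitution $r=j$, $s=i-j$), but the content is the same.
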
 
\begin{proof}
For $j=0,\ldots,i$, we have 
\begin{align*}
  & \       ((\Delta^\pehhe_{i j} \otimes_\ahha 
        \mathrm{id}_{P_{n-i}})  \, \Delta^\pehhe_{ni})
        (u^0,\ldots,u^n)\\
 = & \ (u^0_{(1)},\ldots,u^j_{(1)})
 \otimes_\ahha   
        (u^0_{(2)} \cdots u^j_{(2)},
        u^{j+1}_{(1)},\ldots,
        u^i_{(1)})
 \\ & \ 
\otimes_\ahha   
          (u^0_{(3)}\cdots u^j_{(3)}
          u^{j+1}_{(2)} \cdots u^i_{(2)},u^{i+1},\ldots,
          u^n),
\end{align*}
and for $j=0,\ldots,n-i$, we have 
\begin{align*}
& \ ((\mathrm{id}_{P_{i}} \otimes_\ahha 
        \Delta^\pehhe_{n-i j})  \, \Delta^\pehhe_{ni})
       (u^0,\ldots,u^n)\\
 = & \   (u^0_{(1)},\ldots,u^i_{(1)})
 \otimes_\ahha 
  (u^0_{(2)} \cdots u^i_{(2)}, u^{i+1}_{(1)},
    \ldots,u^{i+j}_{(1)}) \\ & \ \otimes_\ahha 
    (u^0_{(3)} \cdots u^i_{(3)} u^{i+1}_{(2)} \cdots
    u^{i+j}_{(2)},u^{i+j+1},\ldots,u^n).  
\end{align*}
So for $\Delta^\pehhe$ to be coassociative, we need
\begin{align*}
 & \ \sum_{i=0}^n \sum_{j=0}^i  (u^0_{(1)},\ldots,u^j_{(1)})\otimes_\ahha    
        (u^0_{(2)} \cdots u^j_{(2)},
        u^{j+1}_{(1)},\ldots,
        u^i_{(1)}) \\
& \qquad\qquad \otimes_\ahha   
          (u^0_{(3)}\cdots u^j_{(3)}
          u^{j+1}_{(2)} \cdots u^i_{(2)},u^{i+1},\ldots,
          u^n)\\
= & \   \sum_{r=0}^n  \sum_{s=0}^{n-r} (u^0_{(1)},\ldots,u^r_{(1)})
 \otimes_\ahha
  (u^0_{(2)} \cdots u^r_{(2)}, u^{r+1}_{(1)},
    \ldots,u^{r+s}_{(1)}) \\
& \qquad\qquad \otimes_\ahha
    (u^0_{(3)} \cdots u^r_{(3)} u^{r+1}_{(2)} \cdots
    u^{r+s}_{(2)},u^{r+s+1},\ldots,u^n),  
\end{align*}
which is seen to be correct by some basic substitution in the indices,
writing first 
$$
\sum_{i=0}^n \sum_{j=0}^i = 
  \sum_{j=0}^n \sum_{i=j}^n,
$$
and then substituting 
$j$ by $r$ and $i$ by $s=i-j$.
\end{proof} 

\begin{proposition}
\label{karlmarxstadt}
If we define
$$
        \varepsilon^\pehhe:=\varepsilon :
        P_0 = U \rightarrow A
$$
and $\varepsilon^\pehhe|_{P_n}=0$ for $n>0$,
then $(P,\bb',\Delta^\pehhe,\varepsilon^\pehhe)$ is a differential
graded coalgebra.   
\end{proposition}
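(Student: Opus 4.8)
By our conventions a differential graded coalgebra is a chain complex equipped with a coassociative, counital comultiplication that is a morphism of complexes. Coassociativity of $\Delta^\pehhe$ has just been verified, and $\bb'\bb'=0$ is immediate from the bar construction recalled in \S\ref{baraufloesung}. It therefore remains to check three things: the counit axiom for the pair $(\Delta^\pehhe,\varepsilon^\pehhe)$, that $\varepsilon^\pehhe$ is a chain map, and that $\Delta^\pehhe$ is a chain map, where $P\otimes_\ahha P$ is endowed with the total differential $\bb'\otimes_\ahha\mathrm{id}+(-1)^{\bullet}\,\mathrm{id}\otimes_\ahha\bb'$ forced by the sign convention.

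The plan is to dispose of the counit and of $\varepsilon^\pehhe$ first, as these are short. For the counit, note that $\varepsilon^\pehhe$ kills $P_i$ for $i>0$, so $(\varepsilon^\pehhe\otimes_\ahha\mathrm{id})\Delta^\pehhe$ retains only the summand $\Delta^\pehhe_{n0}$ while $(\mathrm{id}\otimes_\ahha\varepsilon^\pehhe)\Delta^\pehhe$ retains only $\Delta^\pehhe_{nn}$; the counit axioms of the bialgebroid together with the balancing in $\otimes_\ahha$ then collapse each to the identity, e.g.\ $\varepsilon(u^0_{(1)})\lact(u^0_{(2)},u^1,\ldots,u^n)=(u^0,\ldots,u^n)$. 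For the chain map property it suffices, since $A$ sits in degree $0$, to see that $\varepsilon\circ\bb'=0$ on $P_1$; this is the computation $\varepsilon(u^0u^1)=\varepsilon\big(u^0\,t(\varepsilon(u^1))\big)=\varepsilon(\varepsilon(u^1)\blact u^0)$, whose first equality is the weak multiplicativity of the counit.

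The substance of the proposition is that $\Delta^\pehhe$ commutes with the differentials, and here the plan is a term-by-term comparison of $\Delta^\pehhe\bb'$ with $(\bb'\otimes_\ahha\mathrm{id}+(-1)^{\bullet}\mathrm{id}\otimes_\ahha\bb')\Delta^\pehhe$, organised by the position of each face $\dd_j$ relative to the cut of $\Delta^\pehhe_{ni}$ separating $(u^0_{(1)},\ldots,u^i_{(1)})$ from $(u^0_{(2)}\cdots u^i_{(2)},u^{i+1},\ldots,u^n)$. Faces that merge two factors strictly inside the first block assemble into $\bb'\otimes_\ahha\mathrm{id}$, and faces acting strictly inside the second block into $\mathrm{id}\otimes_\ahha\bb'$, the reindexing $j\mapsto j-i$ of the latter producing exactly the Koszul sign $(-1)^{i}$. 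The delicate terms are the straddling ones: the face multiplying $u^iu^{i+1}$ across the cut at position $i$, and the terminal counit face $\dd_n$. Here I would invoke coassociativity of the coproduct $\Delta$ of $U$ to identify the contracted middle entry $u^0_{(2)}\cdots u^i_{(2)}$ formed before and after a face is applied, so that the straddling contribution of the cut at $i$ cancels against the corresponding one at $i+1$; the analogous cancellation involving $\dd_n$ uses the counit axiom and the definition of $\dd_n$ via $\varepsilon$.

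I expect the bookkeeping of these straddling faces to be the main obstacle. The difficulty is not conceptual but combinatorial: one must align the signs $(-1)^j$ and $(-1)^i$ under the reindexings that move a face of the ambient complex into a face of one tensor factor, and confirm that the extra terms generated when a face lands exactly at a cut telescope correctly across consecutive values of $i$. Once the signs match and coassociativity of $\Delta$ is applied at each cut, every unwanted term cancels, the two composites agree, and $(P,\bb',\Delta^\pehhe,\varepsilon^\pehhe)$ is a differential graded coalgebra.
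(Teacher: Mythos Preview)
Your proposal is correct and follows essentially the same approach as the paper, which simply asserts that the counit property and the Leibniz rule \eqref{kropotkin} are ``easily verified'' and only pauses to clarify the Koszul sign convention for $\mathrm{id}_P\otimes_\ahha\bb'$. You have spelled out in detail precisely what the paper leaves implicit: the reduction of the counit axiom to the extreme summands $\Delta^\pehhe_{n0}$ and $\Delta^\pehhe_{nn}$, the chain-map property of $\varepsilon^\pehhe$ via weak multiplicativity of $\varepsilon$, and the face-by-cut organisation of the Leibniz rule with the straddling terms handled by coassociativity of $\Delta$ and a telescoping cancellation across consecutive cuts.
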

\begin{proof}
Both the counit property and the Leibniz rule 
\begin{equation}
\label{kropotkin}
        \Delta^\pehhe \, \bb'=
        (\bb' \otimes_\ahha \mathrm{id}_P +
        \mathrm{id}_P \otimes_\ahha \bb') \, \Delta^\pehhe
\end{equation}
are easily verified. We only remark that 
the above Equation (\ref{kropotkin}) is  
meant to be interpreted
using the Koszul sign convention, meaning that  
we have for all $c \in P_p,d \in P_q$
\begin{equation*}
(\mathrm{id}_P \otimes_\ahha \bb')(c \otimes_\ahha d)=
(-1)^p c \otimes_\ahha \bb'(d),
\end{equation*}
but $ (\bb' \otimes_\ahha \mathrm{id}_P)(c \otimes_\ahha d)=
\bb'(c) \otimes_\ahha d$, as $ \mathrm{id}_P$ is of degree 0.
\end{proof} 

\subsection{Comparison of $P$ and $P \otimes_\ahha P$}
Recall that so far it is sufficient to assume $U$ 
to be a left $A$-bialgebroid which is the algebraic 
underpinning of the fact that $\umod$ is monoidal with unit object
$A$. 
Using, for example, the standard spectral sequence of 
the bicomplex $P_\bull \otimes_\ahha P_\bull$, one easily verifies 
that the tensor product $P \otimes_\ahha P$ has homology 
$A \otimes_\ahha A \simeq A$; so it is, like $P$, a resolution of $A$.  
However, only when $U$ is a left Hopf algebroid, $P$ and $P
\otimes_\ahha P$ are necessarily quasi-isomorphic since 
in this case the tensor product of two projectives
in $\umod$ is projective \cite[Theorem 5]{KowKra:DAPIACT}. 
Proposition~\ref{karlmarxstadt} tells us that
$$
        \Delta^{\pehhe} : P \rightarrow P \otimes_\ahha P,\quad 
        \mathrm{id}_P \otimes_\ahha \varepsilon^P : 
        P \otimes_\ahha P \rightarrow P
$$ 
are morphisms of chain complexes that are one-sided inverses of
each other.  
In the left Hopf algebroid case the following proposition 
provides a
homotopy that shows that the maps become in this situation  
quasi-inverse to each other.
Note that this proposition 
is true for all left Hopf algebroids, assuming no projectivity of $U$
over $A$ (although, of course, without that $P$ is not 
necessarily a projective resolution).
 
\begin{proposition}\label{eisenhuettenstadt}
If $U$ is a left Hopf algebroid over $A$, then the maps
$$
\hh_n : \bigoplus\limits_{i+j = n} P_i \otimes_\ahha P_j \to 
\bigoplus\limits_{k+l = n+1} P_k \otimes_\ahha P_l
$$
given by
\begin{footnotesize}
\begin{equation*}
\begin{split}
& (u^0, \ldots, u^i) \otimes_\ahha (v^0, \ldots, v^j) \\
\mapsto & \sum^i_{r=0} (-1)^i (u^0_{+(1)}, \ldots, u^r_{+(1)}) \otimes_\ahha (u^0_{+(2)} \cdots u^r_{+(2)}, u^{r+1}_+, \ldots, u^i_+, u^i_- \cdots u^0_- v^0, v^1, \ldots, v^j) 
\end{split}
 \end{equation*}
\end{footnotesize}
define a homotopy equivalence 
$$ 
        \Delta^\pehhe \, (\mathrm{id}_P \otimes_\ahha
        \varepsilon^P) \sim \mathrm{id}_{P \otimes_\ahha P},
$$ 
so 
$\Delta^{\pehhe}$ and $\mathrm{id}_P \otimes_\ahha \varepsilon^P$ 
are mutual quasi-inverses and we have 
$P \simeq P \otimes_\ahha P$ as objects 
in the derived category $\DCU$.   
\end{proposition}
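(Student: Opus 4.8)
The statement contains three assertions: that $\Delta^\pehhe$ and $\id_P \otimes_\ahha \varepsilon^\pehhe$ are morphisms of chain complexes, that one composite is the identity, and that the other composite is homotopic to the identity through $\hh$. The first two are essentially already in hand: the compatibility \rmref{kropotkin} of Proposition~\ref{karlmarxstadt} says exactly that $\Delta^\pehhe$ is a chain map, the counit property makes $\varepsilon^\pehhe\colon P \to A$ a chain map and hence so is $\id_P \otimes_\ahha \varepsilon^\pehhe$, and the identity $(\id_P \otimes_\ahha \varepsilon^\pehhe)\,\Delta^\pehhe = \id_P$ is the one-sided inverse relation recorded in the discussion preceding the proposition. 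Thus the entire content is to verify that $\hh$ is a contracting homotopy for the remaining composite, i.e. that
$$
\bb'' \hh + \hh \bb'' = \Delta^\pehhe\,(\id_P \otimes_\ahha \varepsilon^\pehhe) - \id_{P \otimes_\ahha P},
$$
where $\bb''$ is the total differential on $P \otimes_\ahha P$, acting on $c \otimes_\ahha d$ with $c \in P_p$ by $\bb''(c \otimes_\ahha d) = \bb'(c) \otimes_\ahha d + (-1)^p\, c \otimes_\ahha \bb'(d)$ as dictated by the Koszul convention of Proposition~\ref{karlmarxstadt}.

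Before computing, I would check that $\hh$ is well defined on the relevant tensor products. Internally to each tensor factor the balancing over $\Aop$ must be preserved once the translation map is applied; this is guaranteed by \rmref{Sch3}, which places $u_+ \otimes_\Aopp u_-$ in the Takeuchi product and so lets the $\blact$- and $\ract$-actions be moved across the tensor symbols appearing in the formula for $\hh$. The balancing over $A$ between the two outer tensor factors of $P \otimes_\ahha P$ is handled the same way, using \rmref{Sch3} together with the Takeuchi condition \rmref{tellmemore} on $\Delta$. This step is routine but must be done, since the definition of $\hh$ mixes the coproduct (in the entries $u^k_{+(2)}$) with the translation map (in the entries $u^k_\pm$).

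The heart of the argument is the explicit expansion of $\bb'' \hh + \hh \bb''$. The guiding principle is the usual one for contracting homotopies of bar-type complexes: the face maps internal to the $u$-block $(u^0_{+(1)}, \ldots, u^r_{+(1)})$ and to the $v$-block, when produced by $\hh \bb''$, telescope against those produced by $\bb'' \hh$, leaving only boundary contributions. The new feature is the junction where a factor $u^k_+$ sits next to $u^k_-$: collapsing these adjacent terms is exactly what the translation-map identities are for, with \rmref{Sch1} and \rmref{Sch2} collapsing the pairings $u_{+(1)} \otimes_\ahha u_{+(2)} u_-$ and $u_{(1)+} \otimes_\Aopp u_{(1)-} u_{(2)}$ to $u \otimes 1$, and \rmref{Sch4} keeping the multiplicativity $(uv)_+ \otimes_\Aopp (uv)_- = u_+ v_+ \otimes_\Aopp v_- u_-$ consistent as entries merge. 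The two extreme index values carry the surviving terms: the fully expanded term ($r = i$ with the inner faces exhausted) reproduces $\id_{P \otimes_\ahha P}$ after using \rmref{Sch47} and \rmref{Sch48} to recognise $u_+ u_- = s(\varepsilon(u))$ and $\varepsilon(u_-) \blact u_+ = u$, while the term in which the second block is contracted down past $v^0$ forces a counit onto the last surviving $v$-entry and reproduces $\Delta^\pehhe\,(\id_P \otimes_\ahha \varepsilon^\pehhe)$.

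I expect the main obstacle to be purely combinatorial: organising the roughly four families of boundary terms (inner $u$-faces, inner $v$-faces, the $u^k_+/u^k_-$ junction, and the two extreme terms) so that the cancellation is exact, while simultaneously tracking the interplay of the Koszul sign $(-1)^p$, the prefactor $(-1)^i$ in $\hh$, and the internal signs of $\bb'$ through the index shift $r \mapsto r \pm 1$. No single identity is hard, but the bookkeeping of signs and the correct invocation of \rmref{Sch1}--\rmref{Sch4} at each junction is where errors would creep in. Once the homotopy relation is established, $\Delta^\pehhe\,(\id_P \otimes_\ahha \varepsilon^\pehhe) \sim \id_{P \otimes_\ahha P}$ together with the already-noted $(\id_P \otimes_\ahha \varepsilon^\pehhe)\,\Delta^\pehhe = \id_P$ shows that the two chain maps are mutually quasi-inverse, hence isomorphisms in $\DCU$, giving $P \simeq P \otimes_\ahha P$ as claimed, with no projectivity hypothesis used.
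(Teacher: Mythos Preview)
Your approach is essentially the same as the paper's: direct verification of the homotopy identity by expanding $\bb''\hh + \hh\bb''$ and using the translation-map identities \rmref{Sch1}--\rmref{Sch5} to collapse the junction terms, with the surviving boundary contributions yielding $\id$ and $\Delta^\pehhe(\id \otimes \varepsilon^\pehhe)$; the paper carries this out explicitly only in degree~$0$ and declares the general case analogous, whereas you describe the general telescoping pattern. One small point: the paper's computation gives $\bb''\hh + \hh\bb'' = \id - \Delta^\pehhe(\id \otimes \varepsilon^\pehhe)$ rather than the opposite sign you wrote, so when you do the bookkeeping make sure the $(-1)^i$ prefactor in the given $\hh$ lands you on that side.
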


\begin{proof}
In degree $n=0$, the homotopy is
$$
        \hh_0 : u \otimes_\ahha v \mapsto 
        u_{+(1)} \otimes_\ahha (u_{+(2)},u_-v)=
        u_{(1)} \otimes_\ahha (u_{(2)+},u_{(2)-}v)
$$
and using the bialgebroid axioms as well as \rmref{Sch3}--\rmref{Sch5}, 
we get
\begin{align*}
        ((\mathrm{id}_U \otimes_\ahha \bb') \, \hh_0)(u \otimes_\ahha v)
&=
        u_{(1)} \otimes_\ahha (u_{(2)+}u_{(2)-}v-
        \varepsilon (u_{(2)-}v) \blact u_{(2)+})\\
&=
        u_{(1)} \otimes_\ahha \varepsilon (u_{(2)}) \lact v-
        u_{(1)} \otimes_\ahha  
        \varepsilon (\varepsilon (v) \blact u_{(2)-}) \blact u_{(2)+}\\
&=
        u_{(1)}\ract \varepsilon (u_{(2)})  \otimes_\ahha v-
        u_{(1)} \otimes_\ahha  
        \varepsilon (u_{(2)-}) \blact u_{(2)+} \ract \varepsilon (v)\\
&=
        u \otimes_\ahha v -
       u_{(1)} \otimes_\ahha 
        u_{(2)} \ract \varepsilon (v)\\
&= (\mathrm{id}_{U \otimes_\ahha U}-\Delta^{\pehhe} \, (\mathrm{id}_U
\otimes_\ahha \varepsilon^P))(u \otimes_\ahha v).
\end{align*}
Analogously, one computes that one has also for $n>0$  
\begin{align*}
& \quad \hh_{n-1} \, (\bb'  \otimes_\ahha \mathrm{id}_P+\mathrm{id}_P
\otimes_\ahha 
\bb') + (\bb'  \otimes_\ahha \mathrm{id}_P+\mathrm{id}_P \otimes_\ahha
\bb') \, \hh_n \\
=& \quad  \mathrm{id}_P - \Delta^{\pehhe} \, (\mathrm{id}_P
\otimes_\ahha \varepsilon^P).\qedhere
\end{align*}
\end{proof}

This fact demonstrates, on the one hand, the
homological difference between the bialgebroid and the left Hopf
algebroid case, and it also illustrates, on the other hand, that 
the cup and cap products we define below are
indeed the derived versions of the composition and contraction 
product 
that we dealt with abstractly in \cite{KowKra:DAPIACT}.

\subsection{$C^\bull(U,N)$ and the cup product}
We retain the assumption that 
$U$ is an $A$-bialgebroid and further denote by 
$P$ the DG coalgebra
defined in the previous sections.

\begin{definition}\label{xion1}
We define for all $N \in \umod$ the cochain complex
$$
        \hat{C}^\bull(U,N):=
        \mathrm{Hom}_U(P_\bull,N)
$$ 
with coboundary map $\hat\delta := \mathrm{Hom}_U(\bb',N)$, that is,
$$
        \hat\delta  : \hat{C}^p(U,N) \rightarrow 
        \hat{C}^{p+1}(U,N),\quad
        \hat\delta \hat\varphi := \hat\varphi \bb'.
$$ 
Furthermore, we define the {\em cup product}
$        \smallsmile \ :
        \hat{C}^\bull(U,A) \otimes_k
        \hat{C}^\bull(U,N) \rightarrow 
        \hat{C}^\bull(U,N)$
by
$$
        (\hat \varphi  \smallsmile \hat\psi)(c)
        :=\hat\psi (\hat\varphi (c_{(1)}) \lact c_{(2)})=
        \hat\varphi (c_{(1)}) \lact \hat\psi (c_{(2)}),
$$
where $c_{(1)} \otimes_\ahha c_{(2)}$ is 
$\Delta ^\pehhe(c)$ in Sweedler notation. 
\end{definition}

Note that for $N=A$ 
the cup product 
becomes simply 
the convolution product  
\begin{equation}
\label{xion2}
        (\hat \varphi  \smallsmile \hat \psi)(c)
        =\hat\varphi (c_{(1)})
        \hat\psi (c_{(2)}),
\end{equation}
and that 
Proposition~\ref{karlmarxstadt} implies:

\begin{corollary} 
\label{medtner}
$(\hat{C}^\bull(U,A),\hat\delta,\smallsmile)$ is a differential graded
algebra and $(\hat{C}^\bull(U,N),\hat\delta,\smallsmile)$ is a
differential graded left module over $\hat{C}^\bull(U,A)$. 
\end{corollary}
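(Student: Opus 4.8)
The plan is to regard Corollary~\ref{medtner} as an instance of the general principle that the linear dual of a DG coalgebra is a DG algebra, the DG coalgebra in question being $(P,\bb',\Delta^\pehhe,\varepsilon^\pehhe)$ from Proposition~\ref{karlmarxstadt}; the only genuine content is then to check that the three structural identities (coassociativity, counit, co-Leibniz) transcribe into associativity, unitality, and the graded Leibniz rule, with the Koszul signs under control. Throughout I would write the cup product in the compact convolution form
$$
\hat\varphi \smallsmile \hat\psi = m_\enne \circ (\hat\varphi \otimes_k \hat\psi) \circ \Delta^\pehhe,
$$
where $m_\enne$ is the left action $a \otimes_k n \mapsto a \lact n$ (reducing to the multiplication of $A$ when $N=A$, cf.~\rmref{xion2}); the two expressions in Definition~\ref{xion1} coincide because $\hat\psi$ is left $U$-linear, hence $A$-linear for $\lact$. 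Well-definedness of $m_\enne(\hat\varphi \otimes_k \hat\psi)$ over the $\otimes_\ahha$-balancing in the target $P_i \otimes_\ahha P_{n-i}$ of $\Delta^\pehhe_{ni}$ is inherited from the well-definedness of $\Delta^\pehhe$ already used in Proposition~\ref{karlmarxstadt}.

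First I would record that $\smallsmile$ is degree-additive: since $\Delta^\pehhe = \sum_i \Delta^\pehhe_{ni}$ with $\Delta^\pehhe_{ni} : P_n \to P_i \otimes_\ahha P_{n-i}$, cupping a $p$-cochain with a $q$-cochain receives a contribution only from the single summand $\Delta^\pehhe_{(p+q)\,p}$, so $\hat\varphi \smallsmile \hat\psi \in \hat C^{p+q}(U,N)$. Associativity of $\smallsmile$ is then a direct transcription of the coassociativity of $\Delta^\pehhe$, and the unit is the $0$-cochain $\varepsilon^\pehhe \in \hat C^0(U,A)$: the counit property $(\varepsilon^\pehhe \otimes_\ahha \mathrm{id}_P)\Delta^\pehhe = \mathrm{id}_P = (\mathrm{id}_P \otimes_\ahha \varepsilon^\pehhe)\Delta^\pehhe$ gives $\varepsilon^\pehhe \smallsmile \hat\varphi = \hat\varphi = \hat\varphi \smallsmile \varepsilon^\pehhe$ under the identifications $A \otimes_\ahha P \cong P \cong P \otimes_\ahha A$.

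The DG part is where the work lies. For $\hat\varphi \in \hat C^p(U,A)$ and $\hat\psi \in \hat C^q(U,A)$ I would compute, using the co-Leibniz rule \rmref{kropotkin},
\begin{align*}
\hat\delta(\hat\varphi \smallsmile \hat\psi)
&= (\hat\varphi \smallsmile \hat\psi)\,\bb'
= m_\enne\,(\hat\varphi \otimes_k \hat\psi)\,\Delta^\pehhe\,\bb' \\
&= m_\enne\,(\hat\varphi \otimes_k \hat\psi)\,
\big(\bb' \otimes_\ahha \mathrm{id}_P + \mathrm{id}_P \otimes_\ahha \bb'\big)\,\Delta^\pehhe.
\end{align*}
Evaluating on $z \in P_{p+q+1}$ and selecting the bidegree $(p,q)$ component, the summand $\bb' \otimes_\ahha \mathrm{id}_P$ picks out $\Delta^\pehhe_{(p+q+1)\,(p+1)}$ and carries no sign, reproducing $(\hat\delta\hat\varphi \smallsmile \hat\psi)(z)$; the summand $\mathrm{id}_P \otimes_\ahha \bb'$ picks out $\Delta^\pehhe_{(p+q+1)\,p}$ and, by the Koszul sign convention recorded in the proof of Proposition~\ref{karlmarxstadt} (namely $(\mathrm{id}_P \otimes_\ahha \bb')(c \otimes_\ahha d) = (-1)^{p} c \otimes_\ahha \bb'(d)$ for $c \in P_p$), contributes $(-1)^p (\hat\varphi \smallsmile \hat\delta\hat\psi)(z)$. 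This yields the graded Leibniz rule $\hat\delta(\hat\varphi \smallsmile \hat\psi) = \hat\delta\hat\varphi \smallsmile \hat\psi + (-1)^p \hat\varphi \smallsmile \hat\delta\hat\psi$.

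For the module statement I would run the identical computation with $\hat\psi \in \hat C^\bull(U,N)$ and $m_\enne$ the left $A$-action: associativity of the action over $\smallsmile$ is again coassociativity of $\Delta^\pehhe$, unitality again the counit property, and the Leibniz rule is verbatim the display above. The main obstacle I anticipate is entirely the sign bookkeeping in the last step, that is, confirming that the by-hand Koszul convention of Proposition~\ref{karlmarxstadt} (where $\mathrm{id}_P$ is of degree $0$ and $\bb'$ contributes the sign only on the right factor) produces precisely the factor $(-1)^p$ with no spurious sign from reordering $\hat\varphi \otimes_k \hat\psi$ against the differential; working directly on elements and extracting the correct bidegree component, rather than manipulating abstract compositions of graded maps, is what keeps this transparent.
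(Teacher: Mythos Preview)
Your proposal is correct and follows exactly the approach the paper has in mind: the corollary is stated as an immediate consequence of Proposition~\ref{karlmarxstadt}, and you have simply unpacked the standard ``linear dual of a DG coalgebra is a DG algebra'' argument that this implication encodes, with the Koszul sign convention handled correctly. The paper gives no further details beyond the words ``Proposition~\ref{karlmarxstadt} implies,'' so your write-up is if anything more thorough than what appears there.
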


By $U$-linearity of $\hat\psi \in \hat{C}^\bull(U,A)$ 
we obtain in a standard fashion the isomorphism
\begin{equation}
\label{gesundbrunnen}
        \hat{C}^p(U,N) \overset{\simeq}{\longrightarrow} 
        C^p(U,N) :=
        \Hom_\Aop({U^{\otimes_\Aopp p}}_\ract, N), \quad 
        \hat\psi \mapsto
        \psi := \hat\psi(1,\cdot).
\end{equation}
The inverse map is given by
$$
        \varphi \mapsto \big\{ \hat\varphi : (u^0,\ldots,u^p) \mapsto 
        u^0 \varphi (u^1,\ldots,u^p) \big\}.
$$
Under this isomorphism, the differential $\hat\delta$  
is transformed into 
$$
\gd: C^\bull(U,N) \to C^{\bull+1}(U,N)
$$
given by
\begin{equation}
\label{spaetkauf}
\begin{split}
        \gd\varphi(u^1, \ldots, u^{p+1}) 
&:= u^1 \varphi(u^2, \ldots, u^{p+1}) 
\\ & \quad 
+ \sum^{p}_{i=1} (-1)^i 
        \varphi(u^1, \ldots, u^i u^{i+1}, \ldots, u^{p+1}) \\
& \quad + (-1)^{p+1} 
        \varphi(u^1, \ldots, \varepsilon(u^{p+1}) \blact u^p).
\end{split}
\end{equation}
Observe that by duality, $C^\bull(U,A)$ carries the structure of a
cosimplicial $k$-module. This will be used in Definition~\ref{thomson}
when defining the
associated reduced complex $\bar C^\bull(U,A)$.

Finally, the cup product can be expressed on $C^\bull(U,A)$ 
as follows:
\begin{lemma}
\label{tiefenacht}
The cup product (\ref{xion2}) assumes on 
$\varphi \in C^p(U,A),\psi \in C^q(U,A)$
the form
\begin{equation}
\label{ostkreuz3}
        (\varphi \smallsmile \psi)
        (u^1, \ldots, u^{p+q}) = 
        \varphi\big(u^1, \ldots, u^{p-1}, 
        \psi(u^{p+1}, \ldots, u^{p+q}) \blact u^p\big).
\end{equation}
\end{lemma}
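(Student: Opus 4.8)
The plan is to transport the convolution product $\rmref{xion2}$ through the isomorphism $\rmref{gesundbrunnen}$ and compute. Writing $\hat\varphi,\hat\psi$ for the images of $\varphi \in C^p(U,A)$, $\psi \in C^q(U,A)$ under the inverse of $\rmref{gesundbrunnen}$, so that $\hat\varphi(u^0,\ldots,u^p) = u^0\varphi(u^1,\ldots,u^p)$ and likewise for $\hat\psi$, I would evaluate
$$
(\varphi \smallsmile \psi)(u^1,\ldots,u^{p+q}) = (\hat\varphi \smallsmile \hat\psi)(1,u^1,\ldots,u^{p+q}) = \hat\varphi(c_{(1)})\,\hat\psi(c_{(2)}),
$$
where $\Delta^\pehhe(c) = c_{(1)} \otimes_\ahha c_{(2)}$ for $c = (1,u^1,\ldots,u^{p+q}) \in P_{p+q}$.

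First I would observe that in $\Delta^\pehhe(c) = \sum_i \Delta^\pehhe_{p+q,i}(c)$ only the summand $i = p$ survives: since $\hat\varphi$ is supported on $P_p$ and $\hat\psi$ on $P_q$, the graded factors must have degrees $(p,q)$. Using $\Delta(1) = 1 \otimes_\ahha 1$, this surviving term is
$$
(1,u^1_{(1)},\ldots,u^p_{(1)}) \otimes_\ahha (u^1_{(2)}\cdots u^p_{(2)},u^{p+1},\ldots,u^{p+q}).
$$
Applying $\hat\varphi$ to the first and $\hat\psi$ to the second factor, and inserting the explicit left $U$-action on $A$, namely $u \cdot a = \varepsilon(u \bract a) = \varepsilon(u\,s(a))$, I obtain
$$
(\varphi \smallsmile \psi)(u^1,\ldots,u^{p+q}) = \varphi(u^1_{(1)},\ldots,u^p_{(1)})\,\varepsilon\big(u^1_{(2)}\cdots u^p_{(2)}\,s(b)\big),\qquad b := \psi(u^{p+1},\ldots,u^{p+q}),
$$
the product being taken in $A$.

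It remains to identify this with the right-hand side of $\rmref{ostkreuz3}$, that is, with $\varphi(u^1,\ldots,u^{p-1},b \blact u^p)$, and this last simplification is where I expect the genuine work to lie. The mechanism is already transparent for $p = 1$: the Takeuchi relation $\rmref{tellmemore}$ applied to $\Delta(u^1)$ gives $u^1_{(1)} \otimes_\ahha u^1_{(2)}s(b) = u^1_{(1)}t(b) \otimes_\ahha u^1_{(2)}$, after which the counit axiom $t(\varepsilon(u_{(2)}))u_{(1)} = u$ together with the right $A$-linearity of $\varphi$ collapses the expression to $\varphi(u^1 t(b)) = \varphi(b \blact u^1)$, using that $b \blact u = u\,t(b)$.

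For general $p$ I would run the same argument, moving the factor $s(b)$ past the product $u^1_{(2)}\cdots u^p_{(2)}$ via $\rmref{tellmemore}$ so that it attaches to the last leg $u^p$ as $t(b)$, and repeatedly applying the counit axioms to absorb the remaining $u^i_{(2)}$ back into $u^i$. The main obstacle is the bookkeeping needed to disentangle the Sweedler indices in this iterated collapse; I would organise it either as an induction on $p$, or, more cleanly, by first rewriting the expression as $\DD_\varphi(u^1,\ldots,u^p) \cdot b$ (using $\varepsilon(s(c)\,y) = c\,\varepsilon(y)$) and then invoking the relation $\DD_\varphi(u^1,\ldots,u^p)\cdot b = \varphi(u^1,\ldots,u^{p-1},b \blact u^p)$ between $\DD_\varphi$ and $\varphi$, whose $b = 1$ instance is the counit identity $\varepsilon(\DD_\varphi(u^1,\ldots,u^p)) = \varphi(u^1,\ldots,u^p)$.
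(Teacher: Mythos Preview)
Your proposal is correct and follows essentially the same approach as the paper's proof: both unwind the convolution formula through the isomorphism $\rmref{gesundbrunnen}$, pick out the $(p,q)$-component of $\Delta^\pehhe$, and then use the Sweedler--Takeuchi relation $\rmref{tellmemore}$ together with the counit axioms to move the factor $b=\psi(u^{p+1},\ldots,u^{p+q})$ onto $u^p$ as a $\blact$-action while collapsing the remaining Sweedler legs. The only cosmetic differences are that the paper keeps a general $u^0$ throughout and specialises at the end, whereas you set $u^0=1$ from the start, and that your alternative route via $\DD_\varphi$ invokes an identity (namely $\varepsilon\DD_\varphi=\varphi$) that the paper only states and proves later.
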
 
\begin{proof}
For $U$-linear $\hat\varphi: P_p \to A$ and $\hat\psi: P_q \to A$, the
explicit meaning of \rmref{xion2} is on an element 
$P_n \ni c := (u^0,
\ldots, u^n)$ 
\begin{equation*}
        (\hat\varphi \smallsmile \hat\psi)(c) = 
        \left\{\begin{array}{ll} 
            \hat\varphi(u^0_{(1)}, \ldots, u^p_{(1)})
            \hat\psi(u^0_{(2)} \cdots u^p_{(2)}, u^{p+1}, \ldots, u^n) 
& \mbox{if } p+q = n,
\\ 0 & \mbox{otherwise.}
\end{array} \right.
\end{equation*}
Using the $U$-linearity of the cochains, the Sweedler-Takeuchi property \rmref{tellmemore}, the fact that all
$A$-actions on $U$ commute, and the property of the 
tensor product in question, we obtain 
\begin{equation*}\begin{split}
& \ \hat\varphi(u^0_{(1)}, \ldots, u^p_{(1)})
        \hat\psi(u^0_{(2)} \cdots u^p_{(2)}, u^{p+1}, \ldots, u^n) \\
& \quad = \hat\varphi(u^0_{(1)}, \ldots, u^p_{(1)}) 
        \varepsilon\big(u^0_{(2)} \cdots u^p_{(2)} 
        \bract \hat\psi(1, u^{p+1}, \ldots, u^n)\big) \\
& \quad = \hat\varphi\big(u^0_{(1)} \ract 
        \varepsilon\big(u^0_{(2)} \bract 
        \varepsilon(u^1_{(2)} \bract \cdots \bract 
        \varepsilon(u^p_{(2)})\ldots)\big), u^1_{(1)}, \ldots,  
        \hat\psi(1, u^{p+1}, \ldots, u^{p+q}) \blact u^p_{(1)} \big) \\
& \quad =  \hat\varphi\big(u^0, u^1, \ldots, u^{p-1}, 
        \hat\psi(1, u^{p+1}, \ldots, u^{p+q}) \blact u^p\big).
\end{split}
\end{equation*}
Applying now the isomorphism
\rmref{gesundbrunnen} yields the claim. 
\end{proof} 

In the following, we will mostly be working with this
alternative complex $(C^\bull(U,A),\delta)$ and 
small Greek letters will usually denote cochains 
therein.
%

\subsection{The comp algebra structure on $C^\bull(U,A)$}
\label{krauslig}
For the construction of the Gerstenhaber bracket, we 
associate to any $p$-cochain
$\varphi \in  C^p(U,A)$ the operator
\begin{equation}
\label{huetor1}
        \DD_\varphi: 
        U^{\otimes_\Aopp p} \to U, \qquad
        (u^1, \ldots, u^p) \mapsto 
        \varphi(u^1_{(1)}, \ldots, u^p_{(1)}) \lact 
        u^1_{(2)} \cdots u^p_{(2)}.
\end{equation}
For zero cochains, i.e., elements in $A$, this becomes
 the map $A \to U, a \mapsto s(a)$, where $s$ is the source map in \rmref{basmati}.

These operators provide the correct substitute 
of the insertion 
operations
used by Gerstenhaber to define what he called 
a \emph{pre-Lie system} in \cite{Ger:TCSOAAR} and
a \emph{(right) comp algebra} in \cite{GerSch:ABQGAAD}. 
Indeed, we can now define, in analogy with \cite{Ger:TCSOAAR},
the {\em Gerstenhaber products} 
$$
        \circ_i : C^p(U,A) \otimes_k C^q(U,A) \rightarrow 
        C^{p+q-1}(U,A),\quad
        i = 1, \ldots, p,
$$
by
\begin{equation}
\label{maxdudler}
\begin{split}
        &\ (\varphi \circ_i \psi)(u^1, \ldots, u^{p+q-1}) 
\\ &\ \quad 
:= \varphi(u^1, \ldots, u^{i-1}, 
        \DD_\psi(u^{i}, \ldots, u^{i+q-1}), 
        u^{i+q}, \ldots, u^{p+q-1}),
\end{split}
\end{equation}
and for zero cochains we define $a \circ_i \psi = 0$ for all $i$ and
all $\psi$, whereas 
$$
\gvf \circ_i a :=  \varphi(u^1, \ldots, u^{i-1}, 
        s(a),u^{i}, \ldots, u^{p-1}).
$$ 
These Gerstenhaber products satisfy the following associativity
relations: 

\begin{lemma}
For  $\varphi \in C^p(U,A)$, 
$\psi \in C^q(U,A)$, and $\chi \in C^r(U,A)$
we have
\begin{equation*}
\label{danton}
(\varphi \circ_i \psi) \circ_j \chi = 
\begin{cases}
(\varphi \circ_j \chi) \circ_{i+r-1} \psi \qquad \mbox{if} \ j < i, \\
\varphi \circ_i (\psi \circ_{j-i +1} \chi) \qquad \mbox{if} \ \, i \leq j < q + i, \\
(\varphi \circ_{j-q+1} \chi) \circ_{i} \psi \qquad \mbox{if} \ j \geq q + i.
\end{cases}
\end{equation*}. 
\end{lemma}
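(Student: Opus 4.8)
The plan is to verify the three associativity relations directly from the definition of $\circ_i$ in \rmref{maxdudler}, treating each case according to the relative position of the insertion points $i$ and $j$. The key observation driving the whole computation is that composing $\varphi \circ_i \psi$ and then $\circ_j \chi$ amounts to substituting the operator $\DD$ of one cochain into slots of another, so everything reduces to understanding how $\DD_\psi$ and $\DD_\chi$ interact when their argument-blocks are either disjoint or nested. First I would record the two elementary facts that make the bookkeeping work: that $\DD_{\varphi \circ_i \psi} = \DD_\varphi \circ_i \DD_\psi$ in the appropriate sense (i.e.\ the operator attached to a Gerstenhaber product is the corresponding composite of operators), and that when arguments are merely inserted without overlapping the $\DD$-slots they pass through unchanged. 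The coassociativity of $\Delta^\pehhe$ proved above, together with the bialgebroid coassociativity used implicitly through Sweedler notation, guarantees that iterated coproducts in the definition of $\DD$ may be regrouped freely.

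For the bulk of the argument I would fix an elementary tensor $(u^1, \ldots, u^{p+q+r-2})$ and expand both sides of each case. In the \emph{disjoint} cases $j < i$ and $j \geq q+i$, the two insertions $\DD_\psi$ and $\DD_\chi$ act on non-overlapping blocks of arguments, and a careful relabelling of indices shows that the order in which they are performed is immaterial up to the index shift $i \mapsto i+r-1$ (resp.\ $i$ unchanged, $j \mapsto j-q+1$); this is precisely what the outer two cases assert. In the \emph{nested} middle case $i \leq j < q+i$, the block on which $\chi$ acts sits entirely inside the block consumed by $\psi$, so $\DD_\chi$ is inserted into the argument list of $\DD_\psi$ at relative position $j-i+1$, which matches $\psi \circ_{j-i+1} \chi$ after substitution into $\varphi$ at slot $i$. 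The only genuine subtlety is tracking how the translation/coproduct legs in $\DD_\psi$ — the factors $u_{(1)}$ versus $u_{(2)}$ of \rmref{huetor1} — redistribute when $\DD_\chi$ is substituted; here one must check that the $\lact$-action and the products $u^1_{(2)} \cdots$ reassemble correctly, which follows from coassociativity and from the fact that all four $A$-actions $\lact, \ract, \blact, \bract$ commute.

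The step I expect to be the main obstacle is the nested case, specifically verifying that $\DD_{\psi \circ_{j-i+1} \chi}$ agrees with the operator obtained by first forming $\DD_\psi$ and then substituting $\DD_\chi$ into its $(j-i+1)$-th slot. The difficulty is not conceptual but combinatorial: one has to confirm that the second Sweedler legs $u_{(2)}$ appearing in $\DD_\psi$ interact properly with the coproduct structure hidden inside $\DD_\chi(u^j, \ldots, u^{j+r-1})$, and that no spurious $\varepsilon$ or target-map factors are introduced by the insertion. I would isolate this as a small auxiliary identity — essentially a compatibility between $\DD$ and $\circ_i$ analogous to Gerstenhaber's original pre-Lie identity — prove it once using \rmref{Sch1}--\rmref{Sch5} and coassociativity, and then feed it into all three cases so that the remaining verifications become pure index arithmetic. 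The boundary conventions for zero cochains (the rules $a \circ_i \psi = 0$ and $\varphi \circ_i a$ via the source map $s$) need a brief separate check, but since $\DD_a = s(a)$ is a constant they degenerate to trivial or easily-matched identities.
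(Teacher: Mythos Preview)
Your plan is correct and is precisely the straightforward computation the paper has in mind; the paper gives no details beyond that phrase, so your outline is effectively a fleshing-out of its proof. The key auxiliary identity you isolate---that $\DD_{\psi \circ_k \chi}$ equals $\DD_\psi$ with $\DD_\chi$ inserted in slot $k$---is exactly right and follows from the computation $\Delta(\DD_\chi(v^1,\ldots,v^r)) = \DD_\chi(v^1_{(1)},\ldots,v^r_{(1)}) \otimes_\ahha v^1_{(2)}\cdots v^r_{(2)}$, which is pure coassociativity of the bialgebroid coproduct together with multiplicativity of $\Delta$ and $\Delta(s(a))=s(a)\otimes_\ahha 1$.

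Two small corrections to your write-up: you do not need the Hopf algebroid relations \rmref{Sch1}--\rmref{Sch5} at all, since neither $\circ_i$ nor $\DD_\varphi$ involves the translation map---the lemma holds for any left bialgebroid and uses only coassociativity of $\Delta$; and the coassociativity you invoke is that of the bialgebroid coproduct $\Delta$ itself, not of the DG coalgebra map $\Delta^\pehhe$ on $P$, which plays no role here.
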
 
\begin{proof}
Straightforward computation.
\end{proof}

The structure of a right comp algebra is completed by adding 
the {\em distinguished element} (analogously to \cite[p.~62]{GerSch:ABQGAAD}) 
\begin{equation}
\label{distinguished, I said}
        \mu := \varepsilon \, m_\uhhu \in C^2(U,A),
\end{equation}
where $m_\uhhu$ is the multiplication map of $U$.

\begin{rem}
The associativity of $ m_U $ implies  
$
        \mu \circ_1 \mu = \mu \circ_2 \mu.
$
Furthermore, one has
\begin{equation}
\label{elekta}
\DD_\mu = m_\uhhu,
\end{equation}
as will be used later.
\end{rem}


\begin{rem}
Equivalently, this structure turns 
$O(n):=C^n(U,A)$ into a
nonsymmetric operad
in the category of 
$k$-modules, see e.g.~\cite[\S
5.8.13]{LodVal:AO} or \cite{MarShnSta:OIATAP, Men:BVAACCOHA}, with composition
$$
       O(n) \otimes_k O(i_1) \otimes_k \cdots 
        \otimes_k O(i_n) \rightarrow O(i_1+\cdots+i_n)
$$ 
given by 
$$
        \varphi \otimes_k \psi_1 \otimes_k \cdots 
        \otimes_k \psi_n \mapsto  
        \varphi \bigl(\DD_{\psi_1}(\cdot),\DD_{\psi_2}(\cdot),\ldots,
        \DD_{\psi_n}(\cdot)\bigr).
$$
Together with $ \mu $, the operad $O$ becomes an operad with multiplication
whose unit is $ \mathrm{id}_\ahha \in C^0(U,A).$
\end{rem}
 
\subsection{The Gerstenhaber algebra $H^\bull(U,A)$}
Recall that $|n| = n-1$.

\begin{definition}
For two cochains $\varphi \in C^p(U,A),\psi \in C^q(U,A)$
we define
\begin{equation*}
        \varphi \bar\circ \psi := 
(-1)^{|p||q|}        
\sum^{p}_{i=1}
        (-1)^{|q||i|} \varphi \circ_i \psi \in C^{|p+q|}(U,A)
\end{equation*}
and their \emph{Gerstenhaber bracket} by
\begin{equation}
\label{zugangskarte}
{\{} \varphi,\psi \}
:= \varphi \bar\circ \psi - (-1)^{|p||q|} \psi \bar\circ \varphi.
\end{equation}
  \end{definition}

Furthermore, one verifies by straightforward computation:

\begin{lemma}
\label{platt}
For $\varphi \in C^p(U,A)$ 
and $\psi \in C^q(U,A)$, 
we have   
\begin{equation*}
        \varphi \smallsmile \psi = 
        (\mu \circ_1 \varphi) \circ_{p+1} \psi = 
        (\mu \circ_2 \psi) \circ_1 \varphi
\end{equation*}
and
\begin{equation}
\label{erfurt}
        \delta \varphi = \{\mu,  \varphi\}.
\end{equation}
\end{lemma}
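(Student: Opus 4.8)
The plan is to reduce both assertions to one auxiliary identity, namely that $\DD_\varphi$ is a coalgebraic lift of $\varphi$ in the sense that $\varepsilon \circ \DD_\varphi = \varphi$; explicitly,
$$
\varphi(u^1_{(1)}, \ldots, u^p_{(1)})\, \varepsilon(u^1_{(2)} \cdots u^p_{(2)}) = \varphi(u^1, \ldots, u^p).
$$
I would prove this first, and I expect it to be the only real (if routine) obstacle: one telescopes the counit axiom $\varepsilon(u\,s(\varepsilon(v))) = \varepsilon(uv) = \varepsilon(u\,t(\varepsilon(v)))$ from the right and reassembles the Sweedler legs using the $A^{\mathrm{op}}$-linearity of $\varphi$ together with the tensor balancing $(\ldots, a \blact u^i, \ldots) = (\ldots, u^{i-1} \ract a, \ldots)$, exactly the bookkeeping already performed in the proof of Lemma~\ref{tiefenacht}. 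Writing $a \lact u = s(a)u$, two consequences then follow by one further application of the same counit axiom, valid for every $w \in U$:
$$
\varepsilon\big(w\, \DD_\varphi(u^1, \ldots, u^p)\big) = \varepsilon\big(w\, s(\varphi(u^1, \ldots, u^p))\big), \qquad \varepsilon\big(\DD_\varphi(u^1, \ldots, u^p)\, w\big) = \varphi\big(u^1, \ldots, u^{p-1}, \varepsilon(w) \blact u^p\big),
$$
where the second uses $\Delta(t(b)) = 1 \otimes_\ahha t(b)$ to carry $t(\varepsilon(w))$ onto the last Sweedler leg, i.e.\ into the action $\varepsilon(w) \blact u^p$.

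For the first identity I would expand the operadic composites directly. Using $\DD_\mu = m_\uhhu$ from \eqref{elekta} and $\mu = \varepsilon\, m_\uhhu$, both $(\mu \circ_1 \varphi) \circ_{p+1} \psi$ and $(\mu \circ_2 \psi) \circ_1 \varphi$ unfold to the single expression $\varepsilon\big(\DD_\varphi(u^1, \ldots, u^p)\, \DD_\psi(u^{p+1}, \ldots, u^{p+q})\big)$, so their equality is immediate (alternatively it is the third case of the associativity relations proved above, applied with outer cochain $\mu$). Feeding $w = \DD_\psi(u^{p+1}, \ldots, u^{p+q})$ into the second identity of the preceding display and then using $\varepsilon \circ \DD_\psi = \psi$ rewrites this as $\varphi\big(u^1, \ldots, u^{p-1}, \psi(u^{p+1}, \ldots, u^{p+q}) \blact u^p\big)$, which is precisely the cup product \eqref{ostkreuz3}.

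For the second identity I would unwind $\{\mu, \varphi\} = \mu \bar\circ \varphi - (-1)^{|2||p|} \varphi \bar\circ \mu$ from \eqref{zugangskarte}, using $|2| = 1$. After the signs are collected, $-(-1)^{|2||p|}\varphi \bar\circ \mu$ becomes $\sum_{i=1}^p (-1)^i\, \varphi \circ_i \mu$, and since $\DD_\mu = m_\uhhu$ each summand is $\varphi(u^1, \ldots, u^i u^{i+1}, \ldots, u^{p+1})$, reproducing the middle sum of \eqref{spaetkauf}. The term $\mu \bar\circ \varphi$ splits as $\mu \circ_2 \varphi$ with coefficient $+1$ and $\mu \circ_1 \varphi$ with coefficient $(-1)^{p+1}$; by the first identity of the display above, $\mu \circ_2 \varphi = \varepsilon\big(u^1\, \DD_\varphi(u^2, \ldots, u^{p+1})\big) = u^1 \varphi(u^2, \ldots, u^{p+1})$, the left $U$-action on $A$, which is the leading term of \eqref{spaetkauf}, while by the second identity, $\mu \circ_1 \varphi = \varepsilon\big(\DD_\varphi(u^1, \ldots, u^p)\, u^{p+1}\big) = \varphi\big(u^1, \ldots, u^{p-1}, \varepsilon(u^{p+1}) \blact u^p\big)$, the trailing term. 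Collecting the three contributions reproduces $\delta \varphi$ exactly, so the only remaining care is the sign tracking through $\bar\circ$ and the bracket.
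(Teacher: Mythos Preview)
Your proposal is correct and amounts to exactly the ``straightforward computation'' the paper invokes in place of a proof. The organisation around the auxiliary identity $\varepsilon \circ \DD_\varphi = \varphi$ is clean; note that the paper establishes this very identity later (in the proof of the lemma containing \eqref{parkraumbewirtschaftung1}), so you are simply front-loading it, and the sign bookkeeping through $\bar\circ$ and the bracket checks out.
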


We can now state the main theorem of this section (cf.~Theorem
\ref{pitandbull}), which follows from Gerstenhaber's results. First,
let us agree about notation:
\begin{definition}
For a bialgebroid $U$ and every 
$N \in \umod$
we denote the cohomology of $C^\bull(U,N)$ 
by $H^\bull(U,N)$ and call this the \emph{cohomology of $U$ with
  coefficients in $N$}.   
\end{definition}

\begin{rem}\label{hgleichext}
If $U$ is a right $A$-projective left
Hopf algebroid 
so that $P$ is, in view of Lemma~\ref{alterhit}, 
a projective resolution of $A \in \umod$, then 
we have $H^\bull(U,N) \simeq \mathrm{Ext}_U(A,N)$, 
but in general we use the symbol $H^\bull(U,N)$ 
for the cohomology of the explicit cochain complex 
$C^\bull(U,N)$. 
\end{rem}

\begin{theorem}
\label{pitandbull2}
If $U$ is a bialgebroid over $A$,
then the maps \rmref{ostkreuz3} and \rmref{zugangskarte} induce a Gerstenhaber algebra structure on 
$H^\bull(U,A)$.
\end{theorem}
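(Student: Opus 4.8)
The plan is to deduce Theorem~\ref{pitandbull2} from Gerstenhaber's abstract framework, exploiting the fact (established in \S\ref{krauslig}) that the collection $O(n) = C^n(U,A)$ forms a nonsymmetric operad with multiplication $\mu$. Gerstenhaber and Gerstenhaber--Schack proved once and for all that the cohomology of any operad with multiplication (equivalently, any right comp algebra with distinguished element satisfying $\mu \circ_1 \mu = \mu \circ_2 \mu$) carries a canonical Gerstenhaber algebra structure. Thus the entire content of the theorem is to verify that the explicit maps \rmref{ostkreuz3} and \rmref{zugangskarte} are precisely the cup product and bracket that this abstract machine produces, and that they descend to cohomology.

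First I would record the two inputs the abstract theorem requires. The associativity relations for the $\circ_i$ are exactly the pre-Lie (comp algebra) identities of \cite{Ger:TCSOAAR, GerSch:ABQGAAD}, and these have already been verified in the lemma preceding \rmref{distinguished, I said}; the distinguished element $\mu$ satisfies $\mu \circ_1 \mu = \mu \circ_2 \mu$ by associativity of $m_U$, as noted in the remark following its definition. These two facts are the complete hypotheses of Gerstenhaber's construction, so I would invoke it directly to obtain: a graded-commutative associative product on $H^\bull(U,A)$ and a graded Lie bracket on the desuspension satisfying the graded Leibniz rule.

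Next I would identify the abstract structure with the explicit formulae. The cup product in Gerstenhaber's framework is $\varphi \smallsmile \psi = (\mu \circ_1 \varphi)\circ_{p+1}\psi$, which Lemma~\ref{platt} already matches with \rmref{ostkreuz3}; since $(C^\bull(U,A),\delta,\smallsmile)$ is a DG algebra by Corollary~\ref{medtner}, the product descends to cohomology and is there graded commutative by the general operadic argument. The bracket \rmref{zugangskarte} is by construction the commutator-type expression built from $\bar\circ$, matching Gerstenhaber's definition verbatim. The one identity tying the differential to the bracket, namely $\delta\varphi = \{\mu,\varphi\}$ from \rmref{erfurt} in Lemma~\ref{platt}, is what guarantees $\delta$ is a graded derivation of the bracket, so that $\{\cdot,\cdot\}$ descends to $H^\bull(U,A)$ and the graded Jacobi identity and Leibniz rule survive to cohomology.

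The genuine obstacle here is not the abstract invocation but making sure the base-algebra bookkeeping does not break Gerstenhaber's classical arguments, which were written for $A=k$. The delicate point is that the insertion operations are now mediated by $\DD_\psi$ rather than naive substitution, and the actions $\lact,\blact$ together with the Sweedler--Takeuchi condition \rmref{tellmemore} must be compatible with every index manipulation in the comp-algebra identities. However, this compatibility has effectively been front-loaded: it is precisely what the associativity lemma for the $\circ_i$ encodes, and once that lemma holds the purely combinatorial sign-chasing of Gerstenhaber goes through unchanged over the noncommutative base. I would therefore keep the proof short, citing Gerstenhaber's theorem for the formal output and pointing to Lemma~\ref{platt} and the comp-algebra associativity lemma as the only places where the $\times_A$-structure enters.
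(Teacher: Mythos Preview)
Your proposal is correct and follows essentially the same approach as the paper: invoke the general fact that any right comp algebra with distinguished element $\mu$ satisfying $\mu \circ_1 \mu = \mu \circ_2 \mu$ yields a Gerstenhaber algebra on cohomology, citing \cite{GerSch:ABQGAAD, McCSmi:ASODHCC}. The paper's proof is in fact even terser than yours---it is a single sentence pointing to those references---so your elaboration on which lemmas supply the comp-algebra hypotheses and how Lemma~\ref{platt} identifies the abstract output with \rmref{ostkreuz3} and \rmref{zugangskarte} is helpful but not strictly necessary.
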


\begin{proof}
It is a general fact that by using the above formulae 
for $ \delta , \smallsmile $ as definitions,
any right comp algebra 
becomes a DG algebra on whose cohomology 
$\{\cdot,\cdot\}$ induces a Gerstenhaber algebra structure, see 
e.g.\ \cite{GerSch:ABQGAAD, McCSmi:ASODHCC} and the references therein. 
\end{proof}


\begin{rem}
The fact that 
the cup product is graded commutative up to homotopy follows
abstractly using 
the ``Hilton-Eckmann trick'', see, e.g., \cite{Sua:THHAFTACOCP} or 
\cite[Theorem~3]{KowKra:DAPIACT} for the concrete bialgebroid
incarnation. In Gerstenhaber's approach 
it follows from
\begin{equation*}
        (-1)^{|q|}\varphi \bar\circ \gd \psi 
        - (-1)^{|q|}\gd(\varphi \bar\circ \psi) 
        + \gd \varphi \bar\circ \psi = 
        \psi \smallsmile \varphi 
        - (-1)^{pq} \varphi \smallsmile \psi, 
\end{equation*}
which means that 
$
\gd(\varphi \bar\circ \psi) = (-1)^{q}\big(\psi \smallsmile \varphi - (-1)^{pq} \varphi \smallsmile \psi \big)
$ 
if $\varphi$ and $\psi$ are cocycles, so their graded commutator is a
coboundary.
\end{rem}

\begin{rem}
If $A$ is commutative and $ \eta $ factorises through the multiplication
map of $A$, that is, if the source and target maps of $U$ coincide so that 
$a \lact u=u \ract a$ holds for all $a \in A,u \in U$, 
then the tensor flip 
$$
        \tau : U \otimes_\ahha U \rightarrow U \otimes_\ahha U,\quad
        u \otimes_\ahha v \mapsto v \otimes_\ahha u
$$ 
is well defined. Consequently, it makes sense to then speak about 
cocommutative left Hopf algebroids, meaning that 
$\tau \circ \Delta = \Delta $. For example, this holds for 
the example of the 
universal enveloping algebra of a Lie-Rinehart algebra, see 
\S\ref{ganzhuebsch}.
In this case an explicit computation shows
that the Gerstenhaber bracket $\{\cdot,\cdot\}$ vanishes which is
clear also for abstract reasons, see \cite{Tai:IHBCOIDHAAGCOTYP}. 
\end{rem}


\begin{rem}\label{esgehtvoran}
Before moving on we also quickly remark 
that the reader may find formulae for Gerstenhaber brackets in
the literature that use a slightly different sign convention. Some
confusion that arises from this 
can be avoided by using the notion of the 
\emph{opposite} $(V,\smallsmile_\op,\{\cdot,\cdot\}_\op)$
of a Gerstenhaber algebra $(V,\smallsmile,\{\cdot,\cdot\})$: 
this is defined by
$$
        u \smallsmile_\op v:=v \smallsmile u,\quad
        \{u,v\}_\op := -\{v,u\},
$$ 
and 
it is verified straightforwardly that this indeed is a
Gerstenhaber algebra again. When defining a Gerstenhaber algebra from
a right comp algebra, the same changes can be made on the level of 
the comp algebra itself. The differential then has to be rescaled 
on degree $p$ by a factor of $(-1)^p$ in order to obtain a 
DG algebra again. 
\end{rem}

\section{The Gerstenhaber module}\label{tchaikovskij}
This section introduces the structures on 
homology that correspond to the cup product and the Gerstenhaber
bracket on $H^\bull(U,A)$: 
the cap product between 
$H^\bull(U,A)$ and  
$H_\bull(U,M)$ and then a Hopf algebroid generalisation 
of the Lie derivative that has been defined by Rinehart on 
Lie-Rinehart and Hochschild (co)homology. 
This, for module-comodules $M$ over a left Hopf algebroid $U$,  
will be defined only on ${\hhmu}$ rather than on 
$H_\bull(U,M)$ , and dually it will be necessary to replace 
$H^\bull(U,A)$ by a Gerstenhaber algebra 
$\hmu$ that is the cohomology of a 
suitable comp subalgebra $C^\bull_M(U) \,{\subseteq}\,C^\bull(U,A)$ . 

\subsection{$C_\bull(U,M)$ and 
the cap product}
The first steps in this section are completely dual to those in 
the previous one. First of all, we define the 
homology of a bialgebroid with coefficients in a right module.
The following is the counterpart of Definition~\ref{xion1}:
 
\begin{definition}\label{neon22}
For any bialgebroid $U$ and any 
$M \in \modu$ we define
$$
        \hat C_\bull(U,M):=
        M \otimes_U 
        P_\bull,
$$
which becomes a chain complex of $k$-modules 
with boundary map 
$\hat \bb := \mathrm{id}_M \otimes_U \bb'$. 
Using the coalgebra structure $\Delta ^\pehhe$ of $P$, we furthermore introduce 
the {\em cap product}
\begin{equation*}
\label{cap}
        \smallfrown \  :
        \hat C^p(U,A) \otimes_k 
        \hat C_n(U,M) \rightarrow 
        \hat C_{n-p}(U,M)
\end{equation*}
by
\begin{equation}
\label{annodom1900}
        \hat\varphi \smallfrown 
       (m \otimes_U c):=
        m \otimes_U c_{(1)} 
        \ract \hat\varphi (c_{(2)}).
\end{equation}
\end{definition}

Analogously to (\ref{gesundbrunnen}), we have an isomorphism of
$k$-modules 
\begin{equation}
\label{kitajgorod}
        \hat{C}_n(U,M) \overset{\simeq}{\longrightarrow} 
        C_n(U,M) =
        M \otimes_\Aopp  U^{\otimes_\Aopp n},
\end{equation}
given by
$$
        m \otimes_U (u^0,\cdots,u^n) \mapsto
        (mu^0,u^1,\ldots,u^n).
$$
Here and in what follows, we are again 
using the notation 
$$(m,u^1,\ldots,u^n):= 
m \otimes_\Aopp u^1 \otimes_\Aopp \cdots \otimes_\Aopp u^n
$$ 
to better
distinguish the tensor product over $\Aop$ from that one over $A$.

\begin{rem}
As a straightforward computation shows, 
the simplicial differential 
$\bb$ from (\ref{scotstoun}) 
differs from the one
induced by 
$\hat \bb$ only by a sign factor: if we suppress the  isomorphism
\rmref{kitajgorod}, then we have on $C_n(U,M)$  $$
        \bb = (-1)^n \hat\bb,
$$
so the two boundary maps yield the same homology $H_\bull(U,M)$.
\end{rem}

\begin{rem}
In analogy with Remark~\ref{hgleichext}, 
if $U$ is a right $A$-projective 
Hopf algebroid, then we have  
$H_\bull(U,M) \simeq \mathrm{Tor}^U(M,A)$.
\end{rem}

Let us compute what happens to the cap product 
under the isomorphisms 
\rmref{gesundbrunnen} and \rmref{kitajgorod}:

\begin{lemma}
The cap product of  
$\varphi \in C^p(U,A)$ with 
$(m, x) \in C_n(U,M)$ is given by
\begin{equation}
\label{alles4}
        \varphi \smallfrown (m,x)  = 
        (m, u^1, \ldots, u^{n-p-1}, 
        \varphi(u^{n-|p|}, \ldots, u^n) \blact u^{n-p}),
\end{equation}
where we again use the abbreviation 
$(m,x)=(m, u^1, \ldots, u^n)$ 
as in Proposition~\ref{kamille}.
\end{lemma}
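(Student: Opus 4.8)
The plan is to compute the cap product directly from its abstract definition \rmref{annodom1900} and then transport the result through the two module isomorphisms \rmref{gesundbrunnen} and \rmref{kitajgorod}, exactly paralleling the strategy used in the proof of Lemma~\ref{tiefenacht} for the cup product. First I would take a generic element $m \otimes_U (u^0, \ldots, u^n) \in \hat C_n(U,M)$ and apply the definition
$$
\hat\varphi \smallfrown (m \otimes_U c) = m \otimes_U c_{(1)} \ract \hat\varphi(c_{(2)}),
$$
using the explicit formula for $\Delta^\pehhe$ from the earlier definition. Since $\hat\varphi \in \hat C^p(U,A)$ is nonzero only on $P_p$, only the single summand $\Delta^\pehhe_{n,n-p}$ of the coproduct survives, namely
$$
(u^0, \ldots, u^n) \mapsto (u^0_{(1)}, \ldots, u^{n-p}_{(1)}) \otimes_\ahha (u^0_{(2)} \cdots u^{n-p}_{(2)}, u^{n-p+1}, \ldots, u^n).
$$
This gives $c_{(1)} = (u^0_{(1)}, \ldots, u^{n-p}_{(1)})$ as the $(n-p)$-part on which $\ract \hat\varphi(c_{(2)})$ acts.

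Next I would rewrite $\hat\varphi(c_{(2)})$ using the $U$-linearity of $\hat\varphi$, which lets me pull the leading factor $u^0_{(2)} \cdots u^{n-p}_{(2)}$ out and replace the unit slot, so that $\hat\varphi$ is expressed in terms of $\varphi = \hat\varphi(1, \cdot)$ via \rmref{gesundbrunnen}. The resulting scalar is then moved onto the last tensor slot of $c_{(1)}$ through the $\ract$-action. At this point the computation should collapse, just as in Lemma~\ref{tiefenacht}: the counit-type contractions coming from $u^0_{(2)} \cdots u^{n-p-1}_{(2)}$ acting via $\varepsilon$ telescope away, leaving the coproduct factors on $u^0, \ldots, u^{n-p-1}$ trivialised, while the $\blact$-action lands on $u^{n-p}$. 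The key identities here are the Sweedler--Takeuchi property and the fact that all four $A$-actions on $U$ commute, together with the bialgebroid counit axioms.

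Finally I would apply the isomorphism \rmref{kitajgorod}, $m \otimes_U (u^0, \ldots) \mapsto (mu^0, u^1, \ldots)$, to translate the answer into the normalised coordinates $(m, u^1, \ldots, u^n)$, which should produce precisely \rmref{alles4}: the first $n-p-1$ entries $u^1, \ldots, u^{n-p-1}$ are untouched, and the last entry becomes $\varphi(u^{n-|p|}, \ldots, u^n) \blact u^{n-p}$, where $|p| = p-1$ shifts the index range correctly. I expect the main obstacle to be purely bookkeeping: tracking which tensor slots the coproduct $\Delta^\pehhe_{n,n-p}$ splits, and ensuring that after applying $U$-linearity and the counit contractions the surviving $\blact$-action attaches to the correct component $u^{n-p}$ rather than to a neighbouring one. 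Getting the index boundaries (the appearance of $n-|p| = n-p+1$ versus $n-p$) exactly right is where sign-free but error-prone care is needed; there are no conceptual difficulties beyond faithfully dualising the cup-product argument.
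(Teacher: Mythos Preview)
Your proposal is correct and follows essentially the same approach as the paper: the paper's proof explicitly says it is ``a computation similar to that in the proof of Lemma~\ref{tiefenacht}'', applies $\Delta^\pehhe_{n,n-p}$, uses $U$-linearity of $\hat\varphi$ to extract $\hat\varphi(1,u^{n-|p|},\ldots,u^n)$, collapses the counit contractions on $u^0_{(1)},\ldots,u^{n-p-1}_{(1)}$ via the Sweedler--Takeuchi property, and finishes by applying the isomorphisms \rmref{gesundbrunnen} and \rmref{kitajgorod}. Your anticipated bookkeeping pitfalls are real but minor, and the paper's computation confirms the indices land exactly where you predict.
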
 
\begin{proof}
For $\hat\varphi \in \hat{C}^p(U,A)$ (recall that these are the $U$-linear
cochains), we have by a computation similar to that in the proof of Lemma \ref{tiefenacht}
\begin{footnotesize}
\begin{equation}
\label{groel}
\begin{split}
& \ \hat\varphi \smallfrown \big(m \otimes_U (u^0, \ldots, u^{n})\big) \\ 
& \quad = \hat\varphi 
        (u^0_{(2)} \cdots u^{n-p}_{(2)}, u^{n-|p|},\ldots, u^n) 
        m \otimes_\uhhu  (u^0_{(1)}, \ldots, u^{n-p}_{(1)}) \\
& \quad = \varepsilon \big(u^0_{(2)} \cdots u^{n-p}_{(2)} \bract 
        \hat\varphi(1, u^{n-|p|}, \ldots, u^n)\big) 
        m \otimes_\uhhu  
        \big(u^0_{(1)}, \ldots, u^{n-p}_{(1)}\big) \\
& \quad = m \otimes_\uhhu  \big(
        u^0_{(1)} \ract \varepsilon 
        (u^0_{(2)} \cdots u^{n-p}_{(2)}), \ldots, u^{n-p-1}_{(1)}, 
        \hat\varphi(1, u^{n-|p|}, \ldots, u^n) \blact u^{n-p}_{(1)}\big) \\
& \quad = m \otimes_\uhhu  \big(u^0, \ldots, u^{n-p-1}, 
        \hat\varphi(1, u^{n-|p|}, \ldots, u^n) \blact u^{n-p}\big).
\end{split}
\end{equation}
\end{footnotesize}
The claim follows by applying the isomorphisms (\ref{gesundbrunnen})
and (\ref{kitajgorod}).
\end{proof}

In the sequel we will 
carry out extensive computations concerning
algebraic relations satisfied by the operators 
$$ 
        \iota_\varphi := \varphi \smallfrown \cdot : C_n(U,M)
        \rightarrow C_{n-p}(U,M).
$$
As a first illustration, we formulate 
the following analogue of Corollary~\ref{medtner} in this
notation. This could still be nicely written out using
$\smallfrown$, but the computations 
in the subsequent sections will be
too complex for that.

\begin{prop}
$(C_\bull(U,M),\bb,\smallfrown)$ is a left DG module over 
$(C^\bull(U,A),\delta,\smallsmile)$, i.e.,
\begin{eqnarray}
\label{mulhouse1}
        \iota_\varphi \, \iota_\psi &=&
        \iota_{\varphi \smallsmile \psi},\\
        \label{mulhouse2}
        [\bb, \iota_\varphi] &=& \iota_{\gd\varphi},
\end{eqnarray} 
where $[\cdot,\cdot]$ denotes the 
graded commutator, that is, we
explicitly have for $\varphi \in C^p(U,A)$
$$
        [\bb,\iota_\varphi]=\bb \, \iota_\varphi -
        (-1)^p \iota_\varphi \, \bb,
$$ 
as $\iota_\varphi$ is of degree $p$ while $\bb$ is of degree
$1$.
\end{prop}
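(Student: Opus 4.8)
The plan is to prove both relations by recognising them, after passing to the \emph{hat} picture, as the assertion that $(\hat{C}_\bull(U,M),\hat\bb,\smallfrown)$ is a left differential graded module over the differential graded algebra $(\hat{C}^\bull(U,A),\hat\delta,\smallsmile)$. This is the exact homological dual of Corollary~\ref{medtner}, and like that result it is a purely formal consequence of the differential graded coalgebra structure on $P$ furnished by Proposition~\ref{karlmarxstadt}. Having established the two identities for the hat cap product $\smallfrown$ of \rmref{annodom1900}, I would transport them along the isomorphisms \rmref{gesundbrunnen} and \rmref{kitajgorod} to the explicit complex $C_\bull(U,M)$, where the cap product takes the form \rmref{alles4}.

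For \rmref{mulhouse1} I would compute $\iota_{\hat\varphi}\big(\iota_{\hat\psi}(m \otimes_U c)\big)$ by applying the defining formula \rmref{annodom1900} twice. The inner application produces $m \otimes_U c_{(1)} \ract \hat\psi(c_{(2)})$, and comultiplying once more and using the coassociativity of $\Delta^\pehhe$ (the lemma preceding Proposition~\ref{karlmarxstadt}) together with right $A$-linearity of $\Delta^\pehhe$, the two convolution factors $\hat\varphi$ and $\hat\psi$ collapse onto adjacent Sweedler components, yielding $m \otimes_U c_{(1)} \ract (\hat\varphi \smallsmile \hat\psi)(c_{(2)}) = \iota_{\hat\varphi \smallsmile \hat\psi}(m \otimes_U c)$. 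No internal differential intervenes, so this identity is sign-free; the only point to check is that the bracketing matches the convolution order in \rmref{xion2}, which fixes $\varphi \smallsmile \psi$ rather than $\psi \smallsmile \varphi$.

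For \rmref{mulhouse2} I would feed the Leibniz rule \rmref{kropotkin}, $\Delta^\pehhe \bb' = (\bb' \otimes_\ahha \mathrm{id}_P + \mathrm{id}_P \otimes_\ahha \bb')\Delta^\pehhe$, through the cap product, using $\hat\delta\hat\varphi = \hat\varphi\,\bb'$ and $\hat\bb = \mathrm{id}_M \otimes_U \bb'$. The summand $\mathrm{id}_P \otimes_\ahha \bb'$ hits the component that $\hat\varphi$ evaluates and reassembles into $\iota_{\hat\delta\hat\varphi}$, while the summand $\bb' \otimes_\ahha \mathrm{id}_P$ reproduces $\iota_{\hat\varphi}$ precomposed with the boundary; the Koszul sign built into \rmref{kropotkin} is precisely what recasts the difference of these two terms as a graded commutator. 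I expect the sign bookkeeping here to be the main obstacle: one must reconcile the \emph{uniform} Koszul sign coming from \rmref{kropotkin} with the \emph{degree-dependent} identity $\bb = (-1)^n \hat\bb$ from the remark after \rmref{kitajgorod}. Since $\iota_\varphi$ lowers degree by $p$, the factors $(-1)^n$ shift when commuted past it, and one has to verify that they recombine into exactly $[\bb,\iota_\varphi] = \bb\,\iota_\varphi - (-1)^p \iota_\varphi\,\bb$ rather than a twisted variant.

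As a robust cross-check, and to keep the argument self-contained, I would also verify \rmref{mulhouse2} directly on $C_\bull(U,M)$, bypassing the hat picture altogether. Applying the explicit cap formula \rmref{alles4} and the explicit boundary $\bb = \sum_{i=0}^n (-1)^i \dd_i$ of \rmref{scotstoun}--\rmref{landliebe}, the faces $\dd_i$ acting away from the last $p+1$ tensor slots simply commute past $\iota_\varphi$, contributing the $(-1)^p \iota_\varphi\,\bb$ term, whereas the faces touching those slots, after using the defining property of the tensor product over $\Aop$, assemble term by term into the explicit coboundary $\gd\varphi$ of \rmref{spaetkauf}, giving $\iota_{\gd\varphi}$. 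This computation is tedious but routine, and it pins down the signs unambiguously.
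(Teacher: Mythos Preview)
Your approach is essentially identical to the paper's: the paper's proof is the single sentence ``This follows instantly from the DG coalgebra axioms when using the original presentation \rmref{annodom1900} for the cap product,'' and your plan is precisely to unwind this, invoking coassociativity of $\Delta^\pehhe$ for \rmref{mulhouse1} and the Leibniz rule \rmref{kropotkin} for \rmref{mulhouse2}. Your additional direct verification of \rmref{mulhouse2} on $C_\bull(U,M)$ via \rmref{alles4}, \rmref{scotstoun}, and \rmref{spaetkauf} is a sound independent check that the paper does not carry out, and your flagging of the sign reconciliation between $\bb = (-1)^n \hat\bb$ and the Koszul convention in \rmref{kropotkin} is the only genuinely delicate point---but it is a bookkeeping issue, not a conceptual one.
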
 

\begin{proof}
This follows instantly from 
the DG coalgebra axioms 
when using the original 
presentation \rmref{annodom1900} 
for the cap product.
\end{proof} 

Consequently, $(H_\bull(U,M),\smallfrown)$ 
is a left module over 
the ring $(H^\bull(U,A),\smallsmile)$.


\subsection{The comp module structure on $C_\bull(U,M)$} 
\label{lufthansaschmerz}
A finer analysis, parallel to the one carried out for $C^\bull(U,A)$ in 
\S\ref{krauslig},
shows that $C_\bull(U,M)$ carries 
a structure that we will refer to as that of a  
comp module 
over $C^\bull(U,A)$: 

\begin{definition}
A {\em comp module} over a comp algebra $C^\bull$ is
a sequence of $k$-modules $C_\bull$ together with $k$-linear
operations 
$$
        \bullet_i : 
        C^p \otimes_k C_n \to C_{n-|p|},\quad i = 1, \ldots, n-|p|
$$
satisfying
for $\gvf \in C^p$, $\psi \in C^q$, $y \in
C_n$, and $j = 1, \ldots n-|q|$
\begin{equation}
\label{SchlesischeStr}
\gvf \bullet_i \big(\psi \bullet_j y\big) = 
\begin{cases} 
\psi \bullet_j \big(\gvf \bullet_{i + |q|}  y\big) \quad & \mbox{if} \ j < i \leq n - |p|-|q|, \\
(\gvf \circ_{j-i+1} \psi) \bullet_{i}  y \quad & \mbox{if} \ j - |p| \leq i \leq j, \\
\psi \bullet_{j-|p|} \big(\gvf \bullet_{i}  y\big) \quad & \mbox{if} \ 1 \leq i < j - |p|.
\end{cases}
\end{equation}
 \end{definition}

Of course, the middle line in \rmref{SchlesischeStr} can also be read
from right to left so as to get an idea how an element $\gvf \circ_i
\psi$ acts on $C_\bull$ via $\bullet_i$.

In our case, we define for $i = 1, \ldots, n-|p|$ 
$$
 \bullet_i : 
C^p(U,A) \otimes_k C_n(U, M) \to C_{n-|p|}(U,M)
$$
by
\begin{equation}\label{sehrgeehrter}
  \gvf \bullet_i(m,x) 
:= (m, u^1, \ldots, u^{i-1}, \DD_\varphi(u^i, \ldots, u^{i+|p|}), u^{i+p}, \ldots, u^n).
  \end{equation}
Observe that for zero cochains, i.e., for elements in $A$, this means that 
$$
a \bullet_i(m,x) := (m, u^1, \ldots, u^{i-1}, s(a), u^{i}, \ldots, u^n), \quad i=1, \ldots, n+1,
$$
where $s$ is the source map from \rmref{basmati}.

One verifies by straightforward computation:
\begin{lemma}\label{uddingston}
The operations \rmref{sehrgeehrter} turn  
$C_\bull(U,M)$ into a comp module over $C^\bull(U,A)$. 

\end{lemma}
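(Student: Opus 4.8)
The plan is to mimic the strategy used for the comp algebra structure on $C^\bull(U,A)$ in \S\ref{krauslig}. Since $\gvf \bullet_i$ replaces the block $u^i, \ldots, u^{i+|p|}$ of a chain $(m,x) \in C_n(U,M)$ by the single entry $\DD_\varphi(u^i, \ldots, u^{i+|p|})$, the operation $\bullet_i$ has on chains exactly the same shape that the insertion $\circ_i$ has on the arguments of a cochain, and the three cases of \rmref{SchlesischeStr} are precisely the module counterpart of the associativity relations for the products $\circ_i$ established in \S\ref{krauslig}. The entire verification therefore reduces to a single substitution identity for the operator $\DD$, after which only index bookkeeping remains.

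The \emph{substitution identity} I would first establish is
$$\DD_\varphi\big(u^1, \ldots, u^{k-1}, \DD_\psi(u^k, \ldots, u^{k+q-1}), u^{k+q}, \ldots, u^{p+q-1}\big) = \DD_{\varphi \circ_k \psi}(u^1, \ldots, u^{p+q-1}), \quad 1 \le k \le p.$$
This is proved by a direct Sweedler computation from \rmref{huetor1} and \rmref{maxdudler}. The key point is that $\DD_\psi(u^k, \ldots, u^{k+q-1}) = s\big(\psi(u^k_{(1)}, \ldots, u^{k+q-1}_{(1)})\big)\, u^k_{(2)} \cdots u^{k+q-1}_{(2)}$ is a product of $s(\psi(\cdots))$ with the second legs, so applying the coproduct — which is an algebra map with $\Delta(s(a)) = s(a) \otimes_\ahha 1$ — and using coassociativity to split each $u^j$ with $k \le j \le k+q-1$ into three legs lands exactly on the expansion of $\DD_{\varphi \circ_k \psi}$: the first legs feed $\psi$, the second legs feed $\varphi$ through the inner $\lact$, and the third legs join the outer product $\lact u^1_{(2)} \cdots u^{p+q-1}_{(2)}$.

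With this identity available I would verify \rmref{SchlesischeStr} case by case. In the two non-overlapping cases ($j < i$ and $i < j - |p|$) the operators $\DD_\varphi$ and $\DD_\psi$ act on disjoint blocks of the $u^\ell$, so the two insertions manifestly commute; the only content is that collapsing a $\psi$-block of length $q$ (respectively a $\varphi$-block of length $p$) into one entry shifts the position of the remaining insertion by $|q|$ (respectively $|p|$), which produces the shifted indices $\gvf \bullet_{i+|q|}$ and $\psi \bullet_{j-|p|}$. In the overlapping case $j - |p| \le i \le j$ the entry $\DD_\psi(\ldots)$ created by $\psi \bullet_j$ occupies slot $j-i+1$ of the block handed to $\DD_\varphi$, and the substitution identity with $k = j-i+1$ rewrites the nested expression as $\DD_{\varphi \circ_{j-i+1}\psi}$ applied to the original entries $u^i, \ldots, u^{i+p+q-2}$, which is exactly $(\gvf \circ_{j-i+1}\psi) \bullet_i (m,x)$. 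The case of zero-cochains is immediate from $\DD_a = s(a)$.

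The step I expect to be the main obstacle is the Sweedler bookkeeping in the substitution identity: one must keep track of the three legs into which each of the $q$ arguments of $\DD_\psi$ is comultiplied and check that they land in the correct factors, while ensuring that all the tensor products over $\Aop$ stay well defined, i.e.\ that the balancing relations $(\ldots, a \blact u^{i}, u^{i+1}, \ldots) = (\ldots, u^{i}, u^{i+1} \ract a, \ldots)$ are respected throughout. Once the identity is in place the non-overlapping cases are routine and the overlapping case is immediate, so no further difficulty remains.
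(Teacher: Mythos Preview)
Your proposal is correct and is exactly the kind of direct verification the paper has in mind: the paper's own proof reads in its entirety ``Straightforward computation'', and the substitution identity $\DD_\varphi(\ldots,\DD_\psi(\ldots),\ldots)=\DD_{\varphi\circ_k\psi}(\ldots)$ together with the disjoint-block/index-shift bookkeeping you describe is precisely what that computation amounts to. There is nothing to add.
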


\begin{rem}
Despite the similarity, the associativity relations \rmref{SchlesischeStr}
are quite different from those that hold for the $\circ_i$ 
in a comp algebra. For example, 
there seems to be no way  
to express the cap product $\smallfrown$  
in terms of $ \mu $ and $\bullet_i$ 
by a formula analogous to the 
one given in Lemma~\ref{platt} for 
the cup product $\smallsmile$. However,  
Lemma~\ref{platt3} below will provide 
a counterpart of the second part of 
Lemma~\ref{platt}.
\end{rem}

For later use, let us also note down the 
following relations: 
\begin{lemma}
Let $\gvf \in C^p(U,A)$, $\psi \in C^q(U,A)$, and $(m,x) \in C_n(U,M)$. For $i = 1, \ldots, n-|p+q|$ one has
\begin{eqnarray}
\label{schokopudding}
(\gvf \smallsmile \psi) \bullet_i (m,x) &=& \mu \bullet_i \big(\gvf \bullet_i (\psi \bullet_{i+p} (m,x))\big), \\
\label{orecchiette2}
\varphi \bullet_i \big(\psi \smallfrown (m,x)\big) &=& \psi \smallfrown \big(\gvf \bullet_i (m,x)\big).
\end{eqnarray}
\end{lemma}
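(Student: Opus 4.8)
The plan is to prove the two identities by independent means: \eqref{schokopudding} will follow formally from the comp‑module associativity already in place, while \eqref{orecchiette2} is essentially a comparison of tensor slots whose only nontrivial input is a right $A$‑linearity of $\DD_\gvf$.

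For \eqref{schokopudding} I would not expand $\DD$ in Sweedler notation, but instead combine Lemma~\ref{platt}, which gives
$$
\gvf \smallsmile \psi = (\mu \circ_1 \gvf) \circ_{p+1} \psi ,
$$
with the associativity relations \eqref{SchlesischeStr} of Lemma~\ref{uddingston}. Read as an equality from right to left, the middle case of \eqref{SchlesischeStr} states that $(\chi \circ_k \psi) \bullet_i y = \chi \bullet_i(\psi \bullet_{i+k-1} y)$ whenever $1 \le k \le \deg \chi$. Applying this first with $\chi := \mu \circ_1 \gvf \in C^{p+1}(U,A)$ and $k = p+1$, and then with $\mu \in C^2(U,A)$ and $k = 1$, transforms $(\gvf \smallsmile \psi) \bullet_i (m,x)$ into $\mu \bullet_i\big(\gvf \bullet_i(\psi \bullet_{i+p}(m,x))\big)$, which is exactly \eqref{schokopudding}. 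The only bookkeeping is to check that $i$ remains admissible at each stage; one finds that $i \le n-|p+q|$ is precisely the condition keeping every intermediate $\bullet$ defined, with the boundary value $i=n-|p+q|$ allowed throughout. The degenerate cases $p=0$ or $q=0$ I would dispatch directly from the special definitions of $\circ_i$ and $\bullet_i$ on zero‑cochains. In this way the genuine content—that $\DD_{\gvf\smallsmile\psi}$ factors as $m_\uhhu$ applied to $\DD_\gvf$ and $\DD_\psi$—is repackaged into relations already proved.

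For \eqref{orecchiette2} I would compare the two composites slot by slot using \eqref{sehrgeehrter} and \eqref{alles4} with $(m,x)=(m,u^1,\dots,u^n)$. The contraction $\iota_\psi = \psi \smallfrown \cdot$ reads the last $q$ entries $u^{n-|q|},\dots,u^n$ and replaces $u^{n-q}$ by $\psi(u^{n-|q|},\dots,u^n)\blact u^{n-q}$, whereas $\gvf \bullet_i$ collapses the block $u^i,\dots,u^{i+|p|}$ into $\DD_\gvf$. For $i < n-|p+q|$ these two blocks are disjoint, so the operations plainly commute and the two sides agree entry by entry. The single interacting case is the boundary $i = n-|p+q|$, where the last argument $u^{n-q}$ of $\DD_\gvf$ is exactly the slot that $\iota_\psi$ multiplies on the right by $t\big(\psi(u^{n-|q|},\dots,u^n)\big)$, since $b \blact u = u\,t(b)$.

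In that boundary case the identity reduces to the right $A$‑linearity of $\DD_\gvf$ in its final argument, namely
$$
\DD_\gvf(w^1,\dots,w^{p-1},w^p\,t(b)) = \DD_\gvf(w^1,\dots,w^p)\,t(b) = b \blact \DD_\gvf(w^1,\dots,w^p),
$$
for $b \in A$. This is the one place where the bialgebroid axioms, rather than pure index combinatorics, enter: from the coproduct identity $\Delta(w^p\,t(b)) = w^p_{(1)} \otimes_\ahha w^p_{(2)}\,t(b)$, the definition \eqref{huetor1} of $\DD_\gvf$, and $a \lact u = s(a)u$, one sees that $\gvf(w^1_{(1)},\dots,w^p_{(1)})$ is unchanged while $t(b)$ sits at the far right of the product $w^1_{(2)}\cdots w^p_{(2)}$ and the $\lact$‑action multiplies by a source element on the far left, so $t(b)$ factors out unaltered. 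Substituting this into the boundary slot matches the two sides and finishes the proof. I expect this right‑linearity of $\DD_\gvf$—the only step requiring a true algebra computation rather than disjointness of supports or formal manipulation of the comp structure—to be the main, albeit minor, obstacle.
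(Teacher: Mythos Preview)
Your proof is correct. For \eqref{orecchiette2} your argument coincides with the paper's in substance: the identity $\Delta(w\,t(b)) = w_{(1)} \otimes_\ahha w_{(2)}\,t(b)$ that you invoke in the boundary case $i = n-|p+q|$ is precisely what ``the coproduct of $U$ is an $\Ae$-module homomorphism'' means when specialised to the $\blact$-action, and your slot-by-slot analysis just makes this explicit. For \eqref{schokopudding}, however, you take a genuinely different route. The paper argues by direct computation, using the Sweedler--Takeuchi property \eqref{tellmemore} together with $\DD_\mu = m_\uhhu$ from \eqref{elekta} to verify the underlying identity $\DD_{\gvf\smallsmile\psi} = m_\uhhu \circ (\DD_\gvf,\DD_\psi)$ in Sweedler notation. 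You instead deduce \eqref{schokopudding} formally from Lemma~\ref{platt} and the comp-module associativity \eqref{SchlesischeStr} of Lemma~\ref{uddingston}, never unpacking any coproducts. Your approach is cleaner and exhibits \eqref{schokopudding} as a purely operadic consequence of structure already in place; the paper's direct computation is more hands-on but makes transparent which bialgebroid axiom is actually doing the work.
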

\begin{proof}
Eq.~\rmref{schokopudding} is easily proven by means of the
Sweedler-Takeuchi property \rmref{tellmemore} and
\rmref{elekta}. Eq.~\rmref{orecchiette2} follows from the fact that
the coproduct of $U$ is an $\Ae$-module homomorphism.
\end{proof}

Similar as for the cap product with a fixed cochain, we introduce 
a new notation for the operator 
$\varphi \bullet_i \cdot$, where 
$ \varphi \in C^p(U,A)$, 
in order to keep the
presentation of our computations below 
as compact as possible:  
whenever $p \leq n$ and for $i = 1, \ldots, n-|p|$, we
define
\begin{equation*}
\label{daygum}
 \DD^{\scriptscriptstyle{i{\rm th}}}_\varphi :
C_n(U, M) \to C_{n-|p|}(U,M), \quad
  (m, x) \mapsto 
\varphi \bullet_i (m,x).
\end{equation*}
In particular, we will make frequent use of the short hand
notation  
$$
        \DD'_\gvf := \DD^{\scriptscriptstyle{(n-|p|){\rm th}}}_\varphi.
$$ 
For example, in this notation we have:
\begin{lemma}
For
any $\varphi \in C^p(U,A)$,
for $0 \leq p < n$ 
we have 
on $C_n(U,M)$
\begin{eqnarray}
\label{parkraumbewirtschaftung1}
\dd_0 \, \DD'_\varphi &=& \iota_\gvf, \\
\label{parkraumbewirtschaftung-1}
\dd_{i} \, \DD'_\varphi 
&=& 
\DD'_\varphi \, \dd_{i+|p|}, 
\qquad \mbox{for} \quad i = 2, \ldots, n-|p|, \\
\label{parkraumbewirtschaftung0}
\sss_{j} \, \DD'_\varphi 
&=& 
\DD'_\varphi \, \sss_{j+|p|}, 
\qquad \mbox{for} \quad j = 1, \ldots, n-|p|.
\end{eqnarray}
\end{lemma}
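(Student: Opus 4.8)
The plan is to prove all three identities by direct computation, after first unwinding $\DD'_\varphi$. Since $\DD'_\varphi = \DD^{(n-|p|)\mathrm{th}}_\varphi$ and $|p| = p-1$, formula \rmref{sehrgeehrter} together with \rmref{huetor1} gives
$$
\DD'_\varphi(m,u^1,\ldots,u^n) = (m, u^1, \ldots, u^{n-p}, \DD_\varphi(u^{n-p+1}, \ldots, u^n)),
$$
so $\DD'_\varphi$ leaves the first $n-p$ entries untouched and compresses the last $p$ entries into the single slot $\DD_\varphi(u^{n-p+1},\ldots,u^n)$, mapping $C_n(U,M)$ to $C_{n-|p|}(U,M)$. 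With this explicit description, each of the three identities becomes a slot-by-slot comparison using the face and degeneracy maps from \rmref{landliebe}.

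For the two commutation identities \rmref{parkraumbewirtschaftung-1} and \rmref{parkraumbewirtschaftung0}, the key point is that in the convention of \rmref{landliebe} the operators are indexed from the right: $\dd_i$ (for $1\le i\le n-1$) merges $u^{n-i}u^{n-i+1}$, $\dd_n$ acts on the leftmost slot as $m\mapsto mu^1$, and $\sss_j$ inserts a unit between $u^{n-j}$ and $u^{n-j+1}$. For $i \geq 2$ the pair of slots merged by $\dd_i$ on $C_{n-|p|}(U,M)$ lies entirely among the first $n-p$ entries (or involves only the module factor $m$ when $i=n-|p|$), hence is disjoint from the $p$ entries consumed by $\DD_\varphi$; the same holds for the slots flanking the insertion of $\sss_j$ for $j\geq 1$. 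So $\DD'_\varphi$ and the face resp.\ degeneracy act on disjoint blocks and commute. The shift by $|p|=p-1$ on the right-hand side is precisely the correction forced by $\DD_\varphi$ replacing $p$ consecutive slots by one: I would check that $\dd_i$ applied after $\DD'_\varphi$ merges $u^{(n-p+1)-i}$ and $u^{(n-p+2)-i}$, whereas $\dd_{i+|p|}$ applied before $\DD'_\varphi$ merges $u^{n-(i+|p|)}=u^{(n-p+1)-i}$ and $u^{(n-p+2)-i}$, so the two positions coincide; an identical bookkeeping matches the insertion positions of the degeneracies, including the boundary case $j=1$ (insertion immediately before the $\DD_\varphi$-slot) and the top degeneracy $j=n-|p|$ (insertion at the far left). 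In every case the arguments of $\DD_\varphi$ remain untouched, and both identities follow.

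The identity \rmref{parkraumbewirtschaftung1} is the only one with genuine content. Applying $\dd_0$ from \rmref{landliebe} to $\DD'_\varphi(m,x)$ contracts the last two slots via the counit, giving
$$
\dd_0 \DD'_\varphi(m,x) = (m, u^1, \ldots, u^{n-p-1}, \varepsilon(\DD_\varphi(u^{n-p+1}, \ldots, u^n)) \blact u^{n-p}),
$$
while \rmref{alles4} reads $\iota_\varphi(m,x) = (m, u^1, \ldots, u^{n-p-1}, \varphi(u^{n-p+1}, \ldots, u^n)\blact u^{n-p})$. Thus the identity reduces to the single relation $\varepsilon \circ \DD_\varphi = \varphi$. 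I would establish this from \rmref{huetor1} by writing $\varepsilon\big(\varphi(u^1_{(1)},\ldots,u^p_{(1)})\lact u^1_{(2)}\cdots u^p_{(2)}\big) = \varphi(u^1_{(1)},\ldots,u^p_{(1)})\,\varepsilon(u^1_{(2)}\cdots u^p_{(2)})$ using left $A$-linearity of the counit, and then collapsing the Sweedler factors one at a time via the counit axioms together with the right $A$-linearity of $\varphi$, mirroring the Hopf-algebra case $A=k$ in which $\varepsilon$ is simply multiplicative.

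I expect the main obstacle to be twofold. The substantive one is getting the counit computation $\varepsilon\circ\DD_\varphi=\varphi$ right in the bialgebroid setting, where one must carefully juggle the source and target maps, the two actions $\lact$ and $\ract$, and the Sweedler--Takeuchi constraint \rmref{tellmemore} instead of merely invoking multiplicativity of $\varepsilon$. The secondary one is purely clerical: keeping the ``from the right'' index shifts in \rmref{parkraumbewirtschaftung-1} and \rmref{parkraumbewirtschaftung0} consistent across the drop in degree from $C_n(U,M)$ to $C_{n-|p|}(U,M)$, which is mechanical but prone to off-by-one errors.
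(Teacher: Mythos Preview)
Your proposal is correct and follows essentially the same strategy as the paper. Both arguments reduce \rmref{parkraumbewirtschaftung1} to the identity $\varepsilon\circ\DD_\varphi=\varphi$; the paper packages its verification by observing that $\bar\Delta:(u^1,\ldots,u^p)\mapsto (u^1_{(1)},\ldots,u^p_{(1)})\otimes_\ahha u^1_{(2)}\cdots u^p_{(2)}$ is a right $U$-coaction and then running the counit axiom abstractly, whereas you propose the equivalent hands-on collapse of the Sweedler legs. For \rmref{parkraumbewirtschaftung-1} and \rmref{parkraumbewirtschaftung0} the paper rewrites the relevant faces and degeneracies as $\dd_i=\mu\bullet_{n-i}$ and $\sss_j=(\varepsilon\,1_U)\bullet_{n-j+1}$ and then invokes the comp module relations \rmref{SchlesischeStr}, which is just a structured repackaging of your slot-by-slot index comparison.
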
 
\begin{proof}
Using \rmref{landliebe}, \rmref{alles4}, and with $\DD_\gvf$ as in \rmref{huetor1}, 
Eq.~\rmref{parkraumbewirtschaftung1} follows directly from the
identity
\begin{equation*}
\label{whiteinch}
\gve \DD_\varphi = \gvf,
\end{equation*}
which we prove now: one verifies 
in a straightforward manner that
$$
\bar\Delta: U^{\otimes_\Aopp p} \to (U^{\otimes_\Aopp p})_\ract \otimes_\ahha \due U \lact {}, \quad (u^1, \ldots, u^p) \mapsto (u^1_{(1)}, \ldots, u^p_{(1)}) \otimes_\ahha u^1_{(2)} \cdots u^p_{(2)}
$$
defines a right $U$-comodule structure on $(U^{\otimes_\Aopp p})_\ract$. Using source and target maps from \rmref{basmati} and denoting by $m_\uhhu$ the multiplication in $U$, we can then write
$$
        \gve \DD_\gvf 
        = 
        \gve m_\uhhu\big(s \gvf \otimes 
        \id\big)\bar\Delta 
        = 
        \gve m_\uhhu\big(s \gvf \otimes 
        s \gve \big)\bar\Delta 
        =  
        (\gvf \otimes \gve)\bar\Delta 
        = 
        \gvf \mUopp (\id \otimes t \gve)\bar\Delta  
        = 
        \gvf,
$$
which holds by $A$-linearity of a bialgebroid counit, the right $A$-linearity of $\gvf$ and the fact that $\bar\Delta$ is a coaction.

Eqs.~\rmref{parkraumbewirtschaftung-1} and \rmref{parkraumbewirtschaftung0} follow by straightforward computation, 
using the fact that the involved 
face and degeneracy maps 
can be written as
\begin{equation*}
\begin{array}{rl}
\begin{array}{rcl}
\dd_i(m,x) &=& \mu \bullet_{n-i} (m,x), \\
\sss_j(m,x)&=& (\gve 1_\uhhu) \bullet_{n-|i|} (m,x), 
\end{array}
& \mbox{for} \ i, j = 1, \ldots, n-1,
\end{array}
\end{equation*}
where $(m,x) \in C_\bull(U,M)$,
and then applying the properties \rmref{SchlesischeStr}.
\end{proof} 

\subsection{The comp algebra $C^\bull_M(U)$}
When
$U$ is a left Hopf
algebroid (not just a bialgebroid as before) 
and $M$ is a module-comodule,
the para-cyclic structure on $C_\bull(U,M)$ 
given in Proposition~\ref{kamille}
relates the products $\bullet_i$ 
to each other:

\begin{lemma}
\label{precedinglemma}
For any $\varphi \in C^p(U,A)$,
we have for $0 \leq p \leq n$ and $(m, x) \in C_\bull(U,M)$
\begin{equation}
\label{naumburg}
\gvf \bullet_{i} \big(\ttt(m,x)\big) = \begin{cases}
 \ttt \big( \gvf \bullet_{i+1} (m,x)\big) & \mbox{for} \ i = 1, \ldots, n-p, \\
\ttt \big( \iota_\varphi \, \sss_{-1}(m,x)\big)     & \mbox{for} \ i = n-|p|.
\end{cases}
\end{equation}
\end{lemma}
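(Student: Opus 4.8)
The plan is to verify both cases by direct computation, expanding $\ttt(m,x)$ via Proposition~\ref{kamille} and then applying the insertion operator $\gvf \bullet_i$ of \rmref{sehrgeehrter}; the dichotomy in the statement reflects precisely whether the block of $p$ consecutive entries combined by $\DD_\gvf$ meets the single ``wrapped-around'' tensor factor created by $\ttt$. Writing $\ttt(m,x) = (m_{(0)} u^1_+, u^2_+, \ldots, u^n_+, u^n_- \cdots u^1_- m_{(-1)})$, every algebra entry except the last is one of the translation-map parts $u^k_+$, while the last entry $u^n_- \cdots u^1_- m_{(-1)}$ is the only one carrying the comodule datum. For $1 \le i \le n-p$ the entries $u^{i+1}_+, \ldots, u^{i+p}_+$ on which $\DD_\gvf$ acts are all of the first kind, whereas for $i = n-|p|$ the block becomes $(u^{n-p+2}_+, \ldots, u^n_+, u^n_- \cdots u^1_- m_{(-1)})$ and reaches the wrapped factor.

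For the first case the crux is the identity
\[
\bigl(\DD_\gvf(u^1, \ldots, u^p)\bigr)_+ \otimes_\Aopp \bigl(\DD_\gvf(u^1, \ldots, u^p)\bigr)_- = \DD_\gvf(u^1_+, \ldots, u^p_+) \otimes_\Aopp u^p_- \cdots u^1_-.
\]
To prove it I would write $\DD_\gvf(u^1, \ldots, u^p) = s\bigl(\gvf(u^1_{(1)}, \ldots, u^p_{(1)})\bigr)\, u^1_{(2)} \cdots u^p_{(2)}$ as in \rmref{huetor1}, split the translation map over this product by the anti-multiplicativity \rmref{Sch4}, note via \rmref{Sch5} that the source factor only contributes $s(\gvf(\ldots)) \otimes_\Aopp 1$, and then rewrite each triple $u^k_{(1)} \otimes_\ahha (u^k_{(2)})_+ \otimes_\Aopp (u^k_{(2)})_-$ by \rmref{Sch38} as $u^k_{+(1)} \otimes_\ahha u^k_{+(2)} \otimes_\Aopp u^k_-$; reassembling the legs yields $\DD_\gvf(u^1_+, \ldots, u^p_+)$ and $u^p_- \cdots u^1_-$. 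Granting this, applying $\gvf \bullet_i$ to $\ttt(m,x)$ creates $\DD_\gvf(u^{i+1}_+, \ldots, u^{i+p}_+)$ at the insertion site, and the identity replaces it by $\bigl(\DD_\gvf(u^{i+1}, \ldots, u^{i+p})\bigr)_+$ while the freed minus-parts $u^{i+p}_- \cdots u^{i+1}_-$ reassemble the trailing entry into $u^n_- \cdots u^1_- m_{(-1)}$; comparison with $\ttt\bigl(\gvf \bullet_{i+1}(m,x)\bigr)$, computed from Proposition~\ref{kamille} and \rmref{sehrgeehrter}, then gives the equality.

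For the second case I would unwind the extra degeneracy $\sss_{-1} = \ttt\,\sss_n$ of \rmref{extra} into $\sss_{-1}(m,x) = (m_{(0)}, u^1_+, \ldots, u^n_+, u^n_- \cdots u^1_- m_{(-1)})$ (using $1_+ \otimes_\Aopp 1_- = 1 \otimes_\Aopp 1$, a special case of \rmref{Sch5}), and then apply the cap product \rmref{alles4} to obtain
\[
\iota_\gvf\,\sss_{-1}(m,x) = \bigl(m_{(0)}, u^1_+, \ldots, u^{n-p}_+, \gvf(u^{n-p+2}_+, \ldots, u^n_+, u^n_- \cdots u^1_- m_{(-1)}) \blact u^{n-p+1}_+\bigr).
\]
It then remains to apply $\ttt$ to this chain and to match the result with the expression for $\gvf \bullet_{n-|p|} \ttt(m,x)$ found in the first paragraph.

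This last comparison is the main obstacle. The left-hand side $\gvf \bullet_{n-|p|} \ttt(m,x)$ uses the coaction on $M$ and the translation map each only once, whereas $\ttt\bigl(\iota_\gvf \sss_{-1}(m,x)\bigr)$ produces a second coaction $(m_{(0)})_{(-1)} \otimes_\ahha (m_{(0)})_{(0)}$ and nested translation maps such as $u^k_{++}, u^k_{+-}$, so the two sides cannot be matched entry by entry. Reconciling them requires the coassociativity of $\Delta_\emme$ to turn the iterated coaction into a single coproduct, the Schauenburg identities \rmref{Sch1}, \rmref{Sch2}, \rmref{Sch37} and \rmref{Sch38} to unwind the nested translation maps, and the $A$- and $\Aop$-balancing of the tensor factors to shift the surplus coproduct legs into the slots where they cancel against the matching translation parts; the scalar $\gvf(\ldots) \in A$ is absorbed through the source and target maps exactly as in the identity $\gve\DD_\gvf = \gvf$ from the proof of \rmref{parkraumbewirtschaftung1}. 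This index bookkeeping across the wrapped factor, rather than any conceptual subtlety, is where the genuine work of the lemma lies.
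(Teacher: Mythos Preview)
Your approach is correct and coincides with the paper's. For the case $1 \le i \le n-p$ the paper performs exactly the computation you describe, using \rmref{Sch38} and \rmref{Sch5} to obtain the key identity
\[
\bigl(\DD_\gvf(u^{i+1},\ldots,u^{i+p})\bigr)_+ \otimes_\Aopp \bigl(\DD_\gvf(u^{i+1},\ldots,u^{i+p})\bigr)_- = \DD_\gvf(u^{i+1}_+,\ldots,u^{i+p}_+) \otimes_\Aopp u^{i+p}_- \cdots u^{i+1}_-,
\]
just as you isolate it. For the case $i=n-|p|$ the paper merely remarks that no aYD compatibility is required and leaves the explicit computation to the reader; your outline is therefore more detailed than the published proof. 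Your list of tools is accurate: after expanding $\ttt(\iota_\gvf\sss_{-1}(m,x))$ one uses coassociativity of $\Delta_\emme$ together with \rmref{Sch37} on $u^1,\ldots,u^{n-p+1}$ to remove the nested $u^j_{++},u^j_{+-}$, and then the combination of \rmref{Sch37} and \rmref{Sch1} yields
\[
(v_+)_{(1)} \otimes_\ahha v_{-(1)} \otimes_\Aopp (v_+)_{(2)} v_{-(2)} = v_+ \otimes_\ahha v_- \otimes_\Aopp 1
\]
for $v=u^{n-p+2},\ldots,u^n$, which collapses the remaining coproduct legs and matches the $\DD_\gvf$ on the left-hand side. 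The identity \rmref{Sch2} is not actually needed here; \rmref{Sch1} and \rmref{Sch37} suffice.
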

\begin{proof}
The case for $1 \leq i \leq n-p$ is a simple computation using \rmref{Sch38} and \rmref{Sch5}:
\begin{align*}
& \ 
\gvf \bullet_{i} \big( \ttt  (m, u^1, \ldots, u^n)\big) \\
=& \ 
\big(m_{(0)}u^1_+, u^2_+, \ldots, u^{i}_+, \DD_\gvf(u^{i+1}_+,\ldots,  u^{i+p}), u^{i+p+1}_+, \ldots, u^n_+, u^n_-  \cdots u^1_- m_{(-1)}\big) \\
=& \ 
\big(m_{(0)}u^1_+, u^2_+, \ldots, \big(\DD_\gvf(u^{i+1},\ldots,
  u^{i+p})\big)_+, \ldots \\
& \ \ldots , u^n_+, u^n_- \cdots \big(\DD_\gvf(u^{i+1},\ldots, u^{i+p})\big)_- \cdots u^1_- m_{(-1)}\big) \\
=& \ 
\ttt \big( \gvf \bullet_{i+1} (m, u^1, \ldots, u^n)\big). 
\end{align*}
As for the case $i = n-|p|$, one first observes that no
aYD condition (i.e., compatibility of $U$-action and $U$-coaction) is
needed for the explicit computation, which we leave to the
reader.
\end{proof}


The comp module structure of $C_\bull(U,M)$ 
does not descend, for general module-comodules $M$ 
over left Hopf algebroids, to the universal cyclic quotient 
$C_\bull^\cyc(U,M)$. Since we will have to work from some point on on
the latter, we define:

\begin{definition}
If $U$ is a left Hopf algebroid and 
$M$ is a module-comodule, we define
$$
        C^\bull_M(U):=\big\{ \varphi \in C^\bull(U,A) \mid 
        \DD^{\scriptscriptstyle{i{\rm th}}}_\varphi (\mathrm{im}(\mathrm{id} - 
        \TT) ) \, {\subseteq} \, \mathrm{im}(\mathrm{id} - \TT) \forall i\big\}.
$$  
\end{definition}

Obviously, one has 
$C^\bull_M(U)=C^\bull(U,A)$ whenever $M$
is an SaYD module.
Observe furthermore 
that the middle relation in \rmref{SchlesischeStr} 
immediately implies:

\begin{lemma}
$C^\bull_M(U) \,{\subseteq}\,
C^\bull(U,A)$ is a comp subalgebra. 
\end{lemma}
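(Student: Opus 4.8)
The plan is to verify the two defining features of a comp subalgebra: that $C^\bull_M(U)$ is closed under all Gerstenhaber products $\circ_k$, and that it contains the distinguished element $\mu$. Throughout I write $W \,{=}\, \mathrm{im}(\mathrm{id}-\TT)$ for the graded subspace of $C_\bull(U,M)$ appearing in the definition of $C^\bull_M(U)$, and I use the operator notation $\DD^{\scriptscriptstyle{i{\rm th}}}_\varphi = \varphi \bullet_i(\cdot)$ introduced above, so that membership in $C^\bull_M(U)$ is exactly the condition $\DD^{\scriptscriptstyle{i{\rm th}}}_\varphi(W)\subseteq W$ for all admissible $i$.

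For closure under $\circ_k$, the essential input is the middle case of the comp-module associativity relations \rmref{SchlesischeStr}, as the remark preceding the lemma indicates. Reading that line with $j = i+k-1$ and rewriting it in the operator notation, it states precisely
$$
\DD^{\scriptscriptstyle{i{\rm th}}}_{\gvf \circ_k \psi}
= \DD^{\scriptscriptstyle{i{\rm th}}}_\gvf \, \DD^{\scriptscriptstyle{(i+k-1){\rm th}}}_\psi
$$
as operators on $C_\bull(U,M)$, for every admissible index $i$. Hence, given $\gvf,\psi \in C^\bull_M(U)$, each of the two factors on the right maps $W$ into itself by definition of $C^\bull_M(U)$, and so does their composite; therefore $\DD^{\scriptscriptstyle{i{\rm th}}}_{\gvf \circ_k \psi}(W)\subseteq W$ for all $i$, which is exactly the condition placing $\gvf \circ_k \psi$ in $C^\bull_M(U)$.

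It remains to record that $\mu \in C^\bull_M(U)$. Here one uses that the operators associated to $\mu$ are, up to reindexing, the inner face maps: on $C_n(U,M)$ one has $\mu \bullet_i = \dd_{n-i}$ for $i = 1, \ldots, n-1$ (compare the expression $\dd_i = \mu \bullet_{n-i}$ used in the proof of \rmref{parkraumbewirtschaftung0}). Since $\TT$ commutes with every face map, each $\dd_{n-i}$ commutes with $\mathrm{id}-\TT$ and therefore preserves $W$, whence $\mu \in C^\bull_M(U)$.

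The only point requiring care—rather than a genuine obstacle—is the bookkeeping with index ranges: one must confirm that, as $i$ ranges over all values for which $\DD^{\scriptscriptstyle{i{\rm th}}}_{\gvf\circ_k\psi}$ is defined (namely $i = 1,\ldots,n-|p+q-1|$ for $\gvf\in C^p$, $\psi\in C^q$), the substitution $j = i+k-1$ stays inside the range $j-|p|\le i\le j$ for which the middle line of \rmref{SchlesischeStr} was stated, and that the intermediate operator $\DD^{\scriptscriptstyle{(i+k-1){\rm th}}}_\psi$ is itself defined on the relevant homogeneous component. Both conditions reduce to the inequalities $1\le k\le p$, which hold for all Gerstenhaber products; once this is checked the argument is immediate, exactly as the preceding remark suggests.
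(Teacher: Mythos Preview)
Your proof is correct and follows exactly the approach the paper indicates: closure under the $\circ_k$ is deduced from the middle line of \rmref{SchlesischeStr}, which is all the paper says. Your additional explicit check that the distinguished element $\mu$ lies in $C^\bull_M(U)$ (via $\mu\bullet_i=\dd_{n-i}$ and the fact that $\TT$ commutes with the face maps) and your verification of the index ranges are details the paper leaves implicit, but they are correct and appropriate to include.
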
 
  
In particular, it is a DG
subalgebra, so it makes sense to talk about its cohomology:
\begin{definition}
The cohomology of $C^\bull_M(U)$ will
be denoted by $\hmu$.
  \end{definition}

Applying Eq.~\rmref{naumburg} repeatedly, one obtains that on 
$C^\cyc_\bull(U,M)$ all operators $\DD^{\scriptscriptstyle{i{\rm
      th}}}_\varphi$ can be expressed in terms of 
$\DD'_\varphi$ and the cyclic operator. More precisely, 
Lemma~\ref{uddingston} 
respectively Eq.~\rmref{orecchiette2} imply:

\begin{lemma}
If $M$ is a module-comodule over a left Hopf algebroid $U$, then
for any $\varphi \in C^p_M(U)$ and 
$\psi \in C^q_M(U)$ we have
\begin{equation}
\label{orecchiette1}
        \DD^{\scriptscriptstyle{i{\rm th}}}_\varphi = 
        \ttt^{n-|p|-i} \DD'_\varphi \ttt^{i+p}, 
        \qquad i = 1, \ldots, n-|p|, 
\end{equation}
and
\begin{equation}
\label{orecchiette3}
\ttt^{n-|p+q|-i} \DD'_\varphi \ttt^{i+p} \iota_\psi = \iota_\psi \ttt^{n-|p|-i} \DD'_\varphi \ttt^{i+p}
\end{equation}
as operators 
on $C^\cyc_\bull(U,M)$.
\end{lemma}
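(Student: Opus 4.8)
The plan is to establish both identities by repeated application of Lemma~\ref{precedinglemma}, specifically Equation~\rmref{naumburg}, which is the single structural fact relating the operators $\DD^{\scriptscriptstyle{i{\rm th}}}_\varphi$ to the cyclic operator $\ttt$. Observe first that the membership $\gvf \in C^p_M(U)$ guarantees precisely that each $\DD^{\scriptscriptstyle{i{\rm th}}}_\varphi$ preserves $\mathrm{im}(\mathrm{id}-\TT)$, hence descends to a well-defined operator on the quotient $C^\cyc_\bull(U,M) = C_\bull(U,M)/\mathrm{im}(\mathrm{id}-\TT)$; this is what makes it legitimate to assert the claimed identities \emph{on $C^\cyc_\bull(U,M)$} rather than on $C_\bull(U,M)$ itself. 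On that quotient, $\TT = \ttt^{n+1}$ acts as the identity, so $\ttt$ is invertible with $\ttt^{-1} = \ttt^n$, and any expression of the form $\ttt^a \DD'_\varphi \ttt^b$ depends only on the relevant residues.

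For Equation~\rmref{orecchiette1}, I would argue by downward induction on $i$, starting from the base case $i = n-|p|$, where the statement reads $\DD'_\varphi = \DD^{\scriptscriptstyle{(n-|p|){\rm th}}}_\varphi$, which holds by the very definition $\DD'_\varphi := \DD^{\scriptscriptstyle{(n-|p|){\rm th}}}_\varphi$. The inductive step rewrites the first case of \rmref{naumburg}, namely $\gvf \bullet_i \ttt = \ttt (\gvf \bullet_{i+1})$, as the operator identity $\DD^{\scriptscriptstyle{i{\rm th}}}_\varphi \, \ttt = \ttt \, \DD^{\scriptscriptstyle{(i+1){\rm th}}}_\varphi$, equivalently $\DD^{\scriptscriptstyle{i{\rm th}}}_\varphi = \ttt \, \DD^{\scriptscriptstyle{(i+1){\rm th}}}_\varphi \, \ttt^{-1}$ after passing to the quotient where $\ttt$ is invertible. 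Feeding the inductive hypothesis $\DD^{\scriptscriptstyle{(i+1){\rm th}}}_\varphi = \ttt^{n-|p|-(i+1)} \DD'_\varphi \ttt^{(i+1)+p}$ into this recursion yields $\DD^{\scriptscriptstyle{i{\rm th}}}_\varphi = \ttt^{n-|p|-i} \DD'_\varphi \ttt^{i+p}$ after collecting powers, which is the asserted formula. One should keep track of the index bookkeeping $n-|p|-i$ and $i+p$ so that the exponents telescope correctly across the induction.

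For Equation~\rmref{orecchiette3}, the strategy is to combine \rmref{orecchiette1} just proven with the commutation relation \rmref{orecchiette2}, which states $\varphi \bullet_i (\psi \smallfrown y) = \psi \smallfrown (\varphi \bullet_i y)$, i.e. $\DD^{\scriptscriptstyle{i{\rm th}}}_\varphi \, \iota_\psi = \iota_\psi \, \DD^{\scriptscriptstyle{i{\rm th}}}_\varphi$ as operators. The subtlety is that $\iota_\psi$ lowers degree by $q$, so on the left-hand side $\DD^{\scriptscriptstyle{i{\rm th}}}_\varphi$ acts in degree $n-q$ while on the right it acts in degree $n$; applying \rmref{orecchiette1} at each of the two degrees produces the two distinct exponents $n-|p+q|-i$ and $n-|p|-i$ that appear in the statement. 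Concretely, substituting the degree-$(n-q)$ instance of \rmref{orecchiette1} on the left and the degree-$n$ instance on the right converts $\DD^{\scriptscriptstyle{i{\rm th}}}_\varphi \, \iota_\psi = \iota_\psi \, \DD^{\scriptscriptstyle{i{\rm th}}}_\varphi$ into the claimed $\ttt^{n-|p+q|-i} \DD'_\varphi \ttt^{i+p} \iota_\psi = \iota_\psi \ttt^{n-|p|-i} \DD'_\varphi \ttt^{i+p}$.

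The main obstacle I anticipate is not the inductive mechanism but the careful verification that all these manipulations are legitimate \emph{on the quotient}: one must confirm that the invertibility of $\ttt$ used in the inductive step, and the degree-shifting in the second identity, are consistent with the descent to $C^\cyc_\bull(U,M)$ guaranteed by the defining condition of $C^\bull_M(U)$. In particular the appearance of $\iota_\psi$ requires $\psi \in C^q_M(U)$ as well, so that $\iota_\psi$ too descends to the quotient and the composite operators are all well-defined there; tracking the degree conventions $|p| = p-1$ in the exponent arithmetic is where sign- and index-errors are most likely to creep in.
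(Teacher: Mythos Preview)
Your proposal is correct and follows essentially the same approach as the paper, which merely states that \rmref{orecchiette1} follows by applying \rmref{naumburg} repeatedly and that \rmref{orecchiette3} then follows from \rmref{orecchiette2}. You have supplied the details the paper omits: the downward induction for \rmref{orecchiette1} (with the base case $i=n-|p|$ reducing to $\DD'_\varphi \ttt^{n+1}=\DD'_\varphi$ on the quotient) and the degree bookkeeping that turns the commutation $\DD^{\scriptscriptstyle{i{\rm th}}}_\varphi \iota_\psi = \iota_\psi \DD^{\scriptscriptstyle{i{\rm th}}}_\varphi$ into \rmref{orecchiette3}.
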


We conclude this subsection with another technical lemma:

\begin{lemma}
Let $M$ be a module-comodule over a left Hopf algebroid $U$ and
$\varphi \in C^p_M(U)$ as well as 
$\psi \in C^q_M(U)$.
\begin{enumerate}
\item
If $\psi$ is a cocycle, then the equation
\begin{equation}
\label{cabras} 
\dd_1 \DD'_\psi = \sum^q_{i=1} (-1)^{i+q} \DD'_\psi \dd_i + (-1)^q \dd_1 \ttt \DD'_\psi \ttt^n
\end{equation}
holds for $0 < q < n$
on $C_n^\cyc(U,M)$.
\item
For $0 \leq p \leq n$, the identities
\begin{equation}
\label{parkraumbewirtschaftung2b}
\DD'_\varphi = \ttt \, \iota_\varphi \, \sss_{-1} \, \ttt^n
\end{equation}
and
\begin{equation}
\label{parkraumbewirtschaftung2a}
\iota_\gvf \, \sss_{-1} 
= \ttt^{n-|p|} \, \DD'_\varphi \, \ttt
\end{equation}
hold on $C^\cyc_\bull(U,M)$.
\end{enumerate}
\end{lemma}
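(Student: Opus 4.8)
The plan is to obtain all three identities formally from the comp-module relations \rmref{SchlesischeStr}, the translation $\dd_i = \mu\bullet_{n-i}$ between the face maps and the distinguished element $\mu$, and the conversion formula \rmref{orecchiette1}, rather than by a direct element chase. Throughout I would work on the cyclic quotient $C^\cyc_\bull(U,M)$, which is exactly where \rmref{orecchiette1} is available and where $\ttt^{n+1}=\mathrm{id}$ holds. The two identities in part (ii) are immediate: the case $i=n-|p|$ of \rmref{naumburg} in Lemma~\ref{precedinglemma} reads $\DD'_\varphi\,\ttt = \ttt\,\iota_\varphi\,\sss_{-1}$ as operators on $C_n$, since $\ttt(m,x)\in C_n$ and $\varphi\bullet_{n-|p|}$ is $\DD'_\varphi$ there. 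Multiplying this on the right by $\ttt^n$ and using $\ttt^{n+1}=\mathrm{id}$ on $C_n$ gives \rmref{parkraumbewirtschaftung2b} at once; multiplying instead on the left by $\ttt^{n-|p|}$ and using that $\ttt^{n-|p|+1}=\ttt^{n-p+2}$ acts as the identity on $C_{n-p+1}$ (the target degree of $\iota_\varphi\sss_{-1}$) yields \rmref{parkraumbewirtschaftung2a}. Neither requires new computation.

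The substantive identity is \rmref{cabras}. First I would rewrite its left-hand side operadically. Since $\DD'_\psi = \psi\bullet_{n-|q|}$ on $C_n$, and since on the target degree $C_{n-|q|}$ one has $\dd_1 = \mu\bullet_{n-q}$, the middle line of \rmref{SchlesischeStr} (applicable because $j-|p|=i$ with $\mu\in C^2$, $i=n-q$, $j=n-q+1$) collapses the composite into $\dd_1\DD'_\psi = (\mu\circ_2\psi)\bullet_{n-q} = \DD'_{\mu\circ_2\psi}$, where $\mu\circ_2\psi\in C^{q+1}(U,A)$. The cocycle hypothesis enters through Lemma~\ref{platt}: expanding $0 = \gd\psi = \{\mu,\psi\}$ in terms of the $\circ_i$ gives $\mu\circ_2\psi = -(-1)^{|q|}\,\mu\circ_1\psi + \sum_{i=1}^q(-1)^{|i|}\,\psi\circ_i\mu$. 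As $\varphi\mapsto\DD'_\varphi$ is linear in the cochain, this transports to $\DD'_{\mu\circ_2\psi} = -(-1)^{|q|}\DD'_{\mu\circ_1\psi} + \sum_{i=1}^q(-1)^{|i|}\DD'_{\psi\circ_i\mu}$.

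It then remains to re-expand the two families of terms back into face maps. For the insertions $\psi\circ_i\mu$, running the middle relation of \rmref{SchlesischeStr} in reverse and using $\mu\bullet_k = \dd_{n-k}$ identifies $\DD'_{\psi\circ_i\mu}$ with $\DD'_\psi\,\dd_{q-i+1}$; after the substitution $i\mapsto q-i+1$ and simplifying $(-1)^{|i|}$ this reproduces exactly $\sum_{i=1}^q(-1)^{i+q}\DD'_\psi\dd_i$. For the single term $\mu\circ_1\psi$, the same relation (now with $j=i$) gives $\DD'_{\mu\circ_1\psi} = \mu\bullet_{n-q}\big(\psi\bullet_{n-q}(\cdot)\big)$, whose inner operator is $\DD^{\scriptscriptstyle{(n-q){\rm th}}}_\psi$ rather than $\DD'_\psi$. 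This is where I would pass to $C^\cyc_\bull(U,M)$ and invoke \rmref{orecchiette1}, which rewrites it as $\ttt\,\DD'_\psi\,\ttt^n$; then $\mu\bullet_{n-q}=\dd_1$ produces $\dd_1\ttt\DD'_\psi\ttt^n$, and with $-(-1)^{|q|}=(-1)^q$ this is precisely the remaining summand of \rmref{cabras}.

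The main obstacle is bookkeeping rather than anything conceptual: one must match the index ranges in \rmref{SchlesischeStr} so each composite falls into the intended case, and keep the sign functions consistent — in particular checking that the signs from the expansion of $\{\mu,\psi\}$, from the reindexing $i\mapsto q-i+1$, and from the shift $|q|\mapsto q$ combine to give exactly the $(-1)^{i+q}$ and $(-1)^q$ appearing in \rmref{cabras}. The one genuinely non-formal input is the appearance of $\DD^{\scriptscriptstyle{(n-q){\rm th}}}_\psi$ in the $\mu\circ_1\psi$ term, which forces the passage to the cyclic quotient via \rmref{orecchiette1} and explains why \rmref{cabras} is asserted on $C^\cyc_n(U,M)$ and not already on $C_n(U,M)$.
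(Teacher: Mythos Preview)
Your proposal is correct and follows essentially the same route as the paper. For part (ii) you derive both identities directly from the case $i=n-|p|$ of \rmref{naumburg} together with $\ttt^{n+1}=\mathrm{id}$ on the cyclic quotient, which is exactly what the paper indicates; for part (i) your operadic repackaging via $\dd_1\DD'_\psi=\DD'_{\mu\circ_2\psi}$, the cocycle condition in the form $\{\mu,\psi\}=0$ from \rmref{erfurt}, the comp-module relations \rmref{SchlesischeStr}, and the conversion \rmref{orecchiette1} is precisely the content of the paper's pointer to \rmref{spaetkauf} and \rmref{orecchiette1}, just made explicit.
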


\begin{proof}
All statements are either obvious or follow by a straightforward
computation. For example, \rmref{cabras} is proven with the help of
\rmref{orecchiette1} and
\rmref{spaetkauf}. Eqs.~\rmref{parkraumbewirtschaftung2b} and
\rmref{parkraumbewirtschaftung2a} follow directly from
\rmref{naumburg} as 
we have $\mathrm{id}-\TT=0$ on $C^\cyc_\bull(U,M)$.
\end{proof}

\subsection{The Lie derivative}
\label{nmd}
Now we
define a Hopf algebroid generalisation of the
\emph{Lie derivative}
that will subsequently be shown to define 
a Gerstenhaber module structure on 
${\hhmu}$. Throughout, $U$ 
is a left Hopf algebroid and $M$ is a module-comodule.

\begin{definition}
For $\varphi \in C^p(U,A)$, 
we define 
$$
        \lie_\varphi : C_n(U,M) \rightarrow 
        C_{n-|p|}(U,M)
$$
in degree $n$ with $p < n+1$ to be 
%
%
\begin{equation}
\label{messagedenoelauxenfantsdefrance2}
        \lie_\varphi := 
        \sum^{n-|p|}_{i=1} 
        (-1)^{\theta^{n,p}_{i}} 
        \ttt^{n - |p| - i} \, \DD'_\varphi \, \ttt^{i+p} 
        + 
        \sum^{p}_{i=1} 
        (-1)^{\xi^{n,p}_{i}} 
        \ttt^{n-|p|} \, \DD'_\varphi \, \ttt^i,
\end{equation}
where the signs are given by
\begin{equation*}
\label{nerv2}
        \theta^{n,p}_{i} 
        := |p|(n-|i|), 
        \qquad \xi^{n,p}_{i} 
        := n|i| + |p|.
\end{equation*}
In case $p = n+1$, we set
$$
        \lie_\varphi := 
        (-1)^{|p|}\iota_\varphi \, \BB,
$$
and for $p > n+1$, we define 
$\lie_\varphi := 0$.
\end{definition}

We will speak of the first sum in the Lie derivative as of the {\em untwisted part}
and of the second sum as of the {\em twisted part}, a terminology which will 
become vivid in \S\ref{alletklaa}.

Clearly, 
$ \lie_\varphi $ descends for 
$ \varphi \in C^\bull_M(U)$ 
to a well-defined operator on 
$C^\cyc_\bull(U,M)$.
In particular, this applies to the 
distinguished element $ \mu $ 
from \rmref{distinguished, I
said}. For this specific cochain, we obtain the following
counterpart to the second half of Lemma~\ref{platt}:

\begin{lemma}\label{platt3}
The differential of $C^\cyc_\bull(U,M)$ is given by
\begin{equation}
\label{sachengibts}
        \bb = -\lie_\mu. 
\end{equation}
\end{lemma}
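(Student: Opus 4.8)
The plan is to substitute $\varphi=\mu$ into the definition \rmref{messagedenoelauxenfantsdefrance2} of the Lie derivative and to recognise the resulting operators, term by term, as signed face maps, so that the whole expression collapses to $-\bb=-\sum_{i=0}^{n}(-1)^i\dd_i$ from \rmref{scotstoun}. First I would record the two elementary facts about $\mu$: since $\DD_\mu=m_\uhhu$ by \rmref{elekta}, the operation $\mu\bullet_i$ of \rmref{sehrgeehrter} merely multiplies the $i$-th and $(i{+}1)$-th tensor legs, so comparing with \rmref{landliebe} gives $\dd_i=\mu\bullet_{n-i}$ for $1\le i\le n-1$ (as already used in deriving \rmref{parkraumbewirtschaftung1}), and in particular $\DD'_\mu=\mu\bullet_{n-1}=\dd_1$ holds already at chain level. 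Next I would specialise the signs in \rmref{messagedenoelauxenfantsdefrance2}: with $p=2$, hence $|p|=1$, one has $\theta^{n,2}_i=n-i+1$ and $\xi^{n,2}_1=1$, $\xi^{n,2}_2=n+1$, whence
\begin{equation*}
\lie_\mu=\sum_{i=1}^{n-1}(-1)^{n-i+1}\,\ttt^{n-1-i}\,\DD'_\mu\,\ttt^{i+2}-\ttt^{n-1}\,\DD'_\mu\,\ttt+(-1)^{n+1}\,\ttt^{n-1}\,\DD'_\mu\,\ttt^2 .
\end{equation*}

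For the \emph{untwisted} sum I would invoke \rmref{orecchiette1}, which on $C^\cyc_\bull(U,M)$ identifies $\ttt^{n-1-i}\DD'_\mu\ttt^{i+2}=\DD^{\scriptscriptstyle{i{\rm th}}}_\mu=\mu\bullet_i=\dd_{n-i}$; re-indexing by $k=n-i$ then turns this sum into $\sum_{k=1}^{n-1}(-1)^{k+1}\dd_k=-\sum_{k=1}^{n-1}(-1)^k\dd_k$, i.e.\ precisely the interior part of $-\bb$. It then remains to show that the two \emph{twisted} terms yield the two boundary faces, namely $\ttt^{n-1}\DD'_\mu\ttt=\dd_0$ and $\ttt^{n-1}\DD'_\mu\ttt^2=\dd_n$; granting these, the twisted part equals $-\dd_0+(-1)^{n+1}\dd_n=-\dd_0-(-1)^n\dd_n$, and summing everything gives $\lie_\mu=-\sum_{i=0}^{n}(-1)^i\dd_i=-\bb$, as claimed.

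These two boundary identities are where the cyclic structure genuinely enters, and this is the step I expect to require the most care. Writing $\DD'_\mu=\dd_1$ and using the para-cyclic relations of Proposition~\ref{kamille} in the form $\dd_1\ttt=\ttt\dd_0$ and $\dd_0\ttt=\dd_n$, I would compute $\ttt^{n-1}\dd_1\ttt=\ttt^n\dd_0$ and $\ttt^{n-1}\dd_1\ttt^2=\ttt^n\dd_0\ttt=\ttt^n\dd_n$. Since both operators land in degree $n-1$, where $\ttt^n=\ttt^{(n-1)+1}=\mathrm{id}$ holds on the cyclic quotient, the prefactors $\ttt^n$ disappear and I obtain $\dd_0$ and $\dd_n$ respectively. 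It is essential here that all manipulations take place on $C^\cyc_\bull(U,M)$, where $\mathrm{id}-\TT=0$, which is exactly what makes both \rmref{orecchiette1} and the reduction $\ttt^{n+1}=\mathrm{id}$ available.

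The main obstacle is therefore not conceptual but bookkeeping: keeping the signs $\theta,\xi$ straight, tracking the exponents of $\ttt$ modulo $n+1$, and being disciplined about in which degree each $\ttt^n$ collapses to the identity. A secondary point to dispatch is the low-degree range $n\le 1$, where the clause $p\ge n+1$ in the definition of $\lie_\mu$ takes over (so that $\lie_\mu$ is given by $(-1)^{|p|}\iota_\mu\BB$ rather than by the two sums); there the identity $\bb=-\lie_\mu$ is verified by direct inspection on $C_0$ and $C_1$.
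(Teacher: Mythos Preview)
Your proposal is correct and follows essentially the same approach as the paper: substitute $\varphi=\mu$, use $\DD'_\mu=\dd_1$ via \rmref{elekta}, and then rewrite each term $\ttt^a\dd_1\ttt^b$ as a signed face map using the para-cyclic relations, so that the sum collapses to $-\bb$ on the quotient $C^\cyc_\bull(U,M)$. The only cosmetic differences are that the paper pushes $\dd_1$ to the left (obtaining e.g.\ $\ttt^{n-1}\dd_1=\dd_n\ttt^{n-1}$ and hence $\dd_n\ttt^n$, $\dd_n\ttt^{n+1}$ for the twisted terms) whereas you push it to the right, and that you explicitly flag the low-degree cases $n\le 1$, which the paper treats tacitly.
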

\begin{proof}
Using \rmref{elekta}, one obtains $\DD'_\mu = \dd_1$ 
and correspondingly for the Lie derivative by the relations of a para-cyclic module:
\begin{equation*}
\begin{split}
\lie_\mu &= \sum^{n-1}_{i=1} (-1)^{n-i+1} \ttt^{n - 1 - i} \, \dd_1 \, \ttt^{i+2} 
+ \sum^{2}_{i=1} (-1)^{n(i-1)+1} \ttt^{n-1} \, \dd_1 \, \ttt^i, \\
&= \sum^{n-1}_{i=1} (-1)^{n-i+1} \dd_{n-i} \, \ttt^{n+1} - \dd_n \, \ttt^n + (-1)^{n+1} \dd_n \ttt^{n+1} \\
&= \sum^{n-1}_{j=1} (-1)^{j+1} \dd_{j} \, \ttt^{n+1} - \dd_0 \, \ttt^{n+1} + (-1)^{n+1} \dd_n \ttt^{n+1} = - \bb
\end{split}
\end{equation*}
on the quotient $C^\cyc_\bull(U,M)$.
\end{proof}

\subsection{The case of 1-cochains}
\label{alletklaa}
For the reader's convenience, 
we treat some special cases in
detail that will help understanding the 
general
formula for $\lie_\varphi $ and how it has been derived.

First of all, consider a 1-cochain $ \varphi \in C^1(U,A)$. 
By extending scalars from $k$ to the ring 
$k \Lbrack r \Rbrack$ of formal power series
in an indeterminate $r$, we 
define for any $k\Lbrack r \Rbrack$-linear map 
$$
        \DD : C_n(U,M)\Lbrack r \Rbrack \rightarrow C_n(U,M)\Lbrack r
        \Rbrack
$$
the operators
$$
        \ttt^\DD := \DD \, \ttt,\quad
        \TT^\DD := 
        (\ttt^\DD)^{n+1}.
$$ 
We apply this with $\DD$ being the exponential series
$$
        \exp(r\varphi) := 
        \sum_{i \geq 0} \frac{1}{i!} 
        (r\DD'_\varphi)^i. 
$$
Thinking of a 1-cocycle $ \varphi $ 
as of a generalised vector field, of 
$\exp(r\varphi)$ as of its flow, and of
$$
        \Omega_\varphi := \mathrm{id}-\TT^{\exp(r\varphi)} 
$$
as of a curvature along an integral curve motivates the fact that  
a short computation yields
\begin{equation*}
        \lie_\varphi = 
            {\textstyle{\frac{d}{dr}}}
            \Omega_\varphi |_{r=0} 
\end{equation*}
 for $n>0$, which in this case is explicitly given by
\begin{equation*}
\label{nakissa}
\lie_\varphi = \sum^n_{i=0} \ttt^{n-i} \, \DD'_\varphi \, \ttt^{i+1} = \sum^n_{i=1} \ttt^{n-i} \, \DD'_\varphi \, \ttt^{i+1} 
+ \ttt^{n} \, \DD'_\varphi \, \ttt.
\end{equation*}

Next, let us study $\lie_\varphi $ in greater detail on $C_\bull^\cyc(U,M)$.
Note first that, when descending to the quotient 
$C_n^\cyc(U,M)$, 
the untwisted part in \rmref{messagedenoelauxenfantsdefrance2} can be written as follows:
\begin{equation*}
\begin{split}
\sum^{n-|p|}_{i=1} (-1)^{\theta^{n,p}_{i}} \ttt^{n - |p| - i} \, \DD'_\varphi \, \ttt^{i+p} (m,x)  
&= \sum^{n-|p|}_{i=1} (-1)^{\theta^{n,p}_{i}}  \DD^{\scriptscriptstyle{i{\rm th}}}_\gvf (m,x) \\
&= \sum^{n-|p|}_{i=1} (-1)^{\theta^{n,p}_{i}} \gvf \bullet_i (m,x).
\end{split}
\end{equation*}
If we now introduce the operator
\begin{equation*}
\label{huetor2}
        \EE_\varphi: U \to U, \quad 
        u \mapsto \varphi(u_-) \blact u_+,
\end{equation*}
then $\lie_\varphi $ can be further rewritten as follows:
\begin{prop}
For every module-comodule $M$ over a left Hopf algebroid $U$, the Lie
derivative $\lie_\varphi $ for $\varphi \in C^1_M(U)$ 
assumes on $C_n^\cyc(U,M)$ the form 
\begin{equation}
\label{ramblas}
\begin{split}
\lie_\varphi(m, x) &= \sum_{i=1}^n \big(m,u^1, \ldots,
\DD_\varphi(u^i), \ldots, u^n\big) \\
&\quad + \big(m_{(0)}, u^1_+, \ldots, u^{n-1}_+, \varphi(u^n_- \cdots u^1_- m_{(-1)}) \blact u^n_+\big).
\end{split}
\end{equation}
This can be alternatively written as
\begin{footnotesize}
\begin{equation}
\label{lefevrerand}
\begin{split}
\lie_\varphi(m, x) &= \big(\varphi(m_{(-1)}) m_{(0)}, u^1, \ldots, u^n \big) 
+ \sum_{i=1}^n \big(m, u^1, \ldots, \DD_\varphi(u^i), \ldots, u^n\big) \\
& \qquad + \sum_{j=1}^n \big(m, u^1, \ldots, \EE_\varphi(u^j), \ldots, u^n\big) \\
&\qquad - \sum_{k=1}^{n} \big(m_{(0)}, u^1_+, \ldots, u^{k-1}_+, \gd\varphi(u^k_-, u^{k-1}_- \cdots u^1_- m_{(-1)}) \blact u^k_+, 
u^{k+1}, \ldots, u^n\big). 
\end{split}
\end{equation}
\end{footnotesize}
\end{prop}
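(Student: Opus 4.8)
The plan is to establish \eqref{ramblas} by specialising the defining formula \eqref{messagedenoelauxenfantsdefrance2} to $p=1$ and rewriting the two resulting sums using the identities already available on $C^\cyc_\bull(U,M)$, and then to deduce \eqref{lefevrerand} from \eqref{ramblas} by a Leibniz-type expansion of $\varphi$ along the contracted product $u^n_-\cdots u^1_-m_{(-1)}$.

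For $p=1$ one has $|p|=0$, so both sign functions vanish, $\theta^{n,1}_i=0$ and $\xi^{n,1}_1=0$, and \eqref{messagedenoelauxenfantsdefrance2} reduces to
$$
\lie_\varphi=\sum_{i=1}^{n}\ttt^{\,n-i}\,\DD'_\varphi\,\ttt^{\,i+1}+\ttt^{\,n}\,\DD'_\varphi\,\ttt .
$$
First I would treat the untwisted sum: by \eqref{orecchiette1} its $i$-th summand is $\DD^{\scriptscriptstyle{i{\rm th}}}_\varphi=\varphi\bullet_i$, and \eqref{sehrgeehrter} with $p=1$ shows that $\varphi\bullet_i$ inserts $\DD_\varphi(u^i)$ into the $i$-th slot; summing over $i=1,\ldots,n$ gives the first line of \eqref{ramblas}. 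For the remaining term I would invoke \eqref{parkraumbewirtschaftung2a}, which for $p=1$ reads $\iota_\varphi\,\sss_{-1}=\ttt^{\,n}\,\DD'_\varphi\,\ttt$, so the twisted part equals $\iota_\varphi\,\sss_{-1}$. It then remains to compute this operator explicitly: writing $\sss_{-1}=\ttt\,\sss_n$, using $\sss_n(m,x)=(m,1,u^1,\ldots,u^n)$ from \eqref{landliebe}, the para-cyclic operator, and $1_+\otimes_\Aopp 1_-=1\otimes_\Aopp 1$, one finds
$$
\sss_{-1}(m,x)=\big(m_{(0)},u^1_+,\ldots,u^n_+,\,u^n_-\cdots u^1_-m_{(-1)}\big),
$$
and the cap product in the form \eqref{alles4} (with $N=n+1$ and $p=1$) produces exactly the second line of \eqref{ramblas}.

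To obtain \eqref{lefevrerand} I would keep the insertion sum, which is common to both formulae, and expand only the twisted term. The governing identity is the failure of multiplicativity of a $1$-cochain read off from \eqref{spaetkauf},
$$
\varphi(uv)=u\,\varphi(v)+\varphi\big(\varepsilon(v)\blact u\big)-\gd\varphi(u,v),
$$
which I would apply iteratively to $\varphi(u^n_-\cdots u^1_-m_{(-1)})$, peeling off one factor $u^k_-$ at a time, so that the second argument runs through the partial products $u^{k-1}_-\cdots u^1_-m_{(-1)}$ exactly as in the last line of \eqref{lefevrerand}. The $\gd\varphi$-contributions then assemble into that last sum over $k$; the terms arising from $u\,\varphi(v)$ and from $\varphi(\varepsilon(v)\blact u)$ are recombined, via the translation-map identities \eqref{Sch4}, \eqref{Sch47} and \eqref{Sch48}, the counit axiom of the coaction, and the right $A$-linearity of $\varphi$, into the operators $\EE_\varphi(u^j)=\varphi(u^j_-)\blact u^j_+$ (the third line) and the modular prefactor $\varphi(m_{(-1)})\,m_{(0)}$ (the first line). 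A useful sanity check is the case $n=1$, where the expansion has a single step and all four lines are immediate.

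The routine parts are the sign count for $p=1$ and the direct evaluation of $\iota_\varphi\,\sss_{-1}$. The hard part will be the bookkeeping in the telescoping for \eqref{lefevrerand}: each peeled factor $u^k_-$ must be reunited with the corresponding $u^k_+$ through \eqref{Sch4} and \eqref{Sch47}, and one has to track carefully how the partially contracted products and the iterated $\blact$-actions redistribute across the $\otimes_\Aopp$-balanced tensor factors, so that the $\DD_\varphi$-, $\EE_\varphi$-, $\gd\varphi$- and modular terms separate without leftover correction terms.
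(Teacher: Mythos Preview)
Your proposal is correct and follows essentially the same route as the paper: the untwisted sum is handled via \eqref{orecchiette1} and \eqref{sehrgeehrter} exactly as the paper indicates in the discussion preceding the proposition, and \eqref{lefevrerand} is obtained from \eqref{ramblas} by the same iterative use of \eqref{spaetkauf} for $p=1$ together with the translation-map identities. The one small deviation is in the twisted term: the paper computes $\ttt^n\DD'_\varphi\ttt$ by invoking the formula for powers of $\ttt$ (Lemma~\ref{kohinor}), whereas you identify it with $\iota_\varphi\,\sss_{-1}$ via \eqref{parkraumbewirtschaftung2a} and evaluate that directly---this is arguably cleaner, since \eqref{parkraumbewirtschaftung2a} is already stated on $C^\cyc_\bull(U,M)$ for arbitrary module-comodules and sidesteps any reference to a result formulated under the SaYD hypothesis.
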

\begin{proof}
The explicit form for the untwisted part of $\lie$, i.e., the first summand in \rmref{ramblas} was explained above, 
whereas the twisted part follows by a straightforward computation using the powers of $\ttt$ in Lemma \ref{kohinor}. Eq.~\rmref{lefevrerand} follows by using Eq.~\rmref{spaetkauf} for $p = 1$
as well as \rmref{Sch3} and \rmref{Sch48}.
\end{proof}

\begin{example}
In degree $n=1$, the above reads 
\begin{equation*}
\begin{split}
\lie_\varphi(m, u) &= \big(\varphi(m_{(-1)}) m_{(0)}, u\big) + \big(m, \varphi(u_{(1)}) \lact  u_{(2)}\big) +
\big(m, \varphi(u_-) \blact u_+\big) \\
& \quad - \big(m_{(0)}, \gd\varphi(u_-, m_{(-1)}) \blact u_+\big),
\end{split}
\end{equation*}
and in degree $n=2$ it becomes
\begin{equation*}
\begin{split}
\lie_\varphi(m, u, v) &= \big(\varphi(m_{(-1)}) m_{(0)}, u, v\big) + \big(m, \varphi(u_{(1)}) \lact  u_{(2)}, v\big) 
+ \big(m,  u, \varphi(v_{(1)}) \lact  v_{(2)}\big) \\
&\quad 
+ \big(m, \varphi(u_-) \blact u_+, v\big) + \big(m, u, \varphi(v_-) \blact v_+\big) \\
&\quad 
- \big(m_{(0)}, \gd\varphi(u_-, m_{(-1)}) \blact u_+, v\big) 
-  \big(m_{(0)}, u_+, \gd\varphi(v_-, u_- m_{(-1)}) \blact v_+ \big).
\end{split}
\end{equation*}
\end{example}

\begin{example}
In case $\varphi$ is a $1$-co{\em cycle}, one has the cocycle condition
\begin{equation}
\label{marcello}
\varphi(uv) = \varepsilon\big(\varphi(v) \blact u\big) + \varphi\big(\varepsilon(v) \blact u \big),
\end{equation}
which implies 
$
\varphi(1) = 0.
$
The Lie derivative in degree zero then reads, as before
$$
\lie_\varphi(m) = \varphi(m_{(-1)}) m_{(0)} = \varphi(m_{(-1)}) \blact m_{(0)} , 
$$
whereas in degree $n$ reduces to
\begin{equation}
\label{lefevre}
\begin{split}
\lie_\varphi(m, x) &= \big(\varphi(m_{(-1)}) m_{(0)}, u^1, \ldots, u^n\big) \\
&\quad + \sum_{i=1}^n \big(m, u^1, \ldots, \DD_\varphi(u^i), \ldots, u^n\big) \\
&\quad + \sum_{j=1}^n \big(m, u^1, \ldots, \EE_\varphi(u^j), \ldots, u^n\big).
\end{split}
\end{equation}
In particular, in degree one this reads 
$$
\lie_\varphi(m, u) = \big(\varphi(m_{(-1)}) m_{(0)}, u\big) + \big(m, \varphi(u_{(1)}) \lact  u_{(2)}\big) +
\big(m, \varphi(u_-) \blact u_+\big).
$$
Observe that in \rmref{lefevre} the single 
summands where the $\EE_\varphi$ appear are not well-defined but only their sum is (a similar comment applies to \rmref{lefevrerand}). 
To exemplify this, consider in degree $2$ the map
$$
(u, v) \mapsto \big(\EE_\varphi(u), v\big) + \big(u, \EE_\varphi(v) \big).
$$
Using \rmref{marcello} and \rmref{Sch5}, one has
$$
(a \blact u, v) \mapsto \big(\EE_\varphi(u), v \ract a \big) + \big(u,  v \ract \varphi(s(a))\big) + \big(a \blact u, \EE_\varphi(v)\big),
$$
and it is easy to see that $(u, v \ract a)$ has the same image.\end{example}

\subsection{The case of an SaYD module}

In the case of stable anti Yetter-Drinfel'd modules, one can find an expression for $\lie_\varphi $ 
on $C^\cyc_\bull(U,M)$ analogous to the one given in 
\rmref{ramblas} for the special case of 
$1$-cochains. This is achieved by the following result:
\begin{prop}
If $M$ is an SaYD module and 
$\gvf \in C^p_M(U)$, one has
on $C^\cyc_\bull(U,M)$
\begin{footnotesize}
\begin{equation*}
\label{encarnita}
\begin{split}
&\lie_\gvf(m, x) =\\
& \quad \sum_{i=1}^{n-|p|} (-1)^{\theta^{n,p}_{i}} 
\big(m,u^1,\ldots, \DD_\gvf(u^i,\ldots, u^{i+|p|}), \ldots, u^n\big) \\
&\quad  + \sum^{|p|}_{i=0} (-1)^{\xi^{n,p}_{i+1}} \big(m_{(0)}u^1_{+(2)} \cdots u^{i}_{+(2)}, u^{i+1}_+, \ldots, u^{n-p+i}_+, \\
& \qquad \qquad \qquad \gvf(u^{n-|p|+i+1}_+, \ldots, u^n_+, u^n_- \cdots u^1_- m_{(-1)}, u^1_{+(1)}, \ldots, u^{i}_{+(1)}) \blact u^{n-|p|+i}_+\big). 
\end{split}
\end{equation*}
\end{footnotesize}
\end{prop}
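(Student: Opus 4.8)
The plan is to evaluate the two sums in the definition \rmref{messagedenoelauxenfantsdefrance2} of $\lie_\gvf$ separately, working throughout on $C^\cyc_\bull(U,M)=C_\bull(U,M)$, which for an SaYD module carries a genuine cyclic structure (so $\TT=\mathrm{id}$ and the closed formula for the powers of $\ttt$ in Lemma~\ref{kohinor} is available). For the \emph{untwisted part} I would invoke \rmref{orecchiette1}, which identifies $\ttt^{n-|p|-i}\DD'_\varphi\ttt^{i+p}$ with the comp-module operation $\DD^{\scriptscriptstyle{i{\rm th}}}_\varphi=\gvf\bullet_i\,\cdot$; inserting the explicit description \rmref{sehrgeehrter} of $\bullet_i$ turns the first sum directly into
$$\sum_{i=1}^{n-|p|}(-1)^{\theta^{n,p}_{i}}\big(m,u^1,\ldots,\DD_\gvf(u^i,\ldots,u^{i+|p|}),\ldots,u^n\big),$$
which is exactly the first line of the asserted formula. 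Here no use of the anti-Yetter--Drinfel'd compatibility is needed, only that $C^\bull_M(U)=C^\bull(U,A)$ in the SaYD case.

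The \emph{twisted part} $\sum_{i=1}^{p}(-1)^{\xi^{n,p}_{i}}\ttt^{n-|p|}\DD'_\varphi\ttt^i$ is the substantial step. After re-indexing $i\mapsto i+1$ so that the summation runs over $i=0,\ldots,|p|$ with sign $\xi^{n,p}_{i+1}$, it suffices to show that each operator $\ttt^{n-|p|}\DD'_\varphi\ttt^{i+1}$ produces, when applied to $(m,x)$, precisely the $i$-th summand on the right-hand side. I would compute this as a composition of three explicit maps. First apply $\ttt^{i+1}$ via Lemma~\ref{kohinor}: this moves the first $i+1$ entries to the front (splitting them through the translation map) and deposits $u^n_-\cdots u^1_-m_{(-1)}$ followed by $u^1_{+(1)},\ldots,u^i_{+(1)}$ in the last $i+1$ positions. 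Next apply $\DD'_\varphi=\gvf\bullet_{n-|p|}$, which by definition contracts the final $p$ entries through $\DD_\gvf$ as in \rmref{huetor1}; since $i\le|p|$, these final $p$ entries are exactly $u^{n-|p|+i+1}_+,\ldots,u^n_+,\,u^n_-\cdots u^1_-m_{(-1)},\,u^1_{+(1)},\ldots,u^i_{+(1)}$, so this step already assembles the argument string appearing inside $\gvf$ in the claim.

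Finally I apply the top power $\ttt^{n-|p|}$ on $C_{n-|p|}(U,M)$, once more via Lemma~\ref{kohinor}. The key point is that this last rotation unpacks the single contracted entry $\DD_\gvf(\cdot)=\gvf((\cdot)_{(1)})\lact(\cdot)_{(2)}$: its translation-map legs and the Sweedler second components recombine with the rotated factors so that the $\lact$-insertion becomes the $\blact u^{n-|p|+i}_+$ displayed in the statement, while the factor $u^{i+1}_+$ is reabsorbed and disappears from the $M$-slot, leaving $m_{(0)}u^1_{+(2)}\cdots u^i_{+(2)}$. The manipulations needed are the translation-map identities \rmref{Sch1}, \rmref{Sch2}, \rmref{Sch38}, \rmref{Sch37}, \rmref{Sch4}, and \rmref{Sch47}--\rmref{Sch48}, together with the stability $m_{(0)}m_{(-1)}=m$, which is exactly what collapses the coaction legs generated by the two cyclic rotations. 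As a consistency check, specialising to $p=1$ recovers the twisted summand of \rmref{ramblas}. I expect the main obstacle to be precisely this bookkeeping in the triple composition $\ttt^{n-|p|}\,\DD'_\varphi\,\ttt^{i+1}$: keeping the many Sweedler legs $u_{+(1)},u_{+(2)},u_-$ correctly ordered through two applications of Lemma~\ref{kohinor} and the intervening contraction, and verifying that the $\lact$-action produced by $\DD_\gvf$ migrates to the correct slot as a $\blact$-action. The SaYD hypothesis enters only to guarantee $\TT=\mathrm{id}$ (licensing work on $C_\bull(U,M)$ with the simple powers of $\ttt$) and to supply the stability relation that removes the spurious coaction legs.
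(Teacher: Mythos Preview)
Your approach is essentially the paper's: both reduce the untwisted sum to the comp-module insertions $\gvf\bullet_i$ via \rmref{orecchiette1} (this is in fact spelled out just before the proposition), and both handle the twisted sum by a direct evaluation of $\ttt^{n-|p|}\DD'_\varphi\ttt^{i+1}$ using Lemma~\ref{kohinor} and the Schauenburg identities \rmref{Sch3}--\rmref{Sch5}. One small correction: your description of where the SaYD hypothesis enters is slightly off. Stability $m_{(0)}m_{(-1)}=m$ is not actually invoked in this computation---the target formula still carries $m_{(0)}$ and $m_{(-1)}$ separately. What the paper flags instead, and what you will need when simplifying the double application of Lemma~\ref{kohinor}, is the compatibility of the two $\Ae$-module structures on $M$ (part of the aYD condition, already baked into Lemma~\ref{kohinor}) together with the Sweedler--Takeuchi property \rmref{auchnochnicht} for the coaction; these are what allow the $\lact$-insertion coming from $\DD_\gvf$ to migrate to the $\blact u^{n-|p|+i}_+$ in the final slot.
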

\begin{proof}
Straightforward computation using Lemma \ref{kohinor} as well as Schauenburg's relations \rmref{Sch3}--\rmref{Sch5}, the fact that the two $\Ae$-module structures originating from the $U$-action and $U$-coaction coincide for SaYD modules, and 
the Sweedler-Takeuchi condition \rmref{auchnochnicht} for comodules.
\end{proof}

\begin{example}
For $p =2$ and $n=3$, this reads:
\begin{equation*}
\begin{split}
\lie_\gvf(m, u, v, w) &= - \big(m,\DD_\gvf(u,v), w \big) +  \big(m, u, \DD_\gvf(v,w) \big) \\ 
& \quad - \big(m_{(0)}, u_+, \gvf(w_+, w_-v_-u_-m_{(-1)}) \blact v_+\big) \\ 
& \quad + \big(m_{(0)}u_{+(2)}, v_+, \gvf(w_-v_-u_-m_{(-1)}, u_{+(1)}) \blact w_+\big). 
\end{split}
\end{equation*}
\end{example}

\subsection{The DG Lie algebra module structure}

We now prove that 
the Lie derivative $\lie$ defines a DG 
Lie algebra representation of $(C^\bull_M(U)[1], \{.,.\})$: 

\begin{theorem}
\label{feinefuellhaltertinte}
For any two cochains $\varphi \in C_M^p(U)$ 
and $\psi \in C_M^q(U)$, 
we have on the quotient $C^\cyc_\bull(U,M)$
\begin{equation}
\label{weimar}
[\lie_\varphi, \lie_\psi] = \lie_{\{\varphi, \psi\}},
\end{equation}
where the bracket on the right hand side is the Gerstenhaber bracket
\rmref{zugangskarte}.
Furthermore, we have
\begin{equation}
\label{alles1}
[\bb, \lie_\varphi] + \lie_{\gd\varphi}= 0. 
\end{equation}
\end{theorem}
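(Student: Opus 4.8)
The plan is to prove the bracket identity \rmref{weimar} directly, as an identity of operators on the chain-level quotient $C^\cyc_\bull(U,M)$, and then to read off \rmref{alles1} as the special case in which one of the two cochains is the distinguished element $\mu$. For this second step, recall that $\mu\in C^2_M(U)$ (noted right after the definition of $\lie_\varphi$), so once \rmref{weimar} is available for all of $C^\bull_M(U)$ we may apply it to the pair $(\mu,\varphi)$ to obtain $[\lie_\mu,\lie_\varphi]=\lie_{\{\mu,\varphi\}}=\lie_{\delta\varphi}$, using Eq.~\rmref{erfurt}. By Lemma~\ref{platt3} we have $\lie_\mu=-\bb$, an operator of the same degree as $\bb$, so the graded commutator rearranges to $[\lie_\mu,\lie_\varphi]=-[\bb,\lie_\varphi]$; comparing the two expressions gives $[\bb,\lie_\varphi]+\lie_{\delta\varphi}=0$, which is precisely \rmref{alles1}. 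Thus the whole theorem is concentrated in \rmref{weimar}.

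To prove \rmref{weimar} I would first put $\lie$ into a form adapted to the comp module structure of \S\ref{lufthansaschmerz}. By \rmref{orecchiette1} the untwisted part of \rmref{messagedenoelauxenfantsdefrance2} is the signed sum $\sum_{i}(-1)^{\theta^{n,p}_i}\,\varphi\bullet_i$ of comp operations, and by \rmref{parkraumbewirtschaftung2a} each summand $\ttt^{n-|p|}\DD'_\varphi\ttt^i$ of the twisted part equals $\iota_\varphi\,\sss_{-1}\,\ttt^{i-1}$. One then expands $[\lie_\varphi,\lie_\psi]=\lie_\varphi\lie_\psi-(-1)^{|p||q|}\lie_\psi\lie_\varphi$ and sorts the resulting terms according to how the insertions of $\varphi$ and $\psi$ interact. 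The decisive tool is the comp-algebra associativity \rmref{SchlesischeStr}: its two outer cases correspond to disjoint insertions and produce contributions that are symmetric under interchanging $\varphi$ and $\psi$, hence cancel in the antisymmetrised commutator, while its middle case produces exactly the nested terms $(\varphi\circ_{j-i+1}\psi)\bullet_i$, which assemble into the compositions $\circ_i$ defining the Gerstenhaber bracket \rmref{zugangskarte}. For the twisted contributions one commutes $\iota_\psi$ through the cyclic powers by \rmref{orecchiette3} and shuttles the operations $\varphi\bullet_i$ across $\ttt$ by \rmref{naumburg}, so as to match them against the twisted part of $\lie_{\{\varphi,\psi\}}$.

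The main obstacle is the sign bookkeeping together with the cyclic wrap-around of the twisted part. The operators $\iota_\varphi\,\sss_{-1}\,\ttt^{i-1}$ push an insertion across the ``seam'' of the cyclic word, and one must check that the terms produced near the ends of the ranges $i=1,\dots,n-|p|$ and $i=1,\dots,p$ recombine correctly; it is here that both summands $\varphi\bar\circ\psi$ and $\psi\bar\circ\varphi$ of the bracket are generated, the cyclicity of $\ttt$ being responsible for producing the second order of insertion out of rearrangements of the first. Verifying that the sign functions $\theta^{n,p}_i$ and $\xi^{n,p}_i$ propagate through these rearrangements so as to reproduce both the signs $(-1)^{|q||i|}$ inside $\varphi\bar\circ\psi$ and the overall factor $(-1)^{|p||q|}$ of \rmref{zugangskarte} is the genuinely delicate point, even though every individual step is an elementary application of the relations established above.
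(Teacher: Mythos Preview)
Your proposal is correct and follows essentially the same approach as the paper: both expand $\lie_\varphi\lie_\psi$ and $\lie_\psi\lie_\varphi$ using the comp module structure via \rmref{orecchiette1} and \rmref{parkraumbewirtschaftung2a}, use the associativity \rmref{SchlesischeStr} to cancel the disjoint-insertion terms and recognise the nested ones as the $\circ_i$ compositions, and then treat the twisted parts via \rmref{orecchiette3} and the cyclic relations; your derivation of \rmref{alles1} from \rmref{weimar} by specialising to $\mu$ and invoking Lemma~\ref{platt3} and \rmref{erfurt} is verbatim the paper's argument. The paper simply carries out explicitly the lengthy sign and index bookkeeping that you correctly flag as the delicate point, with the cross-terms between twisted and untwisted parts (your ``cyclic wrap-around'') handled by an additional use of \rmref{parkraumbewirtschaftung0} to commute degeneracies past $\DD'_\varphi$.
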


\begin{proof}
The proof relies 
on Eqs.~\rmref{parkraumbewirtschaftung0}, \rmref{parkraumbewirtschaftung2a}, and \rmref{orecchiette1}: 
assume that $1 \leq q \leq p$ and 
$p+q \leq n+1$, as the proof for zero cochains and the case $q = 0$,
$p = n+1$ can be carried out by similar, but easier
computations. Recall that throughout we consider the operators induced
on $C^\cyc_\bull(U,M)$ and hence may identify 
$\TT$ and $\mathrm{id} $.

Using \rmref{messagedenoelauxenfantsdefrance2}, we
explicitly compute the expressions for
$\lie_\varphi \lie_\psi $ and $\lie_\psi
\lie_\varphi $. The underbraced terms will afterwards be computed and
compared one
by one.
One has
\begin{small}
\begin{equation*}
\begin{split}
\lie_\varphi \lie_\psi 
&= \ubs{(1)}{\sum^{n-|p|-|q|}_{i=1} \, \sum^{n-|q|}_{j=1} (-1)^{\theta^{n-|q|,p}_i + \theta^{n,q}_j} \ttt^{n-|p|-|q|-i} \DD'_\varphi \ttt^{n-|q|+p+i-j} \DD'_\psi \ttt^{j+q}} \\ 
& \quad + \ubs{(2)}{\sum^{p}_{i=1} \, \sum^{n-|q|}_{j=1} (-1)^{\xi^{n-|q|,p}_i + \theta^{n,q}_j} \ttt^{n-|p|-|q|} \DD'_\varphi \ttt^{n-|q|+i-j} \DD'_\psi \ttt^{j+q}}  \\
&\quad + \ubs{(3)}{\sum^{n-|p|-|q|}_{i=1} \, \sum^{q}_{j=1} (-1)^{\theta^{n-|q|,p}_i + \xi^{n,q}_j} \ttt^{n-|p|-|q|-i} \DD'_\varphi \ttt^{n-|q|+p+i} \DD'_\psi \ttt^{j}}  \\
& \quad + \ubs{(4)}{\sum^{p}_{i=1} \, \sum^{q}_{j=1} (-1)^{\xi^{n-|q|,p}_i + \xi^{n,q}_j} \ttt^{n-|p|-|q|} \DD'_\varphi \ttt^{n-|q|+i} \DD'_\psi \ttt^{j}},  
\end{split}
\end{equation*}
\end{small}
along with
\begin{small}
\begin{equation*}
\begin{split}
-(-1)^{|p||q|} \lie_\psi \lie_\varphi 
&= \ubs{(5)}{\sum^{n-|p|-|q|}_{j=1} \, \sum^{n-|p|}_{i=1} (-1)^{\theta^{n,q}_j + \theta^{n,p}_i +1} \ttt^{n-|q|-|p|-j} \DD'_\psi \ttt^{n-|p|+q+j-i} \DD'_\gvf \ttt^{i+p}} \\ 
& \quad + \ubs{(6)}{\sum^{q}_{j=1} \, \sum^{n-|p|}_{i=1} (-1)^{\xi^{n-|p|,q}_j + \theta^{n-|q|,p}_i +1}  \ttt^{n-|q|-|p|} \DD'_\psi \ttt^{n-|p|+j-i} \DD'_\gvf \ttt^{i+p}}  \\
&\quad + \ubs{(7)}{\sum^{n-|q|-|p|}_{j=1} \, \sum^{p}_{i=1} (-1)^{\theta^{n,q}_j + \xi^{n,p}_i +1}  \ttt^{n-|q|-|p|-j} \DD'_\psi \ttt^{n-|p|+q+j} \DD'_\gvf \ttt^{i}}  \\
& \quad + \ubs{(8)}{\sum^{q}_{j=1} \, \sum^{p}_{i=1} (-1)^{\xi^{n-|p|,q}_j + \xi^{n,p}_i +1 + |p||q|}  \ttt^{n-|q|-|p|} \DD'_\psi \ttt^{n-|p|+j} \DD'_\gvf \ttt^{i}}.  
\end{split}
\end{equation*}
\end{small}
Furthermore, it follows 
from \rmref{SchlesischeStr} and \rmref{orecchiette3} 
that, for $i = 1, \ldots, p$, 
we have 
the identities
\begin{equation*}
\DD'_{\gvf \circ_i \psi} = \DD'_\gvf \DD^{\scriptscriptstyle{k{\rm th}}}_\psi =  \DD'_\gvf \ttt^{n-|q|-k} \DD'_\psi \ttt^{k+q}, \quad \mbox{where} \quad k = n-|p|-|q|, \ldots, n-|q|.
\end{equation*}
Hence
\begin{small}
\begin{equation*}
\begin{split}
\lie_{\varphi \bar\circ \psi} 
&= (-1)^{|p||q|} {\sum^{n-|p|-|q|}_{i=1} \, \sum^{n-|q|}_{k= n -|p|-|q|} (-1)^{\theta^{n,|p+q|}_{i} + |q||k-n+|p+q||} \ttt^{n-|p|-|q|-i} \DD'_\varphi \DD^{\scriptscriptstyle{k{\rm th}}}_\psi \ttt^{i+|p+q|}} \\ 
& \quad + (-1)^{|p||q|}{\sum^{|p+q|}_{i=1} \, \sum^{n-|q|}_{k= n -|p|-|q|} (-1)^{\xi^{n,|p+q|}_{i} + |q||k-n+|p+q||} \ttt^{n-|p|-|q|} \DD'_\varphi \DD^{\scriptscriptstyle{k{\rm th}}}_\psi \ttt^{i+|p+q|}} \\ 
&= {\sum^{n-|p|-|q|}_{i=1} \, \sum^{n-|q|}_{k= n -|p|-|q|} (-1)^{\theta^{n,p}_{i} + \theta^{n,q}_i + |q|(|k|-n)} \ttt^{n-|p|-|q|-i} \DD'_\varphi \ttt^{n-|q|-k} \DD'_\psi \ttt^{k+q+i+|p+q|}} \\ 
& \quad + {\sum^{|p+q|}_{i=1} \, \sum^{n-|q|}_{k= n -|p|-|q|} 
(-1)^{\xi^{n,|p+q|}_{i} + |q|(|k|-n)} \ttt^{n-|p|-|q|} \DD'_\varphi \ttt^{n-|q|-k} \DD'_\psi \ttt^{k+q+i}} \\
&= \ubs{(9)}{\sum^{n-|p|-|q|}_{i=1} \, \sum^{|p|+i}_{l=i} (-1)^{\theta^{n,p}_{i} + \theta^{n,q}_i + |q|(l+i+|p|)} \ttt^{n-|p|-|q|-i} \DD'_\varphi \ttt^{|p|+i-l} t^{n-|q|+1} \DD'_\psi \ttt^{l+q}} \\ 
& \quad + \ubs{(10)}{\sum^{|p+q|}_{i=1} \, \sum^{n+1}_{l = n -|p|+1} (-1)^{\xi^{n,|p+q|}_{i} + \theta^{n,q}_l} \ttt^{n-|p|-|q|} \DD'_\varphi \ttt^{n-|l|} \DD'_\psi \ttt^{l+i}},
\end{split}
\end{equation*}
\end{small}
where we substituted $l:=k-n+|p|+|q|+i$ in the first summand of the last equation, $l:= k +q$ in the second summand, and used the fact that we descend to the quotient $C^\cyc_\bull(U,M)$.
Now it is easy to see that
$$
(9) = \sum^{n-|p|-|q|}_{i=1} \, \sum^{|p|+i}_{l=i} (-1)^{\theta^{n,p}_{i} + \theta^{n,q}_l + |q||p|}  \DD^{\scriptscriptstyle{i{\rm th}}}_\gvf  \DD^{\scriptscriptstyle{l{\rm th}}}_\psi.
$$
Likewise,
\begin{small}
\begin{equation*}
\begin{split}
-(-1)^{|p||q|} \lie_{\psi \bar\circ \gvf} &= \ubs{(11)}{ \sum^{n-|q|-|p|}_{j=1} \, \sum^{|q|+i}_{l=i} (-1)^{\theta^{n,q}_{j} + \theta^{n,p}_{l} + 1}  \DD^{\scriptscriptstyle{j{\rm th}}}_\psi  \DD^{\scriptscriptstyle{l{\rm th}}}_\gvf}  \\ 
& \quad + \ubs{(12)}{\sum^{|q+p|}_{j=1} \, \sum^{n+1}_{l= n -|q| +1} (-1)^{\xi^{n, |q+p|}_{j} + \theta^{n,p}_l + |p||q| + 1} \ttt^{n-|q|-|p|} \DD'_\psi \ttt^{n-|l|} \DD'_\gvf \ttt^{l+j}}.
\end{split}
\end{equation*}
\end{small}
We can now write on the quotient $C^\cyc_\bull(U,M)$
\begin{small}
\begin{equation*}
\begin{split}
(1) &= {\sum^{n-|p|-|q|}_{i=1} \, \sum^{n-|q|}_{j=1} (-1)^{\theta^{n-|q|,p}_{i} + \theta^{n,q}_j} \DD^{\scriptscriptstyle{i{\rm th}}}_\gvf \DD^{\scriptscriptstyle{j{\rm th}}}_\psi} \\
&=  \ubs{(13)}{\sum^{n-p-|q|}_{j=1} \, \sum^{n-|p|-|q|}_{i=j+1} (-1)^{\theta^{n-|q|,p}_{i} + \theta^{n,q}_j} \DD^{\scriptscriptstyle{i{\rm th}}}_\gvf \DD^{\scriptscriptstyle{j{\rm th}}}_\psi} 
+ \ubs{(14)}{\sum^{n-|q|}_{j=p+1} \, \sum^{j-p}_{i=1} (-1)^{\theta^{n-|q|,p}_{i} + \theta^{n,q}_j} \DD^{\scriptscriptstyle{i{\rm th}}}_\gvf \DD^{\scriptscriptstyle{j{\rm th}}}_\psi} \\
& \quad + \ubs{(15)}{\sum^{n-|p|-|q|}_{i=1} \, \sum^{|p|+i}_{l=i} (-1)^{\theta^{n-|q|,p}_{i} + \theta^{n,q}_l} \DD^{\scriptscriptstyle{i{\rm th}}}_\gvf \DD^{\scriptscriptstyle{l{\rm th}}}_\psi} 
\end{split}
\end{equation*}
\end{small}
and 
\begin{small}
\begin{equation*}
\begin{split}
(5) &= {\sum^{n-|q|-|p|}_{j=1} \, \sum^{n-|p|}_{i=1} (-1)^{\theta^{n,q}_{j} + \theta^{n,p}_i +1} \DD^{\scriptscriptstyle{j{\rm th}}}_\psi \DD^{\scriptscriptstyle{i{\rm th}}}_\gvf} \\
&=  \ubs{(16)}{\sum^{n-q-|p|}_{i=1} \, \sum^{n-|q|-|p|}_{j=i+1} (-1)^{\theta^{n,q}_{j} + \theta^{n,p}_i +1}  \DD^{\scriptscriptstyle{j{\rm th}}}_\psi \DD^{\scriptscriptstyle{i{\rm th}}}_\gvf} 
+ \ubs{(17)}{\sum^{n-|p|}_{i=q+1} \, \sum^{i-q}_{j=1} (-1)^{\theta^{n,q}_{j} + \theta^{n,p}_i +1}  \DD^{\scriptscriptstyle{j{\rm th}}}_\psi \DD^{\scriptscriptstyle{i{\rm th}}}_\gvf} \\
& \quad + \ubs{(18)}{\sum^{n-|q|-|p|}_{j=1} \, \sum^{|q|+j}_{l=j} (-1)^{\theta^{n,q}_{j} + \theta^{n,p}_l +1}  \DD^{\scriptscriptstyle{j{\rm th}}}_\psi \DD^{\scriptscriptstyle{l{\rm th}}}_\gvf}. 
\end{split}
\end{equation*}
\end{small}
We directly see that $(9) = (15)$, along with $(11) =(18)$.
Furthermore, by a simple observation one sees that
\begin{small}
\begin{equation*}
\begin{split}
(13) &= {\sum^{n-q-|p|}_{j=1} \, \sum^{n-|q|-|p|}_{i=j+1} (-1)^{\theta^{n-|q|,p}_{i} + \theta^{n,q}_j}  \DD^{\scriptscriptstyle{j{\rm th}}}_\psi \DD^{{\scriptscriptstyle(i+|q|){\rm th}}}_\gvf} \\
&= {\sum^{n-q-|p|}_{j=1} \, \sum^{n-|p|}_{k=j+q} (-1)^{\theta^{n-|q|,p}_{k-|q|} + \theta^{n,q}_j}  \DD^{\scriptscriptstyle{j{\rm th}}}_\psi \DD^{\scriptscriptstyle{k{\rm th}}}_\gvf} =: (19),
\end{split}
\end{equation*}
\end{small}
where in the second step we substituted $k:= i+|q|$. Reordering the double sums in $(19)$,
$$
\sum^{n-q-|p|}_{j=1} \, \sum^{n-|p|}_{k=j+q} = \sum^{n-|p|}_{k=q+1} \, \sum^{k-q}_{j=1}, 
$$
and by $\theta^{n-|q|,p}_{k-|q|} = \theta^{n,p}_{k}$, we conclude that $(13) = (19) = - (17)$. Analogously, one proves that $(14) = (16)$.

After a tedious, but straightforward re-ordering of summands one furthermore has

\begin{small}
\begin{equation*}
\begin{split}
(2) 
&= {\sum^{p-2}_{i=0} \, \sum^{p-|q|-i}_{k=q+1} (-1)^{\xi^{n,p}_{k+i} + |q||i|} \ttt^{n-|p|-|q|} \DD'_\varphi \ttt^{i} \DD'_\psi \ttt^{k}} 
 \\ & \qquad 
+ {\sum^{p}_{i=1} \, \sum^{i-1}_{k=0} (-1)^{\xi^{n,p}_{k+i} + |q||i|} \ttt^{n-|p|-|q|} \DD'_\varphi \ttt^{i} \DD'_\psi \ttt^{n-|k|}} \\
& \qquad + {\sum^{n-|q|}_{i=p+1} \, \sum^{i-2}_{k=i-p-1} (-1)^{\xi^{n,p}_{|k|+i} +|q||i|} \ttt^{n-|p|-|q|} \DD'_\varphi \ttt^{i} \DD'_\psi \ttt^{n-k}},
\end{split}
\end{equation*}
\end{small}
whereas
\begin{small}
\begin{equation*}
\begin{split}
(10) 
&= {\sum^{|p|}_{i=1} \, \sum^{p+|q|-i}_{k=0} (-1)^{\xi^{n,p}_{k+i} + |q||i|} \ttt^{n-|p|-|q|} \DD'_\varphi \ttt^{i} \DD'_\psi \ttt^{k}} 
+ {\sum^{p+|q|}_{k=1} (-1)^{\xi^{n,p}_{k}} \ttt^{n-|p|-|q|} \DD'_\varphi \DD'_\psi \ttt^{k}} \\
& \quad + {\sum^{|p|}_{i=2} \, \sum^{i-2}_{k=0} (-1)^{\xi^{n,p}_{|k|+i} + |q||i|} \ttt^{n-|p|-|q|} \DD'_\varphi \ttt^{i} \DD'_\psi \ttt^{n-k}}.
\end{split}
\end{equation*}
\end{small}
From these expressions one obtains 
after equally tedious but straightforward computations 
\begin{footnotesize}
\begin{equation*}
\begin{split}
(2) - (10) &= \ubs{(20)}{\sum^{|p|}_{i=0} \, \sum^{q}_{k=1} (-1)^{\xi^{n,p}_{k+i} + |q||i|} \ttt^{n-|p|-|q|} \DD'_\varphi \ttt^{i} \DD'_\psi \ttt^{k}} \\ 
& \quad + \ubs{(21)}{\sum^{n-|q|}_{i=p} \, \sum^{i-1}_{k=i-p} (-1)^{\xi^{n,p}_{k+i} + |q||i|} \ttt^{n-|p|-|q|} \DD'_\varphi \ttt^{i} \DD'_\psi \ttt^{n-|k|}},
\end{split}
\end{equation*}
\end{footnotesize}
and one verifies 
directly that $(20) = (4)$.

So far all terms in the expressions for $\lie_\gvf \lie_\psi$ and $\lie_{\gvf {\bar\circ}\psi}$ cancelled, except 
\begin{small}
\begin{eqnarray*}
(3)  &=& {\sum^{n-|p|-|q|}_{i=1} \, \sum^{q}_{j=1} (-1)^{\theta^{n-|q|, p}_{i} + \xi^{n,q}_{j}} \ttt^{n-|p|-|q|-i} \DD'_\varphi \ttt^{n+p-|q|+i} \DD'_\psi \ttt^{j}},  \\
(21) &=& {\sum^{n-|q|}_{i=p} \, \sum^{i-1}_{k=i-p} (-1)^{\xi^{n,p}_{k+i} + |q||i|} \ttt^{n-|p|-|q|} \DD'_\varphi \ttt^{i} \DD'_\psi \ttt^{n-|k|}}.
\end{eqnarray*}
\end{small}
Repeating the same type of arguments that led to $(21)$ analogously cancels all terms in $-(-1)^{|p||q|} \lie_\psi \lie_\gvf$ and $-(-1)^{|p||q|}\lie_{\psi {\bar\circ}\gvf}$, except
\begin{small}
\begin{eqnarray*}
(7)  &=& {\sum^{n-|q|-|p|}_{j=1} \, \sum^{p}_{i=1} (-1)^{\theta^{n, q}_{j} + \xi^{n,p}_{i} +1} \ttt^{n-|q|-|p|-j} \DD'_\psi \ttt^{n+q-|p|+j} \DD'_\gvf \ttt^{i}},  \\
(22) &:=& {\sum^{n-|p|}_{i=q} \, \sum^{i-1}_{k=i-q} (-1)^{\xi^{n,q}_{k+i} + |p|(|i|+|q|)} \ttt^{n-|q|-|p|} \DD'_\psi \ttt^{i} \DD'_\gvf \ttt^{n-|k|}}.
\end{eqnarray*}
\end{small}
Using \rmref{orecchiette3}, \rmref{parkraumbewirtschaftung2a}, and
\rmref{parkraumbewirtschaftung0}, and the relations of a cyclic
$k$-module we see that
\begin{small}
\begin{equation*}
\begin{split}
(3)  &= {\sum^{n-|p|-|q|}_{i=1} \, \sum^{q}_{j=1} (-1)^{\theta^{n-|q|, p}_{i} + \xi^{n,q}_{j}} \ttt^{n-|p|-|q|-i} \DD'_\varphi \ttt^{p+i} \iota_\psi \ttt \sss_n \ttt^{j-1}}  \\
 &= {\sum^{n-|p|-|q|}_{i=1} \, \sum^{q}_{j=1} (-1)^{\theta^{n-|q|, p}_{i} + \xi^{n,q}_{j}} \iota_\psi \ttt^{n-|p|-|i|} \DD'_\varphi \ttt^{|p|+i} \sss_0 \ttt^{j}}  \\
 &= {\sum^{n-|p|-|q|}_{i=1} \, \sum^{q}_{j=1} (-1)^{\theta^{n-|q|, p}_{i} + \xi^{n,q}_{j}} \iota_\psi \ttt \sss_{n-p+1} \ttt^{n-|p|-|i|} \DD'_\varphi \ttt^{|p|+i+j}}  \\
 &= {\sum^{n-|p|-|q|}_{i=1} \, \sum^{q}_{j=1} (-1)^{\theta^{n-|q|, p}_{i} + \xi^{n,q}_{j}} \ttt^{n-|p|-|q|} \DD'_\psi \ttt^{n-|p|-|i|} \DD'_\varphi \ttt^{|p|+i+j}} =: (23).
\end{split}
\end{equation*}
\end{small}
Substitution of $l:= n-|p|-|i|$ and subsequently of $k:=l-j$ produces
\begin{small}
\begin{equation*}
\begin{split}
(23)  &= {\sum^{n-|p|}_{l=q} \, \sum^{q}_{j=1} (-1)^{\xi^{n,q}_{j} + |p|(|l|+|q|)} \ttt^{n-|p|-|q|} \DD'_\psi \ttt^{l} \DD'_\varphi \ttt^{n-|l|+j}} \\
      &= {\sum^{n-|p|}_{l=q} \, \sum^{l-1}_{k=l-q} (-1)^{\xi^{n,q}_{l+k} + |p|(|l|+|q|)} \ttt^{n-|p|-|q|} \DD'_\psi \ttt^{l} \DD'_\varphi \ttt^{n-|k|}}, 
\end{split}
\end{equation*}
\end{small}
and this is directly seen to be $(22)$. Likewise, one shows that $(7)
= (21)$.

For Eq.~\rmref{alles1}, simply use \rmref{sachengibts} to express $\bb$, then apply \rmref{weimar} to the case where $\gvf := \mu$ and finally make use of \rmref{erfurt}:
\begin{equation*}
\{\bb, \lie_\gvf \} = - \{\lie_\mu, \lie_\gvf\} = - \lie_{\{\mu, \gvf\}} = - \lie_{\delta\gvf}.\qedhere
\end{equation*}
\end{proof}

\subsection{The Gerstenhaber module ${\hhmu}$}
By the identities \rmref{mulhouse2} and \rmref{alles1}, both
operators $\iota_\varphi$ and $\lie_\varphi$ 
descend to well defined operators on 
the Hochschild homology 
${\hhmu}$, provided that $\varphi$ is a cocycle. 
In this case, the following theorem together with Proposition \ref{feinefuellhaltertinte} proves that 
$\iota$ and $\lie$ turn ${\hhmu}$ 
into a module over the 
Gerstenhaber algebra $\hmu$, 
cf.~Def.~\ref{golfoaranci} (ii):

\begin{theorem}
\label{waterbasedvarnish}
If $M$ is a module-comodule over a left Hopf algebroid $U$, 
then for any two cocycles 
$\varphi \in C_M^p(U)$, $\psi \in C_M^q(U)$, 
the induced maps 
$$
\begin{array}{rrcl}
\lie_\varphi:  & \hhmu &\to& H^M_{\bull-|p|}(U), \\
\iota_\psi: & \hhmu &\to& H^M_{\bull-q}(U) 
\end{array}
$$
satisfy
\begin{equation}
\label{radicale1}
[\iota_\psi, \lie_\varphi] = \iota_{\{\psi, \gvf\}}.
\end{equation}
\end{theorem}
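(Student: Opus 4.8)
The plan is to establish the identity at the chain level on $C^\cyc_\bull(U,M)$ and then note that it descends to $\hhmu$ once $\varphi,\psi$ are cocycles. Throughout I read the graded commutator as $[\iota_\psi,\lie_\varphi]=\iota_\psi\lie_\varphi-(-1)^{q|p|}\lie_\varphi\iota_\psi$, since $\iota_\psi$ has degree $q$ and $\lie_\varphi$ has degree $|p|$; the target, by \rmref{zugangskarte}, is $\iota_{\{\psi,\varphi\}}=\iota_{\psi\bar\circ\varphi}-(-1)^{|q||p|}\iota_{\varphi\bar\circ\psi}$. First I would insert the definition \rmref{messagedenoelauxenfantsdefrance2} of the Lie derivative into both composites — taken in degree $n$ inside $\iota_\psi\lie_\varphi$ and in degree $n-q$ inside $\lie_\varphi\iota_\psi$ — and split each into its untwisted and twisted parts. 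The goal is then to show that the untwisted contributions assemble into $\iota_{\psi\bar\circ\varphi}$ and the twisted ones into $-(-1)^{|q||p|}\iota_{\varphi\bar\circ\psi}$.

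For the untwisted part I would rewrite each summand as $\DD^{\scriptscriptstyle{i{\rm th}}}_\varphi$ by means of \rmref{orecchiette1} and then pull $\iota_\psi$ to the left in $\lie_\varphi\iota_\psi$ using the commutation relation \rmref{orecchiette3}, i.e.\ $\DD^{\scriptscriptstyle{i{\rm th}}}_\varphi\iota_\psi=\iota_\psi\DD^{\scriptscriptstyle{i{\rm th}}}_\varphi$. Because $\theta^{n-q,p}_i=\theta^{n,p}_i-q|p|$, the prefactor $(-1)^{q|p|}$ is exactly absorbed, so the two untwisted sums (over $i=1,\dots,n-|p|$ and over $i=1,\dots,n-q-|p|$) telescope and leave only the overlap range $i=n-p-q+2,\dots,n-p+1$, namely $\iota_\psi\sum_i(-1)^{\theta^{n,p}_i}\DD^{\scriptscriptstyle{i{\rm th}}}_\varphi$. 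By \rmref{parkraumbewirtschaftung1} and the composition identity $\DD'_{\psi\circ_j\varphi}=\DD'_\psi\DD^{\scriptscriptstyle{l{\rm th}}}_\varphi$ (the analogue, with $\varphi$ and $\psi$ interchanged, of the identity used in the proof of Theorem~\ref{feinefuellhaltertinte}), one has $\iota_\psi\DD^{\scriptscriptstyle{l{\rm th}}}_\varphi=\dd_0\DD'_\psi\DD^{\scriptscriptstyle{l{\rm th}}}_\varphi=\iota_{\psi\circ_j\varphi}$ for $l=n-p-q+1+j$, $j=1,\dots,q$. A short sign check, $\theta^{n,p}_i-(|q||p|+|p||j|)=(p-1)(p-2j+2)\equiv0\pmod2$ under $i=n-p-q+1+j$, then identifies this sum with $\iota_{\psi\bar\circ\varphi}$; note that this half uses neither the cocycle hypothesis nor the cyclic structure beyond \rmref{orecchiette3}.

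The twisted part is the main obstacle, and I would handle it exactly in the spirit of the closing computation of the proof of Theorem~\ref{feinefuellhaltertinte}. Here $\iota_\psi$ sits outside the blocks $\ttt^{n-|p|}\DD'_\varphi\ttt^i$, and since $\ttt$ does not commute with $\iota_\psi$ there is no clean telescoping. Instead I would trade these blocks for insertion operators by means of \rmref{parkraumbewirtschaftung2a} and \rmref{parkraumbewirtschaftung2b} (which convert between $\ttt^{n-|p|}\DD'_\varphi\ttt$ and $\iota_\varphi\sss_{-1}$), move $\iota_\psi$ next to $\DD'_\varphi$, and invoke the cocycle identity \rmref{cabras} to absorb the contribution coming from the cyclic wrap-around. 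This is the only place where the assumption that $\varphi$ and $\psi$ are cocycles is genuinely used, and it is what confines the statement to homology. After the same kind of index substitutions and re-ordering of double sums as for the terms (3), (7), (21), (22) in the proof of Theorem~\ref{feinefuellhaltertinte}, together with \rmref{orecchiette3} and the cyclic relations, the surviving twisted terms reorganise into $\iota_\varphi\DD^{\scriptscriptstyle{k{\rm th}}}_\psi=\iota_{\varphi\circ_i\psi}$ carrying the sign $-(-1)^{|q||i|}$, which is precisely $-(-1)^{|q||p|}\iota_{\varphi\bar\circ\psi}$.

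Adding the two contributions gives $\iota_{\psi\bar\circ\varphi}-(-1)^{|q||p|}\iota_{\varphi\bar\circ\psi}=\iota_{\{\psi,\varphi\}}$ on $C^\cyc_\bull(U,M)$; as $\{\psi,\varphi\}$ is again a cocycle, all operators involved descend to $\hhmu$ by \rmref{mulhouse2} and \rmref{alles1}, which proves \rmref{radicale1}. I expect essentially all of the difficulty to lie in the twisted part: keeping track of the sign functions $\theta$ and $\xi$ through the reindexings, and applying \rmref{cabras} at the right step, whereas the untwisted part reduces to the clean telescoping argument above.
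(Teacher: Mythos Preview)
Your overall decomposition and the untwisted half are correct and match the paper: the commutation \rmref{orecchiette2} (equivalently \rmref{orecchiette3}) makes the two untwisted sums telescope, and the $q$ surviving terms assemble into $\iota_{\psi\bar\circ\varphi}$; your sign check is right.

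The twisted half, however, is where your outline has a real gap. The analogy to the terms $(3),(7),(21),(22)$ in the proof of Theorem~\ref{feinefuellhaltertinte} is misleading: there the residual twisted pieces cancel by a direct chain-level manipulation using only \rmref{orecchiette3}, \rmref{parkraumbewirtschaftung2a}, \rmref{parkraumbewirtschaftung0}, and no cocycle hypothesis enters at all. Here the mechanism is genuinely different and more involved. The paper's argument for $(3)+(5)=-(-1)^{|q||p|}\iota_{\varphi\bar\circ\psi}$ rests on two ingredients you do not mention. First, one expands $\iota_\psi\iota_\varphi=\sum_{i=0}^p(-1)^{i+p}\iota_\varphi\,\dd_i\,\ttt^p\,\DD'_\psi\,\ttt^{n-|p|}$, an identity obtained from the cocycle condition on $\varphi$; this is how the term $(3)$ is unpacked. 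Second, and crucially, one uses repeatedly that $\bb\,\iota_\varphi=0=\iota_\varphi\,\bb$ on cycles modulo boundaries (via \rmref{mulhouse2} and $\delta\varphi=0$), not only \rmref{cabras}; this is what allows the face-map sums produced at each stage to be discarded. These two devices drive a \emph{recursive} procedure: at each step one extracts a single $\iota_{\varphi\circ_i\psi}$ (first $i=p$ and $i=1$, then $i=2$, and so on) and is left with the same three-term pattern \rmref{31}--\rmref{24} but with the effective parameter lowered by one, until everything is exhausted after $p-2$ further iterations. Without recognising this recursion and the role of $\bb\,\iota_\varphi=0$ alongside \rmref{cabras}, the reorganisation you describe will not close up; the substitutions from Theorem~\ref{feinefuellhaltertinte} together with \rmref{cabras} alone do not suffice.
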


\begin{proof}
Throughout we use relations that we have shown above 
to hold for operators on $C^\cyc_\bull(U,M)$, 
but as we now consider the induced
operators on homology, 
we will also assume tacitly that the
operators only act on cycles and 
that we compute modulo boundaries.

Assume $p+q \leq n+1$ (otherwise both sides in \rmref{radicale1} are zero). Without restriction we may assume that $0 < q < p$, the case of $p = q$ and that of zero cochains being skipped as the proof is similar, but somewhat simpler.
We now have
\begin{footnotesize}
\begin{equation*}
\begin{split}
\iota_\psi \lie_\varphi 
&= \sum^{n-|p|-q}_{i=1} (-1)^{\theta^{n,p}_i} \iota_\psi \ttt^{n-|p|-i} \DD'_\varphi \ttt^{i+p} + \sum^{n-|p|}_{i=n-|p|-|q|} (-1)^{\theta^{n,p}_i} \iota_\psi \ttt^{n-|p|-i} \DD'_\varphi \ttt^{i+p} \\
& \qquad + 
\sum^{p}_{i=1} (-1)^{\xi^{n,p}_i} \iota_\psi \ttt^{n-|p|} \DD'_\varphi \ttt^{i} \\
&= \ubs{(1)}{\sum^{n-|p+q|}_{i=1} (-1)^{\theta^{n,p}_i} \iota_\psi \ttt^{n-|p|-i} \DD'_\varphi \ttt^{i+p}} + \ubs{(2)}{\sum^{q}_{k=1} (-1)^{|p|(|q|+|k|)} \iota_{\psi \circ_k \varphi}} 
+  \ubs{(3)}{\sum^{p}_{i=1} (-1)^{\xi^{n,p}_i} \iota_\psi \iota_\varphi \sss_{-1} \ttt^{i-1}}, 
\end{split}
\end{equation*}
\end{footnotesize}
using \rmref{maxdudler} and \rmref{orecchiette1} for the second term and \rmref{parkraumbewirtschaftung2a} for the third term. 
Observe that already
$$
(2) = \iota_{\psi \bar\circ \gvf}.
$$
On the other hand, we see that
\begin{footnotesize}
\begin{equation*}
-(-1)^{q|p|} \lie_\varphi \iota_\psi 
= \ubs{(4)}{\sum^{n-q-|p|}_{i=1} (-1)^{\theta^{n,p}_i +1} \ttt^{n-q-|p|-i} \DD'_\varphi \ttt^{i+p} \iota_\psi} 
+ \ubs{(5)}{\sum^{p}_{i=1} (-1)^{\xi^{n-q,|q||p|}_i} \ttt^{n-q-|p|} \DD'_\varphi \ttt^{i} \iota_\psi}.
\end{equation*}
\end{footnotesize}
By Equation \rmref{orecchiette2}, one immediately observes that $(1) = -(4)$, 
hence we are left to prove that 
\begin{equation}
\label{supergrip}
(3) + (5) =  - (-1)^{|q||p|} \iota_{\varphi \bar\circ \psi} = - \sum^{p}_{i=1} (-1)^{|q||i|} \iota_{\varphi \circ_i \psi}, 
\end{equation}
or, in our former terminology, only the ``twisted'' parts in the Lie derivative still matter.

By \rmref{parkraumbewirtschaftung2a}, we see that
\begin{footnotesize}
$$
(5) = \sum^{p}_{i=1} (-1)^{\xi^{n-q,|q||p|}_i} \iota_\varphi \sss_{-1} \ttt^{i-1} \iota_\psi 
= \ubs{(6)}{\sum^{p-1}_{i=1} (-1)^{\xi^{n-q,|q||p|}_i} \iota_\varphi \sss_{-1} \ttt^{i-1} \iota_\psi} 
+ \ubs{(7)}{(-1)^{\xi^{n-q,|q||p|}_p} \iota_\varphi \sss_{-1} \ttt^{p-1} \iota_\psi},
$$
\end{footnotesize}
and we continue with
\begin{footnotesize}
\begin{equation*}
\begin{split}
(6) 
&= \sum^{p-1}_{i=1} (-1)^{\xi^{n-q,|q||p|}_i} \iota_\varphi \sss_{-1} \ttt^{i-1} \dd_0 \DD'_\psi 
 = \sum^{p-1}_{i=1} (-1)^{\xi^{n-q,|q||p|}_i} \iota_\varphi \dd_i \sss_{-1} \ttt^{i-1} \DD'_\psi \\
&= \sum^{p-1}_{i=1} \, \sum^{n-q+2}_{{j=0}\atop{j \neq i}} (-1)^{\xi^{n-q,|q||p|}_i + |j-i|} \iota_\varphi \dd_j \sss_{-1} \ttt^{i-1} \DD'_\psi \\
&= \ubs{(8)}{\sum^{p-1}_{i=1} \, \sum^{n-|q|}_{{j=1}\atop{j \neq i}} (-1)^{\xi^{|n|-q,|q||p|}_i + j} \iota_\varphi \dd_j \sss_{-1} \ttt^{i-1} \DD'_\psi} \\
& \quad  
+ \ubs{(9)}{\sum^{p-1}_{i=1} (-1)^{\xi^{|n|-q,|q||p|}_i} \iota_\varphi \ttt^{i-1} \DD'_\psi + \sum^{p-1}_{i=1} (-1)^{\xi^{|n|-q,|q||p|}_{|i|}+1} \iota_\varphi \ttt^{i} \DD'_\psi},
\end{split}
\end{equation*}
\end{footnotesize}
where in the third line we used \rmref{mulhouse2} together with the fact that $\varphi$ is a cocycle, and that we deal here with the induced maps on ${\hhmu}$, i.e., $\bb \iota_\varphi = 0 = \iota_\varphi \bb$. Observe now that
\begin{footnotesize}
$$
{(9)} = (-1)^{|q||p|+1} \iota_\varphi \DD'_\psi  + (-1)^{n|p|} \iota_\varphi \ttt^{|p|} \DD'_\psi  = \ubs{(10)}{(-1)^{|q||p|+1} \iota_{\varphi \circ_p \psi}}  
+ \ubs{(11)}{(-1)^{n|p|} \iota_\varphi \ttt^{|p|} \DD'_\psi}.
$$
\end{footnotesize}
Furthermore,
\begin{footnotesize}
\begin{equation*}
(8) = \ubs{(12)}{\sum^{p-3}_{i=0} \, \sum^{n-|q|}_{{j=1}} (-1)^{\xi^{n-q,|q||p|}_{|i|} + |j|} \iota_\varphi \sss_{-1} \ttt^{i} \dd_j \DD'_\psi} 
    + \ubs{(13)}{\sum^{n-|q|-|p|}_{{j=1}} (-1)^{\xi^{|n|,q}_{|p|}+j} \iota_\varphi \sss_{-1} \ttt^{p-2} \dd_j \DD'_\psi},  
\end{equation*}
\end{footnotesize}
where by \rmref{cabras} and \rmref{parkraumbewirtschaftung-1} we have
\begin{footnotesize}
\begin{equation*}
\begin{split}
(12) 
&= {\sum^{p-3}_{i=0} \, \sum^{n}_{{j=q+1}} (-1)^{\xi^{n-q,qp}_{|i|} + |j + p|} \iota_\varphi \sss_{-1} \ttt^{i} \DD'_\psi \dd_j} 
+ {\sum^{p-3}_{i=0} \, \sum^{q}_{{j=1}} (-1)^{\xi^{n-q,qp}_{|i|} + |j + p|} \iota_\varphi \sss_{-1} \ttt^{i} \DD'_\psi \dd_j} \\
& \quad
+ {\sum^{p-3}_{i=0} (-1))^{\xi^{n-q,|q||p|}_{|i|} + |q|} \iota_\varphi \sss_{-1} \ttt^{i} \dd_1 \ttt \DD'_\psi \ttt^n} \\ 
&= \ubs{(14)}{\sum^{p-3}_{i=0} (-1)^{\xi^{n-q,qp}_{|i|} + p} \iota_\varphi \sss_{-1} \ttt^{i} \DD'_\psi \dd_0}
+ \ubs{(15)}{\sum^{p-3}_{i=0} (-1)^{\xi^{n-q,qp}_{|i|} + |p|} \iota_\varphi \sss_{-1} \ttt^{i} \dd_1 \ttt \DD'_\psi \ttt^n},  
\end{split}
\end{equation*}
\end{footnotesize}
where in the second line we used that the representatives in
${\hhmu}$ are cycles.
By a similar argument we get, still with \rmref{cabras},
\begin{footnotesize}
\begin{equation*}
\begin{split}
(13) &= {\sum^{n-|q|-|p|}_{{j=2}} (-1)^{\xi^{|n|,q}_{|p|} + j} \iota_\varphi \sss_{-1} \ttt^{p-2} \dd_j \DD'_\psi} 
+ {(-1)^{|n|p} \iota_\varphi \sss_{-1} \ttt^{p-2} \dd_1 \ttt \DD'_\psi \ttt^n} \\
& \quad
    + {\sum^{q}_{{j=1}} (-1)^{\xi^{|n|,|j|}_{|p|}} \iota_\varphi \sss_{-1} \ttt^{p-2} \DD'_\psi \dd_j} \\
&= \ubs{(16)}{(-1)^{|n|p} \iota_\varphi \sss_{-1} \ttt^{p-2} \dd_1 \ttt \DD'_\psi \ttt^n}    
+ \ubs{(17)}{\sum^{n}_{{j=n-|p|+1}} (-1)^{\xi^{|n|,j}_{|p|}} \iota_\varphi \sss_{-1} \ttt^{p-2} \DD'_\psi \dd_j} \\
& \quad
+ \ubs{(18)}{(-1)^{|n|p+1} \iota_\varphi \sss_{-1} \ttt^{p-2} \DD'_\psi \dd_0}.    
\end{split}
\end{equation*}
\end{footnotesize}
We now see that
\begin{footnotesize}
\begin{equation*}
\begin{split}
(14)&+(18)+(15)+(16) \\
&= \sum^{p-2}_{i=0} (-1)^{\xi^{n-q,qp}_{|i|} + p} \iota_\varphi \sss_{-1} \ttt^{i} \DD'_\psi \dd_0 
+ {\sum^{p-2}_{i=0} (-1)^{\xi^{n-q,qp}_{|i|} + |p|} \iota_\varphi \sss_{-1} \ttt^{i} \dd_1 \ttt \DD'_\psi \ttt^n} \\ 
&= \sum^{p-2}_{i=0} (-1)^{\xi^{n-q,qp}_{|i|} + p} \iota_\varphi \sss_{-1} \ttt^{i} \dd_{n-|q|} \DD'_\psi \ttt^n 
+ {\sum^{p-2}_{i=0} (-1)^{\xi^{n-q,qp}_{|i|} + |p|} \iota_\varphi \sss_{-1} \ttt^{i} \dd_1 \ttt \DD'_\psi \ttt^n} \\
&= {\sum^{p-2}_{i=0} (-1)^{\xi^{n-q,qp}_{|i|} + p} \iota_\varphi \sss_{-1} \ttt^{i} (\dd_0-\dd_1) \ttt \DD'_\psi \ttt^n} =: (19). 
\end{split}
\end{equation*}
\end{footnotesize}
Let us come back to the other half and compute (3): to this end, consider first
\begin{footnotesize}
\begin{equation*}
\begin{split}
\iota_\psi& \iota_\varphi(m, u^1, \ldots, u^n) \\
&= \big(m, u^1, \ldots, \psi(u^{n-|p+q|}, \ldots, \varphi(u^{n-|p|}, \ldots, u^n) \blact u^{n-p}) \blact u^{n-p-q}\big) \\
&= \big(m, u^1, \ldots, \varepsilon\big(\varphi(u^{n-|p|}, \ldots, u^n) \blact \DD_\psi(u^{n-|p+q|}, \ldots, u^{n-p})\big) \blact u^{n-p-q}\big) \\
&= \big(m, u^1, \ldots, \varphi(\DD_\psi(u^{n-|p+q|}, \ldots, u^{n-p})u^{n-|p|}, \ldots, u^n) \blact u^{n-p-q}\big) \\ 
&\quad + \sum^{n-1}_{i=n-|p|} \!\! (-1)^{i-n+p} \Big(m, u^1, \ldots, \varphi\big(\DD_\psi(u^{n-|p+q|}, \ldots, u^{n-p}), \ldots, u^{i}u^{i+1}, \ldots, u^n\big) \blact u^{n-p-q}\Big) \\ 
&\quad + (-1)^p \Big(m, u^1, \ldots, \varphi\big(\DD_\psi(u^{n-|p+q|}, \ldots, u^{n-p}), \ldots, \varepsilon(u^n) \blact u^{n-1}\big) \blact u^{n-p-q}\Big),  
\end{split}
\end{equation*}
\end{footnotesize}
which is true since $\varphi$ is a cocycle; that is, with the help of \rmref{orecchiette1},
$$
\iota_\psi\iota_\varphi = \sum^p_{i=0} (-1)^{i+p} \iota_\varphi \dd_i \ttt^p \DD'_\psi
t^{n-|p|}. 
$$
Hence, by \rmref{parkraumbewirtschaftung-1} and \rmref{parkraumbewirtschaftung0}, 
\begin{footnotesize}
\begin{equation*}
\begin{split}
(3) &= {\sum^{p}_{j=1} (-1)^{\xi^{n,p}_{j}} \iota_\psi \iota_\varphi \sss_{-1} \ttt^{j-1}} \\
&=  {\sum^{p}_{j=1} \, \sum^p_{i=0} (-1)^{\xi^{n,i}_{j}} \iota_\varphi \dd_i \sss_{-1} \ttt^{|p|} \DD'_\psi \ttt^{n-|p|+j}} \\
&=  \ubs{(20)}{\sum^{p-1}_{j=1} \, \sum^p_{i=0} (-1)^{\xi^{n,i}_{j}} \iota_\varphi \dd_i \sss_{-1} \ttt^{|p|} \DD'_\psi \ttt^{n-|p|+j}} 
 + \ubs{(21)}{\sum^p_{i=0} (-1)^{\xi^{n,i}_{p}} \iota_\varphi \dd_i \sss_{-1} \ttt^{|p|} \DD'_\psi},  
\end{split}
\end{equation*}
\end{footnotesize}
where we continue with
\begin{footnotesize}
\begin{equation*}
(21) = (-1)^{n|p| +1} \iota_\varphi \ttt^{|p|} \DD'_\psi 
+  \sum^{n-|q|}_{k=n-|q|-|p|+1} (-1)^{|n|p-|q|+k} \iota_\varphi \sss_{-1} \ttt^{p-2} \dd_k \DD'_\psi
+ (-1)^{|n||p|} \iota_\varphi \dd_p \sss_{-1} \ttt^{|p|} \DD'_\psi, 
\end{equation*}
\end{footnotesize}
and these three terms are precisely, by \rmref{parkraumbewirtschaftung-1} and \rmref{parkraumbewirtschaftung1} again, the terms $-(11)$, $-(16)$, and $-(7)$, respectively. We furthermore have
\begin{footnotesize}
\begin{equation*}
(20)
=  \ubs{(22)}{\sum^{p-1}_{j=1} \, \sum^p_{i=1} (-1)^{\xi^{n,i}_{j}} \iota_\varphi \dd_i \sss_{-1} \ttt^{p-1} \DD'_\psi \ttt^{n-|p|+j}} 
+  \ubs{(23)}{\sum^{p-1}_{j=1} (-1)^{n|j|+1} \iota_\varphi \ttt^{p-1} \DD'_\psi \ttt^{n-|p|+j}}, 
\end{equation*}
\end{footnotesize}
where
\begin{footnotesize}
\begin{equation*}
(23) = \ubs{(24)}{\sum^{p-1}_{j=2} (-1)^{n|j|+1} \iota_\varphi \ttt^{p-1} \DD'_\psi \ttt^{n-|p|+j}} 
- \ubs{(25)}{\iota_\varphi \ttt^{p-1} \DD'_\psi \ttt^{n-p+2}}, 
\end{equation*}
\end{footnotesize}
and we observe that $(25) = \iota_{\varphi \circ_1 \psi}$.

For better orientation let us state were we are at this point: we are left with the equations
\begin{eqnarray}
\label{31}
(19) &=& {\sum^{p-1}_{i=1} (-1)^{\xi^{n-q,qp}_{i}+p} \iota_\varphi (\dd_i - \dd_{i+1}) \sss_{-1} \ttt^{i} \DD'_\psi \ttt^n}, \\
\label{22}
(22) &=& \sum^{p-1}_{j=1} \, \sum^p_{i=1} (-1)^{\xi^{n,i}_{j}} \iota_\varphi \dd_i \sss_{-1} \ttt^{p-1} \DD'_\psi \ttt^{n-|p|+j}, \\
\label{24}
(24) &=& \sum^{p-1}_{j=2} (-1)^{n|j|+1} \iota_\varphi \ttt^{p-1} \DD'_\psi \ttt^{n-|p|+j}, 
\end{eqnarray}
and we are also missing the terms, cf.~\rmref{supergrip}, 
$$
-\sum^{p-1}_{i=2} (-1)^{|q||i|} \iota_{\varphi \circ_i \psi}.
$$ 
The proof proceeds now in recursive steps, which at each step reproduce formally the Equations \rmref{31}--\rmref{24}, but with lower degrees, and one of the $\iota_{\varphi \circ_i \psi}$. We only give the next step: start with
\begin{footnotesize}
\begin{equation*}
(22)
=  \ubs{(26)}{\sum^{p-2}_{j=1} \, \sum^p_{i=1} (-1)^{\xi^{n,i}_{j}} \iota_\varphi \dd_i \sss_{-1} \ttt^{p-1} \DD'_\psi \ttt^{n-|p|+j}} 
+  \ubs{(27)}{\sum^p_{i=1} (-1)^{\xi^{n,i}_{|p|}} \iota_\varphi \dd_i \sss_{-1} \ttt^{p-1} \DD'_\psi \ttt^{n}}, 
\end{equation*}
\end{footnotesize}
where
\begin{footnotesize}
\begin{equation*}
\begin{split}
(26) &=  \ubs{(28)}{\sum^{p-2}_{j=1} \, \sum^{n-|q|}_{i=n-|q|-|p|+1} (-1)^{\xi^{n,i}_{|j|} + q + p} \iota_\varphi \sss_{-1} \ttt^{p-2} \dd_i \DD'_\psi \ttt^{n-|p|+j}} \\
& \qquad +  \ubs{(29)}{\sum^{p-3}_{j=1} (-1)^{\xi^{n,p}_{j}} \iota_\varphi \sss_{-1} \ttt^{p-2} \dd_1 \ttt \DD'_\psi \ttt^{n-|p|+j}} 
 +  \ubs{(30)}{(-1)^{|n||p|} \iota_\varphi \sss_{-1} \ttt^{p-2} \dd_1 \ttt \DD'_\psi \ttt^{n-1}}. 
\end{split}
\end{equation*}
\end{footnotesize}
Then
\begin{footnotesize}
\begin{equation*}
\begin{split}
(28) &=  \ubs{(31a)}{\sum^{n-2}_{j=n-p+2} \, \sum^{n-j}_{i=0} (-1)^{\xi^{n,|i|}_{|p+j|}}  \iota_\varphi \sss_{-1} \ttt^{p-2} \DD'_\psi \ttt^{j} \dd_i} \\
& 
 \qquad +  \ubs{(31b)}{\sum^{n-2}_{j=n-p+2} \, \sum^{j-n+p-2}_{i=0} (-1)^{\xi^{n,|i|}_{p+j}}  \iota_\varphi \sss_{-1} \ttt^{p-2} \DD'_\psi \ttt^{j} \dd_{n-i}} \\
& \qquad + \ubs{(32)}{(-1)^{|n||p|} \iota_\varphi \sss_{-1} \ttt^{p-2} \DD'_\psi \ttt^{n-1} (\dd_0 - \dd_1)}.
\end{split}
\end{equation*}
\end{footnotesize}
Since the representatives of the elements we consider are in $\ker \bb$, we conclude
\begin{footnotesize}
\begin{equation*}
\begin{split}
(31a)+(31b) &=  {\sum^{p-3}_{j=1} \, \sum^{n-j}_{i=p-j} (-1)^{\xi^{n,i}_{|j+p|}}  \iota_\varphi \sss_{-1} \ttt^{p-2} \DD'_\psi \ttt^{n-|p|+j} \dd_i} \\
&=  {\sum^{p-3}_{j=1} \, \sum^{n-|p|}_{i=1} (-1)^{\xi^{n,|i+p|}_{j}}  \iota_\varphi \sss_{-1} \ttt^{p-2} \DD'_\psi \dd_i \ttt^{n-|p|+1+j}} =: (33).
\end{split}
\end{equation*}
\end{footnotesize}
Now, again by \rmref{cabras}, we have 
\begin{footnotesize}
\begin{equation*}
\begin{split}
(33) + (29) 
&= {\sum^{p-3}_{j=1} \, \sum^{n-|q|-|p|}_{i=1} (-1)^{\xi^{n,|i+p+q|}_{j}}  \iota_\varphi \sss_{-1} \ttt^{p-2} \dd_i \DD'_\psi \ttt^{n-p+2+j}} \\
&= {\sum^{p-3}_{j=1} \, \sum^{n-|q|}_{i=p} (-1)^{\xi^{n,i+q}_{j}}  \iota_\varphi \dd_i \sss_{-1} \ttt^{p-2} \DD'_\psi \ttt^{n-p+2+j}} \\
&= {\sum^{p-3}_{j=1} \, \sum^{p-1}_{i=0} (-1)^{\xi^{n,|i+q|}_{j}}  \iota_\varphi \dd_i \sss_{-1} \ttt^{p-2} \DD'_\psi \ttt^{n-p+2+j}} \\
& \qquad + {\sum^{p-3}_{j=1} (-1)^{nj} \iota_\varphi \dd_{n-q+2} \sss_{-1} \ttt^{p-2} \DD'_\psi \ttt^{n-p+2+j}} \\
&= \ubs{(34)}{\sum^{p-3}_{j=1} \, \sum^{p-1}_{i=1} (-1)^{\xi^{n,|i+q|}_{j}}  \iota_\varphi \dd_i \sss_{-1} \ttt^{p-2} \DD'_\psi \ttt^{n-p+2+j}}  
+ \ubs{(35)}{\sum^{p-3}_{j=1} (-1)^{\xi^{n,|q|}_{j}} \iota_\varphi \ttt^{p-2} \DD'_\psi \ttt^{n-p+2+j}} \\
& \qquad 
+ \ubs{(36)}{\sum^{p-3}_{j=1} (-1)^{nj} \iota_\varphi \ttt^{p-1} \DD'_\psi \ttt^{n-p+2+j}}, 
\end{split}
\end{equation*}
\end{footnotesize}
where in the third equation we used one more time $\bb \iota_\varphi = 0 = \iota_\varphi \bb$, which holds in our situation.
One furthermore has
\begin{footnotesize}
\begin{equation*}
(35) 
= \ubs{(37)}{\sum^{p-2}_{j=3} (-1)^{nj+q} \iota_\varphi \ttt^{p-2} \DD'_\psi \ttt^{n-p+1+j}} 
+ \ubs{(38)}{(-1)^q \iota_\varphi \ttt^{p-2} \DD'_\psi \ttt^{n-p+3}}, 
\end{equation*}
\end{footnotesize}
and we see that $(38) = -(-1)^{|q|} \iota_{\varphi \circ_2 \psi}$, that is, the second summand in \rmref{supergrip}. Moreover,
\begin{footnotesize}
\begin{equation*}
(27) + (19) 
= \ubs{(39)}{\sum^{p-2}_{i=1} (-1)^{\xi^{n-q,qp}_{i} +p} \iota_\varphi (\dd_i - \dd_{i+1}) \sss_{-1} \ttt^{i} \DD'_\psi \ttt^{n}} 
+ \ubs{(40)}{\sum^{p-2}_{i=1} (-1)^{np + |i|} \iota_\varphi \dd_i \sss_{-1} \ttt^{p-1} \DD'_\psi \ttt^{n}}, 
\end{equation*}
\end{footnotesize}
where
\begin{footnotesize}
\begin{equation*}
(40) = {\sum^{n-q}_{i=n-|q|-|p|+1} (-1)^{\xi^{|n|,|i+q|}_{p}} \iota_\varphi \sss_{-1} \ttt^{p-2} \dd_i \DD'_\psi \ttt^{n}}
= \ubs{(41)}{\sum^{n}_{i=n-p+3} (-1)^{|n||p|+i} \iota_\varphi \sss_{-1} \ttt^{p-2} \DD'_\psi \ttt^{n-1} \dd_i}.
\end{equation*}
\end{footnotesize}
Furthermore, we obtain
\begin{footnotesize}
\begin{equation*}
\begin{split}
(41) + (32) 
&= {\sum^{n-p+2}_{i=2} (-1)^{\xi^{|n|,i}_{p}}  \iota_\varphi \sss_{-1} \ttt^{p-2} \DD'_\psi \ttt^{n-1} \dd_i} \\ 
&= {\sum^{q}_{i=1} (-1)^{|n||p|+i} \iota_\varphi \sss_{-1} \ttt^{p-2} \DD'_\psi \dd_i \ttt^{n}} 
+ {\sum^{n-|p|}_{i=q+1} (-1)^{|n||p|+i} \iota_\varphi \sss_{-1} \ttt^{p-2} \DD'_\psi \dd_i \ttt^{n}} \\ 
&= \ubs{(42)}{(-1)^{|n||p|+1} \iota_\varphi \sss_{-1} \ttt^{p-2} \dd_1 \ttt \DD'_\psi \ttt^{n-1}} 
+ \ubs{(43)}{\sum^{n-|q|-|p|}_{i=2} (-1)^{|n||p|+|q+i|} \iota_\varphi \sss_{-1} \ttt^{p-2} \dd_i \DD'_\psi \ttt^{n}}, 
\end{split}
\end{equation*}
\end{footnotesize}
where for the first term in the last line we used \rmref{cabras}.
By $\bb \iota_\varphi = 0 = \iota_\varphi \bb$ again, one has
\begin{footnotesize}
\begin{equation*}
\begin{split}
(43) 
&= {\sum^{n-|q|}_{i=p} (-1)^{|n||p|+i+p+q} \iota_\varphi \dd_i \sss_{-1} \ttt^{p-2} \DD'_\psi \ttt^{n}} \\
&= \ubs{(44)}{\sum^{p-1}_{i=1} (-1)^{n|p|+i+q} \iota_\varphi \dd_i \sss_{-1} \ttt^{p-2} \DD'_\psi \ttt^{n}}
+ \ubs{(45)}{(-1)^{n|p|+q} \iota_\varphi \ttt^{p-2} \DD'_\psi \ttt^{n}} 
+ \ubs{(46)}{(-1)^{np} \iota_\varphi \ttt^{p-1} \DD'_\psi \ttt^{n}}. 
\end{split}
\end{equation*}
\end{footnotesize}
Finally, we see that $(42) = - (30)$, that $(36)+(46) = -(24)$, and that
\begin{footnotesize}
\begin{equation*}
(34) + (44) 
= {\sum^{p-1}_{j=2} \, \sum^{p-1}_{i=1} (-1)^{nj+i+q} \iota_\varphi \dd_i \sss_{-1} \ttt^{p-2} \DD'_\psi \ttt^{n-|p|+j}}  =: (47),
\end{equation*}
\end{footnotesize}
as well as
\begin{footnotesize}
\begin{equation*}
(37) + (45) 
= {\sum^{p-1}_{j=3}  (-1)^{nj+q} \iota_\varphi \ttt^{p-2} \DD'_\psi \ttt^{n-|p|+j}} =:(48).  
\end{equation*}
\end{footnotesize}
We are now left with the three terms
\begin{eqnarray}
\label{53}
(39) &=& {\sum^{p-2}_{i=1} (-1)^{\xi^{n-q,qp}_{i} +p}  \iota_\varphi (\dd_i - \dd_{i+1}) \sss_{-1} \ttt^{i} \DD'_\psi \ttt^{n}}, \\
\label{54}
(47) &=& {\sum^{p-1}_{j=2} \, \sum^{p-1}_{i=1} (-1)^{\xi^{n,|i|}_{|j|} +q}  \iota_\varphi \dd_i \sss_{-1} \ttt^{p-2} \DD'_\psi \ttt^{n-|p|+j}}, \\  
\label{55}
(48) &=& {\sum^{p-1}_{j=3}  (-1)^{nj+q} \iota_\varphi \ttt^{p-2} \DD'_\psi \ttt^{n-|p|+j}},  
\end{eqnarray}
and these correspond (with alternating signs) to the Eqs.~\rmref{31}--\rmref{24}, but with one summand 
less and $p$ lowered by one, respectively. Also, we obtained $\iota_{\varphi \circ_2 \psi}$, 
see $(38)$, on the way. Repeating the same steps as above another $p-3$ times yields the missing terms 
$$
- \sum^{p-1}_{i=3} (-1)^{|q||i|} {\iota_{\varphi \circ_i \psi} = - \sum^{p-1}_{i=3} (-1)^{|q||i|} \iota_\varphi \ttt^{p-i} \DD'_\psi \ttt^{n-|p|+i}},  
$$
in \rmref{supergrip},
and cancels the rest. Observe that in \rmref{54} and \rmref{55} the factor $(-1)^q$ appears in contrast to \rmref{22} and \rmref{24}, but in correspondence to the sign rule in \rmref{supergrip}.
\end{proof}

\section{The Batalin-Vilkovisky module}
This section contains the 
both conceptually and computationally 
most involved aspect of our paper,
which is a Hopf algebroid generalisation of the 
Cartan-Rinehart homotopy formula. This is a relation on the
(co)chain level which implies on 
(co)homology the Batalin-Vilkovisky relation that expresses
$\lie_\varphi $ as the graded commutator of $\BB$ and $
\iota_\varphi$. In other words, establishing 
this formula will complete the proof that 
$\hmu$ and
$\hhmu$ form a 
differential calculus. 

\subsection{The operators $\SSS_\varphi$}
We begin by defining 
the generalisation 
of the operator 
denoted by $\SSS$ in the work  
Nest, Tsygan and Tamarkin \cite{NesTsy:OTCROAA, Tsy:CH,
  TamTsy:NCDCHBVAAFC, TamTsy:CFAIT}, by $\mathbf{B}$ in Getzler's work
\cite{Get:CHFATGMCICH}, and by $f$ in Rinehart's paper
\cite{Rin:DFOGCA}. This operator may be considered as a generalisation
of the cap product for the cyclic bicomplex. Throughout this section,
$U$ is assumed to be a left Hopf algebroid and $M$ is a
module-comodule (not necessarily an SaYD module).

\begin{definition}
Given $\varphi \in  C^p(U,A)$, we define 
$$
        \SSS_\varphi : C_n(U,M) \rightarrow 
        C_{n-p+2}(U,M)
$$
for $p \le n$ by
\begin{equation*}
\label{capillareal1}
        \SSS_\varphi := 
        \sum^{n-p}_{j=0} \, 
        \sum^j_{i=0} (-1)^{\eta^{n,p}_{j,i}} \sss_{-1} \,
        \ttt^{n-p-i} \, \DD'_\varphi \, \ttt^{n+i-|j|},
\end{equation*}
where the sign is given by
\begin{equation*}
\label{nerv1}
        \eta^{n,p}_{j,i} := 
        nj + |p|i.
\end{equation*}
For $p>n$, we put
$$
        \SSS_\varphi := 0.
$$
\end{definition}

\begin{rem}
Observe that the extra degeneracy \rmref{extra} is given here as $\sss_{-1} = \ttt \, \sss_{n-|p|}$.
\end{rem}

In general, inserting the explicit formula 
for $\ttt,\DD'_\varphi$ and 
$\sss_{-1}$ results in truly unpleasant expressions. 
However,  
in case $M$ is an SaYD module and 
hence $C_\bull(U,M)$ a cyclic module, 
these can be at least somewhat 
simplified:

\begin{prop}
If $M$ is an SaYD module over a left Hopf algebroid $U$, then   
$\SSS_\varphi$,
for $\varphi \in C^p(U,A)$, $p \leq n$, assumes the following form:
\begin{equation*}
\label{capillareal2}
\begin{split}
\SSS_\varphi(m, x) &= 
\sum_{i=0}^{n-p} \ \sum_{j=i+1}^{n-|p|} (-1)^{n(i+|p|) + |p|(j+i+1)} 
\big(m_{(0)} u^1_{+(2)} \cdots u^{i}_{+(2)}, u^{i+1}_+, \ldots, \\ 
& \qquad \qquad \DD_\varphi(u^{j}_+, \ldots, u^{j+|p|}_+),  
\ldots, u^n_+, u^n_- \cdots u^1_- m_{(-1)}, u^1_{+(1)}, \ldots, u^{i}_{+(1)}\big).
\end{split}
\end{equation*}
\end{prop}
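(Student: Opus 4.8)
The plan is to compute $\SSS_\varphi$ term by term directly from its definition, exploiting that for an SaYD module $C_\bull(U,M)$ is a genuine cyclic $k$-module, so that $\TT = \ttt^{n+1} = \mathrm{id}$ and Lemma~\ref{kohinor} provides closed expressions for all powers of $\ttt$. The guiding observation is structural: comparing the claimed formula with the expression \rmref{alhambra} for $\BB = \sss_{-1}\NN$, one sees that the asserted value of $\SSS_\varphi(m,x)$ is precisely the $\BB$-sum in which one interior block $u^j_+, \ldots, u^{j+|p|}_+$ has been replaced by $\DD_\varphi(u^j_+, \ldots, u^{j+|p|}_+)$, summed over all admissible interior positions $j$ with $i < j \le n-|p|$. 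This is exactly the behaviour one expects of the higher homotopy $\SSS$ in a Cartan--Rinehart calculus, and it tells us which bijection of index pairs to aim for.

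First I would fix a summand $T_{j,i} := \sss_{-1}\,\ttt^{n-p-i}\,\DD'_\varphi\,\ttt^{n+i-|j|}$ of the defining double sum, where $0 \le i \le j \le n-p$, and evaluate it on $(m,x) = (m,u^1,\ldots,u^n)$. Writing $n+i-|j| = n+1+i-j$, the inner power lies between $p+1$ and $n+1$; where it equals $n+1$ I use $\ttt^{n+1} = \mathrm{id}$, and otherwise Lemma~\ref{kohinor}. This produces an explicit cyclic rotation in which a definite consecutive block of the original arguments, together with the module leg $m_{(0)}$ and the comodule leg $m_{(-1)}$, has been shifted into the last $p$ slots.

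Next I would apply $\DD'_\varphi$, which by definition contracts these last $p$ slots through $\DD_\varphi$ as in \rmref{huetor1}. The delicate point is that the arguments occupying those slots already carry split translation legs $u_{+(1)}, u_{+(2)}$ and products $u^n_- \cdots u^1_- m_{(-1)}$ coming from Lemma~\ref{kohinor}, while $\DD_\varphi$ splits each of its own arguments once more via the coproduct; reconciling these requires repeated use of Schauenburg's identities, chiefly the multiplicativity \rmref{Sch4} of the translation map together with \rmref{Sch38} and \rmref{Sch37}, as well as coassociativity and the counit axioms, and the coincidence of the module and comodule $\Ae$-structures for SaYD modules. I would then apply $\ttt^{n-p-i}$ and the extra degeneracy $\sss_{-1} = \ttt\,\sss_{n-|p|}$ (as noted in the Remark), whose combined effect is to reinsert a slot and rotate the contracted $\DD_\varphi$-block back into the interior, producing a term of exactly the $\BB$-shape with $\DD_\varphi$ sitting at some interior position.

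Finally I would reindex the double sum: letting the new outer index count the number of $_{+(2)}$-prefactors $u^1_{+(2)}\cdots u^{i}_{+(2)}$ and the new inner index record the position of the $\DD_\varphi$-block, the original range $0 \le i \le j \le n-p$ is carried to $0 \le i \le n-p$, $i+1 \le j \le n-|p|$, and one checks that the defining sign $\eta^{n,p}_{j,i} = nj + |p|i$ is transported to $n(i+|p|) + |p|(j+i+1)$. \emph{The main obstacle is exactly this last bookkeeping}: keeping the sign $\eta^{n,p}_{j,i}$ consistent through the cyclic reductions modulo $n+1$ and through the reinsertion by $\sss_{-1}$, while simultaneously verifying that all the translation legs redistribute into the clean pattern $m_{(0)} u^1_{+(2)}\cdots u^i_{+(2)}$ at the front and $u^1_{+(1)}, \ldots, u^i_{+(1)}$ at the back. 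Once a single generic term is handled and the index bijection is pinned down, the remaining cases (boundary values of $i,j$, and the vanishing for $p > n$) follow by the same manipulations.
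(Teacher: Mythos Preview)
Your proposal is correct and follows exactly the approach the paper takes: the paper's own proof consists of the single phrase ``Direct computation,'' and what you have outlined is a careful blueprint for that computation, using Lemma~\ref{kohinor} for the powers of $\ttt$, the Schauenburg identities \rmref{Sch3}--\rmref{Sch5}, and the SaYD condition to simplify each summand, followed by the reindexing and sign check. Your structural observation comparing the target formula with \rmref{alhambra} is a helpful orientation that the paper does not make explicit.
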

\begin{proof}
Direct computation.
\end{proof} 
\begin{example}
For $n=1, \, p=1$, the above means:
\begin{equation*}
\SSS_\varphi(m, u) = (m_{(0)}, \varphi(u_{+(1)}) \lact u_{+(2)}, u_- m_{(-1)}),
\end{equation*}
while it becomes for $n=2, p=1$:
\begin{equation*}
\begin{split}
\SSS_\varphi(m, u, v) &= (m_{(0)}, \varphi(u_{+(1)}) \lact u_{+(2)}, v_+,  v_-u_- m_{(-1)}) \\ 
&\quad + (m_{(0)}, u_+, \varphi(v_{+(1)}) \lact v_{+(2)}, v_-u_- m_{(-1)}) \\ 
&\quad + (m_{(0)}u_{+(2)}, \varphi(v_{+(1)}) \lact v_{+(2)}, v_-u_- m_{(-1)}, u_{+(1)}). 
\end{split}
\end{equation*}
For $n=3$ and $p=2$, we get
\begin{equation*}
\begin{split}
\SSS_\varphi(m, u, v, w) &= - (m_{(0)}, \varphi(u_{+(1)}, v_{+(1)}) \lact 
u_{+(2)}v_{+(2)}, w_+, w_-v_-u_-m_{(-1)}) \\
& \quad + (m_{(0)}, u_+, \varphi(v_{+(1)},  w_{+(1)}) \lact v_{+(2)}w_{+(2)}, w_-v_-u_-m_{(-1)}) \\
& \quad + (m_{(0)}u_{+(2)}, \varphi(v_{+(1)}, w_{+(1)}) \lact v_{+(2)}w_{+(2)}, w_-v_-u_-m_{(-1)}, u_{+(1)}).
\end{split}
\end{equation*}
\end{example}

\subsection{The relation $[\BB,\SSS_\varphi]=0$}

Our first result 
is that $\SSS_\varphi$ commutes with 
$\BB$. As this simplifies the 
formula for $\BB$, we will from now on 
be working on the
reduced chain complex $\bar C_\bull(U,M)$ 
resp.~$\bar C_\bull^{\cyc}(U,M)$, which dually
requires passing also to the reduced cochain complex:
\begin{definition}\label{thomson}
We denote by $\bar C^\bull(U,A)$ respectively 
$\bar C_M^\bull(U)$ the 
intersection of the kernels of the
codegeneracies in the cosimplicial $k$-modules  
$C^\bull(U,A)$ respectively 
$C^\bull_{M}(U)$.
\end{definition}

\begin{prop}
\label{pleiadians}
For any $\varphi \in \bar C^p(U,A)$ 
the identity
\begin{equation}
\label{alles3}
[\BB, \SSS_\varphi] = 0
\end{equation}
holds on the reduced chain complex 
$\bar C_\bull(U,M)$.
\end{prop}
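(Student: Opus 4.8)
The plan is to prove the two identities $\BB\SSS_\varphi = 0$ and $\SSS_\varphi\BB = 0$ \emph{separately} on $\bar C_\bull(U,M)$. This suffices: on the reduced complex one has $\BB = \sss_{-1}\NN$ (as recalled after \rmref{extra}), and since $\SSS_\varphi$ has homological degree $2-p$ and hence parity $p$, the graded commutator reads $[\BB,\SSS_\varphi] = \BB\SSS_\varphi - (-1)^p\SSS_\varphi\BB$. The one principle driving every step is that on the reduced complex any operator factoring through a composite of two degeneracies vanishes, so the whole argument amounts to manoeuvring the various extra degeneracies $\sss_{-1} = \ttt\,\sss_n$ next to one another.

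For the first identity I would exploit that $\SSS_\varphi$, exactly like $\BB$, carries $\sss_{-1}$ as its leftmost factor: writing $\SSS_\varphi = \sss_{-1}\,R_\varphi$ with $R_\varphi = \sum_{j,i}(-1)^{\eta^{n,p}_{j,i}}\ttt^{n-p-i}\DD'_\varphi\ttt^{n+i-|j|}$, one obtains $\BB\SSS_\varphi = \sss_{-1}\NN\sss_{-1}\,R_\varphi$, so everything reduces to the single key lemma $\sss_{-1}\NN\sss_{-1} = 0$ on $\bar C_\bull(U,M)$. I would prove this by expanding $\NN$ as in \rmref{extra} and pushing each power of $\ttt$ through the inner $\sss_{-1} = \ttt\,\sss_n$ by the para-cyclic degeneracy relations; in every case (including the two wrap-around cases at the top of the range) this leaves a \emph{single} degeneracy on the left, which upon meeting the outer $\sss_{-1}$ becomes a composite of two degeneracies and hence dies. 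Incidentally this reproves $\BB^2 = 0$ on $\bar C_\bull(U,M)$, cf.~\rmref{mixedcomp}.

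The second identity $\SSS_\varphi\BB = 0$ is where the real work lies, and since $\BB = \sss_{-1}\NN$ it is enough to show $\SSS_\varphi\,\sss_{-1} = 0$. Here I would first normalise $\SSS_\varphi$ by means of \rmref{naumburg}: sliding the left power $\ttt^{n-p-i}$ across $\DD'_\varphi$ turns it into $\DD^{\scriptscriptstyle{(i+1){\rm th}}}_\varphi\,\ttt^{n-p-i}$, whereupon the two powers of $\ttt$ flanking the insertion fuse and the $i$-dependence of the $\ttt$-exponent drops out. One is then left with a trailing block $\ttt^{\bull}\sss_{-1}$, which again reduces to a single degeneracy; this degeneracy is transported leftwards past the insertion operators $\DD^{\scriptscriptstyle{i{\rm th}}}_\varphi$ using \rmref{parkraumbewirtschaftung0} and its analogues, until it collides with the leftmost $\sss_{-1}$ of $\SSS_\varphi$ and once more produces a vanishing double degeneracy.

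The main obstacle is not either collapse mechanism as such but the boundary bookkeeping in the second identity: the relations \rmref{naumburg}, \rmref{parkraumbewirtschaftung-1} and \rmref{parkraumbewirtschaftung0} hold only in restricted index ranges, so the extremal summands — where a degeneracy index falls outside the range in which it may legally be commuted past a $\DD$-operator — must be treated by hand and shown either to cancel pairwise or to reduce to degeneracies by a separate manipulation. As a structural sanity check that keeps the signs honest, note that $[\BB,\SSS_\varphi]$ is precisely the homological-degree-$(3-p)$ component of the Cartan--Rinehart formula $\lie_\varphi = [\BB+\bb,\SSS_\varphi+\iota_\varphi] - \iota_{\gd\varphi} - \SSS_{\gd\varphi}$: every other contribution there lands in degree $1-p$ (namely $[\BB,\iota_\varphi]$, $[\bb,\SSS_\varphi]$, $\SSS_{\gd\varphi}$ and $\lie_\varphi$ itself) or in degree $-1-p$ (namely $[\bb,\iota_\varphi]$ and $\iota_{\gd\varphi}$), so the vanishing of the degree-$(3-p)$ part is forced, and establishing \rmref{alles3} is exactly the task of isolating and verifying this top-degree shadow.
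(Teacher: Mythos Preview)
Your proposal is correct and follows essentially the same strategy as the paper's proof: split the commutator into $\BB\SSS_\varphi$ and $\SSS_\varphi\BB$ and show each separately lands in the degenerate subcomplex by manoeuvring the two copies of $\sss_{-1}$ into a double degeneracy. The paper's explicit computation treats $\SSS_\varphi\,\ttt\,\sss_n$ and stays with $\DD'_\varphi$ throughout, using only the para-cyclic relations and \rmref{parkraumbewirtschaftung0} rather than your detour through \rmref{naumburg} and the operators $\DD^{\scriptscriptstyle{i{\rm th}}}_\varphi$; for $\BB\SSS_\varphi$ it says only ``a similar argument'', which is in effect your cleaner observation that $\sss_{-1}\NN\sss_{-1}=0$ on $\bar C_\bull$ holds independently of $\varphi$ since $\SSS_\varphi$ already begins with $\sss_{-1}$.
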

\begin{proof}
Explicitly, the graded commutator reads 
on the reduced complex
$$
[\BB, \SSS_\varphi] =  
\ttt \, 
\sss_{n-p+2} \, \NN \, \SSS_\varphi - 
(-1)^{p-2} \, \SSS_{\varphi} \, \ttt \, \sss_n \, \NN.
$$ 
If $p > n+1$, the entire expression is already zero.
Hence assume that $p \leq n+1$ and first consider the second summand: 
it suffices to show that the 
image of $\SSS_{\varphi} \, \ttt \, \sss_n$ 
on elements of degree $n$ is degenerate, and this can be seen as 
follows:
\begin{equation*}
\begin{split}
        \SSS_{\varphi} \, \ttt \, \sss_n 
&= 
        \sum^{n-p+1}_{j=0} \, \sum^j_{i=0} 
        (-1)^{\eta^{n,p}_{j,i}} \ttt \, \sss_{n-p+2} \, \ttt^{n-p-i+1} \, 
        \DD'_\varphi \, \ttt^{n+i-j+2} \, \ttt \, \sss_n \\
 &= 
        \sum^{n-p+1}_{j=0} \, \sum^j_{i=0} 
        (-1)^{\eta^{n,p}_{j,i}} \ttt \, \sss_{n-p+2} \, \ttt^{n-p-i+1} \, 
        \DD'_\varphi \, \ttt^{n+i-j+1} \, \sss_0 \, \ttt \\
 &= 
        \sum^{n-p+1}_{j=0} \, \sum^j_{i=0} 
        (-1)^{\eta^{n,p}_{j,i}} \ttt \, \sss_{n-p+2} \, \ttt^{n-p-i+1} \, 
        \DD'_\varphi \, \sss_{n-(j-i)+1} \, \ttt^{n+i-j+2} \\
&= 
        \sum^{n-p+1}_{j=0} \, \sum^j_{i=0} 
        (-1)^{\eta^{n,p}_{j,i}} \ttt \, \sss_{n-p+2} \, \ttt^{n-p-i+1}  
        \sss_{n-(j-i)-p+2} \,  \DD'_\varphi \, \ttt^{n+i-j+2} \\
&= 
        \sum^{n-p+1}_{j=0} \, \sum^j_{i=0} 
        (-1)^{\eta^{n,p}_{j,i}} \ttt \, \sss_{n-p+2} \, \ttt^{n-p-j+2}  
        \, \sss_{n-p+1} \, \ttt^{j-i-1} \, \DD'_\varphi \, 
        \ttt^{n+i-j+2} \\
&= 
        \sum^{n-p+1}_{j=0} \, \sum^j_{i=0} 
        (-1)^{\eta^{n,p}_{j,i}} \ttt \, \sss_{n-p+2} \, \ttt^{n-p-j}  
        \, \sss_{0} \, \ttt^{j-i} \, \DD'_\varphi \, 
        \ttt^{n+i-j+2},
\end{split}
\end{equation*}
using the simplicial and cyclic relations as well as
\rmref{parkraumbewirtschaftung0} in the third line, along with the
fact that $j-i = 0, \ldots, n-p+1$. Now we distinguish the following
cases: we have on $\bar C^\cyc_\bull(U,M)$
\begin{small}
$$
\ttt \sss_{n-p+2} \ttt^{n-p-j}  \sss_{0} = 
\begin{cases} 
\begin{array}{ll}
\ttt \sss_{n-p+2} \ttt^{n-p+3}  \sss_{0} = \ttt \sss_{n-p+2} \sss_{n-p+3} \ttt^{n-p+3} & \mbox{if} \quad j = n-p+1, \\
\ttt \sss_{n-p+2} \sss_{0} & \mbox{if} \quad j = n-p, \\
\ttt \sss_{n-p+2} \ttt  \sss_{0} & \mbox{if} \quad j = n-p-1, \\
\ttt \sss_{n-p+2} \sss_{n-p-j} \ttt^{n-p-j} & \mbox{if} \quad j \leq n-p-2,
\end{array}
\end{cases}
$$
\end{small}
and a quick computation reveals that in all these cases one produces degenerate elements.

That the first summand $\ttt \sss_{n-p+2} \NN \SSS_\varphi$ is also degenerate follows by a similar argument, and this finishes the proof.
%
\end{proof}

\subsection{The Cartan-Rinehart homotopy formula}
We are now in a position to state:

\begin{theorem}\label{calleelvira}
If $M$ is a module-comodule over a left Hopf algebroid $U$, then 
for any
cochain $\varphi \in \bar C_M^\bull(U)$ 
the homotopy formula
\begin{equation}
\label{sacromonte1}
\lie_\varphi = [\BB+\bb, \SSS_\varphi + \iota_\varphi] - \iota_{\gd\varphi} - \SSS_{\gd\varphi}
\end{equation}
holds on $\bar C^\cyc_\bull(U,M)$.
\end{theorem}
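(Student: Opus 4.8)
The plan is to reduce \rmref{sacromonte1} to a single core identity by means of the two relations already established, and then to verify that identity by a direct expansion on $\bar C^\cyc_\bullet(U,M)$. Since $\iota_\varphi$ and $\SSS_\varphi$ have operator degrees $p$ and $p-2$, hence the same parity, the combined contraction $\SSS_\varphi+\iota_\varphi$ has a well-defined parity and the graded commutator splits with one uniform sign as
\begin{equation*}
[\BB+\bb,\SSS_\varphi+\iota_\varphi] = [\BB,\SSS_\varphi] + [\BB,\iota_\varphi] + [\bb,\SSS_\varphi] + [\bb,\iota_\varphi].
\end{equation*}
By Proposition~\ref{pleiadians} the first bracket vanishes, and by \rmref{mulhouse2} the last equals $\iota_{\gd\varphi}$; both relations descend from $\bar C_\bullet(U,M)$ to the quotient $\bar C^\cyc_\bullet(U,M)$ precisely because $\varphi\in\bar C^\bullet_M(U)$. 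Substituting into \rmref{sacromonte1}, the two copies of $\iota_{\gd\varphi}$ cancel, and it remains to prove the chain-level identity
\begin{equation*}
\lie_\varphi = [\BB,\iota_\varphi] + [\bb,\SSS_\varphi] - \SSS_{\gd\varphi}
\end{equation*}
on $\bar C^\cyc_\bullet(U,M)$.

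For the core identity I would compute each of the three operators on the right by expanding it into a sum of monomials of the shape $\ttt^{a}\,\DD'_\varphi\,\ttt^{b}$, together with auxiliary terms carrying $\sss_{-1}$ or $\dd_i$, in the same spirit as the proofs of Theorems~\ref{feinefuellhaltertinte} and \ref{waterbasedvarnish}. For $[\BB,\iota_\varphi]$ I would insert $\BB=\sss_{-1}\NN$ (using \rmref{extra}) and turn $\iota_\varphi\,\sss_{-1}$ into $\ttt^{n-|p|}\DD'_\varphi\ttt$ via \rmref{parkraumbewirtschaftung2a}, together with \rmref{parkraumbewirtschaftung2b} for the other order; expanding the norm $\NN=\sum_i(-1)^{in}\ttt^i$ then reproduces monomials $\ttt^{n-|p|}\DD'_\varphi\ttt^{i}$, which I expect to assemble into the \emph{twisted part} $\sum_{i=1}^{p}(-1)^{\xi^{n,p}_i}\ttt^{n-|p|}\DD'_\varphi\ttt^{i}$ of \rmref{messagedenoelauxenfantsdefrance2}. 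For $[\bb,\SSS_\varphi]$ I would insert $\bb=\sum_i(-1)^i\dd_i$ and commute the face maps through $\sss_{-1}$, $\ttt$ and $\DD'_\varphi$ using \rmref{parkraumbewirtschaftung1}, \rmref{parkraumbewirtschaftung-1} and \rmref{parkraumbewirtschaftung0}, collapsing the nested double sum defining $\SSS_\varphi$. The subtraction $\SSS_{\gd\varphi}$ is then treated by rewriting $\DD'_{\gd\varphi}$ through $\gd\varphi=\{\mu,\varphi\}$ (cf.~\rmref{erfurt}, \rmref{elekta}) and the comp-module relations \rmref{SchlesischeStr}, the aim being that $[\bb,\SSS_\varphi]-\SSS_{\gd\varphi}$ yields the \emph{untwisted part} $\sum_{i=1}^{n-|p|}(-1)^{\theta^{n,p}_i}\ttt^{n-|p|-i}\DD'_\varphi\ttt^{i+p}$, while any remaining contributions cancel against the auxiliary terms left over from $[\BB,\iota_\varphi]$.

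A crucial structural point is that, in contrast to Theorem~\ref{waterbasedvarnish}, the present identity is asserted \emph{at the chain level} on $\bar C^\cyc_\bullet(U,M)$, so one may \emph{not} discard boundaries or use that representatives are cycles: every auxiliary term must cancel exactly. This is where passing to the \emph{reduced} complexes is indispensable, since it annihilates the degenerate terms that appear whenever $\sss_{-1}$ meets a degeneracy, exactly as exploited in Proposition~\ref{pleiadians}; and the precise form of the sign $\eta^{n,p}_{j,i}$ built into $\SSS_\varphi$ is engineered so that the surviving terms reorganise and cancel.

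The main obstacle is therefore purely combinatorial: reconciling the three families of double sums, whose index ranges genuinely differ (note in particular that $\gd\varphi$ has degree $p+1$, so $\SSS_{\gd\varphi}$ runs over ranges shifted relative to those coming from $[\bb,\SSS_\varphi]$), and matching the sign functions $\theta$, $\xi$ and $\eta$ through repeated reindexing and the para-cyclic relations. As a safeguard for fixing signs I would first test the formula on the distinguished cochain $\mu$, where $\gd\mu=\{\mu,\mu\}=0$ and \rmref{sachengibts} forces $\lie_\mu=-\bb$, and on the $1$-cochains worked out explicitly in \S\ref{alletklaa}, before carrying out the general term-by-term bookkeeping.
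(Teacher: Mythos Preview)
Your reduction to the core identity $\lie_\varphi = [\BB,\iota_\varphi] + [\bb,\SSS_\varphi] - \SSS_{\gd\varphi}$ via Proposition~\ref{pleiadians} and \rmref{mulhouse2} is exactly what the paper does, and the overall plan---expand everything into monomials in $\ttt$, $\DD'_\varphi$, $\dd_i$, $\sss_{-1}$ and match terms using the para-cyclic relations together with \rmref{parkraumbewirtschaftung1}--\rmref{parkraumbewirtschaftung0} and \rmref{parkraumbewirtschaftung2a}--\rmref{parkraumbewirtschaftung2b}---is precisely the paper's method, carried out through some thirty labelled terms.

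One organisational point will not come out quite as you anticipate. You expect $[\BB,\iota_\varphi]$ alone to produce the twisted part of $\lie_\varphi$ and $[\bb,\SSS_\varphi]-\SSS_{\gd\varphi}$ the untwisted part. In the paper's bookkeeping the untwisted part indeed arises from a single term of $\bb\,\SSS_\varphi$, but the twisted part is assembled from $(-1)^{|p|}\iota_\varphi\BB$ \emph{together with} a boundary term of $\bb\,\SSS_\varphi$; the remaining piece $\BB\,\iota_\varphi$ cancels entirely against another term of $\bb\,\SSS_\varphi$. Also, the paper obtains its expression for $\DD'_{\gd\varphi}$ not via \rmref{erfurt} and \rmref{SchlesischeStr} but by writing $\DD'_{\gd\varphi}=\ttt\,\iota_{\gd\varphi}\,\sss_{-1}\,\ttt^n$ from \rmref{parkraumbewirtschaftung2b} and then inserting $\iota_{\gd\varphi}=[\bb,\iota_\varphi]$ from \rmref{mulhouse2}; this yields directly a sum of three pieces that match exactly the three leftover terms from $[\bb,\SSS_\varphi]$. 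None of this is a gap---you correctly flagged that the main obstacle is pure combinatorics and that the precise alignment must emerge from the computation---but be prepared for the term allocation to differ from your first guess.
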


\begin{rem}
Observe that using \rmref{alles3} and \rmref{alles4}, this can be rewritten as
\begin{equation}
\label{sacromonte2}
\lie_\varphi = [\BB, \iota_\varphi] + [\bb, \SSS_\varphi] - \SSS_{\gd\varphi}.
\end{equation}
\end{rem}

\begin{rem}
Apart from the obvious classical Cartan homotopy \cite{Car:LTDUGDLEDUEFP}, 
this formula has been given in the context of associative algebras, i.e., in the classical cyclic homology of algebras, in \cite{Rin:DFOGCA} for the commutative case, 
in \cite{NesTsy:OTCROAA, Get:CHFATGMCICH} for the noncommutative situation, 
and in more restricted settings such as for $1$-cocycles in \cite{Goo:CHDATFL, Con:NCDG, Xu:NCPA}.
\end{rem}

\begin{proof}[Proof of Theorem \ref{calleelvira}]
We stress that throughout we work 
on $\bar C^\cyc_\bull(U,M)$. Rewrite first
\begin{equation*}
\begin{split}
[\BB, \iota_\varphi] + [\bb, \SSS_\varphi] - \SSS_{\gd\varphi} 
&= \BB\iota_\varphi - (-1)^p \iota_\varphi \BB + \bb \SSS_\varphi - (-1)^{p-2} \SSS_\varphi \bb  - \SSS_{\gd\varphi} \\
&= \BB\iota_\varphi + (-1)^{|p|} \iota_\varphi \BB + \bb \SSS_\varphi + (-1)^{|p|} \SSS_\varphi \bb  - \SSS_{\gd\varphi}.
\end{split}
\end{equation*}
Observe then that the statement in the cases $p > n+1$ and $p = n+1$ follows by definition. 
For $p < n+1$, let us write down \rmref{messagedenoelauxenfantsdefrance2}: 
\mycount=1
$$
\lie_\varphi = \ubs{(1)}{\sum^{n-|p|}_{i=1} (-1)^{\theta^{n,p}_{i}} \ttt^{n - |p| -i} \DD'_\varphi \ttt^{i+p}} 
+ \ubs{(2)}{\sum^{p}_{i=1} (-1)^{\xi^{n,p}_{i}} \ttt^{n-|p|} \DD'_\varphi \ttt^i},
$$
and also write with \rmref{parkraumbewirtschaftung1} and \rmref{parkraumbewirtschaftung2a}
on $\bar C^\cyc_\bull(U,M)$
\begin{eqnarray*}
\BB \iota_\varphi &=& {\sum^{n-p}_{k=0} (-1)^{k(n-p)} \sss_{-1} \ttt^k \dd_0 \DD'_\varphi} =: (3), \\
(-1)^{|p|}\iota_\varphi \BB &=& \sum^{n}_{k=0} (-1)^{|p|+nk} \ttt^{n-|p|} \DD'_\varphi \ttt^{k+1} \\
&=& \ubs{(4)}{\sum^{n}_{k=1} (-1)^{|p|+n|k|} \ttt^{n-|p|} \DD'_\varphi \ttt^{k}} + \ubs{(5)}{(-1)^{|p|} \ttt^{n-|p|} \DD'_\varphi}. 
\end{eqnarray*}
A lengthy computation using the simplicial and cyclic relations yields
\begin{footnotesize}
\begin{equation*}
\begin{split}
\bb \SSS_\varphi &= 
   \sum^{n-p}_{j=0} \, \sum^j_{i=0} (-1)^{\eta^{n,p}_{j,i}} \ttt^{n-p-i} \DD'_\varphi \ttt^{n+i-|j|} 
 + \sum^{n-p}_{j=0} \, \sum^j_{i=0} (-1)^{\eta^{n,|p|}_{|j|,|i|}+1} \sss_{-1} \ttt^{n-p-i} \dd_0 \DD'_\varphi \ttt^{n+i-|j|} \\
&\quad + \ubs{(6)}{\sum^{n-|p|}_{k = 2}  \, \sum^{k-1}_{{i=1}} \, \sum^{n-|p|}_{j=i} (-1)^{\eta^{n,p}_{|j|,|i|} + k -i} \sss_{-1}  \ttt^{n-p-i} \dd_k \DD'_\varphi \ttt^{n+i-|j|}} \\
&\quad+ \ubs{(7)}{\sum^{n-p}_{k = 1}  \, \sum^{n-p}_{j=k}  \, \sum^{j}_{i=k} (-1)^{\eta^{n,p}_{j,i} + k + n - |p| -i} \sss_{-1}  \ttt^{n-p-i} \dd_k \DD'_\varphi \ttt^{n+i-|j|}} \\
&\quad+ \ubs{(8)}{\sum^{n-p}_{j=0} (-1)^{\eta^{n,p}_{j,0} + n - p} \ttt^{n-|p|} \DD'_\varphi \ttt^{n-|j|}} 
 + \ubs{(9)}{\sum^{n-p-1}_{j=0} \, \sum^j_{i=0} (-1)^{\eta^{n,p}_{|j|,|i|}+n-p} \ttt^{n-p-i} \DD'_\varphi \ttt^{n+i-|j|}} \\
&= \ubs{(10)}{\sum^{n-p-1}_{j=0} \, \sum^j_{i=0} (-1)^{\eta^{n,p}_{j,i}} \ttt^{n-p-i} \DD'_\varphi \ttt^{n+i-|j|}} 
 +\ubs{(11)}{\sum^{n-|p|}_{i=1} (-1)^{\eta^{n,p}_{n-p,|i|}} \ttt^{n-|p|-i} \DD'_\varphi \ttt^{p+i}} \\ 
&\quad+ \ubs{(12)}{\sum^{n-p}_{j=1} \, \sum^{j-1}_{i=0} (-1)^{\eta^{n,|p|}_{|j|,|i|}+1} \sss_{-1} \ttt^{n-p-i} \dd_0 \DD'_\varphi \ttt^{n+i-|j|}} 
 + \ubs{(13)}{\sum^{n-p}_{i=0} (-1)^{\eta^{n,|p|}_{|i|,|i|}+1} \sss_{-1}  \ttt^{n-p-i} \dd_0 \DD'_\varphi} \\
&\quad+ (6) + (7) + (8) + (9).
\end{split}
\end{equation*}
\end{footnotesize}
Observe that by $(-1)^{\eta^{n,p}_{|j|,|i|}+n-p} = (-1)^{\eta^{n,p}_{j,i}+1}$ one has $(9) = - (10)$. Likewise, by 
$(-1)^{\eta^{n,p}_{n-p,|i|}} = (-1)^{\theta^{n,p}_{i}}$, we see that
$(11) = (1)$. By substitution $k:=n-p-i$, one obtains $(-1)^{k(n-p)} = (-1)^{\eta^{n,|p|}_{|i|,|i|}}$, and hence $(13) = -(3)$. Finally, $(2) = (4) + (5) + (8)$ by substitution of $i:=n-|j|$ in $(8)$.
We continue computing 
\begin{small}
\begin{equation*}
\begin{split}
(6) &= \sum^{n-|p|}_{k = 2} \, \sum^{k-1}_{i=1} \,  \sum^{n-p}_{j=i} (-1)^{\eta^{n,|p|}_{|j|,|i|} + k +1} \sss_{-1}  \ttt^{n-p-i} \dd_k \DD'_\varphi \ttt^{n+i-|j|} \\
& \qquad
+ \sum^{n-|p|}_{k = 2} \sum^{k-1}_{{i=1}} (-1)^{\eta^{n,|p|}_{n-p,|i|} + k +1} \sss_{-1}  \ttt^{n-p-i} \dd_k \DD'_\varphi \ttt^{p+i} \\
&= \ubs{(14)}{\sum^{n-p}_{k = 2}  \, \sum^{k-1}_{i=1} \, \sum^{n-p}_{j=i} (-1)^{\eta^{n,|p|}_{|j|,|i|} + k +1} \sss_{-1}  \ttt^{n-p-i} \dd_k \DD'_\varphi \ttt^{n+i-|j|}} \\
& \qquad
   + \ubs{(15)}{\sum^{n-p}_{k = 2} \sum^{k-1}_{{i=1}} (-1)^{\eta^{n,|p|}_{n-p,|i|} + k +1} \sss_{-1}  \ttt^{n-p-i} \dd_k \DD'_\varphi \ttt^{p+i}} \\
&\qquad + \ubs{(16)}{\sum^{n-p}_{j=1} \, \sum^{j}_{{i=1}} (-1)^{\eta^{n,|p|}_{|j|,|i|} + n + p} \sss_{-1}  \ttt^{n-p-i} \dd_{n-|p|} \DD'_\varphi \ttt^{n+i-|j|}} \\
 & \qquad
+ \ubs{(17)}{\sum^{n-p}_{{i=1}} (-1)^{\eta^{n,|p|}_{n-p,|i|} + n + p} \sss_{-1}  \ttt^{n-p-i} \dd_{n-|p|} \DD'_\varphi \ttt^{p+i}}. \\
\end{split}
\end{equation*}
\end{small}
With \rmref{parkraumbewirtschaftung-1} one sees 
$$
(15) = {\sum^{n-1}_{k = p+1} \sum^{k-p}_{{i=1}} (-1)^{\eta^{n,|p|}_{|p|,i} + k} \sss_{-1}  \ttt^{n-p-i} \DD'_\varphi \dd_k \ttt^{p+i}} =: (18),
$$
and we also simplify 
\begin{footnotesize}
$$
(16) = \ubs{(19)}{\sum^{n-p-1}_{j=1} \, \sum^{j}_{{i=1}} (-1)^{\eta^{n,|p|}_{|j|,i}} \sss_{-1}  \ttt^{n-p-i} \dd_{n-|p|} \DD'_\varphi \ttt^{n+i-j}} 
+  \ubs{(20)}{\sum^{n-p}_{{i=1}} (-1)^{\eta^{n,|p|}_{i,i}} \sss_{-1}  \ttt^{n-p-i} \dd_{n-|p|} \DD'_\varphi}.
$$
\end{footnotesize}
Furthermore, 
\begin{footnotesize}
\begin{equation*}
\begin{split}
(7) &= \ubs{(21)}{\sum^{n-p}_{k = 2} \, \sum^{n-p}_{j=k} \, \sum^{j}_{i=k} (-1)^{\eta^{n,|p|}_{|j|,|i|} + k +1} \sss_{-1}  \ttt^{n-p-i} \dd_k \DD'_\varphi \ttt^{n+i-|j|}} \\
& \qquad
+ \ubs{(22)}{{\sum^{n-p}_{j=1} \, \sum^{j}_{i=1} (-1)^{\eta^{n,|p|}_{|j|,|i|}} \sss_{-1}  \ttt^{n-p-i} \dd_1 \DD'_\varphi \ttt^{n+i-|j|}}}.  
\end{split}
\end{equation*}
\end{footnotesize}
On the other hand, we have 
\begin{small}
\begin{equation*}
\begin{split}
(-1)^{|p|} \SSS_\varphi \bb &= \ubs{(23)}{\sum^{n-p}_{i=1}  (-1)^{\eta^{|n|,p}_{|i|,|i|} + n + |p|} \sss_{-1} \ttt^{n-p-i} \DD'_\varphi \dd_n} 
\\ & \qquad
+  \ubs{(24)}{\sum^{n-1}_{k = 0} \, \sum^{n-p}_{j=1} \, \sum^{j}_{i=1}  (-1)^{\eta^{|n|,p}_{|j|,|i|} + k + i - j + |p|} \sss_{-1}  \ttt^{n-p-i} \DD'_\varphi \dd_k \ttt^{n+i-|j|}} 
\\ & \qquad 
+  \ubs{(25)}{\sum^{n-p-1}_{i=1} \, \sum^{n-1}_{k=p+i}  (-1)^{\eta^{|n|,p}_{|n-p|,|i|} + k - |i|} \sss_{-1}  \ttt^{n-p-i} \DD'_\varphi \dd_k \ttt^{p+i}}, 
\end{split}
\end{equation*}
\end{small}
and we directly observe that $(23) = - (20)$ and $(25) = - (18)$, whereas
\begin{small}
\begin{equation*}
\begin{split}
(24) &= \ubs{(26)}{\sum^{p}_{k = 1} \, \sum^{n-p}_{j=1} \, \sum^{j}_{i=1}  (-1)^{\eta^{n,|p|}_{|j|,i} + k +1} \sss_{-1}  \ttt^{n-p-i} \DD'_\varphi \dd_k \ttt^{n+i-|j|}} \\
&\qquad +  \ubs{(27)}{\sum^{n-1}_{k = p+1} \, \sum^{n-p}_{j=1} \, \sum^{j}_{i=1}  (-1)^{\eta^{n,|p|}_{|j|,i} + k +1} \sss_{-1}  \ttt^{n-p-i} \DD'_\varphi \dd_k \ttt^{n+i-|j|}} \\
&\qquad +  \ubs{(28)}{\sum^{n-p}_{j=1} \, \sum^{j}_{i=1}  (-1)^{\eta^{n,|p|}_{|j|,i} +1} \sss_{-1}  \ttt^{n-p-i} \DD'_\varphi \dd_0 \ttt^{n+i-|j|}}, 
\end{split}
\end{equation*}
\end{small}
where by the cyclic relations 
\begin{footnotesize}
\begin{equation*}
(28) = \ubs{(29)}{\sum^{n-p-1}_{j=1} \, \sum^{j}_{i=1}  (-1)^{\eta^{n,|p|}_{|j|,i} +1} \sss_{-1}  \ttt^{n-p-i} \DD'_\varphi \dd_n \ttt^{n+i-j}} 
+  \ubs{(30)}{\sum^{n-p}_{i=1}  (-1)^{\eta^{n,|p|}_{p,i} +1} \sss_{-1}  \ttt^{n-p-i} \DD'_\varphi \dd_n \ttt^{p+i}}. 
\end{equation*}
\end{footnotesize}
By means of \rmref{parkraumbewirtschaftung-1}, one now sees that $(14) + (21) = - (27)$ and that $(29) = - (19)$, 
along with $(30) = - (17)$. 

To conclude the proof, we need to show that $\SSS_{\gd \varphi}$ equals the only remaining terms $(12)$, $(22)$, and $(26)$. 
Note first that from \rmref{mulhouse2}, \rmref{parkraumbewirtschaftung-1}, \rmref{parkraumbewirtschaftung1}, 
as well as from the cyclic and simplicial relations follows for the $(p+1)$-cochain $\gd \varphi$:
\begin{footnotesize}
\begin{equation*}
\begin{split}
\DD'_{\gd \varphi} &= \ttt\iota_{\gd \varphi} \sss_{-1} \ttt^n 
                 =  \ttt \bb \iota_{\varphi} \sss_{-1} \ttt^n + (-1)^{|p|}  \ttt\iota_{\varphi}\bb \sss_{-1} \ttt^n \\
                &= \sum^{n-|p|+1}_{k=1} (-1)^{|k|} \ttt \dd_0 \dd_k \DD'_\varphi \sss_{-1} \ttt^n + \sum^{n+1}_{k=0} (-1)^{k+|p|} \ttt \dd_0 \DD'_\varphi \dd_k \sss_{-1} \ttt^n \\
           &= \ttt \dd_0 \dd_1 \DD'_\varphi \sss_{-1} \ttt^n + \sum^{p}_{k=0} (-1)^{k+|p|} \ttt \dd_0 \DD'_\varphi \dd_k \sss_{-1} \ttt^n \\
           &= \ttt \dd_0 \iota_\varphi \sss_{-1} \ttt^n + (-1)^{|p|} \ttt \dd_0 \DD'_\varphi \ttt^n  + \sum^{p}_{k=1} (-1)^{k+|p|} \ttt \iota_\varphi \sss_{-1} \dd_{k-1} \ttt^n \\
           &= \ttt^{n-p+1} \dd_1 \DD'_\varphi  + (-1)^{|p|} \ttt \dd_0 \DD'_\varphi \ttt^n  + \sum^{p}_{k=1} (-1)^{k+|p|} \ttt^{n-p+1} \DD'_\varphi \dd_{k}. \\
\end{split}
\end{equation*}
\end{footnotesize}
Hence we have for the $(p+1)$-cochain $\gd \varphi$:
\begin{footnotesize}
\begin{equation*}
\begin{split}
\SSS_{\gd \varphi}  &= \sum^{n-(p+1)}_{j=0} \, \sum^{j}_{i=0}  (-1)^{\eta^{n,|p|}_{j,i}} \sss_{-1}  \ttt^{n-p-(i+1)} \dd_1 \DD'_\varphi \ttt^{n+i-|j|} \\
&\quad +  \sum^{p}_{k = 1} \, \sum^{n-(p+1)}_{j=0} \, \sum^{j}_{i=0}  (-1)^{\eta^{n,|p|}_{j,i} + k + |p|} \sss_{-1}  \ttt^{n-p-(i+1)} \DD'_\varphi \dd_k \ttt^{n+i-|j|} \\
& \quad + \sum^{n-(p+1)}_{j=0} \, \sum^{j}_{i=0}  (-1)^{\eta^{n,|p|}_{j,i} + |p|} \sss_{-1}  \ttt^{n-p-i} \dd_0 \DD'_\varphi \ttt^{n+i-j} \\
 &= \sum^{n-p}_{j=1} \, \sum^{j}_{i=1}  (-1)^{\eta^{n,|p|}_{|j|,|i|}} \sss_{-1}  \ttt^{n-p-i} \dd_1 \DD'_\varphi \ttt^{n+i-|j|} \\
&\quad +  \sum^{p}_{k = 1} \, \sum^{n-p}_{j=1} \, \sum^{j}_{i=1}  (-1)^{\eta^{n,|p|}_{|j|,|i|} + k + |p|} \sss_{-1}  \ttt^{n-p-i} \DD'_\varphi \dd_k \ttt^{n+i-|j|} \\
& \quad + \sum^{n-(p+1)}_{j=0} \, \sum^{j}_{i=0}  (-1)^{\eta^{n,|p|}_{j,|i|} +1} \sss_{-1}  \ttt^{n-p-i} \dd_0 \DD'_\varphi \ttt^{n+i-j}, 
\end{split}
\end{equation*}
\end{footnotesize}
and these summands are exactly the terms $(22)$, $(26)$, and $(12)$, which concludes the proof of \rmref{sacromonte2} and hence of \rmref{sacromonte1}.
\end{proof}

With the help of the homotopy formula, we can easily 
prove:

\begin{corollary}
\label{lacueva}
For any cochain $\varphi \in \bar C_M^\bull(U)$, 
we have on $\bar 
C^\cyc_\bull(U,M)$ 
\begin{equation}
\label{alles2}
        [\lie_\varphi, \BB] = 0.
\end{equation}
\end{corollary}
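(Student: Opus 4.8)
The plan is to deduce this formally from the Cartan--Rinehart homotopy formula by applying the inner derivation $[\BB,-\,]$ to it. Concretely, I would start from the streamlined form \rmref{sacromonte2},
\[
\lie_\varphi = [\BB, \iota_\varphi] + [\bb, \SSS_\varphi] - \SSS_{\gd\varphi},
\]
and take the graded commutator with $\BB$ on $\bar C^\cyc_\bull(U,M)$. Since $[\BB,-\,]$ is linear and $[\lie_\varphi,\BB]$ and $[\BB,\lie_\varphi]$ differ only by a sign (both operators being homogeneous), the claim reduces to showing that each of $[\BB,[\BB,\iota_\varphi]]$, $[\BB,[\bb,\SSS_\varphi]]$ and $[\BB,\SSS_{\gd\varphi}]$ vanishes.

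For the first term I would note the elementary fact that, for any homogeneous $T$, the double bracket $[\BB,[\BB,T]]$ collapses to a cancelling pair of copies of $\BB T\BB$ and hence vanishes whenever $\BB^2=0$; the latter holds on the cyclic quotient because $\BB^2 = (\mathrm{id}-\TT)(\cdots)$ by \rmref{mixedcomp} and $\mathrm{id}-\TT=0$ there. Taking $T=\iota_\varphi$ kills this term. For the second term I would invoke the graded Jacobi identity to write $[\BB,[\bb,\SSS_\varphi]] = [[\BB,\bb],\SSS_\varphi] \pm [\bb,[\BB,\SSS_\varphi]]$; here $[\BB,\bb]=\BB\bb+\bb\BB=\mathrm{id}-\TT$ again vanishes by \rmref{mixedcomp}, and $[\BB,\SSS_\varphi]=0$ is precisely \rmref{alles3}, so both summands die. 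For the third term, $[\BB,\SSS_{\gd\varphi}]=0$ is a direct application of Proposition~\ref{pleiadians}, which is legitimate because $\gd\varphi$ still lies in the reduced complex $\bar C^\bull(U,A)$: the cosimplicial differential preserves the intersection of the kernels of the codegeneracies. Combining the three vanishings gives $[\lie_\varphi,\BB]=0$.

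I do not anticipate a real obstacle: once the homotopy formula \rmref{sacromonte2} and the commutation relation \rmref{alles3} are available, the corollary is purely formal, being the usual principle that conjugating a homotopy formula against the very differential it homotopes produces nothing. The only points demanding care are the bookkeeping of degrees and signs in the graded Jacobi identity, and the verification that $\delta$ does not leave the normalised subcomplex so that Proposition~\ref{pleiadians} may be applied to $\gd\varphi$ rather than only to $\varphi$.
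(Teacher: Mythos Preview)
Your proposal is correct and follows essentially the same route as the paper: take the graded commutator of the streamlined homotopy formula \rmref{sacromonte2} with $\BB$, and kill the three resulting terms using $\BB^2=0$ and $[\bb,\BB]=0$ on the cyclic quotient (from \rmref{mixedcomp}) together with $[\BB,\SSS_\psi]=0$ (Proposition~\ref{pleiadians}). The paper phrases the vanishing of $[\BB,[\BB,\iota_\varphi]]$ via the graded Jacobi identity rather than your direct expansion, but this is a cosmetic difference; your explicit check that $\gd\varphi$ remains in the normalised subcomplex so that Proposition~\ref{pleiadians} applies to it is a small point the paper leaves implicit.
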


\begin{proof}
Using\label{hieralso}  
\rmref{sacromonte2}, \rmref{alles4}, and 
\rmref{mixedcomp}, 
we see by the graded Jacobi identity that
\begin{equation*}
\begin{split}
[\lie_\varphi, \BB] &= [[\BB, \iota_\gvf], \BB] + [[\bb, \SSS_\gvf], \BB] - [\SSS_{\gd\gvf}, \BB] \\
           &= [\BB, [\iota_\gvf, \BB]] - (-1)^p [\iota_\gvf, [\BB, \BB]] + [\bb, [\SSS_\gvf, \BB]] - (-1)^{p-2} [\SSS_\gvf, [\bb, \BB]] \\
           &= 0,
\end{split}
\end{equation*}
 where the fact that $ [\BB, [\iota_\gvf, \BB]] = 0$ directly follows from the graded Jacobi identity. 
\end{proof}

\begin{rem}
With some more effort, it can be shown that \rmref{alles2} even holds on
the non-reduced complex, but we do not need this.
\end{rem}

\subsection{Proof of Theorem~\ref{pen1}}
If $ \varphi \in \bar C^\bull_M(U)$ is a cocycle, then for
the induced maps 
$$
        \lie_\varphi: \hhmu \to 
        H^M_{\bull-|p|}(U), \quad  \quad 
        \iota_\varphi: \hhmu \to 
        H^M_{\bull-p}(U), 
$$
the 
Rinehart homotopy formula 
\rmref{sacromonte1} simplifies to
$$
        \lie_\gvf = [\BB, \iota_\gvf].
$$
Using this and \rmref{mulhouse1} one has
\begin{corollary}
For cocycles $\varphi, \psi \in \bar C_M^\bull(U)$, 
the induced maps on ${\hhmu}$ obey
\begin{equation*}
\label{radicale2}
\lie_{\varphi \smallsmile \psi}  = \lie_\varphi  \iota_\psi + (-1)^{\deg \gvf} \iota_\varphi  \lie_\psi. 
\end{equation*}
\end{corollary}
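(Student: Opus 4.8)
The plan is to obtain the mixed Leibniz rule as a purely formal consequence of the simplified homotopy formula $\lie_\chi = [\BB, \iota_\chi]$, valid on ${\hhmu}$ for every cocycle $\chi \in \bar C_M^\bull(U)$, together with the multiplicativity $\iota_{\varphi \smallsmile \psi} = \iota_\varphi \iota_\psi$ of the cap product from \rmref{mulhouse1}.

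First I would note that $\varphi \smallsmile \psi$ is again a cocycle: since $(\bar C_M^\bull(U),\gd,\smallsmile)$ is a DG algebra (the reduced counterpart of Corollary~\ref{medtner}, cf.~Theorem~\ref{pitandbull2}), the differential $\gd$ is a graded derivation of $\smallsmile$, so $\gd(\varphi \smallsmile \psi) = 0$ whenever $\gd\varphi = \gd\psi = 0$. Hence the simplified formula applies to all of $\varphi$, $\psi$ and $\varphi \smallsmile \psi$, and on ${\hhmu}$ we get
$$
\lie_{\varphi \smallsmile \psi} = [\BB, \iota_{\varphi \smallsmile \psi}] = [\BB, \iota_\varphi \iota_\psi],
$$
the second equality being \rmref{mulhouse1}.

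The key step is then to expand $[\BB, \iota_\varphi \iota_\psi]$ using that $\BB$ is an operator of odd degree (namely degree $1$), so that $[\BB,\cdot]$ is a graded derivation of the algebra of operators on $\bar C^\cyc_\bull(U,M)$, in the degree convention already used in the excerpt in which $\iota_\varphi$ has degree $p = \deg\varphi$. Concretely, for homogeneous operators $X,Y$ one has the graded Leibniz identity $[\BB, XY] = [\BB,X]\,Y + (-1)^{|X|} X\,[\BB,Y]$, which is immediate from $[\BB,Z] = \BB Z - (-1)^{|Z|} Z\BB$. Applying it with $X = \iota_\varphi$ of degree $p$ and $Y = \iota_\psi$ gives
$$
[\BB, \iota_\varphi \iota_\psi] = [\BB, \iota_\varphi]\,\iota_\psi + (-1)^p \iota_\varphi\,[\BB, \iota_\psi] = \lie_\varphi \iota_\psi + (-1)^{\deg\varphi} \iota_\varphi \lie_\psi,
$$
where the last equality reinstates $\lie_\varphi = [\BB,\iota_\varphi]$ and $\lie_\psi = [\BB,\iota_\psi]$, proving the claim.

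There is no substantial analytic difficulty here, as everything reduces to the derivation property of a commutator with an odd operator; the only point to keep straight is the sign convention that $\BB$ carries odd degree, which is exactly the convention fixed in the proof of Theorem~\ref{calleelvira} when writing $[\BB, \iota_\varphi] = \BB\iota_\varphi - (-1)^p \iota_\varphi \BB$. So the factor $(-1)^p = (-1)^{\deg\varphi}$ comes out automatically, with no extra bookkeeping required, and all identities are understood on homology where the simplified formula $\lie_\chi = [\BB,\iota_\chi]$ for cocycles is available.
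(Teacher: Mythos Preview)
Your proof is correct and follows exactly the same approach as the paper's: the paper's proof is the single line $\lie_{\varphi \smallsmile \psi} = [\BB, \iota_{\varphi \smallsmile \psi}] = [\BB, \iota_\varphi]\iota_\psi + (-1)^{\deg\varphi}\iota_\varphi[\BB,\iota_\psi] = \lie_\varphi\iota_\psi + (-1)^{\deg\varphi}\iota_\varphi\lie_\psi$, which is precisely your argument with the intermediate justifications (cocyclicity of the cup product, the derivation property of $[\BB,\cdot]$, and the sign convention) stripped out.
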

\begin{proof}
This is now only one line:
\begin{equation*}
        \lie_{\varphi \smallsmile \psi}  = 
            [\BB, \iota_{\varphi \smallsmile \psi}]  = 
            [\BB, \iota_{\varphi}] \iota_\psi + 
            (-1)^{\deg\gvf} \iota_\gvf [\BB, \iota_\psi] 
            = 
            \lie_{\varphi} \iota_\psi + 
            (-1)^{\deg\gvf} \iota_\gvf \lie_\psi.
 \qedhere
  \end{equation*}
\end{proof}

We now sum up the results of Theorems \ref{feinefuellhaltertinte}, \ref{waterbasedvarnish}, and \ref{calleelvira},  
and state the main theorem 
(cf.~Theorem \ref{pen1}) of this paper:

\begin{theorem}
\label{pen3}
If $U$ is a left Hopf algebroid over $A$, 
and $M$ is a module-comodule,
then $\iota$ given in 
\rmref{alles4} and the 
Lie derivative $\lie$ given in
\rmref{messagedenoelauxenfantsdefrance2} 
turn 
${\hhmu}$ 
into a Batalin-Vilkovisky module 
over the Gerstenhaber algebra 
$\hmu$ 
defined by Theorem~\ref{pitandbull2}.
\end{theorem}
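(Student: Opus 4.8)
The plan is to verify the three clauses of Definition~\ref{golfoaranci} in turn, taking $V := \hmu$ with the cup product $\smallsmile$ and Gerstenhaber bracket $\{\cdot,\cdot\}$ of \S\ref{ud}, $\Omega := \hhmu$ with the cap product $\smallfrown$ of \rmref{alles4} (so that $\iota_\varphi = \varphi \smallfrown \cdot$), the Lie derivative $\lie$ of \rmref{messagedenoelauxenfantsdefrance2}, and the cyclic differential $\BB$. Since all the analytic content has already been extracted in Theorems~\ref{pitandbull2}, \ref{feinefuellhaltertinte}, \ref{waterbasedvarnish}, and \ref{calleelvira}, the proof is essentially an assembly, the only subtlety being to match the sign and degree conventions of Definition~\ref{golfoaranci} with those of the cited results. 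Clause (i) is then immediate: $C^\bull_M(U)$ is a DG subalgebra of $C^\bull(U,A)$, so Theorem~\ref{pitandbull2} endows its cohomology $\hmu$ with a Gerstenhaber algebra structure.

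For clause (ii) I would first invoke Eqs.~\rmref{mulhouse1} and \rmref{mulhouse2}, which exhibit $C_\bull(U,M)$ as a DG module over $C^\bull(U,A)$, to conclude that for cocycles $\smallfrown$ descends to a graded $\hmu$-module structure on $\hhmu$, with $\ga\smallfrown x \in \Omega_{n-p}$ for $\ga\in V^p$. The required representation of the graded Lie algebra $(V[1],\{\cdot,\cdot\})$ is exactly Eq.~\rmref{weimar} of Theorem~\ref{feinefuellhaltertinte}, namely $[\lie_\varphi,\lie_\psi]=\lie_{\{\varphi,\psi\}}$, read off on homology once \rmref{alles1} guarantees that $\lie_\varphi$ descends for cocycles $\varphi$. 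Finally, the mixed Leibniz rule is Eq.~\rmref{radicale1} of Theorem~\ref{waterbasedvarnish}: writing $\iota_\gb=\gb\smallfrown\cdot$ and expanding the graded commutator $[\iota_\psi,\lie_\varphi]=\iota_{\{\psi,\varphi\}}$ reproduces precisely $\gb\smallfrown\lie_\ga(x)=\{\gb,\ga\}\smallfrown x+(-1)^{pq}\lie_\ga(\gb\smallfrown x)$, once the desuspension shift $|p|=p-1$ is accounted for.

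For clause (iii) I would take $\BB$ to be the cyclic differential. By \rmref{mixedcomp} one has $\BB^2=(\id-\TT)(\id-\ttt)\sss_{-1}\sss_{-1}\NN$ and $\bb\BB+\BB\bb=\id-\TT$; since $\id-\TT$ vanishes on $C^\cyc_\bull(U,M)$, the operator $\BB$ anticommutes with $\bb$, hence descends to $\hhmu$, and satisfies $\BB^2=0$ there. It then remains to specialise the Cartan-Rinehart homotopy formula \rmref{sacromonte1} (equivalently \rmref{sacromonte2}) of Theorem~\ref{calleelvira} to a cocycle $\varphi$: then $\gd\varphi=0$ kills both $\iota_{\gd\varphi}$ and $\SSS_{\gd\varphi}$, while on homology the term $[\bb,\SSS_\varphi]$ acts as zero, since its value on a cycle is the boundary $\bb(\SSS_\varphi x)$. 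What survives is exactly $\lie_\varphi=[\BB,\iota_\varphi]=\BB\iota_\varphi-(-1)^p\iota_\varphi\BB$, which is the Batalin-Vilkovisky homotopy formula of Definition~\ref{golfoaranci}(iii); this is the content already isolated in Corollary~\ref{lacueva} and the discussion preceding Theorem~\ref{pen3}.

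The main obstacle is therefore not in this final assembly but lies entirely in the already-established Theorems~\ref{waterbasedvarnish} and \ref{calleelvira}, whose chain-level computations are long and delicate. At the level of the present statement the only genuine points to watch are the sign bookkeeping in matching \rmref{radicale1} to the mixed Leibniz rule, and the verification that the $\bb$-commutator and the $\gd\varphi$-terms drop out upon passing to homology—which is precisely what restricting to cocycles and working modulo boundaries achieves.
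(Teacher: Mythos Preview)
Your proposal is correct and follows essentially the same route as the paper: the paper's proof of Theorem~\ref{pen3} is literally a one-line summary pointing to Theorems~\ref{feinefuellhaltertinte}, \ref{waterbasedvarnish}, and \ref{calleelvira}, and you have simply unpacked this assembly explicitly, matching each clause of Definition~\ref{golfoaranci} to the relevant prior result. One small inaccuracy: Corollary~\ref{lacueva} establishes $[\lie_\varphi,\BB]=0$, not the BV formula itself; the reduction $\lie_\varphi=[\BB,\iota_\varphi]$ on homology is spelled out in the paragraph headed ``Proof of Theorem~\ref{pen1}'' just before that corollary.
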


\begin{rem}
A natural question is to what extent and in which sense 
the above structures lift to the (co)chain level.   
For the Gerstenhaber algebra structure on Hochschild cohomology this
is the content of Deligne's conjecture, which asserts that the
Hochschild cochain complex $C^\bull(\Ae,A)$ is 
an algebra over an operad (in the category of cochain complexes of
$k$-modules) that is quasi-isomorphic to the chain little discs operad 
(see
e.g.~\cite{DwyHes:LKAMBO,GerVor:HGAAMSO,KonSoi:DOAOOATDC,McCSmi:ASODHCC,Tam:OPOMKFT}
or \cite[\S13.3.19]{LodVal:AO}).
Kontsevich and Soibelman have
extended the scope of Deligne's conjecture to the full differential
calculus structure on Hochschild (co)homology \cite[Theorem~11.3.1]{KonSoi:NOAA}.     
As the referee of the present paper remarked, one should expect our
(co)chain complexes to be in general algebras over the coloured
operad constructed therein, or over a
quasi-isomorphic one.  
\end{rem}

\section{Lie-Rinehart algebras and jet spaces}\label{ganzhuebsch}

This section contains a brief sketch of how to generalise the 
above results 
to complete left Hopf algebroids (the Hopf algebroid generalisation of
complete Hopf algebras, see e.g.~\cite{Qui:RHT}), and how this  
allows one to obtain the 
well-known calculus for Lie-Rinehart algebras (Lie algebroids)
given by the Lie derivative,
insertion operator, and the de Rham differential 
(cf.~the original reference \cite{Rin:DFOGCA} and also, for
example, \cite{Hue:DFLRAATMC,GraUrb:AGDCOVB,Hue:LRAGAABVA, Kos:EGAALBA,
  Xu:GAABVAIPG}), and in particular the classical  
Cartan calculus from differential geometry 
that arises as the special
case of the tangent Lie
algebroid (see \cite{Car:LTDUGDLEDUEFP}).

In \S\ref{magsein} we introduce the jet space 
$J\!L$ of a Lie-Rinehart
algebra (\cite{KowPos:TCTOHA}, see also \cite{CalRosVdB:HCFLA}), 
and explain
its complete Hopf algebroid structure. Then we sketch in 
\S\ref{cjl} 
how to adapt the constructions of this paper to this setting. 
Finally, in the last two sections we recall the definition of the 
generalised Hochschild-Kostant-Rosenberg morphisms and use them to
relate the differential calculus of
Theorem~\ref{pen1} to the standard one on the exterior
algebras of $L$ respectively $L^*$ that gives rise to
Lie-Rinehart cohomology.

\subsection{Universal enveloping algebras and jet spaces}\label{magsein}
 
Let $(A,L)$ be a Lie-Rinehart algebra over a commutative $k$-algebra
$A$ with anchor map $L \to \Der_k(A), X \mapsto \{a \mapsto X(a)\}$,  
and $V\!L$ be its universal enveloping algebra 
(see \cite{Rin:DFOGCA} for details). 
This is naturally a left Hopf algebroid, see 
e.g.~\cite{KowKra:DAPIACT};  
as therein, we denote by the same symbols elements $a \in A$ and $X \in L$ and the corresponding generators in $V\!L$.
The source and target maps $s = t$ are equal to the canonical injection $A \to V\!L$. The coproduct and the counit are given by 
\begin{equation}
\label{ayran}
\begin{array}{rclrcl}
        \gD(X) 
&:=& X \otimes_\ahha 1 + 1 \otimes_\ahha X, 
        & \qquad \gve(X) 
&:=& 0, \\
        \gD(a) &:=& 
a \otimes_\ahha 1, &                 \qquad \gve(a) &:=& a,
\end{array}
\end{equation}
whereas the inverse of the Hopf-Galois map is
\begin{equation}
\label{marmorkuchen}
X_+ \otimes_\Aopp X_- := X \otimes_\Aopp 1 - 1 \otimes_\Aopp X, \qquad a_+ \otimes_\Aopp a_- := a \otimes_\Aopp 1,
\end{equation}
where we retain the notation $\otimes_\Aopp$ for the tensor product $\due {V\!L} \blact {} \otimes_\Aopp \due {V\!L} {} \ract$ although $A$ is commutative.
By universality,  
these maps can be extended to $V\!L$.

\begin{definition}
The $A$-linear dual $J\!L := \Hom_\ahha(V\!L,A)$ is called 
the {\em jet space} of $(A,L)$.   
\end{definition}

By duality, $J\!L$  
carries a commutative $\Ae$-algebra structure with product
\begin{equation}
\label{kaesekuchen1}
        (fg) (u) = 
        f(u_{(1)}) g(u_{(2)}), \qquad 
        f, g \in J\!L, \ u \in V\!L,
\end{equation}
unit given by the counit $\gve$ of $V\!L$, 
and source and target maps given by 
\begin{equation}
\label{sarare}
        s(a)(u) := a \gve(u) = \varepsilon (au), \qquad 
        t(a)(u) := \gve(ua), \qquad 
        a \in A, u \in V\!L.
\end{equation}
Observe that these do not coincide although $A$ is commutative. 

The $\Ae$-algebra $J\!L$ is complete with respect to 
the (topology defined by the) decreasing 
filtration whose degree $p$ part consists of those 
functionals that vanish on the $A$-linear span $(V\!L)_{\le p} \,{\subseteq}\,
V\!L$ of all 
monomials in up to $p$ elements of $L$. For 
finitely generated projective $L$,
Rinehart's generalised PBW theorem \cite{Rin:DFOGCA}
identifies $J\!L$ with the completed symmetric algebra 
of the $A$-module $L^* = \Hom_\ahha(L,A)$. 

\begin{example}
The simplest example beyond Lie algebras is
$A=k[x],L=\mathrm{Der}_k(A)$, in which case 
$L$ is generated as an $A$-module by 
$p:=\frac{d}{dx}$. Then $V\!L$ is isomorphic to the first Weyl
algebra. In particular, there is an $A$-algebra isomorphism
$J\!L \simeq A \Lbrack h \Rbrack$ under which  
$h^i$ corresponds to the $A$-linear functional 
on $A[p]$ that maps $p^j$ to $ \delta _{ij} \in A$. 
Here $J\!L$ is considered as $A$-algebra via the source map $s$ which becomes 
under the isomorphism the standard unit map of $A \Lbrack h
\Rbrack$. However, the target map $t$ maps a
polynomial $a \in A$ to the power series given by its jet
$$
        t(a)=a+\frac{da}{dx} h+\frac{d^2a}{dx^2} h^2 + \cdots. 
$$    
\end{example}   

The filtration of $J\!L$ induces one
of $J\!L \otimes_\ahha J\!L$ and
if we denote by $J\!L
\hat\otimes_\ahha J\!L$ 
the completion, then  
the product of $V\!L$ yields a coproduct
$\gD: J\!L \to J\!L \hat\otimes_\ahha J\!L$ determined by 
\begin{equation}
\label{kaesekuchen2}
        f(uv) =: \gD(f)(u \otimes_\Aopp v) = f_{(1)}(u f_{(2)}(v)),
\end{equation}
see Lemma 3.16 in \cite[\S3.4]{KowPos:TCTOHA}. 
This is part of a {\em complete Hopf algebroid} structure on $J\!L$. 
We refer to \cite[Appendix A]{Qui:RHT} for
complete Hopf algebras, the Hopf algebroid generalisation is
straightforward. 
The counit of $J\!L$ is given by 
$f \mapsto f(1_{\scriptscriptstyle{V\!L}})$, and 
the antipode is 
\begin{equation}
\label{kaesekuchen3}
        (Sf)(u) := \varepsilon (u_+ f(u_-)), \qquad u \in V\!L, f \in J\!L,
\end{equation}
which for $u\in L \subseteq V\!L$ 
is known under the name {\em Grothendieck
  connection}. A short computation gives
$
S^2 = \mathrm{id}.
$
The translation map
\rmref{pm} is   
\begin{equation}
\label{apfelorangeingwersaft}
f_+ \hat\otimes_\Aopp f_- := f_{(1)} \hat\otimes_\Aopp S(f_{(2)}). 
\end{equation}

Note that
$J\!L$ is not only a left but a 
{\em full} complete Hopf algebroid 
in the sense of B\"ohm and
Szlach\'anyi \cite{Boe:HA}. Over noncommutative base algebras 
this would generally require {\em two}
bialgebroid structures that coincide here. In particular, 
$J\!L$ is also a commutative
Hopf algebroid 
in the narrower sense studied already for decades
\cite{Hov:HTOCOAHA,Rav:CCASHGOS}.


\subsection{$C^\bull(J\!L,A)$ and $C_\bull(J\!L,A)$}\label{cjl}
For complete Hopf algebroids such as $J\!L$, 
the theory developed in this paper 
needs to be modified as follows, in order for the structure maps (e.g.~the
cyclic operator $\ttt$) to be well-defined:
in $P_\bull$ and in the chain complex 
$C_\bull(J\!L,M)$, the completed tensor products have to be used. 
Similarly, in the definition of a module-comodule and of an SaYD module the
coaction might be given by maps $M \rightarrow J\!L
\hat\otimes_\ahha M$.

Dually, $C^\bull(J\!L,A)$ has to be defined as
$\Hom_\Aop^\mathrm{cont}({J\!L^{\hat\otimes_\Aopp \bull}}_\ract, A)$,
where $\mathrm{cont}$ means that the cochains 
have to be continuous ($A$
being discrete), as only the operators assigned to these cochains will
be well-defined on the completed tensor products.

Unlike for general left Hopf algebroids, 
we have for $J\!L$ canonical homology
coefficients: 
using that $J\!L$ is commutative, one easily 
verifies that $A$ carries a natural structure of an SaYD module 
over $J\!L$ whose action and coaction are given by
\begin{equation}
\label{solimando}
\begin{array}{rclrcl}
A \otimes J\!L & \to & A, & \quad (a,f) & \mapsto & a\varepsilon (f) , \\
A  & \to &  J\!L \otimes_\ahha A, & \quad a  & \mapsto & s(a) \otimes_\ahha 1_\ahha, 
\end{array}
\end{equation}
where $s$ is the source map from \rmref{sarare}. Hence 
Theorem~\ref{pen1} yields a canonical differential
calculus $(H^\bull(J\!L,A),H_\bull(J\!L,A))$ 
associated to any Lie-Rinehart algebra $(A,L)$ that we want to
discuss in more detail as an illustration of the abstract theory.

\subsection{Lie-Rinehart (co)homology}\label{lrc}
In order to do so, recall that the space 
$
        \Hom_\ahha(\textstyle\bigwedge^\bull_\ahha \!L,A) 
$
of alternating $A$-multilinear forms  
is a cochain complex of $k$-modules with respect to
$$
        \mathsf{d}: \Hom_\ahha(\textstyle\bigwedge^n_\ahha \!L, A) \to \Hom_\ahha(\textstyle\bigwedge^{n+1}_\ahha \!L, A)
$$ 
given by (where the terms $\hat X^i$ are omitted)
\begin{equation}
\label{fassbinder}
\begin{split}
\mathsf{d}\go(X^0, \ldots, X^n) &:= \sum^n_{i=0} (-1)^i X^i\big(\go(X^0, \ldots,  \hat{X}^i, \ldots, X^n)\big) \\
&\quad + \sum_{i < j} (-1)^{i+j} \go([X^i, X^j], X^0, \ldots, \hat{X}^i, \ldots, \hat{X}^j, \ldots,  X^n).
\end{split}
\end{equation}

In case $(A,L)$ arises from a Lie algebroid $E$, the above is the
complex of $E$-differential forms (see, for example,
\cite{CanWei:GMFNCA}), and in case $E$ is the tangent bundle of a
smooth manifold, these are the conventional differential forms that
appear in differential geometry. 

\begin{definition}
$H^\bull(\Hom_\ahha(\textstyle\bigwedge_\ahha \!L,A),\mathsf{d})$  
is called the 
{\em
  Lie-Rinehart cohomology} of $L$.  
\end{definition}
From \cite[Theorem 3.21]{KowPos:TCTOHA} we gather that there is a
morphism of chain complexes  
\begin{equation}
\label{schreibwarenhandlung1}
        F: \big(\bar C_\bull(J\!L,A), \bb\big) \to 
        \big(\Hom_\ahha(\textstyle\bigwedge^\bull_\ahha \!L,A), 0\big)
\end{equation}
given in degree $n$ by
$$
        F(f^1, \ldots, f^n)(X^1 \wedge \cdots \wedge X^n) 
:= (-1)^n \big(Sf^1 \wedge \cdots \wedge Sf^n\big)(X^1, \ldots, X^n).
$$
Here $Sf^1 \wedge \cdots \wedge Sf^n$ is the wedge product of
alternating multilinear forms. As
$C_\bull(J\!L,A)$ is defined via completed tensor
products, we have
\begin{equation}\label{inwers}
        C_n(J\!L,A) \simeq \varprojlim
        \Hom_\ahha\big((V\!L^{\otimes_\ahha n})_{\leq p},A \big),  
  \end{equation}
where $(V\!L^{\otimes_\ahha n})_{\leq p}$ is the degree $p$ part of
the filtration induced by that of $V\!L$.
The antipodes appear above as 
this isomorphism (\ref{inwers}) is given by
\begin{equation}\label{cityyildiz}
        (f^1, \ldots, f^n)(u^1, \ldots, u^n) := 
        Sf^1(u^1) \cdots Sf^n(u^n).
\end{equation}
That $F$ is well-defined on the reduced complex 
$\bar C_\bull(J\!L,A)$ follows 
since degenerate chains vanish under $F$ as
\rmref{Sch48} gives for $X \in L$
\begin{equation}
\label{schonwiederzuweniggegessenundzuvielKaffeegetrunken}
        \gve(X_+ 1_{\scriptscriptstyle{{J\!L}}}(X_-)) = 
        \gve(X_+ \gve(X_-)) = 
        \gve(X) = 0.
\end{equation}

When $L$ is finitely generated projective
over $A$, the wedge product of multilinear forms provides an
isomorphism 
$$
                \textstyle\bigwedge^\bull_\ahha \!L^* \rightarrow 
                \Hom_\ahha(\textstyle\bigwedge^\bull_\ahha \!L,A) 
$$
that we suppress in the sequel. Furthermore, the pairing
\rmref{cityyildiz} yields an isomorphism
(cf.~\cite[Eq.~(4.10)]{CalRosVdB:HCFLA})
\begin{equation}\label{zerolimits}
        C^n(J\!L,A) \simeq V\!L^{\otimes_\ahha n}.
\end{equation}
Finally, if we denote by $\pr: VL \to L$ the projection on $L$ resulting
from Rinehart's PBW theorem, we have:
\begin{prop}
Assume that $L$ is finitely generated projective over $A$ and 
define  
$$
        F'(\ga^1 \wedge \cdots \wedge \ga^n) := 
        \sum_{\gs \in S_n}(-1)^\gs \big(\pr^*\ga^{\gs(1)}, 
        \ldots, \pr^*\ga^{\gs(n)}\big)
$$
 for $\ga^1,\ldots,\ga^n \in L^*$. Then we have
$$
        FF'=n! \,\mathrm{id}_{\scriptscriptstyle \bigwedge^n_\ahha
          \!L^{\scriptscriptstyle \ast}}.
$$
In particular, if $ \mathbb{Q} \,{\subseteq}\,k$, then the morphism 
$F$ has a right inverse.
\end{prop}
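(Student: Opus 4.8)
The plan is to reduce the entire identity to a single computation describing how the antipode $S$ of $J\!L$ acts on the functionals $\pr^*\ga$, $\ga \in L^*$, once one restricts attention to arguments lying in $L \subseteq V\!L$. Concretely, I would first establish the key lemma
$$
        (S\,\pr^*\ga)(X) = -\ga(X), \qquad X \in L,\ \ga \in L^*,
$$
that is, the restriction of $S\,\pr^*\ga$ to $L$ equals $-\ga$ as an element of $L^*$. Everything after that is a bookkeeping of signs together with a summation over the symmetric group that produces the factor $n!$. Throughout I would suppress, as the paper does, the identification $\bigwedge^n_\ahha L^* \cong \Hom_\ahha(\bigwedge^n_\ahha L, A)$.

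For the key lemma I would argue directly from the definitions. By \rmref{kaesekuchen3} together with the translation map \rmref{marmorkuchen}, which for $X \in L$ reads $X_+ \otimes_\Aopp X_- = X \otimes_\Aopp 1 - 1 \otimes_\Aopp X$, one obtains
$$
        (S\,\pr^*\ga)(X) = \gve\big(X\,(\pr^*\ga)(1)\big) - \gve\big((\pr^*\ga)(X)\big).
$$
Since the PBW projection annihilates the unit, $(\pr^*\ga)(1) = \ga(\pr(1_{\scriptscriptstyle V\!L})) = 0$, so the first term vanishes and no higher PBW information about $\pr^*\ga$ is ever needed; and since $(\pr^*\ga)(X) = \ga(X) \in A$ while $\gve$ restricts to the identity on $A$, the second term equals $\ga(X)$. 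Hence $(S\,\pr^*\ga)(X) = -\ga(X)$.

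With the lemma in hand I would insert the definitions of $F$ and $F'$ and compute, for $\ga^1, \ldots, \ga^n \in L^*$,
$$
        FF'(\ga^1 \wedge \cdots \wedge \ga^n) = \sum_{\gs \in S_n} (-1)^\gs (-1)^n \big( S\,\pr^*\ga^{\gs(1)} \wedge \cdots \wedge S\,\pr^*\ga^{\gs(n)} \big).
$$
Because the wedge of forms is evaluated only on elements of $L$, each factor $S\,\pr^*\ga^{\gs(i)}$ may be replaced by $-\ga^{\gs(i)}$, producing a sign $(-1)^n$ that cancels the $(-1)^n$ coming from $F$; reordering the wedge back to $\ga^1 \wedge \cdots \wedge \ga^n$ produces a second $(-1)^\gs$ that cancels the $(-1)^\gs$ coming from $F'$. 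Each of the $n!$ summands thus equals $\ga^1 \wedge \cdots \wedge \ga^n$, giving $FF' = n!\,\mathrm{id}$. The final assertion is then immediate: if $\mathbb{Q} \subseteq k$, then $n!$ is invertible in every degree, and the degreewise rescaling $\tfrac{1}{n!}F'$ is a right inverse of $F$.

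The only genuinely delicate point is the key lemma, where the subtlety lies entirely in reading the antipode formula correctly — in particular in noticing that the potentially awkward term $X\,(\pr^*\ga)(1)$ disappears because $\pr$ kills the unit. Everything after that is the sign count above, which is robust and, notably, independent of the normalisation one chooses for the wedge product of alternating forms, since only the reordering rule $\ga^{\gs(1)} \wedge \cdots \wedge \ga^{\gs(n)} = (-1)^\gs\,\ga^1 \wedge \cdots \wedge \ga^n$ ever enters.
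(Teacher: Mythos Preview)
Your proof is correct and follows essentially the same approach as the paper's: the paper's one-line proof also rests on the identity $S(\pr^*\ga) = -\pr^*\ga$ (stated there as an equality in $J\!L$, see \rmref{tatue}), after which the $n!$ and the sign count are left implicit. Your version is in fact slightly more careful, since you isolate exactly the restriction of $S\,\pr^*\ga$ to $L$ that the wedge-product evaluation actually requires, and you verify it directly from \rmref{kaesekuchen3} and \rmref{marmorkuchen}.
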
 
\begin{proof}
This follows by straightforward computation, using that 
\rmref{marmorkuchen} yields 
\begin{equation}
\label{tatue}
S(\pr^* \ga) = - \pr^* \ga
\end{equation}
for every $1$-form $\ga \in L^*$. 
\end{proof} 

%

Dual to \rmref{schreibwarenhandlung1}, one has a
morphism
\begin{equation}
\label{schreibwarenhandlung2}
F^*: \big(\!\textstyle\bigwedge^\bull_\ahha\!L, 0\big)  \to 
(\bar C^\bull(J\!L,A),\delta ) 
\end{equation}
of cochain complexes explicitly given as 
$$
X^1 \wedge \cdots \wedge X^n \mapsto \big\{ (f^1, \ldots, f^n) \mapsto  (-1)^n \!\! \sum_{\gs \in S_n} (-1)^\gs (Sf^1)(X^{\gs(1)}) \cdots (Sf^n)(X^{\gs(n)}) \big\}.
$$

\subsection{The calculus structure for Lie-Rinehart algebras}
Our main aim is to use now $F,F^*$, and $F'$ to 
compare the calculus structure on 
$\big(H^\bull(J\!L,A),H_\bull(J\!L,A)\big)$ resulting from (the topological version of) 
Theorem~\ref{pen1} with the well-known 
calculus on $\big(\textstyle\bigwedge^\bull_\ahha \!L,
\textstyle\bigwedge^\bull_\ahha \!L^*\big)$ 
given by the exterior differential, the insertion operator, the Lie
derivative for differential forms, along with the classical Cartan
homotopy formula (see \cite{Rin:DFOGCA,
  Hue:DFLRAATMC, Hue:LRAGAABVA}, or
\cite{CanWei:GMFNCA, Kos:EGAALBA, Xu:GAABVAIPG} for the case of 
Lie algebroids and in particular the original reference
\cite{Car:LTDUGDLEDUEFP} for the tangent bundle of a smooth
manifold). First, recall that these operators, besides $\dd$ from \rmref{fassbinder}, are given by
\begin{eqnarray*}
\mathsf{i}_X: {\textstyle{\bigwedge^n_\ahha \!L^*}} &\to& {\textstyle{\bigwedge^{n-1}_\ahha \!L^*}}, \quad \go \mapsto \go(\cdot, \ldots, X), \\
\mathsf{L}_X: {\textstyle{\bigwedge^n_\ahha \!L^*}} &\to& {\textstyle{\bigwedge^{n}_\ahha \!L^*}},  \quad \mathsf{L}_X\go(Y^1, \ldots, Y^n) := X\big(\go(Y^1, \ldots, Y^n)\big) \\
&& \hspace*{4.5cm} \quad - \sum^n_{i=1} \go(Y^1, \ldots, [X, Y^i] , \ldots,  Y^n), 
\end{eqnarray*}
where $Y^1, \ldots, Y^n \in L$.


Let us then consider the Gerstenhaber bracket on
$
C^\bull(J\!L,A) \simeq V\!L^{\otimes_\ahha \bull}.
$
Now, $V\!L^{\otimes_\ahha n}$ 
carries a canonical comp algebra structure given by
\begin{equation}
\label{rosenthalerplatz}
\begin{split}
& (u^1 \otimes_\ahha \cdots \otimes_\ahha u^p) \circ_i^{\scriptscriptstyle{\rm tens}} 
(v^1 \otimes_\ahha \cdots \otimes_\ahha v^q) \\
& := (u^1 \otimes_\ahha \cdots \otimes_\ahha u^{i-1} \otimes_\ahha u^i_{(1)} v^1 \otimes_\ahha \cdots \otimes_\ahha  u^i_{(q)} v^q \otimes_\ahha u^{i+1} \otimes_\ahha \cdots \otimes_\ahha u^p, 
\end{split}
\end{equation}
for $i = 1, \ldots, p$, and
where $\Delta^q(u) = u_{(1)} \otimes_\ahha \cdots \otimes_\ahha u_{(q)}$ is the iterated coproduct (where $\Delta^0 := \gve$ and $\Delta^1 := \id$). This is a slight generalisation to bialgebroids from a statement in \cite[p.~65]{GerSch:ABQGAAD}, and the expression is well defined with \rmref{tellmemore}. 

In the first part of the following proposition we state that \rmref{rosenthalerplatz} corresponds to our general expression \rmref{maxdudler} of the Gerstenhaber products by means of the isomorphism \rmref{zerolimits}, and in particular that the resulting Gerstenhaber bracket
 corresponds to the classical Schouten-Nijenhuis bracket on the exterior algebra $\textstyle\bigwedge^\bull_\ahha \!L$. In the second part, we show how the relevant operators from the two mentioned calculi are connected to each other; for the sake of simplicity we restrict to the case where one acts with an element $X \in L = \textstyle\bigwedge^1_\ahha \!L$:

\begin{prop}
If $L$ is finitely generated projective over $A$, then 
for 
$1 \leq i \leq p$
one has
$$
(u^1 \otimes_\ahha \cdots \otimes_\ahha u^p) \circ_i
(v^1 \otimes_\ahha \cdots \otimes_\ahha v^q) = (u^1 \otimes_\ahha \cdots \otimes_\ahha u^p) \circ_i^{\scriptscriptstyle{\rm tens}} 
(v^1 \otimes_\ahha \cdots \otimes_\ahha v^q),
$$
where the left hand side is the Gerstenhaber product from \rmref{maxdudler}. In particular, if $\mathbb{Q} \,{\subseteq}\,k$, then the Gerstenhaber bracket from \rmref{zugangskarte} corresponds to the classical Schouten-Nijenhuis bracket by means of the map $\frac{1}{n!}F^*$ from \rmref{schreibwarenhandlung2}.

Furthermore, 
for the operations $\mathsf{d}$, $\mathsf{i}_X$, and $\mathsf{L}_X$
of differential, insertion, and Lie derivative 
of (generalised) forms along a (generalised) vector field $X \in L$, one has on $\textstyle{\bigwedge^n_\ahha \!L^*}$
\begin{eqnarray}
\label{immaamt1}
       (n+1) \, \mathsf{d} &=& F \BB F', \\
\label{immaamt2}     
     (n-1) \, \mathsf{i}_X &=& F \iota_{F^*X} F', \\
\label{immaamt3}
         n \, \mathsf{L}_X &=& F \lie_{F^*X} F'.
\end{eqnarray}
\end{prop}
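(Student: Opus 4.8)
The plan is to reduce all four assertions to explicit computations on decomposable elements $\alpha^1 \wedge \cdots \wedge \alpha^n \in \bigwedge^n_\ahha L^*$ (for the three Cartan identities) and $u^1 \otimes_\ahha \cdots \otimes_\ahha u^p \in V\!L^{\otimes_\ahha p}$ (for the comp-algebra statement), using throughout the pairing \rmref{cityyildiz}, the explicit forms of $F$ and $F^*$, the translation map \rmref{marmorkuchen}, the antipode \rmref{kaesekuchen3}, and the identity \rmref{tatue}. A preliminary observation that streamlines everything is that under \rmref{zerolimits} the $1$-cochain $F^*X$ corresponds to $X \in V\!L$, i.e.\ to the functional $f \mapsto f(X)$: indeed \rmref{marmorkuchen} gives $X_+ \otimes_\Aopp X_- = X \otimes_\Aopp 1 - 1 \otimes_\Aopp X$, so $(Sf)(X) = \gve(X_+ f(X_-)) = -f(X)$ by \rmref{kaesekuchen3}, and the sign in the definition of $F^*$ then cancels.

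For the comp-algebra assertion I would first unwind \rmref{zerolimits} and transport the operator $\DD_\psi$ of \rmref{huetor1} through this duality. The key point is that $\DD_\psi$ is dual to the iterated coproduct of $V\!L$: since the coproduct of $J\!L$ is dual to the product of $V\!L$ (cf.\ \rmref{kaesekuchen2}) and the product of $J\!L$ is dual to the coproduct of $V\!L$ (cf.\ \rmref{kaesekuchen1}), evaluating $\varphi \circ_i \psi$ from \rmref{maxdudler} on jets and resumming the Sweedler legs produces exactly the insertion $u^i \mapsto u^i_{(1)} v^1 \otimes_\ahha \cdots \otimes_\ahha u^i_{(q)} v^q$ of \rmref{rosenthalerplatz}. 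This is the bialgebroid version of the computation on p.~65 of \cite{GerSch:ABQGAAD}. Once $\circ_i = \circ_i^{\scriptscriptstyle{\rm tens}}$, the bracket \rmref{zugangskarte} is determined, and identifying it with the Schouten--Nijenhuis bracket under $\tfrac1{n!}F^*$ is the standard check that on the primitively generated $\bigwedge^\bullet_\ahha L$ the coderivation bracket is Schouten--Nijenhuis; the factor $n!$, hence the hypothesis $\mathbb{Q} \subseteq k$, is forced by $FF' = n!\,\mathrm{id}$ together with the antisymmetrization built into $F^*$.

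For the insertion identity \rmref{immaamt2} I would set $p=1$ in the cap product \rmref{alles4}, so that $\iota_{F^*X}$ contracts the last jet-slot against $f \mapsto f(X)$; applying this to $F'\omega$, using $S\pr^* = -\pr^*$ from \rmref{tatue} inside the pairing \rmref{cityyildiz}, turns the contracted factor into $\pm\,\alpha(X)$, and after reapplying $F$ the $(n-1)!$ permutations fixing that slot reassemble into $(n-1)!\cdot\mathsf{i}_X\omega$, whence the factor $(n-1)$ upon comparison with $FF' = (n-1)!\,\mathrm{id}$ in degree $n-1$. For \rmref{immaamt1} I would use the SaYD expression $\BB = \sss_{-1}\NN$ from \rmref{alhambra}, legitimate since $A$ is an SaYD module over $J\!L$ by \rmref{solimando}; here $\NN$ contributes $n+1$ cyclic terms and the extra degeneracy inserts the counit in the new slot, and after passing through $F$ and $F'$ the cyclic sum antisymmetrizes into the $(n+1)$ summands of \rmref{fassbinder}, the derivative part coming from the action-leg and the bracket part from the translation legs.

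The Lie-derivative identity \rmref{immaamt3} is the main obstacle, and I would attack it directly via the $1$-cochain formula \rmref{lefevre}: with $M=A$ and the coaction \rmref{solimando} the coaction term $\varphi(m_{(-1)})m_{(0)}$ collapses, the $\DD_\varphi$-sum reproduces the derivative terms $X^i(\omega(\dots))$, and the $\EE_\varphi$-sum, through $\EE_\varphi(f) = \varphi(f_-)\blact f_+$ and the translation map \rmref{marmorkuchen}, reproduces the Lie-bracket terms $\omega(\dots,[X,Y^i],\dots)$, while the remaining $\gd\varphi$-contribution vanishes because $F^*X$ is a $\gd$-cocycle (as $F^*$ is a chain map out of $(\bigwedge^\bullet_\ahha L,0)$, cf.\ \rmref{schreibwarenhandlung2}). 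As an independent cross-check I would feed $\varphi = F^*X$ into the homotopy formula \rmref{sacromonte2}: since $\gd\varphi = 0$ it reads $\lie_{F^*X} = [\BB,\iota_{F^*X}] + [\bb,\SSS_{F^*X}]$, and because $F\bb = 0$ the $\SSS$-commutator is killed by $F$ on the left, so $F\lie_{F^*X}F' = F[\BB,\iota_{F^*X}]F'$, consistent with Cartan's magic formula once the combinatorial factors $n+1$, $n-1$, $n$ are reconciled (this is only a sanity check, not a clean derivation, because $F'F \neq \mathrm{id}$). The genuinely delicate point throughout is the interaction of the antipode $S$ and the projection $\pr$ with the \emph{non-coinciding} source and target maps of $J\!L$ from \rmref{sarare} inside the para-cyclic operator $\ttt$: tracking which of $\lact,\ract,\blact,\bract$ appears and verifying that the Sweedler legs produced by $\DD',\EE$ and $\sss_{-1}$ collapse correctly against $\pr^*$ and $S$ is precisely where the factors $n+1,n-1,n$ are pinned down and where sign errors are most likely, and this is the step I expect to consume the bulk of the work.
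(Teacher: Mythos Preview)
Your plan matches the paper's: each identity is obtained by direct computation from the definitions of $F,F',\BB,\iota,\lie$ together with \rmref{kaesekuchen1}--\rmref{apfelorangeingwersaft} and the Schauenburg relations. One slip to correct: from \rmref{marmorkuchen} and \rmref{kaesekuchen3} one gets $(Sf)(X)=\gve\big(Xf(1)\big)-f(X)=X(f(1))-f(X)$, not $-f(X)$, so $F^*X$ corresponds to $-X$ rather than $X$ under \rmref{zerolimits}; this does not derail the argument because the paper never uses such an identification---it evaluates $F\iota_{F^*X}F'$ etc.\ directly on test multivectors---and on jets $f=\pr^*\alpha$ one has $f(1)=0$ anyway, which is all that appears in $\mathrm{im}\,F'$. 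Two further points of comparison: the paper does not verify the Schouten--Nijenhuis correspondence by hand but invokes \cite[Theorem~1.4]{Cal:FFLA}; and for \rmref{immaamt3} the paper merely declares the computation analogous to the first two and omits it, so your route via \rmref{lefevre} (legitimate since $F^*X$ is a $\delta$-cocycle by \rmref{schreibwarenhandlung2} and the module entry in $\mathrm{im}\,F'$ is $1_A$, whence the coaction term vanishes as $\varphi(1_{J\!L})=0$ for cocycles) actually supplies more detail than the paper provides.
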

\begin{proof}
For the general Gerstenhaber product \rmref{maxdudler} one computes with the commutativity of $J\!L$,
\rmref{kaesekuchen1}--\rmref{kaesekuchen2}, \rmref{Sch3}, and using
the isomorphism \rmref{zerolimits}, 
\begin{footnotesize}
\begin{equation*}
\begin{split}
& \big((u^1 \otimes_\ahha \cdots \otimes_\ahha u^p) \circ_i (v^1 {\otimes_\ahha} \cdots {\otimes_\ahha} v^q)\big)\big(f^1, \ldots, f^{p+|q|}\big) \\
&= \big(u^1 \otimes_\ahha \cdots \otimes_\ahha u^p\big)\big(f^1, \ldots, f^{i-1}, \DD_{v^1 {\scriptscriptstyle{\otimes_\ahha}} \cdots {\scriptscriptstyle{\otimes_\ahha}} v^q} (f^i, \ldots, f^{i+|q|}), f^{i+q}, \ldots, f^{p+|q|}\big) \\
&= Sf^1(u^1) \cdots Sf^{i-1}(u^{i-1}) \big(S\big(s(Sf^i_{(1)}(v^1) \cdots  Sf^{i+|q|}_{(1)}(v^q)) f^i_{(2)} \cdots f^{i+|q|}_{(2)}\big)\big)\big(u^i\big) \\
& \hspace*{9cm}   Sf^{i+q}(u^{i+1})  \cdots Sf^{p+|q|}(u^{p}) \\
&= Sf^1(u^1) \cdots Sf^{i-1}(u^{i-1}) 
\,
\gve\big(u^i_{(1)+} \gve(v^1_+ f^i_{(1)}(v^1_-))f^i_{(2)}(u^i_{(1)-})\big) 
\cdots 
\\ & \hspace*{3cm}   
\gve\big(u^i_{(q)+} \gve(v^q_+ f^{i+|q|}_{(1)}(v^q_-))f^{i+|q|}_{(2)}(u^i_{(q)-})\big) 
Sf^{i+q}(u^{i+1})  \cdots Sf^{p+|q|}(u^{p}) \\
&= Sf^1(u^1) \cdots Sf^{i-1}(u^{i-1}) 
\,
\gve\big(u^i_{(1)+} v^1_+ f^i(v^1_- u^i_{(1)-})\big) 
\cdots 
\\ & \hspace*{3cm}   
\gve\big(u^i_{(q)+}v^q_+ f^{i+|q|}(v^q_- u^i_{(q)-})\big) 
Sf^{i+q}(u^{i+1})  \cdots Sf^{p+|q|}(u^{p}) \\
&=
 Sf^1(u^1) \cdots Sf^{i-1}(u^{i-1}) 
\,
Sf^i(u^i_{(1)} v^1) 
\cdots 
Sf^{i+|q|}(u^i_{(q)}v^q) 
Sf^{i+q}(u^{i+1})  \cdots Sf^{p+|q|}(u^{p}) \\
&=  \big((u^1 \otimes_\ahha \cdots \otimes_\ahha u^p) \circ_i^{\scriptscriptstyle{\rm tens}}  
(v^1 {\otimes_\ahha} \cdots {\otimes_\ahha} v^q)\big)\big(f^1, \ldots, f^{p+|q|}\big)
\end{split}
\end{equation*}
\end{footnotesize}
for $f^i \in J\!L$ and $u^j, v^k \in V\!L$. 
The fact that the Gerstenhaber bracket resulting from 
\rmref{rosenthalerplatz} corresponds to the (generalised) Schouten-Nijenhuis bracket on ${\textstyle\bigwedge^\bull_\ahha\!L}$ 
by means of the (generalised) Hochschild-Kostant-Rosenberg map was already shown in 
\cite[Theorem~1.4]{Cal:FFLA}. Hence, observing that the map
$\frac{1}{n!} F^*$ is the mentioned HKR morphism followed by
\rmref{zerolimits}, the first claim is proven.

 Concerning the identity \rmref{immaamt1}, 
as stated in \rmref{schonwiederzuweniggegessenundzuvielKaffeegetrunken}, 
the degenerate elements of $\BB$ vanish under $F$, whereas
the operator \rmref{alhambra} assumes the form
\begin{equation*}
\label{bohnen&speck}
\sss_{-1} \NN(f^1, \ldots, f^n) = \sum^n_{i=0} (-1)^{in} (f^{i+1}_+, \ldots, f^n_+, f^n_- \cdots f^1_-, f^1_+, \ldots, f^{i}_+) 
\end{equation*}
for an element $(f^1, \ldots, f^n) \in C_n(J\!L, A)$,
as is quickly revealed by a direct computation using \rmref{solimando}, \rmref{tellmemore}, and the commutativity of $J\!L$.
Hence, since $S$ is an involution and with \rmref{apfelorangeingwersaft}, \rmref{Sch3}, \rmref{ayran}, and \rmref{kaesekuchen1}--\rmref{kaesekuchen3} one has
\begin{footnotesize}
\begin{equation*}
\begin{split}
\big(& F \BB F'(\ga^1, \ldots, \ga^n)\big)\big(X^0 \wedge \cdots \wedge X^{n}\big) \\
&=  F\Big( \sum^n_{i=0} (-1)^{in} \sum_{\gs \in S_n}(-1)^{\gs} 
\big((\ga^{\gs(i+1)} \pr)_+, \ldots, (\ga^{\gs(n)} \pr)_+, \\
& \qquad \qquad (\ga^{\gs(n)} \pr)_- \cdots (\ga^{\gs(1)}\pr)_-, (\ga^{\gs(1)} \pr)_+, \ldots, (\ga^{\gs(i)} \pr)_+ \big)\Big)\Big(X^0 \wedge \cdots \wedge X^{n}\Big) \\
&= (n+1)\!\! \sum_{\gs \in S_n}\!\!(-1)^{\gs} 
S\big((\ga^{1} \pr)_{(1)}\big)\big(X^{\gs(1)}\big) \cdots S\big((\ga^{n} \pr)_{(1)}\big)\big(X^{\gs(n)}\big) 
\\ & \hspace*{5cm} 
\cdot \big((\ga^{n} \pr)_{(2)} \cdots (\ga^{1}\pr)_{(2)}\big)\big(X^{\gs(0)}\big) \\
&= (n+1)\!\! \sum_{\gs \in S_n}\!\!(-1)^{\gs} 
\gve\big(X^{\gs(1)}_+ (\ga^{1} \pr)(X^{\gs(1)}_- X^{\gs(0)}_{(1)})\big) 
\cdots
\gve\big(X^{\gs(n)}_+ (\ga^{n} \pr)(X^{\gs(n)}_- X^{\gs(0)}_{(n)})\big) \\
&=  (n+1) \sum^n_{i=1} \sum_{\gs \in S_n}\!\!(-1)^{\gs} 
\gve\big(X^{\gs(1)}_+ (\ga^{1} \pr)(X^{\gs(1)}_-)\big) 
\cdots \gve\big(X^{\gs(i)}_+ (\ga^{i} \pr)(X^{\gs(i)}_- X^{\gs(0)})\big) \\
& \hspace*{6cm} \cdots 
\gve\big(X^{\gs(n)}_+ (\ga^{n} \pr)(X^{\gs(n)}_-)\big) \\
&= (n+1) \sum^n_{i=1} \Big[ (-1)^{n-1} \! \sum_{\gs \in S_n}\!\!(-1)^{\gs} 
\ga^{1}(X^{\gs(1)}) 
\cdots X^{\gs(i)} \big(\ga^{i}(X^{\gs(0)})\big) \cdots \ga^{n} (X^{\gs(n)}) \\
& \quad +  
(-1)^{n} \! \sum_{\gs \in S_n}\!\!(-1)^{\gs} 
\ga^{1}(X^{\gs(1)}) 
\cdots (\ga^{i}\pr)(X^{\gs(i)} X^{\gs(0)}) \cdots \ga^{n} (X^{\gs(n)})\Big] \\
&= (n+1) \, \mathsf{d}(\ga^1 \wedge \cdots \wedge \ga^n)(X_0, \ldots, X_n),
\end{split}
\end{equation*}
\end{footnotesize}
where the last line follows from the fact that the vector fields are
derivations on $A$ and that 
 $\pr(XY - YX) = \pr([X,Y])=[X,Y]$.

As for the insertion operator, 
we compute with \rmref{tatue}, \rmref{kaesekuchen1}--\rmref{kaesekuchen3}, and $St = s$:
\begin{footnotesize}
\begin{equation*}
\begin{split}
&\big(F \iota_{F^*{X^n}} F'(\ga^1, \ldots, \ga^n)\big)\big(X^1 \wedge \cdots \wedge X^{n-1}\big) \\
&=  \sum_{\gs \in S_n}(-1)^\gs  F \Big(\pr^*\ga^{\gs(1)}, \ldots, \pr^*\ga^{\gs(n-2)}, \big((\pr^*\ga^{\gs(n)})(F^*X^n)\big) \blact \pr^*\ga^{\gs(n-1)}\Big)
\\ & \hspace*{9cm}  
\Big(X^1 \wedge \cdots \wedge X^{n-1}\Big) \\
&= (-1)^{n-1} (n-1) \! \sum_{\gs \in S_n}(-1)^\gs  
(S(\ga^{1} \pr))(X^{\gs(1)}) \cdots (S(\ga^{n-2} \pr))(X^{\gs(n-2)}) 
\\ & \hspace*{5cm}  
\Big(S\big((\ga^{n-1}\pr) t(\ga^n\pr(F^*X^{\gs(n)}))\big)\Big)\Big(X^{\gs(n-1)}\Big) \\
&= (n-1) \sum_{\gs \in S_n}(-1)^\gs  
\ga^{1}(X^{\gs(1)}) \cdots \ga^{n-2}(X^{\gs(n-2)}) s(\ga^n(X^{\gs(n)}))(X^{\gs(n-1)}_{(1)})
(\ga^{n-1}\pr) (X^{\gs(n-1)}_{(2)}) \\
&= (n-1) \sum_{\gs \in S_n}(-1)^\gs  
\ga^{1}(X^{\gs(1)}) \cdots  
\ga^{n-1} (X^{\gs(n-1)}) \ga^n(X^{\gs(n)}) \\
&= (n-1) \big(\mathsf{i}_{X^n}(\ga^1 \wedge \cdots \wedge \ga^n)\big)\big(X^1, \ldots, X^{n-1}\big),
\end{split}
\end{equation*}
\end{footnotesize}
hence \rmref{immaamt2} is proven.

In a similar way, one proves \rmref{immaamt3} the details of which we omit since the computation is similar to those of the two preceding identities.
\end{proof}

\section{Hochschild (co)homology and twisted Calabi-Yau algebras}

In this final section we discuss as an example  
the action of the 
Hochschild cohomology $H^\bull(A,A)$ 
of an associative algebra $A$ 
on the Hochschild homology 
$H_\bull(A,M)$ with coefficients in suitable $A$-bimodules $M$.
In particular, the differential calculus discussed in \cite{NesTsy:OTCROAA} 
is generalised towards nontrivial coefficients which are not even SaYD modules, and 
this is used to prove  
Theorem~\ref{ginzburger}.

\subsection{The Hopf algebroid $\Ae$ and the 
coefficients $A_\sigma $}
As said in the 
introduction, all the main results of this
paper were historically first obtained  
for the 
Hochschild cohomology $H^\bull(A,A)$ and homology 
$H_\bull(A,A)$ of an associative
$k$-algebra $A$.
This arises as the special 
case in which $U$ is the enveloping
algebra $\Ae$ of $A$, with 
$\eta=\mathrm{id}_{\Ae}$ and 
coproduct and counit given by
$$
        \Delta: U \to U \otimes_\ahha U, \> a
        \otimes_k b \mapsto (a \otimes_k 1) 
        \otimes_\ahha 
        (1 \otimes_k b),\quad 
        \varepsilon: U \to A, \> a \otimes_k
        b \mapsto ab.
$$
One then has
$$
 {}_\blact U \otimes_\Aopp  U_\ract = U
 \otimes_k U/{\rm span}_k\{(a \otimes_k cb) 
\otimes_k (a' \otimes_k b')
 -(a \otimes_k b) \otimes_k 
(a' \otimes_k b'c)\},
$$
where $cb$ and $b'c$ is understood to be
the product in $A$, and one
easily verifies that
$$
(a \otimes_k b)_+ \otimes_\Aopp  
(a \otimes_k b)_- := (a \otimes_k 1)
\otimes_\Aopp  (b \otimes_k 1)
$$
yields an inverse of the Galois map as was originally
pointed out by Schauenburg. 
For simplicity, we shall 
assume throughout this section that 
$k$ is a field which implies in particular that $U=\Ae$ 
is $A$-projective (in fact free) with respect to all four actions
$\lact,\ract,\blact,\bract$.

Like $J\!L$ in the previous section, 
$U=\Ae$ is an example of a full 
Hopf algebroid in the sense of
B\"ohm and Szlach\'anyi whose antipode
$S(a \otimes_k b):=b \otimes_k a$ is an involution.
We use this to identify   
left and right $U$-modules. Obviously, $U$-modules 
can also be identified with $A$-bimodules with symmetric action of
$k$, and in the sequel $M$ is such a bimodule that will be viewed 
freely as left or right $U$-module as necessary.

In particular, any algebra endomorphism 
$\gs: A \to A$ defines an $A$-bimodule
$A_\gs$ which is $A$ as
$k$-vector space with the $A$-bimodule respectively 
right $\Ae$-module structure 
$$
		  b \blact m  \bract a=m(a \otimes_k b) := 
		  bx\gs(a), \qquad a, m \in A, b \in \Aop.
$$
These bimodules are 
prototypical examples of the homology coefficients we are interested
in. They carry a left $\Ae$-comodule
structure given by 
$$
		  A_\gs \to \Ae \otimes_\ahha A_\gs,
		  \quad 
		  m \mapsto (m\otimes_k 1) \otimes_\ahha 1,
$$
for which the induced left $A$-module structure is 
${}_\blact A$. However, in general $A_\sigma $ is not a stable anti
Yetter-Drinfel'd module, see \cite{KowKra:CSIACT} for a discussion of
this fact.

Up to isomorphism, $A_\sigma$ only depends on the class of $\sigma $
in the outer automorphism group $ \mathrm{Out}(A)$ 
of $A$, and $ \sigma \mapsto A_\sigma $ yields 
an embedding of the latter 
into the Picard group of $\umod$ that appears to 
have been considered in detail for the first time by
Fr\"ohlich \cite{Fro:TPGONRIPOO}.   
The study of the (co)homology of $A$ with coefficients in these
bimodules
has many motivations. Nest and Tsygan suggested 
to view the Hochschild cohomology groups $H^\bull(A,A_\sigma)$ 
as defining a quantum analogue of the Fukaya category
\cite{NesTsy:OTCROAA, NesTsy:TFTCFAA} while Kustermans, Murphy and Tuset 
related $H_\bull(A,A_\sigma)$ to Woronowicz's concept of covariant
differential calculi over compact quantum groups
\cite{KusMurTus:DCOQGATCC}. Moreover, they arise naturally in the
description of the Hochschild (co)homology of the 
crossed product 
$A \rtimes_\sigma \mathbb{Z}$, see \cite{GetJon:TCHOCPA}.  

\subsection{The Hochschild (co)chain complex}
In this situation, the chain complex 
$C_\bull(U,M) = M \otimes_\Aopp U^{\otimes_\Aopp \bull}$ 
is isomorphic to the standard
Hochschild chain complex
$$
        C_\bull(A,M):=M \otimes_k A^{\otimes_k \bull}
$$  
by means of the map
$$
m \otimes_\Aopp (a_1 \otimes_k b_1) \otimes_\Aopp \cdots \otimes_\Aopp (a_n \otimes_k b_n) 
\mapsto 
b_n \cdots b_1 m \otimes_k a_1 \otimes_k \cdots \otimes_k a_n.  
$$

For $M=A_\sigma $, the para-cyclic structure 
on $C_\bull(U,A_\sigma )$ from Proposition~\ref{kamille} becomes 
under this isomorphism
\begin{equation*}
\!\!\!
\begin{array}{rcll}
        \dd_i(m \otimes_k y)  
&\!\!\!\!\! =& \!\!\!\!\!
        \left\{ \!\!\!\begin{array}{l}
        a_n m \otimes_k a_1  \otimes_k   \cdots   \otimes_k   a_{n-1}
\\
        m \otimes_k \cdots \otimes_k  a_{n-i} a_{n-i+1}
        \otimes_k  \cdots 
\\
        m\gs(a_1) \otimes_k a_2  \otimes_k   \cdots    \otimes_k  a_n 
        \end{array}\right.  
& \!\!\!\!\!\!\!\!\!\!\!\!   \begin{array}{l} 
        \mbox{if} \ i \!=\! 0, \\ \mbox{if} \ 1
\!  \leq \! i \!\leq\! n-1, \\ \mbox{if} \ i \! = \! n, \end{array} \\
\\
\sss_i(m \otimes_k y) &\!\!\!\!\! =&\!\!\!\!\!  \left\{ \!\!\!
\begin{array}{l} m  \otimes_k   a_1  
\otimes_k   \cdots   \otimes_k
 a_n  \otimes_k 
		  1
\\
m \otimes_k \cdots \otimes_k   a_{n-i} 
\otimes_k   1  
\otimes_k   a_{n-i+1}  \otimes_k
 \cdots  
\\
m \otimes_k 1
\otimes_k a_1  \otimes_k   \cdots    \otimes_k  a_n 
\end{array}\right.   & \!\!\!\!\!\!\!\!\!\!\!  \begin{array}{l} 
\mbox{if} \ i\!=\!0, \\ 
\mbox{if} \ 1 \!\leq\! i \!\leq\! n-1, \\  \mbox{if} \ i\! = \!n, \end{array} \\
\\
\ttt_n(m \otimes_k y) 
&\!\!\!\!\!=&\!\!\!\!\! 
\gs(a_1) \otimes_k a_2 \otimes_k \cdots
\otimes_k a_n \otimes_k m, 
& \\
\end{array}
\end{equation*}
where $m \in A$ and where we abbreviate $y:=a_1
\otimes_k \cdots \otimes_k a_n$.
In particular, one has
$$
		  \TT=\gs \otimes_k \cdots
		  \otimes_k \gs,
$$
so $C_\bull(A,A_\gs)$ is
cyclic
if and only if $\gs= \id$ (in
which case $A_\gs$ is an SaYD module).

Likewise, there is an isomorphism of cochain complexes of $k$-vector
spaces  
$$
        C^\bull(U,A) \rightarrow 
        C^\bull(A,A) := \Hom_k(A^{\otimes_k \bull},A),\quad
        \varphi \mapsto \tilde \varphi , 
$$
where the latter is the standard Hochschild cochain 
complex \cite{Hoch:OTCGOAAA} 
and $ \tilde \varphi $ is defined by 
$$
        \tilde\gvf(a_1 \otimes_k \cdots \otimes_k a_n) 
        :=       \gvf\big((a_1 \otimes_k 1)  \otimes_\Aopp \cdots 
        \otimes_\Aopp (a_n \otimes_k 1)\big) 
$$
so that 
$$
        \gvf\big((a_1 \otimes_k b_1)  \otimes_\Aopp \cdots 
        \otimes_\Aopp (a_n \otimes_k b_n)\big) =
        \tilde\gvf(a_1 \otimes_k \cdots \otimes_k a_n)b_n \cdots b_1. 
$$

The resulting operators involved in the calculus structure are 
given by
\begin{footnotesize}
\begin{eqnarray*}
       \BB(m \otimes_k y) &\!\!\!\!\!
=&\!\!\!\!\! \sum^n_{i=0} (-1)^{in} 1 \otimes_k a_{i+1} \otimes_k \cdots \otimes_k a_n \otimes_k m \otimes_k \gs(a_1) \otimes_k \cdots \otimes_k \gs(a_{i})
,\\
       \iota_{\tilde\varphi}(m \otimes_k y) &\!\!\!\!\!=&\!\!\!\!\! {\tilde\varphi}(a_{n-|p|}, \ldots, a_n)m \otimes_k a_1 \otimes_k \cdots \otimes_k a_{n-p}, \\
       \SSS_{\tilde\varphi}(m \otimes_k y) &\!\!\!\!\!=&\!\!\!\!\! \sum^{n-p}_{j=0} \sum^j_{i=0} (-1)^{\eta^{n,p}_{j,i}} 1 \otimes_k  
\gs(a_{n-|p|-j}) \otimes_k \cdots \otimes_k {\tilde\varphi}\big(\gs(a_{n-|p|+i-j}) \otimes_k \cdots \otimes_k \gs(a_{n+i-j})\big) \\
&& \qquad \qquad \otimes_k  \cdots \otimes_k \gs(a_n) \otimes_k \gs(m) \otimes_k \gs^2(a_1) \otimes_k \cdots \otimes_k \gs^2(a_{n-p-j}) \\
       \lie_{\tilde\varphi}(m \otimes_k y) &\!\!\!\!\!=&\!\!\!\!\! 
\sum^{n-|p|}_{i=1} (-1)^{\theta^{n,p}_i}  
\gs(m) \otimes_k \cdots \otimes_k {\tilde\varphi}\big(\gs(a_{i}) \otimes_k \cdots \otimes_k \gs(a_{i+|p|})\big) \otimes_k \cdots \otimes_k \gs(a_n) \\
&& \quad +
\sum^{p}_{i=1} (-1)^{\xi^{n,p}_i}  
\gs\Big({\tilde\varphi}\big(a_{n-|p|+i} \otimes_k \cdots \otimes_k a_n \otimes_k m \otimes_k \gs(a_{1}) \otimes_k \cdots \otimes_k \gs(a_{i-1})\big)\Big) \\ 
&& \qquad \qquad 
\otimes_k \gs(a_i) 
\otimes_k \cdots \otimes_k \gs(a_{n-p+i}), 
\end{eqnarray*}
\end{footnotesize}

Here we again work with the reduced complexes,
so ${\tilde\varphi} \in \bar C^p(A,A)$ and
$(m \otimes_k y)$ represents a class in 
$\bar C_\bull(A,A_\sigma)$.  
For $ \sigma = \mathrm{id} $ 
these operators 
appeared in \cite{Rin:DFOGCA,NesTsy:OTCROAA, Get:CHFATGMCICH}.

\subsection{The case of semisimple $ \sigma $}
A particularly well-behaved case is when the automorphism 
$ \sigma $ is semisimple (diagonalisable), 
that is, if there is a subset $\Sigma \,{\subseteq}\,k \setminus \{0\}$ and a decomposition 
of $k$-vector spaces
$$
        A=\bigoplus_{\lambda \in \Sigma} A_\lambda,\quad
        A_\lambda = \{a \in A \mid \sigma (a)=\lambda a\}.
$$
Note that we have $1 \in \Sigma $ because $\sigma (1)=1$, and also that an algebra $A$
equipped with such an automorphism is exactly the same as  
a $G$-graded algebra, where $G$ is a submonoid of the multiplicative group 
$k \setminus \{0\}$, as $\sigma (ab)=\sigma (a) \sigma (b)$ implies 
$A_\lambda A_\mu \,{\subseteq}\,A_{\lambda \mu}$ (thus the monoid $G \,{\subseteq}\,k \setminus \{0\}$ 
resulting from $ \sigma \in \mathrm{Aut}(A)$ is the one generated by 
$ \Sigma $).

This grading yields decompositions of $C^\bull(A,A)$ and
$C_\bull(A,A_\sigma)$. The chain complex $C_\bull(A,A_\sigma)$ becomes
$G$-graded by the total degree of a tensor,
$$
        C_\bull(A,A_\sigma)=\bigoplus_{\lambda \in G}
        C_\bull(A,A_\sigma)_\lambda,\quad
        C_n(A,A_\sigma)_\lambda=\bigoplus_{\lambda_0,\ldots,\lambda_n
          \in G \atop \lambda_0 \cdots \lambda_n=\lambda} 
        A_{\lambda_0} \otimes_k \cdots \otimes_k A_{\lambda_n},
$$
which is a decomposition of chain complexes of $k$-vector spaces. 
This coincides with the decomposition into eigenspaces of $\TT$, and
in particular we have
$$
        \mathrm{ker}(\mathrm{id}-\TT)=C_\bull(A,A_\sigma)_1,\quad
        \mathrm{im}(\mathrm{id}-\TT)=\bigoplus_{\lambda \in G \setminus \{1\}}
        C_\bull(A,A_\sigma)_\lambda.
$$
It is also immediately seen that this decomposition is in fact one of 
para-cyclic $k$-vector spaces, so we have:
\begin{lemma}\label{endspurt}
If $A$ is an algebra over a field $k$ and $ \sigma \in
\mathrm{Aut}(A)$ is a semisimple automorphism, then 
the para-cyclic $k$-vector space $C_\bull(A,A_\sigma)$ is
quasi-cyclic.  
\end{lemma}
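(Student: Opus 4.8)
The plan is to use the $G$-grading of $C_\bull(A,A_\sigma)$ already exhibited above and to recognise it as the eigenspace decomposition of $\TT=\ttt^{\bull+1}$. The key observation is that on a homogeneous component $A_{\lambda_0}\otimes_k\cdots\otimes_k A_{\lambda_n}$ the operator $\TT=\sigma\otimes_k\cdots\otimes_k\sigma$ acts as multiplication by the scalar $\lambda_0\cdots\lambda_n$, because $\sigma$ restricts on $A_{\lambda_i}$ to multiplication by $\lambda_i$. Thus $\TT$ acts on the whole graded piece $C_n(A,A_\sigma)_\lambda$ as the scalar $\lambda\in G\subseteq k\setminus\{0\}$, so that $C_n(A,A_\sigma)_\lambda$ is precisely the $\lambda$-eigenspace of $\TT$ in degree $n$.

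First I would check that this $G$-grading is one of para-cyclic $k$-vector spaces, which makes the whole decomposition visible at once: $\dd_i$, $\sss_j$ and $\ttt$ all preserve the total degree $\lambda_0\cdots\lambda_n$. This is immediate from the explicit formulae, since merging two adjacent factors sends $A_\lambda\otimes_k A_\mu$ into $A_{\lambda\mu}$ as $\sigma$ is an algebra map, the unit inserted by the degeneracies lies in $A_1$ because $\sigma(1)=1$, and applying $\sigma$ or cyclically permuting the factors does not change the product of the degrees. (For the condition in Definition~\ref{quasitsyglic} only the degreewise $k$-linear splitting is strictly needed, but this stronger compatibility costs nothing and clarifies the structure.)

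Then, since $k$ is a field and every $\lambda$ lies in $k\setminus\{0\}$, the endomorphism $\mathrm{id}-\TT$ acts on $C_n(A,A_\sigma)_\lambda$ as the scalar $1-\lambda$: this vanishes exactly when $\lambda=1$, and is a unit otherwise. Hence $C_\bull(A,A_\sigma)_1\subseteq\mathrm{ker}(\mathrm{id}-\TT)$, while for $\lambda\neq1$ the map $\mathrm{id}-\TT$ restricts to an isomorphism of $C_n(A,A_\sigma)_\lambda$, so that $C_\bull(A,A_\sigma)_\lambda\subseteq\mathrm{im}(\mathrm{id}-\TT)$ and meets $\mathrm{ker}(\mathrm{id}-\TT)$ trivially. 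Summing over $G$ yields
$$
\mathrm{ker}(\mathrm{id}-\TT)=C_\bull(A,A_\sigma)_1,\qquad
\mathrm{im}(\mathrm{id}-\TT)=\bigoplus_{\lambda\neq1}C_\bull(A,A_\sigma)_\lambda,
$$
and these are complementary, giving the splitting $C_\bull=\mathrm{ker}(\mathrm{id}-\TT)\oplus\mathrm{im}(\mathrm{id}-\TT)$ required by Definition~\ref{quasitsyglic}. I do not expect a genuine obstacle: the whole content is that semisimplicity of $\sigma$ makes $\TT$ diagonalisable in each degree with nonzero eigenvalues, with $1$ the only eigenvalue lying in the kernel of $\mathrm{id}-\TT$; the single point that actually uses the field hypothesis is the invertibility of the scalar $1-\lambda$ for $\lambda\neq1$.
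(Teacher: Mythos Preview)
Your proposal is correct and follows essentially the same approach as the paper: the paper's proof is precisely the discussion preceding the lemma, which identifies the $G$-grading of $C_\bull(A,A_\sigma)$ with the eigenspace decomposition of $\TT=\sigma\otimes_k\cdots\otimes_k\sigma$, notes that $\mathrm{ker}(\mathrm{id}-\TT)=C_\bull(A,A_\sigma)_1$ and $\mathrm{im}(\mathrm{id}-\TT)=\bigoplus_{\lambda\neq 1}C_\bull(A,A_\sigma)_\lambda$, and observes that the decomposition is one of para-cyclic $k$-vector spaces. If anything, you spell out more carefully than the paper why the structure maps preserve the grading and why $1-\lambda$ is invertible for $\lambda\neq 1$.
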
  

Unless $G$ is finite, the decomposition of the cochain complex
$C^\bull(A,A)$ is slightly more subtle. 
Given a cochain 
$\tilde \varphi \in C^p(A,A)$, we denote by 
$\tilde \varphi_\lambda $
its homogeneous component of degree $\lambda \in k \setminus
\{0\}$. That is,   
$\tilde \varphi_\lambda : A^{\otimes_k p} \rightarrow A$ 
is given 
on the homogeneous component 
$$
        (A^{\otimes_k p})_\mu :=
        \bigoplus_{\mu_1,\ldots,\mu_p
          \in G \atop \mu_1 \cdots \mu_p=\mu} 
        A_{\mu_1} \otimes_k \cdots \otimes_k A_{\mu_p}
$$ 
of elements of $A^{\otimes_k p}$ of total degree  $ \mu \in G$
by
$$
       \tilde \varphi_\lambda :=
       \pi_{\lambda \mu} \circ \tilde \varphi : 
       (A^{\otimes_k p})_\mu \rightarrow A_{\lambda \mu},
$$
where $ \pi_\nu : A \rightarrow A_\nu $ is the projection onto the
degree $ \nu $ part of $A$.
If we denote by 
$$
        C^p(A,A)_\lambda := \{ \tilde \varphi \in C^p(A,A)
        \mid 
        \tilde \varphi ((A^{\otimes_k p})_\mu) \,{\subseteq}\,
        A_{\lambda \mu}\}
$$ 
the set of all $\lambda$-homogeneous
$p$-cochains, then 
$\tilde \varphi \mapsto 
\{\tilde \varphi_\lambda\}_{\lambda \in k \setminus\{0\}}$
defines an embedding
$$
        C^\bull(A,A) \rightarrow \prod_{\lambda \in k \setminus \{0\}} 
        C^\bull(A,A)_\lambda
$$ 
of cochain complexes of $k$-vector spaces which is, however,
not a quasi-isomorphism in general. 
Still, we can split off the homogeneous part of degree $1$,
$$
        C^\bull(A,A) \simeq C^\bull(A,A)_1 \oplus 
        \Bigl(C^\bull(A,A) \cap \prod_{\lambda \in k \setminus \{0,1\}} 
        C^\bull(A,A)_\lambda\Bigr),
$$
and $C^\bull(A,A)_1$ consists precisely of those cochains $\tilde \varphi$
for which $\DD'_{\tilde \varphi}$ commutes with $\TT$.

Note that $C^\bull(A,A)_1$ is not equal 
to $C^\bull_{A_\sigma}(A,A)$ in general. We rather have:

\begin{lemma}
With the assumptions and notation as above,  
we have 
$$
        C^p_{A_\sigma}(A,A) = \big\{
        \tilde \varphi \in C^p(A,A) \mid 
        \forall \lambda \in k \setminus\{0,1\} \forall \mu \in G : 
        \tilde \varphi_\lambda |_{(A^{\otimes_k p})_{\lambda^{-1}
            \mu^{-1}}} =0 \big\}.
$$   
\end{lemma}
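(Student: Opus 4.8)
The plan is to diagonalise the whole situation with respect to the $G$-grading and to reduce the defining condition of $C^p_{A_\sigma}(A,A)$ to a homogeneous statement. Recall that $\tilde\gvf$ lies in $C^p_{A_\sigma}(A,A)$ precisely when each operator $\DD^{\scriptscriptstyle{i{\rm th}}}_{\tilde\gvf}$ carries $\mathrm{im}(\mathrm{id}-\TT)$ into itself, in every chain degree $n$ and for every admissible $i$. Since $\TT=\gs\otimes_k\cdots\otimes_k\gs$ acts on the homogeneous piece $C_n(A,A_\sigma)_\mu$ as the scalar $\mu$, we have $\mathrm{im}(\mathrm{id}-\TT)=\bigoplus_{\nu\neq 1}C_\bull(A,A_\sigma)_\nu$ and $\mathrm{ker}(\mathrm{id}-\TT)=C_\bull(A,A_\sigma)_1$. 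First I would decompose $\tilde\gvf=\sum_\lambda\tilde\gvf_\lambda$ into its homogeneous components and use that $\tilde\gvf\mapsto\DD^{\scriptscriptstyle{i{\rm th}}}_{\tilde\gvf}$ is linear.

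The key homogeneity to establish is that $\DD^{\scriptscriptstyle{i{\rm th}}}_{\tilde\gvf_\lambda}$ shifts the total degree by $\lambda$, that is, it maps $C_n(A,A_\sigma)_\mu$ into $C_{n-|p|}(A,A_\sigma)_{\lambda\mu}$. In Hochschild coordinates this operator replaces the block of entries $a_i,\ldots,a_{i+|p|}$ by a single slot built from $\tilde\gvf_\lambda(a_i,\ldots,a_{i+|p|})$ and leaves the remaining slots and $m$ unchanged up to multiplication in $A$ and by $\gs$, both of which respect the grading; since $\tilde\gvf_\lambda$ carries $(A^{\otimes_k p})_\nu$ into $A_{\lambda\nu}$, the claim follows. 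Consequently, for $z\in C_\bull(A,A_\sigma)_\mu$ with $\mu\neq 1$, the degree-$1$ component of $\DD^{\scriptscriptstyle{i{\rm th}}}_{\tilde\gvf}(z)$ equals $\DD^{\scriptscriptstyle{i{\rm th}}}_{\tilde\gvf_{\mu^{-1}}}(z)$. By independence of the eigenspaces, $\tilde\gvf\in C^p_{A_\sigma}(A,A)$ if and only if $\DD^{\scriptscriptstyle{i{\rm th}}}_{\tilde\gvf_\lambda}$ vanishes on $C_\bull(A,A_\sigma)_{\lambda^{-1}}$ for every $\lambda\neq 1$ and all $i,n$.

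It then remains to translate this operator identity into the asserted vanishing on graded pieces. Writing a homogeneous chain with representatives $u^j=a_j\otimes_k 1$ as a block of $p$ entries of degree $\nu$ together with a complement (the slot $m$ and the $n-p$ remaining entries) of degree $\mu$, with $\nu\mu=\lambda^{-1}$, one sees that $\DD^{\scriptscriptstyle{i{\rm th}}}_{\tilde\gvf_\lambda}$ acts only on the block and leaves a nonzero complement intact; hence it vanishes on all such chains if and only if $\tilde\gvf_\lambda|_{(A^{\otimes_k p})_\nu}=0$ for every block degree $\nu=\lambda^{-1}\mu^{-1}$ that admits a nonzero complement. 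A complement of degree $\mu$ occupies $n-p+1$ tensor slots, each graded by an element of $\Sigma$; because $1\in\Sigma$, every $\mu\in G$ is realised once $n$ is large enough. Intersecting over all $n$ therefore yields exactly $\tilde\gvf_\lambda|_{(A^{\otimes_k p})_{\lambda^{-1}\mu^{-1}}}=0$ for all $\mu\in G$ and all $\lambda\in k\setminus\{0,1\}$, which is the claim; the cases $\lambda^{-1}\mu^{-1}\notin G$ contribute vacuously because the corresponding graded space is zero, and the components $\tilde\gvf_\lambda$ with $\lambda\notin G$ vanish automatically.

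The main obstacle is this last translation rather than any single formula. One must verify the degree homogeneity of $\DD^{\scriptscriptstyle{i{\rm th}}}_{\tilde\gvf_\lambda}$ cleanly in Hochschild coordinates, check that evaluating the operator on a chain with a fixed nonzero complement (take all $b_j=1$) genuinely detects $\tilde\gvf_\lambda$ on the block, and, crucially, observe that all complement degrees in $G$ become available only after letting $n$ grow; it is precisely the failure of $G$ to be a group that makes $C^p_{A_\sigma}(A,A)$ strictly larger than the degree-$1$ part $C^p(A,A)_1$. A minor point worth recording is that the resulting condition is independent of $i$, by translation invariance of the grading, so quantifying over all positions $i$ adds nothing beyond a single one.
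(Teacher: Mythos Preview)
Your argument is correct and follows the same route as the paper's one-sentence proof: both hinge on the observation that $\DD^{\scriptscriptstyle{i{\rm th}}}_{\tilde\varphi_\lambda}$ shifts the total $G$-degree by $\lambda$, so that the defining condition on $C^p_{A_\sigma}(A,A)$ reduces to the vanishing of $\DD^{\scriptscriptstyle{i{\rm th}}}_{\tilde\varphi_\lambda}$ on $C_\bull(A,A_\sigma)_{\lambda^{-1}}$ for $\lambda\neq 1$, which in Hochschild coordinates is precisely the stated restriction on $\tilde\varphi_\lambda$. The paper records only the key degree computation $x\otimes_k y\mapsto x\otimes_k\tilde\varphi_\lambda(y)$ for $\DD'$, whereas you spell out the remaining bookkeeping (independence of $i$, realisability of every complement degree $\mu\in G$ for large $n$ via $1\in\Sigma$), but the substance is identical.
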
 
\begin{proof}
This follows from the fact that the operator $\DD'_{\tilde
  \varphi_\lambda}$ maps a chain  
$x \otimes_k y \in C_{n+p}(A,A_\sigma)_{\lambda^{-1}}
\,{\subseteq}\,\mathrm{im}(\mathrm{id}-\TT)$ 
to 
$x \otimes_k \tilde \varphi _\lambda (y) \in
C_{n+1}(A,A_\sigma)_1
\,{\subseteq}\,\mathrm{ker}(\mathrm{id}-\TT)$. 
\end{proof}

From this it is clear that the projections onto the homogeneous parts
leave $C^\bull_{A_\sigma}(A,A) \,{\subseteq}\,C^\bull(A,A)$ invariant, so 
$C^\bull_{A_\sigma}(A,A)$ splits as well 
as a direct sum of cochain complexes
into 
$C^\bull(A,A)_1$ and 
$C^\bull_{A_\sigma}(A,A) \cap \prod_{\lambda \neq 1} 
C^\bull(A,A)_\lambda$. We therefore obtain:  

\begin{lemma}\label{schweinebucht}
If $A$ is an algebra over a field $k$ and $ \sigma \in
\mathrm{Aut}(A)$ is a semisimple automorphism, then 
$C^\bull(A,A)_1$ is a comp subalgebra of $C^\bull_{A_\sigma}(A,A)$,
and the induced morphisms
$$
        H^\bull(C(A,A)_1) \rightarrow H^\bull_{A_\sigma}(A,A),\quad 
        H^\bull(C(A,A)_1) \rightarrow H^\bull(A,A)
$$ 
are  
injective and split as maps 
of $H^\bull(C(A,A)_1)$-modules. 
\end{lemma}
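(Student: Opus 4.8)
The plan is to exploit that the whole comp-algebra structure on $C^\bull(A,A)$ is strictly homogeneous for the multiplicative grading by $k \setminus \{0\}$ induced by $\sigma$, with the distinguished element in grading degree $1$. First I would record that under the isomorphism $\varphi \mapsto \tilde\varphi$ the formulae \rmref{ostkreuz3} and \rmref{maxdudler} become the usual operations that insert the value of one Hochschild cochain as an argument of another; hence if $\tilde\varphi \in C^p(A,A)_\lambda$ and $\tilde\psi \in C^q(A,A)_\kappa$, then the inner cochain raises the total internal degree of its arguments by its own homogeneity degree and the outer one raises it once more, giving $\tilde\varphi \smallsmile \tilde\psi \in C^{p+q}(A,A)_{\lambda\kappa}$ and $\tilde\varphi \circ_i \tilde\psi \in C^{p+q-1}(A,A)_{\lambda\kappa}$. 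Since $\tilde\mu$, the multiplication of $A$, lies in $C^2(A,A)_1$ because $A_{\nu_1}A_{\nu_2} \subseteq A_{\nu_1\nu_2}$, and since $\delta = \{\mu,\cdot\}$ by \rmref{erfurt} then maps $C^\bull(A,A)_\lambda$ to itself, the degree-$1$ part $C^\bull(A,A)_1$ is closed under the $\circ_i$, contains $\mu$, and is $\delta$-stable, so it is a comp subalgebra. Its containment in $C^\bull_{A_\sigma}(A,A)$ is immediate from the preceding lemma: a purely degree-$1$ cochain has $\tilde\varphi_\lambda = 0$ for every $\lambda \neq 1$, so the defining vanishing condition there is vacuous.

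Next I would produce a single retraction serving both morphisms, namely the grading projection $\pi_1$ onto the degree-$1$ component. Because $\delta$ preserves the grading, $\pi_1$ is a chain map; and because $C^\bull(A,A)_1$ is concentrated in grading degree $1$, for $a \in C^\bull(A,A)_1$ and arbitrary $x$ one has $(a \smallsmile x)_h = a \smallsmile x_h$ for each homogeneity degree $h$, whence $\pi_1(a \smallsmile x) = a \smallsmile \pi_1(x)$, and symmetrically on the right, so $\pi_1$ is a $C^\bull(A,A)_1$-bimodule map. Restricting $\pi_1$ along the two direct-sum decompositions already established in the text, $C^\bull(A,A) \simeq C^\bull(A,A)_1 \oplus \big(C^\bull(A,A) \cap \prod_{\lambda \neq 0,1} C^\bull(A,A)_\lambda\big)$ and $C^\bull_{A_\sigma}(A,A) \simeq C^\bull(A,A)_1 \oplus \big(C^\bull_{A_\sigma}(A,A) \cap \prod_{\lambda \neq 1} C^\bull(A,A)_\lambda\big)$, yields in both cases a chain map and $C^\bull(A,A)_1$-module retraction $\pi_1$ satisfying $\pi_1 \circ \iota = \mathrm{id}$, where $\iota$ denotes the relevant inclusion.

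Finally I would pass to cohomology. The inclusions $\iota$ are DG-algebra maps, so the induced $H^\bull(\iota)$ are ring maps, through which $H^\bull_{A_\sigma}(A,A)$ and $H^\bull(A,A)$ become $H^\bull(C(A,A)_1)$-modules; the induced retractions $H^\bull(\pi_1)$ are then $H^\bull(C(A,A)_1)$-module maps because $\pi_1$ is a module map on the cochain level, and $H^\bull(\pi_1) \circ H^\bull(\iota) = \mathrm{id}$. Hence each $H^\bull(\iota)$ is injective and split by a map of $H^\bull(C(A,A)_1)$-modules, as claimed. I expect the only genuinely delicate point to be the bookkeeping that the comp operations and $\delta$ are \emph{strictly} graded rather than merely filtered; once that homogeneity is pinned down, the module-retraction property of $\pi_1$ is formal and the transition to cohomology is routine.
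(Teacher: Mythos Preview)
Your proof is correct and follows essentially the same approach as the paper, which in fact gives no explicit proof beyond the sentence preceding the lemma (the direct-sum splittings of $C^\bull(A,A)$ and $C^\bull_{A_\sigma}(A,A)$ with $C^\bull(A,A)_1$ as a summand). Your argument simply spells out the details: the homogeneity of the comp operations and of $\mu$, the resulting $\delta$-stability of the grading, and the fact that the degree-$1$ projection $\pi_1$ is a $C^\bull(A,A)_1$-linear chain retraction.
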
 

\begin{example}
Let $k$ be any field, $A$ be the polynomial ring 
$k[x]$, and $ \sigma $ be specified by 
$\sigma (x)=qx$ for some fixed $q \in k\setminus \{0\}$ 
which is assumed to be not a root of unity. Then 
we have $\Sigma =
\{q^n \mid n \in \mathbb{N} \}=G \simeq \mathbb{N}$, 
and $ \mathrm{ker} (\mathrm{id} - \TT)$ consists only of the
(degenerate) multiples of $1 \otimes_k \cdots \otimes_k 1$. 
Then $C^p(A,A)_1 \simeq k$ for all $p$ while 
$C^\bull_{A_\sigma}(A,A)$ consists of all cochains that do not 
decrease the degree (where ``decrease'' refers to the ordering of $G
\simeq \mathbb{N}$). In particular, 
$C^0(A,A)_1 \simeq k$ while 
$C^0_{A_\sigma}(A,A) \simeq A$, and as 
$A$ is commutative, we also have 
$H^0_{A_\sigma}(A,A) \simeq A$ while 
$H^0(C(A,A)_1) \simeq k$.   
\end{example}

\subsection{Twisted Calabi-Yau algebras}
More recently, the Hochschild homology groups with 
coefficients in $A_\sigma $ have been studied
intensively for the fact that large classes of algebras have
been recognised to
be what is nowadays called a twisted Calabi-Yau algebra: 

\begin{definition}\label{mathar}
An algebra $A$ is a \emph{twisted Calabi-Yau algebra} 
with \emph{modular
automorphism} $\sigma \in \mathrm{Aut} (A)$ if 
the $\Ae$-module $A$ has (as an $\Ae$-module) 
a finitely generated projective resolution of finite length
and there exists $d \in \mathbb{N} $ and isomorphisms 
of right $\Ae$-modules
$$
        \mathrm{Ext}^i_\Ae(A,\Ae) \simeq 
        \left\{
          \begin{array}{ll}
            0 & i \neq d,\\
            A_\sigma & i=d.\end{array}\right.
$$    
\end{definition}

The number $d$ is then necessarily the \emph{dimension} of $A$ 
in the sense
of \cite{CarEil:HA}, that is, the 
projective dimension of $A \in \amoda$, and  
the Ischebeck spectral sequence 
\cite{Isch:EDZDFEUT} leads to a Poincar\'e-type
duality 
\begin{equation}\label{puankare}
        H^\bull(A,A) \simeq H_{d-\bull}(A,A_\sigma).  
  \end{equation}

We refer to
\cite{BerSol:ACFHPTBTCY, Bic:HHOHAAFYDROTC, BroZha:DCATHCFNHA, Gin:CYA, Kel:DCYC, Kra:OTHCOQHS, LiuWu:RDCOQHS, VdB:ARBHHACFGR, VdB:CYAAS, VdBdTdV:CYDANCH}   
and the references therein for more information and
background, and in particular plenty of
examples.


It had been our aim in \cite{KowKra:DAPIACT} to understand 
the duality (\ref{puankare}) in the wider context of Hopf algebroids 
and to observe that 
(\ref{puankare}) is an isomorphism of graded 
$H^\bull(A,A)$-modules.
%
%
From that point of view, the essence of the present paper 
is that
(\ref{puankare}) is 
even compatible with the Gerstenhaber structure
which implies Theorem~\ref{ginzburger}. 
For $ \sigma = \mathrm{id} $ 
this theorem has been proven by Ginzburg in \cite{Gin:CYA} and just as
therein, the fact is more or less immediate once the full differential
calculus structure is established: 

\begin{proof}[Proof of Theorem~\ref{ginzburger}]
First we need to observe that in the case of a twisted Calabi-Yau
algebra, we have $H^\bull(A,A) \simeq H^\bull(C(A,A)_1)$.
Indeed, we know already that the duality isomorphism 
(\ref{puankare}) is an isomorphism of 
$H^\bull(A,A)$-modules, see, for instance, Theorem~1 
in \cite{KowKra:DAPIACT}. By Lemma~\ref{endspurt} we know that the
homology is in fact concentrated in degree $1$ with respect to the
$G$-grading. Hence the cohomology is also concentrated in degree
$1$, that is, the embedding $C^\bull(A,A)_1 \rightarrow 
C^\bull(A,A)$ is a quasi-isomorphism.

Now Theorem~\ref{pen1} states in combination
with Theorem~1 in \cite{KowKra:DAPIACT} precisely that 
$H^\bull(A,A)$ 
and $H_\bull(A,A_\sigma)$ form for
a twisted Calabi-Yau algebra with 
semisimple modular automorphism 
$\sigma $ 
what Lambre calls a differential calculus with duality 
\cite[D\'efinition~1.2]{Lam:VDBDABVSOCYA}.  
Hence \cite[Corollaire~1.6]{Lam:VDBDABVSOCYA} 
directly implies Theorem~\ref{ginzburger}. 
\end{proof}

\providecommand{\bysame}{\leavevmode\hbox to3em{\hrulefill}\thinspace}
\providecommand{\MR}{\relax\ifhmode\unskip\space\fi MR }
\providecommand{\MRhref}[2]{%
  \href{http://www.ams.org/mathscinet-getitem?mr=#1}{#2}
}
\providecommand{\href}[2]{#2}

\end{document}